\newcommand{\labelitem}[1]{%
\item[#1]\protected@edef\@currentlabel{#1}%
}
\theoremstyle{plain}
\newtheorem{theorem}{Theorem}[section]
\newtheorem{proposition}[theorem]{Proposition}
\newtheorem{lemma}[theorem]{Lemma}
\newtheorem{claim}{Claim}[theorem]
\newtheorem{corollary}[theorem]{Corollary}
\newtheorem{fact}[theorem]{Fact}
\newtheorem*{question*}{Question}
\newtheorem*{theorem*}{Theorem}
\theoremstyle{definition}
\newtheorem{example}[theorem]{Example}
\newtheorem{definition}[theorem]{Definition}
\theoremstyle{remark}
\newtheorem{remark}[theorem]{Remark}
\DeclareMathOperator{\Aut}{Aut}
\DeclareMathOperator{\diam}{diam}
\DeclareMathOperator{\SL}{SL}
\DeclareMathOperator{\GL}{GL}
\DeclareMathOperator{\PO}{PO}
\DeclareMathOperator{\SO}{SO}
\DeclareMathOperator{\PSL}{PSL}
\DeclareMathOperator{\PGL}{PGL}
\DeclareMathOperator{\End}{End}
\DeclareMathOperator{\Span}{Span}
\DeclareMathOperator{\Gr}{Gr}
\DeclareMathOperator{\image}{Im}
\DeclareMathOperator{\hil}{d_{\Omega}}
\DeclareMathOperator{\dproj}{d_{\mathbb{P}}}
\DeclareMathOperator{\domains}{\mathcal{C}}
\DeclareMathOperator{\pdomains}{\mathcal{C}_*}
\DeclareMathOperator{\orbitcl}{\mathscr{O}}
\DeclareMathOperator{\Cc}{\mathcal{C}}
\DeclareMathOperator{\Gc}{\mathcal{G}}
\DeclareMathOperator{\Kc}{\mathcal{K}}
\DeclareMathOperator{\Pc}{\mathcal{P}}
\DeclareMathOperator{\Hb}{\mathbb{H}}
\DeclareMathOperator{\Nb}{\mathbb{N}}
\DeclareMathOperator{\Pb}{\mathbb{P}}
\DeclareMathOperator{\Rb}{\mathbb{R}}
\DeclareMathOperator{\Zb}{\mathbb{Z}}
\DeclareMathOperator{\bdry}{\partial \Omega}
\DeclareMathOperator{\dist}{d}
\DeclareMathOperator{\Haus}{Haus}
\newcommand{\abs}[1]{\left|#1\right|}
\newcommand{\norm}[1]{\left\|#1\right\|}
\newcommand{\wt}[1]{\widetilde{#1}}
\newcommand{\dee}{\ensuremath{\partial}}
\newcommand{\conorm}[1]{\mathbf{m}(#1)}
\newcommand{\seq}[1]{\{#1\}}
\newcommand{\gtriple}[1][]{\mathcal{T}_{#1}}
\newcommand{\minus}{-}
\newcommand{\op}{\operatorname}
\newcommand{\set}[1]{\left\{ #1 \right\}}
\newcommand{\st}{:}
\newcommand{\identity}{\mathrm{id}}
\newcommand{\eps}{\ensuremath{\varepsilon}}
\newcommand{\PGLdR}{\mathrm{PGL}(d, \mathbb{R})}
\newcommand{\GLdR}{\mathrm{GL}(d, \mathbb{R})}
\newcommand{\SLdR}{\mathrm{SL}(d, \mathbb{R})}
\newcommand{\Pd}{\mathbb{P}^*}
\newcommand{\lift}[1]{\widetilde{#1}}
\newcommand{\Pspan}[1]{\mathrm{span}_{\mathbb{P}}\{#1\}}
\definecolor{refkey}{rgb}{0.9,0,0.1}
\definecolor{labelkey}{rgb}{0.9,0,0.1}
\begin{document}

\title[Morse Properties in Convex Projective Geomerty]{Morse Properties in Convex Projective Geometry}
\author{Mitul Islam}
\email{mitul.islam@mis.mpg.de}
\author{Theodore Weisman}
\email{weisman@mis.mpg.de}

\address{Max Planck Institute for Mathematics in the Sciences,
  Inselstraße 22, 04103 Leipzig}

\date{\today}
\keywords{}
\subjclass[2010]{}

\begin{abstract}
  We study properties of ``hyperbolic directions'' in groups acting
  cocompactly on properly convex domains in real projective space,
  from three different perspectives simultaneously: the (coarse)
  metric geometry of the Hilbert metric, the projective geometry of
  the boundary of the domain, and the singular value gaps of
  projective automorphisms. We describe the relationship between
  different definitions of ``Morse'' and ``regular'' quasi-geodesics
  arising in these three different contexts. This generalizes several
  results of Benoist and Guichard to the non-Gromov hyperbolic setting.
\end{abstract}

\maketitle

\tableofcontents

\section{Introduction}

Group actions on $\Hb^n$ have classically played a pivotal role in the
study of discrete subgroups of Lie groups, geometric topology, and
geometric group theory. Hyperbolic geometry provides a
strong link between these fields, since hyperbolic manifolds (whose
holonomy representations have discrete images lying in
$\PO(n,1) \simeq \mathrm{Isom}(\Hb^n)$) give some of the most
important examples of geometric structures on manifolds, and the
properties of hyperbolic space are effectively coarsified via the
theory of Gromov-hyperbolic groups.

It is thus natural to try and find a substitute for hyperbolic
geometry which extends this connection beyond the negative curvature
setting. In particular, one would like to find a model geometry which
facilitates the study of discrete subgroups of higher-rank Lie groups,
such as $\SLdR$ for $d \ge 3$. One reasonable possibility to
consider is the non-positively curved Riemannian symmetric space
$\SLdR/\SO(d)$, but actions on the symmetric space are often
unsatisfyingly rigid. For instance, when $d \geq 3$, any discrete
Zariski-dense subgroup of $\SLdR$ which acts cocompactly on a
convex subset of $\SLdR/\SO(d)$ is a uniform lattice in
$\SLdR$ \cite{Q2005, KL2006}.

Convex projective geometry fills this gap by providing examples of
natural spaces---\emph{properly convex domains}---for discrete
subgroups of $\SLdR$ to act on. A properly convex domain $\Omega$ is
an open subset of real projective space $\Pb(\Rb^d)$, which is bounded
in some affine chart. Such a domain can be equipped with its
\emph{Hilbert metric} $\hil$, and the group
$\Aut(\Omega) \subset \PGLdR$ of projective transformations preserving
$\Omega$ acts by isometries of this metric. When $\Omega$ is a round
ball in $\Pb(\Rb^{d+1})$, then $\Aut(\Omega) \simeq \PO(d, 1)$ and the
space $(\Omega, \hil)$ is precisely the projective Beltrami-Klein
model of the real hyperbolic space $\Hb^d$. By virtue of this example,
convex projective geometry can be viewed as a far-reaching
generalization of real hyperbolic geometry.  This viewpoint has been
of much interest lately, and consequently convex projective geometry
has developed close connections with higher Teichm\"{u}ller theory
(see e.g. \cite{G1990, CG1993,GW2008,AW2018,B2005, BK2023}) and the theory of Anosov
representations \cite{DGK2017convex, Z2017}.

Outside of coarse negative curvature, however, convex projective
geometry can also be used to model examples which have a mixture of
``negatively curved'' and ``flat'' behavior. This allows for the study
of discrete subgroups of Lie groups which have markedly
\emph{different} behavior from those in rank one. For instance,
consider a closed 3-manifold $M$ with a geometric decomposition along
a nonempty collection of tori, into pieces whose interiors admit
finite-volume complete hyperbolic structures. Benoist \cite{B2006}
constructed examples of such 3-manifolds $M$ which are diffeomorphic
to a quotient $\Omega/\Gamma$, where $\Omega$ is a properly convex
domain and $\Gamma$ is a discrete subgroup of $\Aut(\Omega)$. In this
case, $\Gamma \simeq \pi_1M$, is relatively hyperbolic relative to
2-flats and the domain $\Omega$ is quasi-isometric to $\pi_1(M)$.

In this above example, the projective structure on the manifold
$\Omega / \Gamma$ is not ``non-positively curved,'' in the sense that
$\Omega$ equipped with its Hilbert metric $\hil$ is not a CAT(0)
metric space. In fact, a classical theorem states that a Hilbert
geometry $(\Omega,\hil)$ is CAT(0) if any only if $\Omega$ is
equivalent to the projective model of hyperbolic space
\cite{KS1958}. Despite this, convex projective domains share some
striking similarities with CAT(0) geometry.  Lately, there has been
much activity in understanding these similarities \cite{IZ2019,
  Islam2019, W2020, MB2020, IZ2021, PLB2021}. A key upshot of these
recent developments is the realization that irreducible properly
convex domains with a cocompact action essentially come in two
flavors: rank one and higher rank \cite{Islam2019,Z2020}. The
higher-rank domains are special, and have a complete
classification. On the other hand, many different examples of rank one
domains have been constructed \cite{B2006b, B2006, K2007, CLM2016,
  BDL2018, BV2023}, and their classification seems difficult. These domains
are characterized by the presence of abundant ``negatively curved
behavior'' (see \Cref{sec:rank_one}).

\subsubsection*{Summary of results}

Motivated by this, we initiate in this paper a study of ``hyperbolic''
geodesic directions in $(\Omega,\hil)$. We are mainly interested in
the case where there is a discrete subgroup
$\Gamma \subset \Aut(\Omega)$ acting cocompactly on $\Omega$; in this
situation, following Benz\'ecri \cite{benzecri} and Benoist
\cite{B2008}, we say that $\Gamma$ \emph{divides} $\Omega$ and that
$\Omega$ is \emph{divisible}. We consider \emph{projective} geodesic
rays in a divisible domain $\Omega$, i.e. geodesic rays
$c:[0, \infty) \to \Omega$ whose image is a projective line segment,
and sequences $\{\gamma_n\}$ in $\Gamma$ whose orbits give a coarse
approximation of $c$. In this paper we understand ``hyperbolicity'' of
these geodesic rays from three different perspectives:
\begin{enumerate}[label=(\Roman*), wide, labelwidth=!,labelindent=0pt]
\item \label{item:ggt_hyperbolicity} \textbf{The coarse metric geometry of the space
    $(\Omega, \hil)$.} In this context, there are two notions of
  ``hyperbolic geodesic'' which are relevant for this paper:
  \emph{Morse geodesics}, which are geodesics which satisfy the same
  ``Morse'' or ``quasi-geodesic stability'' property as geodesics in
  hyperbolic spaces, and \emph{contracting geodesics}, whose
  nearest-point projection maps have a similar ``contracting''
  property as hyperbolic geodesics. In CAT(0) spaces, Morse and
  contracting geodesics coincide; we prove that the same
  is true for Hilbert geometries
  (\Cref{thm:hilbert_morse_equals_contracting}).
  
\item \label{item:la_hyperbolicity}\textbf{The linear algebraic
    behavior of the sequence $\{\gamma_n\}$ in $\SLdR$.} Here, our
  understanding of ``hyperbolicity'' of a geodesic comes from results
  of Benoist \cite{B2004}, Bochi-Potrie-Sambarino \cite{BPS}, and
  Kapovich-Leeb-Porti \cite{KLP2017}, implying that a discrete group
  $\Gamma \subseteq \Aut(\Omega)$ acting cocompactly on $\Omega$ is
  Gromov-hyperbolic if and only if the singular values of $\Gamma$
  satisfy a \emph{uniform exponential gap} condition along all
  geodesics in $\Gamma$. Thus we understand ``hyperbolic directions''
  as geodesics in $\Gamma$ whose singular value gaps satisfy a similar
  exponential growth condition. This perspective is closely tied to
  the notion of a \emph{$k$-Morse} quasi-geodesic in the Riemannian
  symmetric space $\SLdR / \SO(d)$, introduced by Kapovich-Leeb-Porti
  \cite{KLP2018}.
  
\item \label{item:pg_hyperbolicity} \textbf{The projective geometry of
    the boundary of the domain $\Omega$.} Our motivation for this
  perspective comes from results of Benoist \cite{B2004} and
  \cite{guichard05}, which imply that, if $\Gamma$ is a discrete
  hyperbolic group dividing a domain $\Omega$, then the boundary of
  $\Omega$ is a $C^{\alpha}$ hypersurface in projective space for some
  $\alpha > 1$. From this perspective, the ``hyperbolicity'' of a
  geodesic ray $c$ in a general divisible domain $\Omega$ is captured
  by the local regularity of the hypersurface $\partial \Omega$ at the
  endpoint of $c$.
\end{enumerate}

Our main aim in this paper is to establish relationships between
geodesics satisfying various versions of these notions of
``hyperbolicity.'' Many of these relationships (in the case of a
projective geodesic $c$ in a convex divisible domain with
\emph{exposed boundary}, see \Cref{defn:exposed_boundary}) are
summarized in \Cref{fig:summary_diagram} below.

\begin{figure}[h!]
  \begin{center}
    \begin{tikzpicture}
    \tikzstyle{cond} = [
    rectangle, align=center, draw
    ];
    \matrix[column sep=2.2em, row sep=1em]
    {
      & & \node[align=center] (suf_1) {strongly unif.\\$1$-regular}; &
      \node[cond] (uf_1) {uniformly\\$1$-regular};&
      \node[cond] (B) {$\beta$-convex};
      \\
      \node[cond] (C) {contracting}; &
      \node[cond] (M) {Morse};&
      \node (+) {$+$};\\
      & \node[cond] (K) {$1$-Morse\\(KLP)}; & \node[align=center] (suf_2) {strongly
        unif.\\$(d-1)$-regular};&
      \node[cond] (uf_2) {uniformly\\$(d-1)$-regular};&
      \node[cond] (A) {$C^\alpha$-regular};\\
    };
    \node[draw, fit = (suf_1) (+) (suf_2)] (suf) {};
    \draw[implies-implies, double distance=2pt] (M) -- (C)
    node[midway, above]
    {\ref{thm:hilbert_morse_equals_contracting}}; 
    \draw[implies-implies, double distance=2pt] (M) -- (suf)
    node[midway, above,
    align=center]{\ref{thm:morse_implies_bdry_reg_and_unif_reg}\\
      \ref{thm:1_morse_ggt_morse}};
    \draw[-implies, double distance=2pt] (suf_1) -- (uf_1);
    \draw[-implies, double distance=2pt] (suf_2) -- (uf_2);
    \draw[implies-implies, double distance=2pt] (uf_1) -- (B)
    node[midway, below] {\ref{thm:regularity_equiv_regularity}};
    \draw[implies-implies, double distance=2pt] (uf_2) -- (A)
    node[midway, above] {\ref{thm:regularity_equiv_regularity}};
    \draw[implies-implies, double distance=2pt] (K) -- (suf);
  \end{tikzpicture}
\end{center}
\caption{Relationships between various notions of ``hyperbolicity''
  for a projective geodesic in a convex divisible domain with exposed
  boundary.}
\label{fig:summary_diagram}
\end{figure}

Before giving precise theorem statements in the next section, we
briefly outline the main results expressed by this
diagram. \Cref{thm:morse_implies_bdry_reg_and_unif_reg} and
\Cref{thm:1_morse_ggt_morse} relate perspectives
\ref{item:ggt_hyperbolicity} and \ref{item:la_hyperbolicity}
above. These theorems show that, if $c$ is a projective geodesic in a
divisible domain tracked by a sequence $\{\gamma_n\}$, then Morseness
of $c$ (in the sense of \ref{item:ggt_hyperbolicity}) is characterized
by the behavior of singular value gaps of the sequence
$\{\gamma_n\}$. In particular, this implies that for projective
geodesics, the Kapovich-Leeb-Porti notion of ``$1$-Morseness'' for
quasi-geodesics in symmetric spaces coincides with the coarse metric
notion of Morseness in \ref{item:ggt_hyperbolicity}
(\Cref{cor:morse_equals_morse}).

\Cref{thm:regularity_equiv_regularity} in the diagram directly relates
perspectives \ref{item:la_hyperbolicity} and
\ref{item:pg_hyperbolicity}. The theorem concerns projective geodesics
$c$ whose endpoint $c(\infty)$ in $\partial \Omega$ satisfies a
certain regularity property; roughly, this property asserts that if
$\partial \Omega$ is locally the graph of a convex function $f(x)$
near $c(\infty)$, then $f$ is sandwiched between $C_1 \norm{x}^\alpha$
and $C_2\norm{x}^{\beta}$ for some $\alpha>1$ and $\beta<\infty$. We
prove that this property is characterized by the behavior of the
singular values of the sequence $\{\gamma_n\}$, and give a formula for
the optimal constants $\alpha$ and $\beta$ in terms of these singular
values. Via the results alluded to in the previous paragraph, this
also relates perspectives \ref{item:ggt_hyperbolicity} and
\ref{item:pg_hyperbolicity}, and shows that every Morse quasi-geodesic
in the sense of \ref{item:ggt_hyperbolicity} satisfies the regularity
property mentioned above. However, the converse to this statement
turns out to be false (see
\Cref{thm:not_strongly_uniform}). Effectively, this theorem says that
the reverse of the implications ``strong uniform regularity''
$\implies$ ``uniform regularity'' in \Cref{fig:summary_diagram} do not
always hold.

\subsection*{Statement of the main results}

We now provide a more detailed and precise account of the main results
in the paper.

\subsection{$M$-Morse geodesics and uniform regularity}
\label{sec:M_Morse_singular_values_intro}
\begin{definition}
\label{defn:morse_gauge}
Let $(X, d)$ be a proper geodesic metric space and
$M:\Rb_{\ge 0} \times \Rb_{\ge 0} \to \Rb_{\ge 0}$ be any function. A
geodesic (segment, ray, line) $\ell$ is \emph{$M$-Morse} if any
$(\lambda, a)$-quasi-geodesic $\ell'$ with endpoints $x, y \in \ell$
lies in the $M(\lambda, a)$-neighborhood of $\ell$. The function $M$ is called a \emph{Morse gauge} for the geodesic
$\ell$. 
\end{definition}

At times, we will skip explicit mention of the Morse gauge and only say that $\ell$ is \emph{Morse}, instead of $\ell$ is $M$-Morse. Also, if $\ell$ is $M$-Morse for some Morse gauge $M$, then it is also $M'$-Morse for an \emph{increasing} Morse gauge $M'$, but this will not be relevant for us.

Geodesic rays in $\Hb^2$ are all $M_0$-Morse for a fixed Morse gauge
$M_0$. On the other hand, flat spaces like $\Rb^2$ and higher rank
CAT(0) spaces like $\Hb^2 \times \Hb^2$ or $\SL(3, \Rb)/\SO(3)$
do not contain any Morse geodesics.  The results in
\cite{Islam2019} and \cite {Z2020} indicate that a ``generic'' divisible domain $\Omega$ has
many Morse geodesics that project to closed geodesics in the quotient
$\Omega/\Gamma$ (see \cref{sec:rank_one}). It is also straightforward to check (see
\Cref{sec:morse_contracting}) that any $M$-Morse geodesic ray in a
convex projective domain $\Omega$ is uniformly close to a projective
geodesic ray. 

We would like to understand the Morseness of a projective geodesic ray
by studying the sequence of automorphisms that approximates the ray
via an orbit map. To make this precise, we use the following
terminology throughout this paper.

\begin{definition}[Tracking sequences]
  \label{defn:tracking}
  Let $\Omega$ be a properly convex domain, $c:[0, \infty) \to \Omega$
  be a projective geodesic ray, $x_0 \in \Omega$, and $R>0$.  We say
  that a sequence $\set{\gamma_n}_{n \in \Nb}$ in $\Aut(\Omega)$
  $R$-\emph{tracks $c$ with respect to $x_0$} if
  $\hil(x_0,\gamma_n^{-1}c(n))<R $ for every $n \in \Nb$.
\end{definition}
\begin{remark}
  While discussing tracking sequences, unless necessary we will omit
  the specific constant $R$ and the basepoint $x_0$, and simply say
  that $\{\gamma_n\}$ \emph{tracks} $c$. Note that if $\Omega/\Gamma$ is
  compact, then every geodesic is $R$-tracked by some sequence in
  $\Gamma$ for $R=\diam(\Omega/\Gamma)$. Also, by definition,
  $\{\gamma_n\}$ tracks $c$ along a sequence of equally-spaced points
  $\{c(n)\}$. One can consider other kinds of sequences, but we do not
  pursue this here.
\end{remark}

Now, for any $g \in \GLdR$, let $\sigma_1(g) \geq  \ldots \geq \sigma_d(g)$ be
the singular values of $g$, and for any $1 \le i < j \le d$, let
$\mu_{i,j}(g) := \log \frac{\sigma_i(g)}{\sigma_j(g)}$. Note that
$\mu_{i,j}$ descends to a well-defined map on $\PGLdR$.

\begin{definition}
  \label{defn:strong_unif_reg}
  For $1 \le k < d$, we say that a sequence $\{g_n\}$ in $\GLdR$ is
  \emph{uniformly $k$-regular} if it is divergent (it leaves
  every compact set in $\GLdR$) and
  \[
    \liminf_{n \to \infty} \frac{\mu_{k, k+1}(g_n)}{\mu_{1,d}(g_n)} >
    0.
  \]
  We say that the sequence $\{g_n\}$ is \emph{strongly uniformly
    $k$-regular} if it is divergent and there are constants $C, N > 0$
  such that for all $n,m \in \Nb$ with $m> N$, we have
  \[
    \frac{\mu_{k, k+1}(g_n^{-1}g_{n+m})}{\mu_{1,d}(g_n^{-1}g_{n+m})} > C.
  \]
\end{definition}

\begin{remark}\
\label{rem:comparison_with_KLP_stuff}
\begin{enumerate}[label=(\alph*)]
    \item It is immediate that a strongly uniformly $k$-regular sequence is
  also uniformly $k$-regular. In general, the converse does not hold;
  the construction in \Cref{sec:morse_counterexample} of this paper
  provides a counterexample. 
\item Note that our definition of ``uniform regularity''
    agrees with the definition given by Kapovich-Leeb-Porti in
    \cite{KLP2017,KL2018}, but \emph{not} with the definition of
    ``coarse uniform regularity'' in \cite{KLP2018}. In the special case
    where the sequence $\{g_n\}$ determines a quasi-isometric
    embedding $\Nb \to \PGL(d, \Rb)$, then our definition of
    \emph{strong} uniform regularity coincides with the notion of
    ``coarse uniform regularity'' defined in \cite{KLP2018}; in
    general our definition is weaker.
\end{enumerate}
\end{remark}

We prove the following:
\begin{theorem}[\cref{sec:sv_morse}]
\label{thm:morse_implies_bdry_reg_and_unif_reg}
  Suppose $\Omega$ is a properly convex domain,
  $c:[0,\infty) \to \Omega$ is a geodesic ray, and $\{\gamma_n\}$
  $R$-tracks $c$ with respect to $x_0 \in \Omega$. If $c$ is
  $M$-Morse, then $\{\gamma_n\}$ is strongly uniformly $k$-regular for
  both $k=1$ and $k=d-1$.

  Moreover, the constants $C, N$ in the definition of strong uniform
  regularity depend only on $M$, $R$, and the basepoint
  $x_0 \in \Omega$.
\end{theorem}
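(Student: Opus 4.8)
\textbf{Proof plan.} I would first reduce to the case $k=1$. Since $\sigma_i(g^{-1})=\sigma_{d+1-i}(g)^{-1}$, we have $\mu_{d-1,d}(g)=\mu_{1,2}(g^{-1})$ and $\mu_{1,d}(g)=\mu_{1,d}(g^{-1})$ for every $g\in\GLdR$, so ``$\{\gamma_n\}$ is strongly uniformly $(d-1)$-regular'' says exactly that the inverse products $\gamma_{n+m}^{-1}\gamma_n$ satisfy the defining inequality of strong uniform $1$-regularity. These inverse products appear as the endpoint-to-endpoint products of the sequences $t\mapsto\gamma_{n+m-t}$, which $R$-track (with respect to $x_0$) the reversed segments $t\mapsto c(n+m-t)$ of $c$; these segments are again $M$-Morse, so the $k=d-1$ case follows from the $k=1$ case applied to them. (Alternatively, one can dualize to $\Omega^{\ast}$.) Next, by the standard comparison between the Hilbert metric and Cartan projections --- there is $D_0=D_0(\Omega,x_0)$ with $|\hil(x_0,gx_0)-\tfrac12\mu_{1,d}(g)|\le D_0$ for all $g\in\Aut(\Omega)$ (see, e.g., \cite{B2004}) --- the tracking hypothesis gives $\tfrac12\mu_{1,d}(\gamma_n^{-1}\gamma_{n+m})=m+O(R+D_0)$. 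In particular these elements are divergent once $m$ is large, and $\{\gamma_n\}$ is itself divergent; so it remains only to bound $\mu_{1,2}(\gamma_n^{-1}\gamma_{n+m})$ below by a fixed positive multiple of $m$ for all large $m$, uniformly in $n$.

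For this I would argue by contradiction. If strong uniform $1$-regularity fails, negating \Cref{defn:strong_unif_reg} yields indices $n_j,m_j$ with $m_j\to\infty$ and $\mu_{1,2}(g_j)/\mu_{1,d}(g_j)\to0$, where $g_j:=\gamma_{n_j}^{-1}\gamma_{n_j+m_j}$; by the first paragraph $\mu_{1,2}(g_j)=o(m_j)$ while $\mu_{1,d}(g_j)=2m_j+O(1)\to\infty$. Pass to the reparametrized translate $c_j(t):=\gamma_{n_j}^{-1}c(n_j+t)$, an $M$-Morse projective geodesic ray with $c_j(0)\in B(x_0,R)$; then $g_j$ carries $\gamma_{n_j+m_j}^{-1}c(n_j+m_j)\in B(x_0,R)$ to $c_j(m_j)$, which lies at $\hil$-distance $m_j+O(R)$ from $x_0$. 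By the quantitative form of \Cref{thm:hilbert_morse_equals_contracting}, each $c_j$ is $D$-contracting for one constant $D=D(M,\Omega)$.

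The crux is to derive a contradiction from $\mu_{1,2}(g_j)/\mu_{1,d}(g_j)\to0$. I would pass to a subsequence along which the singular frames of $g_j$ converge and, after rescaling by $\sigma_1(g_j)^{-1}$, obtain a limiting linear map $\hat g$. If $\mu_{1,2}(g_j)$ stays bounded, the hypothesis forces $\operatorname{rank}\hat g\ge2$; since $g_j\,\overline{\Omega}=\overline{\Omega}$, pushing forward a fixed pair of points of $\overline{\Omega}$ lying off $\Pb(\ker\hat g)$ and taking limits produces nondegenerate projective segments $[a_j,b_j]\subseteq\overline{\Omega}$ with $a_j\to a_\infty\ne b_\infty\leftarrow b_j$, lying uniformly close to the far portion of $c_j$. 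Such a ``flat'' appearing uniformly near $c_j$ at infinity is incompatible with all the $c_j$ being $D$-contracting for a single $D$: either $[a_\infty,b_\infty]\subseteq\partial\Omega$, contradicting the quantitative strict-convexity/$C^1$ control at the endpoint that the contracting property provides; or $[a_\infty,b_\infty]$ meets $\Omega$, in which case for large $j$ it furnishes a quasi-geodesic of bounded quality with endpoints on $c_j$ that escapes the $D$-neighborhood of $c_j$ by an amount $\to\infty$, contradicting \Cref{defn:morse_gauge}. The constants $C,N$ extracted at the end then depend only on $M,R,x_0$ by compactness. (When $\Omega/\Gamma$ is compact one could instead route this through orbit maps into $\SLdR/\SO(d)$ and the Kapovich--Leeb--Porti Morse lemma \cite{KLP2018}; the argument above uses no cocompactness.)

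The step I expect to be the main obstacle is this last one --- converting the degeneracy of the projective limit of the $g_j$ into a genuine \emph{quantitative} violation of the fixed Morse gauge, with no regularity assumed on $\partial\Omega$. Two points require care: first, building the escaping quasi-geodesic (respectively, exploiting boundary regularity) from the limiting segment in a way that quantitatively contradicts $M$; and second, the case $\mu_{1,2}(g_j)\to\infty$, where $\hat g$ has rank $1$ and one must instead run a second-order (blow-up) analysis of $g_j$ near its attracting fixed point along $c_j$, using $\mu_{2,d}(g_j)=\mu_{1,d}(g_j)-\mu_{1,2}(g_j)\to\infty$ to see the non-conformality that a fixed contraction constant forbids.
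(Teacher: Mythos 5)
Your reductions are fine and match the paper: the $k=d-1$ case follows from $k=1$ via $\mu_{d-1,d}(g)=\mu_{1,2}(g^{-1})$, divergence and the linear upper bound on $\mu_{1,d}(\gamma_n^{-1}\gamma_{n+m})$ come from \cref{prop:dgk_sv_gap_omega}, and the whole theorem indeed reduces to a lower bound $\mu_{1,2}(\gamma_n^{-1}\gamma_{n+m})\gtrsim m$. The gap is in how you propose to get that lower bound. Your rescaled-limit argument (rank of $\hat g\ge 2$ produces a nontrivial segment/face, contradicting the $C^1$-extreme-point property of Morse endpoints) only rules out the case where $\mu_{1,2}(g_j)$ stays \emph{bounded}; it is exactly the content of \cref{lem:morse_geodesic_uniform_gap} via \cref{prop:face_dim_implies_some_sv_gp} and \cref{cor:morse_point_is_C1_extreme}. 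But the negation of strong uniform $1$-regularity only gives $\mu_{1,2}(g_j)=o(\mu_{1,d}(g_j))$, and the genuinely hard regime is $\mu_{1,2}(g_j)\to\infty$ sublinearly in $m_j$. There your $\hat g$ has rank $1$, the boundary-segment construction yields nothing, and the ``second-order blow-up / non-conformality'' step you defer to is not supplied; I do not see a mechanism by which a single limit of the $g_j$ can detect the growth \emph{rate} of $\mu_{1,2}$ rather than merely its boundedness. Note also that $\mu_{1,2}$ is neither subadditive nor superadditive for arbitrary products in $\GLdR$, so a ``definite gain per unit length'' does not automatically accumulate.

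The paper closes precisely this gap with two ingredients your sketch lacks. First, the straightness \cref{lem:geodesic_additivity}: for a sequence tracking a \emph{projective} geodesic, $\mu_{1,2}(\gamma_n)+\mu_{1,2}(\gamma_n^{-1}\gamma_{n+m})\le\mu_{1,2}(\gamma_{n+m})+D$, i.e.\ $\mu_{1,2}$ is coarsely superadditive along the tracking sequence; this is a nontrivial projective-geometric estimate (\cref{lem:sigma_12d-1d_estimates}, \cref{prop:automorphism_sv_estimates}) using the invariant splitting $c(+\infty)\oplus\lift{H_0}\oplus c(-\infty)$. Second, the compactness statement \cref{lem:morse_geodesic_uniform_gap} (essentially your bounded-$\mu_{1,2}$ case, made uniform over all $M$-Morse segments) guarantees that every sub-segment of length $k=k(M,C,x_0)$ contributes $\mu_{1,2}>3D$. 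Telescoping these gains through the superadditivity (\cref{prop:morse_geodesic_gap_growth}) yields $\mu_{1,2}(\gamma_n^{-1}\gamma_{n+m})\gtrsim m/k$, which is the bound you need. If you want to rescue your outline, you should replace the unspecified blow-up with a subdivision-plus-superadditivity argument of this type; as written, the crux of the theorem is asserted rather than proved. (A minor normalization point: with the paper's convention, \cref{prop:dgk_sv_gap_omega} gives $\mu_{1,d}(g)\approx\tfrac12\hil(x_0,gx_0)$, so $\mu_{1,d}(\gamma_n^{-1}\gamma_{n+m})=\tfrac{m}{2}+O(1)$, not $2m+O(1)$.)
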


This theorem implies in particular that Morse geodesics give rise to
uniformly regular sequences. We express this via the corollary below.
\begin{corollary}
  \label{prop:morse_implies_weakly_uniformly_regular}
  Suppose that $\Omega$ is a properly convex domain, $c$ is a
  projective geodesic ray, and $\{\gamma_n\}$ $R$-tracks $c$ with
  respect to $x_0 \in \Omega$. For any Morse gauge $M$, there exists
  $\xi = \xi(M, R, x_0) > 1$ so that
  \[
    \liminf_{n \to \infty}
    \frac{\mu_{1,d}(\gamma_n)}{\mu_{1,d-1}(\gamma_n)} > 1+ \frac{1}{\xi-1} \quad \text{ and } \quad 
    \limsup_{n \to \infty}
    \frac{\mu_{1,d}(\gamma_n)}{\mu_{1,2}(\gamma_n)}<\xi.
  \]
\end{corollary}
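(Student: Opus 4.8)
The plan is to read this off directly from \Cref{thm:morse_implies_bdry_reg_and_unif_reg}: since $c$ is $M$-Morse, that theorem says the tracking sequence $\{\gamma_n\}$ is strongly uniformly $k$-regular for both $k=1$ and $k=d-1$, with constants $C,N$ depending only on $M$, $R$, $x_0$; after replacing $C$ by the minimum of the two constants we may assume a single $C = C(M,R,x_0) \in (0,1)$ works for both values of $k$. What remains is to convert the defining inequality of strong uniform regularity, which is phrased in terms of $\gamma_n^{-1}\gamma_{n+m}$, into a statement about $\gamma_n$ itself while keeping track of constants --- in other words, to make quantitative the ``easy'' implication ``strongly uniformly $k$-regular $\Rightarrow$ uniformly $k$-regular'' recorded in the remark after \Cref{defn:strong_unif_reg}.

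To do this I would fix the base index $0$ and specialize: for every $m > N$ and every $k \in \{1,d-1\}$ we have $\mu_{k,k+1}(\gamma_0^{-1}\gamma_m) > C\,\mu_{1,d}(\gamma_0^{-1}\gamma_m)$. Writing $\gamma_m = \gamma_0 \cdot (\gamma_0^{-1}\gamma_m)$ and using the elementary singular value bounds $\sigma_d(g)\sigma_i(h) \le \sigma_i(gh) \le \sigma_1(g)\sigma_i(h)$, one obtains $|\mu_{i,j}(\gamma_m) - \mu_{i,j}(\gamma_0^{-1}\gamma_m)| \le 2\kappa_0$ for all $i<j$, where $\kappa_0 := \log\max\{\sigma_1(\gamma_0), \sigma_1(\gamma_0^{-1})\} < \infty$ depends only on the fixed element $\gamma_0$. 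Since $\{\gamma_n\}$ is divergent (also part of the conclusion of \Cref{thm:morse_implies_bdry_reg_and_unif_reg}), $\mu_{1,d}(\gamma_n)\to\infty$, hence $\mu_{1,d}(\gamma_0^{-1}\gamma_n)\to\infty$ as well. Combining these, for all sufficiently large $n$,
\[
  \frac{\mu_{k,k+1}(\gamma_n)}{\mu_{1,d}(\gamma_n)} \;\ge\; \frac{\mu_{k,k+1}(\gamma_0^{-1}\gamma_n) - 2\kappa_0}{\mu_{1,d}(\gamma_0^{-1}\gamma_n) + 2\kappa_0} \;>\; \frac{C\,\mu_{1,d}(\gamma_0^{-1}\gamma_n) - 2\kappa_0}{\mu_{1,d}(\gamma_0^{-1}\gamma_n) + 2\kappa_0},
\]
and the right-hand side tends to $C$ as $n\to\infty$ because $\kappa_0$ is a fixed constant while the denominator blows up. Therefore $\liminf_{n\to\infty}\mu_{k,k+1}(\gamma_n)/\mu_{1,d}(\gamma_n)\ge C$ for $k=1$ and $k=d-1$.

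To finish I would unwind this using $\mu_{1,d} = \mu_{1,d-1} + \mu_{d-1,d}$ and $\mu_{1,d} = \mu_{1,2} + \mu_{2,d}$. Setting $r_n := \mu_{d-1,d}(\gamma_n)/\mu_{1,d}(\gamma_n)$ and $s_n := \mu_{1,2}(\gamma_n)/\mu_{1,d}(\gamma_n)$, the previous step gives $\liminf_n r_n \ge C$ and $\liminf_n s_n \ge C$, while $\mu_{1,d}(\gamma_n)/\mu_{1,d-1}(\gamma_n) = (1-r_n)^{-1}$ and $\mu_{1,d}(\gamma_n)/\mu_{1,2}(\gamma_n) = s_n^{-1}$. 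Hence $\liminf_n \mu_{1,d}(\gamma_n)/\mu_{1,d-1}(\gamma_n) \ge (1-C)^{-1}$ and $\limsup_n \mu_{1,d}(\gamma_n)/\mu_{1,2}(\gamma_n) \le C^{-1}$. Taking $\xi := 1 + C^{-1} > 1$, which depends only on $M$, $R$, $x_0$ because $C$ does, one checks $1 + (\xi-1)^{-1} = 1 + C < (1-C)^{-1}$ (the difference equals $C^2/(1-C) > 0$) and $C^{-1} < \xi$, which yields the two claimed strict inequalities.

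The argument is essentially mechanical once \Cref{thm:morse_implies_bdry_reg_and_unif_reg} is available; the only subtle point is the opening of the middle paragraph, where the correction term $\kappa_0$ depends on the (sequence-dependent) base element $\gamma_0$ and is \emph{not} controlled by $M$, $R$, $x_0$. One must resist trying to bound it and instead exploit that it is a fixed finite constant which is swamped by $\mu_{1,d}(\gamma_0^{-1}\gamma_n)\to\infty$ in the limit. I do not anticipate any genuine obstacle --- all of the difficulty resides in \Cref{thm:morse_implies_bdry_reg_and_unif_reg}.
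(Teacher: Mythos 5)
Your proof is correct and follows the route the paper intends: the corollary is stated as an immediate consequence of \Cref{thm:morse_implies_bdry_reg_and_unif_reg}, with the deduction (strong uniform regularity at a fixed base index, the additive perturbation bound of \Cref{lem:additive_root_bound} to pass from $\gamma_{n_0}^{-1}\gamma_n$ to $\gamma_n$, divergence swamping the correction term, and the arithmetic $\mu_{1,d}=\mu_{1,d-1}+\mu_{d-1,d}=\mu_{1,2}+\mu_{2,d}$) left implicit. Your write-up fills in exactly those steps, and your choice $\xi = 1 + C^{-1}$ with the check $1+(\xi-1)^{-1}=1+C<(1-C)^{-1}$ is sound; the only cosmetic point is that the base index should be taken to be the smallest index actually covered by the definition of strong uniform regularity ($n=1$ rather than $n=0$), which changes nothing.
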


If we impose additional assumptions on the domain $\Omega$, then a
partial converse to \Cref{thm:morse_implies_bdry_reg_and_unif_reg}
also holds. Specifically, we have the following:
\begin{restatable}[\cref{sec:sv_morse}]{theorem}{KLPMorseGGTMorse}
  \label{thm:1_morse_ggt_morse}
  Let $\Omega$ be a convex divisible domain with exposed boundary and
  let $c$ be a projective geodesic ray in $\Omega$. Suppose
  $\{\gamma_n\}$ $R$-tracks $c$ with respect to $x_0 \in \Omega$. If
  $\{\gamma_n\}$ is strongly uniformly k-regular for $k=1$ and
  $k=d-1$, then $c$ is $M$-Morse for some Morse gauge $M$.

  Moreover, $M$ can be chosen to depend only on $x_0$, $R$, and the
  constants in the definition of strong uniform $k$-regularity.
\end{restatable}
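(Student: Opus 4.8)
The plan is to establish that the projective geodesic ray $c$ is \emph{contracting} --- in the sense relevant to $\Cref{thm:hilbert_morse_equals_contracting}$ --- with a contraction constant depending only on $x_0$, $R$, and the constants $C, N$ in the definition of strong uniform $k$-regularity; then $\Cref{thm:hilbert_morse_equals_contracting}$ immediately produces a Morse gauge $M$ depending only on that contraction constant. Set $p_n := \gamma_n x_0$, so that $\{p_n\}$ is an $R$-quasi-geodesic uniformly Hausdorff-close to $c$, write $\xi := c(\infty) \in \partial\Omega$, and for $n < n'$ abbreviate $\delta_{n,n'} := \gamma_n^{-1}\gamma_{n'} \in \Aut(\Omega)$. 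Since the $\gamma_m$ act isometrically and $R$-track $c$, we have $\big|\hil(x_0, \delta_{n,n'} x_0) - (n'-n)\big| \le 2R$, and since the Hilbert displacement and $\mu_{1,d}$ are two-sided comparable on a divisible domain, $\mu_{1,d}(\delta_{n,n'}) \ge c_1(n'-n) - c_2$ with $c_1,c_2$ depending only on $\Omega, x_0, R$. Feeding this into the definition of strong uniform $k$-regularity for $k = 1$ and $k = d-1$ yields $C, N > 0$ such that, for all $n'-n > N$,
\[
  \begin{gathered}
    \mu_{1,2}(\delta_{n,n'}) \ \ge\ C\,\mu_{1,d}(\delta_{n,n'}) \ \ge\ C c_1(n'-n) - C c_2, \\
    \mu_{d-1,d}(\delta_{n,n'}) \ \ge\ C\,\mu_{1,d}(\delta_{n,n'});
  \end{gathered}
\]
that is, the first and last singular value gaps of every increment grow at least linearly in the separation $n'-n$. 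In particular each $\delta_{n,n'}$ with $n'-n$ large is biproximal, with an attracting point $\delta_{n,n'}^+$ and a repelling hyperplane $\delta_{n,n'}^-$ (and likewise for $\delta_{n,n'}^{-1}$) well-defined up to an error exponentially small in $n'-n$.

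Next I would pin down the location of these flags using the exposed-boundary hypothesis. The point $\delta_{n,n'}x_0$ lies within $R$ of $\gamma_n^{-1}c(n')$, a point on the projective ray $\gamma_n^{-1}\circ c$ converging as $n'\to\infty$ to $\gamma_n^{-1}\xi$, which is again an exposed boundary point of $\Omega$ (see $\Cref{defn:exposed_boundary}$), exposedness being $\Aut(\Omega)$-invariant. The standard singular value contraction estimate --- if $\mu_{1,2}(g)$ is large and $z$ lies a definite projective distance from the repelling hyperplane of $g$, then $gz$ is close to the attracting point of $g$, at a scale exponential in $\mu_{1,2}(g)$, and dually using $\mu_{d-1,d}$ --- then shows that $\delta_{n,n'}^+ \to \gamma_n^{-1}\xi$ and that the hyperplanes $\delta_{n,n'}^-$ can accumulate only on supporting hyperplanes of $\overline{\Omega}$ at $\gamma_n^{-1}\xi$. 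Exposedness forces every such supporting hyperplane to meet $\overline{\Omega}$ in the single point $\gamma_n^{-1}\xi$, which is exactly what prevents the repelling hyperplanes $\delta_{n,n'}^-$ from drifting into $\Omega$; a compactness argument using that $\Omega$ is divisible then upgrades this to a \emph{uniform} transversality: there is $\epsilon_0 = \epsilon_0(x_0, R, C, N) > 0$ so that, for all $n'-n > N$, the hyperplane $\delta_{n,n'}^-$ stays at projective distance $\ge \epsilon_0$ from a fixed-radius Hilbert ball about $p_n$, and symmetrically for $\delta_{n,n'}^{-1}$ and $p_{n'}$.

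With this uniform transversality in hand, the singular value contraction estimate applies with uniform constants to every increment: suitably conjugated, each $\delta_{n,n'}$ (or its inverse) collapses everything outside a uniform neighborhood of its repelling hyperplane onto a uniform neighborhood of its attracting point, at an exponential rate in $n'-n$. By the standard translation between uniform exponential contraction of the tracking increments and the contracting property of the limit geodesic --- the same mechanism as in real hyperbolic geometry and the work of Benoist and Guichard --- this forces the nearest-point projection to $c$ of any metric ball disjoint from $c$ to have diameter at most some $b = b(x_0, R, C, N)$; i.e., $c$ is $b$-contracting. Now $\Cref{thm:hilbert_morse_equals_contracting}$ gives that $c$ is $M$-Morse for a Morse gauge $M$ depending only on $b$, hence only on $x_0$, $R$, $C$, and $N$, as claimed. (Alternatively one could skip the contracting formalism and bound directly the excursion of a $(\lambda,a)$-quasi-geodesic with endpoints on $c$: the uniform exponential contraction makes any detour of depth $D$ away from $c$ cost length growing with $D$, which gives the Morse property with an effective gauge.)

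The main obstacle is the second step: converting the purely linear-algebraic input --- linear growth of $\mu_{1,2}$ and $\mu_{d-1,d}$ along \emph{every} increment $\gamma_n^{-1}\gamma_{n'}$ --- into a \emph{uniform} statement about where all of the repelling hyperplanes $\delta_{n,n'}^-$ can lie. A priori these hyperplanes could approach $\Omega$ as $n, n'$ vary, which would destroy the contraction estimates near $c$; ruling this out requires both the exposed-boundary hypothesis (to confine the limiting hyperplanes to the single point $\xi$) and cocompactness (to make the resulting bound uniform in $n$ and $n'$). This is also why ``strongly'' uniformly regular cannot be weakened to ``uniformly'' regular in this theorem: controlling only the $\gamma_n$ does not control the increments, and so does not control the shape of $\Omega$, nor the behavior of quasi-geodesics, along the entire ray $c$.
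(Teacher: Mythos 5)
Your overall strategy (reduce to the contracting property and invoke \Cref{thm:hilbert_morse_equals_contracting}) is reasonable, and your opening reductions are fine: \Cref{prop:dgk_sv_gap_omega} does give $\mu_{1,d}(\gamma_n^{-1}\gamma_{n'})\asymp n'-n$, and strong uniform regularity then gives linear growth of $\mu_{1,2}$ and $\mu_{d-1,d}$ along all increments. But the heart of your argument --- the ``uniform transversality'' step --- contains a genuine gap, and the justification you offer for it is not correct. First, a terminological slip: large gaps $\mu_{1,2}(\delta_{n,n'})$ and $\mu_{d-1,d}(\delta_{n,n'})$ concern singular values, not eigenvalues, so they do not make $\delta_{n,n'}$ biproximal; what they give you is well-defined Cartan attracting points and repelling hyperplanes, which is presumably what you mean. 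More seriously, exposedness does \emph{not} force every supporting hyperplane at $\gamma_n^{-1}\xi$ to meet $\overline{\Omega}$ only in that point: \Cref{defn:exposed_boundary} asserts the existence of \emph{one} supporting hyperplane meeting $\partial\Omega$ exactly in the closure of the face, and a priori $\xi$ could be non-extreme, non-$C^1$, or lie in the closure of a segment. Worse, the obstruction to Morseness does not live at $\xi$ at all but at the \emph{conical limits} of $\xi$: the construction in \Cref{sec:morse_counterexample} produces a divisible domain with exposed boundary and a projective ray whose endpoint is an exposed, $C^\alpha$, $\beta$-convex (hence $C^1$ extreme) point, yet the ray is not Morse. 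So no argument that only constrains the supporting hyperplanes at $\gamma_n^{-1}\xi$ itself, plus a generic ``compactness argument,'' can yield the uniform transversality you need; the repelling hyperplanes of the increments are governed by the recentered configurations $g(\Omega,\gamma_n^{-1}c(t))$ as $n,t\to\infty$, and one must rule out segments and corners in \emph{those} limits.

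This is exactly where the paper does the real work, and where strong uniform regularity (as opposed to plain uniform regularity) must actually be used quantitatively. The paper first characterizes Morseness of $[x_0,\xi)$ in a divisible domain with exposed boundary by the condition that every point forward conically related to $\xi$ is a $C^1$ extreme point (\Cref{cor:divisible_c1_extreme_morse}, built on \Cref{lem:morse_conically_related} and the half-triangle lemmas). Then \Cref{prop:1_morse_ggt_morse_nonuniform} proves the contrapositive: if some conical limit $z_\infty$ lies in a segment (or is a corner), one writes down explicit diagonal maps $h_m$ with a two-dimensional eigenspace adapted to that segment, shows via Benz\'ecri properness that $h_m$ approximates $\gamma_{n(m)}^{-1}\gamma_{n(m)+m}$ up to a compact error, and concludes from the minimax formula that $\mu_{d-1,d}(\gamma_{n(m)}^{-1}\gamma_{n(m)+m})$ stays bounded while $\mu_{1,d}\to\infty$ --- contradicting strong uniform regularity. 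Uniformity of the gauge is then obtained by a separate compactness-plus-diagonalization argument. To repair your proof you would need to supply precisely this bridge from ``segment or corner at a conical limit'' to ``bounded $\mu_{1,2}$ or $\mu_{d-1,d}$ along a subsequence of increments''; as written, your proposal assumes the conclusion of that step.
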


We provide the precise definition of a convex projective domain with
\emph{exposed boundary} in \Cref{defn:exposed_boundary}. The
additional assumptions on $\Omega$ in \Cref{thm:1_morse_ggt_morse} are
necessary, as is the assumption that $c$ is a \emph{projective}
geodesic; see \Cref{ex:nondivisible_nonmorse} and
\Cref{ex:triangle_nonmorse}.

Together, \Cref{thm:morse_implies_bdry_reg_and_unif_reg} and
\Cref{thm:1_morse_ggt_morse} show that, when $\Omega$ is a convex
divisible domain with exposed boundary, it is possible to completely
characterize (projective) Morse geodesics in terms of the singular
values of tracking sequences. We also show that the weaker uniform
regularity condition in
\Cref{prop:morse_implies_weakly_uniformly_regular} does \emph{not}
imply Morseness:
\begin{theorem}[\cref{sec:morse_counterexample}]
  \label{thm:not_strongly_uniform}
  There exists a convex divisible domain $\Omega$ with exposed
  boundary, a projective geodesic ray $c$, and a sequence
  $\{\gamma_n\}$ tracking $c$ so that $\{\gamma_n\}$ is both uniformly
  $1$-regular and uniformly $(d-1)$-regular, but \emph{not} strongly
  uniformly $1$-regular. In particular, due to
  \Cref{thm:morse_implies_bdry_reg_and_unif_reg}, $c$ is \emph{not}
  $M$-Morse for any Morse gauge $M$.
\end{theorem}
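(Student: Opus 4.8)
The plan is to build the domain $\Omega$ as a convex cocompact quotient carefully engineered so that the tracking sequence $\{\gamma_n\}$ of the geodesic $c$ has singular value gaps that oscillate: the ratio $\mu_{1,2}(\gamma_n^{-1}\gamma_{n+m})/\mu_{1,d}(\gamma_n^{-1}\gamma_{n+m})$ stays bounded away from zero when we average over all $n$ (giving uniform $1$-regularity, which by \Cref{defn:strong_unif_reg} only constrains the $\liminf$ of $\mu_{1,2}(\gamma_n)/\mu_{1,d}(\gamma_n)$), yet this ratio dips arbitrarily close to zero along certain sub-blocks $[n, n+m]$ — which is exactly what strong uniform $1$-regularity forbids. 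The natural source of such a group is a graph of groups / relatively hyperbolic construction in the spirit of Benoist's examples \cite{B2006}: take a group $\Gamma$ dividing some $\Omega$ which contains a rank-one element $g$ (contracting, hence with fast singular value gap) and, say, a $2$-flat or a parabolic-like subgroup $P$ along which the $\mu_{1,2}$ gap grows sublinearly relative to $\mu_{1,d}$. One then writes down a geodesic ray $c$ in $\Omega$ whose tracking word is a concatenation of longer and longer alternating segments of powers of $g$ and of words ranging deep into $P$, so that on the $g$-segments the gap ratio is large, on the $P$-segments it collapses, and the lengths are tuned so that the full-prefix $\liminf$ computed at the integer points $\gamma_n$ stays positive.

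The key steps, in order, are: (1) fix an explicit divisible domain with exposed boundary — most cleanly one of Benoist's relatively hyperbolic examples in dimension $3$ or a Coxeter-group example, verifying exposedness via the known structure of $\partial\Omega$; (2) identify inside $\Gamma$ a "fast" direction (a rank-one/biproximal element, where $\mu_{1,2}$ grows linearly in $\mu_{1,d}$) and a "slow" direction (inside a flat or unipotent-type subgroup, where $\mu_{1,2}(h)/\mu_{1,d}(h) \to 0$); (3) specify the sequence of block lengths $\{a_k\}$ for the slow segments and $\{b_k\}$ for the fast segments, solving the elementary arithmetic so that $\liminf_n \mu_{1,2}(\gamma_n)/\mu_{1,d}(\gamma_n)>0$ — this requires the fast segments to be long enough that by the end of each one the accumulated gap dominates, using subadditivity/superadditivity estimates for $\mu_{1,2}$ and $\mu_{1,d}$ along the orbit (the quasi-morphism-like behavior of $\log$ of singular values along quasi-geodesics in the symmetric space); (4) observe that evaluating at the \emph{two endpoints} of a slow block $[n_k, n_k + a_k]$ gives $\gamma_{n_k}^{-1}\gamma_{n_k + a_k} \in P$ with $\mu_{1,2}/\mu_{1,d}$ going to $0$ as $k\to\infty$, directly contradicting strong uniform $1$-regularity; (5) do the symmetric bookkeeping for $k=d-1$ (equivalently, apply the argument to the dual domain $\Omega^*$), so that $\{\gamma_n\}$ is also uniformly $(d-1)$-regular; and (6) invoke \Cref{thm:morse_implies_bdry_reg_and_unif_reg} to conclude $c$ is not $M$-Morse for any $M$.

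The main obstacle is step (3): controlling the full-prefix ratios $\mu_{1,2}(\gamma_n)/\mu_{1,d}(\gamma_n)$ at \emph{every} $n$, not just at block boundaries. Singular values are not additive along products, so I expect to need uniform comparison estimates relating $\mu_{i,j}(\gamma_n)$ to a coarse sum of contributions from the blocks traversed — these follow from the standard fact that orbit maps of cocompact actions are quasi-isometric embeddings into $(\Omega,\hil)$ combined with the relationship between Hilbert distance and singular value gaps (e.g. $\mu_{1,d}(\gamma) \asymp \hil(x_0,\gamma x_0)$ up to additive error, and lower bounds on $\mu_{k,k+1}$ coming from the contracting behavior on the fast blocks via the results underlying \Cref{thm:morse_implies_bdry_reg_and_unif_reg}). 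Assembling these into a clean inequality that survives the oscillation — making the slow blocks just short enough relative to the preceding fast blocks that the $\liminf$ never drops, yet long enough that at their own endpoints the ratio $\to 0$ — is the delicate tuning at the heart of the construction, and is where most of the real work of \Cref{sec:morse_counterexample} will go.
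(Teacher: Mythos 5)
Your proposal follows essentially the same route as the paper: the paper takes a Benoist domain in $\Pb(\Rb^4)$, uses a biproximal rank-one element $\gamma$ as the ``fast'' direction and a sequence $a_n$ in a $\Zb^2$ torus subgroup with bounded $\mu_{1,2}$ as the ``slow'' direction, and forms the alternating words $w_k = a_1\gamma a_2\gamma^2\cdots a_m\gamma^m$, with failure of strong uniform $1$-regularity read off from the increments $w_{k-1}^{-1}w_k = a_{m+1}$. The two points you flag as needing work are exactly what the paper supplies: a combination theorem showing $\langle\gamma\rangle * A$ is strongly quasi-convex (so the $w_k$ lie along a genuine projective geodesic ray, which is needed even to speak of a tracking sequence), and the ``straightness'' Lemma~\ref{lem:geodesic_additivity} giving coarse super-additivity of $\mu_{1,2}$ along prefixes, which yields $\mu_{1,2}(w_k)\gtrsim k^2$ against $\mu_{1,d}(w_k)=O(k^2)$ and hence the positive $\liminf$.
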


\subsubsection{$k$-Morseness in symmetric spaces}

In \cite{KLP2018}, Kapovich-Leeb-Porti developed a notion of
``Morseness'' for quasi-geodesics in certain symmetric spaces. If $X$
is the Riemannian symmetric space $\PGLdR / \PO(d)$, then a
quasi-geodesic ray $q:[0, \infty) \to X$ is \emph{$k$-Morse in the
  sense of Kapovich-Leeb-Porti} if it satisfies a ``higher-rank Morse
property'' with respect to the Grassmannian of $k$-planes $\Gr(k, d)$,
viewed as a space of simplices in the visual boundary of $X$. This
property says that $q$ lies in a bounded neighborhood of a Euclidean
Weyl sector asymptotic to a $k$-plane in $\mathrm{Gr}(k, d)$.

In the same paper, Kapovich-Leeb-Porti proved a \emph{higher-rank Morse
  lemma}, characterizing Morse quasi-geodesics in terms of their
uniform regularity (cf. \ref{defn:strong_unif_reg} and \ref{rem:comparison_with_KLP_stuff}). Combining this result with
\Cref{thm:morse_implies_bdry_reg_and_unif_reg} and
\Cref{thm:1_morse_ggt_morse}, we obtain the following:
\begin{corollary}
  \label{cor:morse_equals_morse}
  Let $\Omega$ be a convex divisible domain with exposed boundary, let
  $c:[0, \infty) \to \Omega$ be a projective geodesic, and let
  $\{\gamma_n\}$ be a sequence in $\Aut(\Omega)$ which tracks
  $c$. Then $c$ is a Morse geodesic if and only if the mapping
  $\Nb \to \PGLdR / \PO(d)$ given by $n \mapsto \gamma_n\PO(d)$ is a
  $1$-Morse quasi-geodesic in the sense of Kapovich-Leeb-Porti.
\end{corollary}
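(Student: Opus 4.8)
The plan is to deduce \Cref{cor:morse_equals_morse} by combining the equivalence between Morseness and strong uniform regularity (\Cref{thm:morse_implies_bdry_reg_and_unif_reg} and \Cref{thm:1_morse_ggt_morse}) with the higher-rank Morse lemma of Kapovich-Leeb-Porti \cite{KLP2018}. So the proof has two halves, each essentially a matter of unwinding definitions and applying a black box. First I would translate the conclusion ``$\{\gamma_n\}$ is strongly uniformly $k$-regular for $k=1$ and $k=d-1$'' into the language of \cite{KLP2018}: a sequence is coarsely uniformly $\{1,d-1\}$-regular in their sense precisely when the singular value gaps $\mu_{1,2}$ and $\mu_{d-1,d}$ grow linearly in $\mu_{1,d}$ along all sufficiently separated pairs $\gamma_n^{-1}\gamma_{n+m}$, which is exactly our definition (after noting $\mu_{d-1,d}(g) = \mu_{1,2}(g^{-*})$ and that $g \mapsto g^{-*}$ is a symmetry of $\PGLdR$). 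The relevant face type here is $\tau_{\mathrm{mod}} = \{1, d-1\}$, the stabilizer of a line and a hyperplane; $k$-Morseness ``in the sense of Kapovich-Leeb-Porti'' as stated in the introduction is about $\Gr(k,d)$, and for $k=1$ the associated simplex type in the spherical building is exactly this $\tau_{\mathrm{mod}}$ (a vertex of type $1$ together with the complementary vertex of type $d-1$, since in type $A_{d-1}$ these are the unique vertices incident to the simplex fixed by a full flag through a line).

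Next I would run the two implications. For the forward direction: assume $c$ is $M$-Morse. By \Cref{thm:morse_implies_bdry_reg_and_unif_reg}, $\{\gamma_n\}$ is strongly uniformly $k$-regular for $k=1,d-1$. I then observe that the orbit map $n \mapsto \gamma_n \PO(d)$ is a quasi-geodesic ray in $X = \PGLdR/\PO(d)$: indeed $\gamma_n$ tracks the projective geodesic $c$, and the inclusion $(\Omega, \hil) \hookrightarrow (X, d_X)$ (via $x \mapsto \Stab(x)$, or any orbit map) is coarsely Lipschitz, while lower bounds on $d_X(\gamma_n\PO(d), \gamma_{n'}\PO(d)) \gtrsim |n - n'|$ follow from $\mu_{1,d}(\gamma_n^{-1}\gamma_{n'}) \leq d_X(\cdot,\cdot)$ together with the fact that $\mu_{1,d}(\gamma_n^{-1}\gamma_{n'})$ grows linearly in $|n-n'|$ --- this last point itself needs the regularity, or alternatively just the fact that $c$ is an (unparametrized) geodesic tracked at unit speed. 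Strong uniform $\{1,d-1\}$-regularity of a quasi-geodesic is precisely the hypothesis of the higher-rank Morse Lemma \cite[Theorem ...]{KLP2018}, whose conclusion is that $n \mapsto \gamma_n\PO(d)$ is $\tau_{\mathrm{mod}}$-Morse, i.e. $1$-Morse in the KLP sense. For the converse: if $n \mapsto \gamma_n \PO(d)$ is $1$-Morse in the KLP sense, then by the (easy direction of the) higher-rank Morse Lemma it is coarsely uniformly $\{1,d-1\}$-regular, which unwinds to strong uniform $k$-regularity of $\{\gamma_n\}$ for $k=1$ and $k=d-1$; since $\Omega$ is a convex divisible domain with exposed boundary, \Cref{thm:1_morse_ggt_morse} then gives that $c$ is $M$-Morse for some gauge $M$. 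Combining, we get the claimed biconditional.

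The main obstacle I anticipate is purely bookkeeping around conventions rather than anything deep: matching our normalization of ``strong uniform $k$-regularity'' (phrased via the pairs $\gamma_n^{-1}\gamma_{n+m}$ with $m > N$) to the precise form of ``coarsely uniformly regular'' and ``$\tau_{\mathrm{mod}}$-regular quasi-geodesic'' used in the statement of the higher-rank Morse Lemma in \cite{KLP2018}, including the translation between the Grassmannian $\Gr(k,d)$ picture advertised in the introduction and the face-type $\tau_{\mathrm{mod}} = \{1,d-1\}$ picture, and checking that $1$-Morse (a condition about $\Gr(1,d) = \Pb(\Rb^d)$) is equivalent to $(d-1)$-Morse (about $\Gr(d-1,d)$) because the two share the same minimal face type. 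A secondary, genuinely non-vacuous point is verifying that $n \mapsto \gamma_n\PO(d)$ is in fact a quasi-geodesic with controlled constants --- the upper Lipschitz bound is standard (orbit maps are coarsely Lipschitz and $\gamma_n$ $R$-tracks the unit-speed ray $c$), and the lower bound can be extracted either from the regularity hypothesis or, in the converse direction, is part of the definition of a KLP quasi-geodesic, so no circularity arises. I would remark that the dependence of constants claimed implicitly (uniformity of the Morse gauge) also transfers, since both \Cref{thm:morse_implies_bdry_reg_and_unif_reg}, \Cref{thm:1_morse_ggt_morse}, and the KLP Morse Lemma have explicit control of constants.
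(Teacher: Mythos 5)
Your proposal is correct and follows the same route as the paper, which derives \Cref{cor:morse_equals_morse} exactly by combining \Cref{thm:morse_implies_bdry_reg_and_unif_reg} and \Cref{thm:1_morse_ggt_morse} with the Kapovich--Leeb--Porti higher-rank Morse lemma, modulo the same bookkeeping about matching ``strongly uniformly regular'' with ``coarsely uniformly regular.'' Your observation that the quasi-geodesic property of $n \mapsto \gamma_n\PO(d)$ follows from unit-speed tracking together with \Cref{prop:dgk_sv_gap_omega} (no regularity needed) is the right way to close that gap.
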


\begin{remark} If $\{g_n\}$ is a sequence in $\PGL(d, \Rb)$
  determining a $1$-Morse quasi-geodesic, then the sequence
  $\{\mu_{1,2}(g_n)\}$ satisfies the \emph{CLI condition} of
  Gu\'eritaud-Guichard-Kassel-Wienhard \cite{ggkw2017anosov} (as a
  consequence of the above-mentioned higher rank Morse lemma). So, the
  corollary above shows that Morse geodesics in $\Omega$ determine CLI
  sequences. One can also prove this directly (without invoking the
  higher-rank Morse lemma) by applying \Cref{lem:geodesic_additivity}
  in the present paper.
\end{remark}

\subsection{Uniform regularity and boundary regularity}
\label{sec:morse_implies_bdry_regular}

We now relate the singular value gap conditions appearing in
\Cref{thm:morse_implies_bdry_reg_and_unif_reg} and
\Cref{prop:morse_implies_weakly_uniformly_regular} to the smoothness
or regularity of the boundary $\partial \Omega$ at the endpoint of a
projective geodesic. The boundary $\partial \Omega$ is a convex
hypersurface in $\Pb(\Rb^d)$, meaning it is locally the graph of a
convex function $f:\Rb^{d-2} \to \Rb$. In general, this hypersurface could be nowhere $C^1$. For instance, Benoist constructed closed 3-manifolds $\Omega/\Gamma$ for which $\partial \Omega$ has a dense set of non-differentiable points (\cite{B2006}; also see \cref{sec:description_of_benoist_domains}). But even when this occurs, we can still make sense of local regularity using
convexity.

We say that a point $z \in \bdry$ is a \emph{$C^1$ point} if there is
a unique supporting hyperplane of $\Omega$ at $z$, i.e. a hyperplane
containing $z$, but not intersecting $\Omega$. Note that if the function $f$ defining $\bdry$ near $z$ is $C^1$, then $z$ is indeed a $C^1$ point; however the converse is not always true.  At a $C^1$ point $z$,
we further have a local notion of $\alpha$-H\"older regularity.  We
say that $z$ is a \emph{$C^\alpha$ point} if there exist Euclidean
coordinates on an affine chart in $\Pb(\Rb^d)$ such that $\bdry$ is
the graph of a convex function $f$, $(z,f(z))$ is the origin, and
$f(x) \le C_1||x||^\alpha$ for some $C_1 > 0$ and all $x$ sufficiently
close to $z$.  Dually, we say that $z$ is a \emph{$\beta$-convex}
point if $f(x) \ge C_2||x||^\beta$ for some $C_2 > 0$ and all $x$
sufficiently close to $z$.

\begin{restatable}{definition}{defnbdryregularity}
  \label{defn:alpha_beta_point}
  Let $\Omega$ be a properly convex domain and $x \in \bdry$ be a $C^1$ point. Set
  \begin{align*}
    \alpha(x, \Omega) := \sup\{\alpha > 1 : \dee \Omega \textrm{ is
    }C^\alpha\textrm{ at }x\}
    \end{align*}
  and 
  \begin{align*}
    \beta(x, \Omega) := \inf\{\beta < \infty : \dee \Omega \textrm{ is
    }\beta\textrm{-convex at }x\}.
  \end{align*}
  If $\dee \Omega$ is not $C^\alpha$ at $x$ for any $\alpha > 1$, we
  define $\alpha(x, \Omega) = 1$. Similarly if $\dee \Omega$ is not
  $\beta$-convex at $x$ for any $\beta < \infty$, we define
  $\beta(x, \Omega) = \infty$.
\end{restatable}

We show that for a divisible domain $\Omega$, the functions $\alpha(x,\Omega)$ and $\beta(x,\Omega)$ defined above are determined by singular values of tracking sequences.  To state our result, we require $x$ to be an exposed boundary point; see \cref{defn:exposed_boundary} and \cref{fig:defn_h}.

\begin{restatable}[\cref{sec:boundary_regularity_sv}]{theorem}{boundaryRegularity}
  \label{thm:regularity_equiv_regularity}
  Let $\Omega$ be a properly convex domain, let $\{\gamma_n\}$ track a
  projective geodesic ray $c:[0, \infty) \to \Omega$, and suppose that
  $c(\infty) = x$ is an exposed $C^1$ extreme point in $\dee
  \Omega$. Define
  \begin{align*}
    \alpha _0:= \liminf_{n \to \infty}
    \frac{\mu_{1,d}(\gamma_n)}{\mu_{1,d-1}(\gamma_n)} \quad \text{ and } \quad 
    \beta_0 := \limsup_{n \to \infty}
    \frac{\mu_{1,d}(\gamma_n)}{\mu_{1,2}(\gamma_n)}.
  \end{align*}
  Then $\alpha_0 = \alpha(x, \Omega)$ and $\beta_0 = \beta(x,
  \Omega)$.

  In particular, $c(\infty)$ is a $C^\alpha$ point for some
  $\alpha > 1$ if and only if $\{\gamma_n\}$ is uniformly
  $(d-1)$-regular, and $c(\infty)$ is $\beta$-convex for
  $\beta < \infty$ if and only $\{\gamma_n\}$ is uniformly
  $1$-regular.
\end{restatable}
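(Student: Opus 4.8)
The plan is to reduce the theorem to a statement purely about the behavior of the Cartan projection of $\gamma_n$ by understanding precisely how a projective geodesic ray ending at an exposed $C^1$ extreme point $x$ interacts with the singular value decomposition of the tracking sequence. Concretely, fix a lift $\lift{c}(t)$ of the geodesic ray and write $v_n \in \Rb^d$ for a unit vector spanning the line $\gamma_n^{-1} \lift{c}(\infty)$, or rather work with the fact that $\gamma_n$ moves the basepoint $x_0$ to within distance $R$ of $c(n)$. The key geometric input is that, since $x = c(\infty)$ is a $C^1$ point, the hyperplane $H_n := \gamma_n \cdot (\text{hyperplane through } \gamma_n^{-1} x \text{ transverse to the ray})$ converges to the unique supporting hyperplane $T_x\partial\Omega$; and since $x$ is an \emph{exposed} extreme point, this convergence is ``transverse'' in a quantitative way — the domain $\Omega$ near $x$ is trapped between a paraboloid-type region and the supporting hyperplane. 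I would first set up the dictionary: the attracting direction of $\gamma_n$ (top singular direction) converges to $x$, the repelling hyperplane (span of bottom $d-1$ singular directions of $\gamma_n^{-1}$, equivalently the hyperplane spanned by the bottom $d-1$ left singular vectors of $\gamma_n$) converges to $T_x\partial\Omega$, and the quantities $\mu_{1,2}(\gamma_n)$ and $\mu_{1,d-1}(\gamma_n)$ measure, respectively, the ``fastest'' and ``slowest'' rates at which the second-through-$(d-1)$st singular directions are contracted relative to the top one.

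The central computation is a local model. Choose affine coordinates so that $x$ is the origin, $T_x\partial\Omega = \{x_{d-1} = 0\}$ (in the notation $\Rb^{d-1} = \Rb^{d-2}\times\Rb$ with last coordinate the ``height''), and $\partial\Omega$ is locally the graph of a convex function $f:\Rb^{d-2}\to\Rb_{\ge 0}$ with $f(0)=0$, $\nabla f(0) = 0$. The projective transformation $\gamma_n$, in these coordinates, acts (up to bounded error coming from the basepoint slack $R$ and the fact that $x$ is extreme, hence $\gamma_n^{-1} x_0$ stays in a compact ``Benzécri-type'' part of the domain) essentially like a diagonal map $\mathrm{diag}(e^{\mu_{1,d}(\gamma_n)}, e^{\mu_{2,d}(\gamma_n)}, \dots, 1)$ in suitable coordinates adapted to the singular value decomposition. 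The point $c(n)$ sits at Hilbert-distance roughly $\tfrac{1}{2}\mu_{1,d}(\gamma_n)$ into the domain along the ray toward $x$, so the ``scale'' at which we see the boundary near $x$ after applying $\gamma_n^{-1}$ is governed by $e^{-\mu_{1,d}(\gamma_n)}$. Tracking a nearby boundary point $(y, f(y)) \in \partial\Omega$ under $\gamma_n^{-1}$, the condition that $\gamma_n^{-1}$ maps $\Omega$ to a domain of bounded Benzécri shape (containing $x_0$ with the whole orbit staying in a compact set of marked domains) forces: the height coordinate $f(y)$ gets scaled by $e^{-\mu_{1,d}(\gamma_n)}$ (direction $d$ vs direction $1$), while a transverse coordinate $y_i$ gets scaled by $e^{-\mu_{i,d}(\gamma_n)}$ roughly, hence relative to the ray by $e^{-\mu_{1,i}(\gamma_n)}$. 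Balancing ``$f(y) \asymp \|y\|^\gamma$'' against these scalings along the sequence yields exactly $\gamma = \lim \mu_{1,d}(\gamma_n)/\mu_{1,i}(\gamma_n)$ for the relevant $i$, and taking $\liminf$ over the direction giving the \emph{slowest} contraction (which is $i = d-1$, i.e.\ $\mu_{1,d-1}$) produces the sharp H\"older exponent $\alpha_0$, while $\limsup$ over the \emph{fastest} contraction ($i = 2$, i.e.\ $\mu_{1,2}$) produces the sharp convexity exponent $\beta_0$. I would carry this out as two matching inequalities: (a) $\alpha(x,\Omega) \ge \alpha_0$ and $\beta(x,\Omega)\le \beta_0$, obtained by using the Cartan data to build explicit supporting paraboloids, and (b) the reverse inequalities $\alpha(x,\Omega)\le \alpha_0$, $\beta(x,\Omega)\ge\beta_0$, obtained by feeding a $C^\alpha$ (resp.\ $\beta$-convex) estimate on $f$ back through $\gamma_n^{-1}$ and reading off a contradiction with the definitions of $\alpha_0,\beta_0$ if the exponents were strictly out of range.

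For the final ``in particular'' clauses, once $\alpha_0 = \alpha(x,\Omega)$ and $\beta_0 = \beta(x,\Omega)$ are established, the equivalences are immediate unwinding of definitions: $c(\infty)$ is $C^\alpha$ for some $\alpha>1$ iff $\alpha(x,\Omega)>1$ iff $\alpha_0>1$ iff $\liminf \mu_{1,d}(\gamma_n)/\mu_{1,d-1}(\gamma_n) > 1$ iff $\liminf \mu_{d-1,d}(\gamma_n)/\mu_{1,d}(\gamma_n) > 0$ (using $\mu_{1,d} = \mu_{1,d-1} + \mu_{d-1,d}$), which is exactly uniform $(d-1)$-regularity; and symmetrically for $1$-regularity via $\mu_{1,d} = \mu_{1,2}+\mu_{2,d}$. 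I expect the main obstacle to be step (b) above — controlling the behavior of $\gamma_n^{-1}$ on a \emph{whole neighborhood} of $x$ in $\partial\Omega$ rather than just at $x$ itself: the singular value decomposition of $\gamma_n$ controls where the top singular direction and the bottom hyperplane go, but the ``middle'' singular directions ($2$ through $d-1$) a priori rotate, so one must argue that the relevant Grassmannian frames converge (using the exposedness and extremality of $x$, plus compactness of the Benzécri space of pointed divisible domains) fast enough that the local coordinate picture above is valid up to multiplicative constants absorbed into $C_1, C_2$. Handling this rotation carefully, and making sure the ``bounded error'' from the tracking constant $R$ and basepoint $x_0$ never affects the \emph{exponents} (only the multiplicative constants), is the technical heart of the argument.
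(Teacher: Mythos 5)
Your proposal follows essentially the same route as the paper: the paper fixes the affine chart in which $c(\infty)$ is the origin, the supporting hyperplane $H_+$ is horizontal and the geodesic is vertical, proves exactly your scaling dictionary as a quantitative lemma (for $u$ in the annulus $S_n = h^{-1}([n-1,n])$ one has $\log f(u) = -\mu_{1,d}(\gamma_n) + O(1)$ and $-\mu_{1,d-1}(\gamma_n) - O(1) \le \log\|u\| \le -\mu_{1,2}(\gamma_n)+O(1)$, with both horizontal bounds attained up to $O(1)$ at suitable points of $S_n$), and then balances exponents and unwinds the ``in particular'' clauses exactly as you describe. The only real divergence concerns the technical heart you flag: the paper never needs convergence of the middle singular frames, because it replaces the singular value decomposition by the fixed splitting $c(+\infty)\oplus(\lift{H}_+\cap\lift{H}_-)\oplus c(-\infty)$, which is preserved by $k_n\gamma_n^{-1}$ for $k_n$ in a compact set, and shows (\cref{prop:automorphism_sv_estimates}) that the norm and conorm of the restriction to the transverse factor agree with $\sigma_2$ and $\sigma_{d-1}$ up to bounded multiplicative error --- which is all the balancing argument requires, since only $\|g_n|_{W_0}\|$ and $\conorm{g_n|_{W_0}}$, not the full frame, enter the estimate on $\|u\|$.
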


An immediate consequence of
\Cref{prop:morse_implies_weakly_uniformly_regular} and
\cref{thm:regularity_equiv_regularity} is the following, which is our
link between perspectives \ref{item:ggt_hyperbolicity} and
\ref{item:pg_hyperbolicity} in this paper:
\begin{corollary}
  \label{cor:morse_regular_convex}
  Suppose $\Omega$ is a properly convex domain and $\{\gamma_n\}$
  $R$-tracks a projective geodesic ray $c$ with respect to
  $x_0 \in \Omega$. If $c$ is $M$-Morse, then $c(\infty)$ is
  $C^\alpha$ and $\beta$-convex for some $\alpha>1, \beta<\infty$, depending only on $M$, $R$, and $x_0$. Moreover 
  \begin{align*}
    \alpha(c(\infty),\Omega) = \liminf_{n \to \infty}
    \frac{\mu_{1,d}(\gamma_n)}{\mu_{1,d-1}(\gamma_n)} > 1 ~\text{ and }~
    \beta(c(\infty),\Omega)= \limsup_{n \to \infty}
    \frac{\mu_{1,d}(\gamma_n)}{\mu_{1,2}(\gamma_n)}< \infty.
  \end{align*}
\end{corollary}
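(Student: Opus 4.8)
The plan is to obtain this by chaining \Cref{thm:morse_implies_bdry_reg_and_unif_reg} (via \Cref{prop:morse_implies_weakly_uniformly_regular}) with \Cref{thm:regularity_equiv_regularity}; the only genuine work is checking that the hypotheses of the latter are satisfied at the point $c(\infty)$. First I would apply \Cref{thm:morse_implies_bdry_reg_and_unif_reg}: since $c$ is $M$-Morse and $\{\gamma_n\}$ $R$-tracks $c$ with respect to $x_0$, the sequence $\{\gamma_n\}$ is strongly uniformly $k$-regular for $k=1$ and $k=d-1$, with constants depending only on $M$, $R$, $x_0$; in particular it is uniformly $1$-regular and uniformly $(d-1)$-regular. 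Feeding this into \Cref{prop:morse_implies_weakly_uniformly_regular} produces $\xi = \xi(M,R,x_0) > 1$ with
\[
  \liminf_{n \to \infty}\frac{\mu_{1,d}(\gamma_n)}{\mu_{1,d-1}(\gamma_n)} > 1 + \frac{1}{\xi-1}
  \quad\text{and}\quad
  \limsup_{n \to \infty}\frac{\mu_{1,d}(\gamma_n)}{\mu_{1,2}(\gamma_n)} < \xi .
\]

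The next step, which I expect to be the main obstacle, is to verify that $c(\infty)$ is an exposed $C^1$ extreme point of $\partial \Omega$, so that \Cref{thm:regularity_equiv_regularity} may be invoked. The $C^1$ and extremality conditions follow from fairly soft limit-set arguments: uniform $(d-1)$-regularity of $\{\gamma_n\}$ forces the supporting hyperplanes of $\Omega$ at the points $\gamma_n x_0$ to converge, and since these points approach $c(\infty)$ radially along the \emph{projective} geodesic $c$, the limiting hyperplane is the unique supporting hyperplane at $c(\infty)$; dually, uniform $1$-regularity forces $c(\infty)$ to be an extreme point of $\overline{\Omega}$. Exposedness is the delicate part and genuinely requires the quasi-geodesic stability encoded by the Morse condition (equivalently, by \Cref{thm:hilbert_morse_equals_contracting}, the contracting property): if the supporting hyperplane $H$ at $c(\infty)$ met $\overline{\Omega}$ at a second point $y$, one could splice into $c$ a detour running close to the segment toward $y$ to produce a quasi-geodesic with endpoints on $c$ that escapes every bounded neighborhood of $c$, contradicting $M$-Morseness; this argument is essentially the one already used in \cref{sec:sv_morse}, so in practice I would extract the statement "the endpoint of an $M$-Morse projective geodesic ray is an exposed $C^1$ extreme point" from that section and cite it.

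Finally, with $c(\infty) = x$ an exposed $C^1$ extreme point, \Cref{thm:regularity_equiv_regularity} gives
\[
  \alpha(c(\infty),\Omega) = \liminf_{n \to \infty}\frac{\mu_{1,d}(\gamma_n)}{\mu_{1,d-1}(\gamma_n)}
  \quad\text{and}\quad
  \beta(c(\infty),\Omega) = \limsup_{n \to \infty}\frac{\mu_{1,d}(\gamma_n)}{\mu_{1,2}(\gamma_n)} ,
\]
which is the displayed identity in the corollary. Combining with the inequalities from the first step yields $\alpha(c(\infty),\Omega) > 1 + \tfrac{1}{\xi-1} > 1$ and $\beta(c(\infty),\Omega) < \xi < \infty$; since $\xi$ depends only on $M$, $R$, $x_0$, so do these bounds, and the constants $C_1, C_2$ witnessing the $C^\alpha$ and $\beta$-convex conditions at $c(\infty)$ can likewise be read off from the proof of \Cref{thm:regularity_equiv_regularity} in terms of the same data. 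This establishes the corollary.
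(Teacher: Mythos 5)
Your proposal is correct and is exactly the paper's route: the paper states \Cref{cor:morse_regular_convex} as an immediate consequence of \Cref{prop:morse_implies_weakly_uniformly_regular} and \Cref{thm:regularity_equiv_regularity}, and the only content to supply is the hypothesis check you identify. That hypothesis -- $c(\infty)$ is an exposed $C^1$ extreme point -- is already proved in \Cref{sec:morse_contracting} (not \cref{sec:sv_morse}): \Cref{cor:morse_point_is_C1_extreme} gives $C^1$ and extremality, and exposedness follows from \Cref{cor:morse_not_in_segment}, since the unique supporting hyperplane $H$ at $c(\infty)$ meets $\overline{\Omega}$ in a convex subset of $\partial \Omega$, so a second intersection point would place $c(\infty)$ in the closure of a nontrivial boundary segment. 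Your own ad hoc sketches of these facts (supporting hyperplanes ``at the interior points $\gamma_n x_0$,'' and the spliced detour toward $y$, which is not obviously a quasi-geodesic without the half-triangle setup of \Cref{lem:morse_not_in_halftri}) are not quite right as stated, but they are dispensable once you cite the results above.
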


By applying \Cref{thm:not_strongly_uniform} and
\Cref{thm:regularity_equiv_regularity}, we can also see that the
converse to \Cref{cor:morse_regular_convex} does not hold:
\begin{corollary}
  There exists a convex divisible domain $\Omega$ with exposed
  boundary and a projective geodesic ray $c$ so that $c$ is \emph{not}
  $M$-Morse for any Morse gauge $M$, but $c(\infty)$ is both a
  $C^\alpha$ point for some $\alpha > 1$ and a $\beta$-convex point
  for some $\beta < \infty$.
\end{corollary}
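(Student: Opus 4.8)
The plan is to obtain the example by directly combining \Cref{thm:not_strongly_uniform} with \Cref{thm:regularity_equiv_regularity}. Let $\Omega$, the projective geodesic ray $c$, and the tracking sequence $\{\gamma_n\}$ be exactly those furnished by \Cref{thm:not_strongly_uniform}: so $\Omega$ is a convex divisible domain with exposed boundary, $\{\gamma_n\}$ tracks $c$, and $\{\gamma_n\}$ is simultaneously uniformly $1$-regular and uniformly $(d-1)$-regular, but \emph{not} strongly uniformly $1$-regular. The final sentence of \Cref{thm:not_strongly_uniform} — which is itself the contrapositive of \Cref{thm:morse_implies_bdry_reg_and_unif_reg} — already records that $c$ is not $M$-Morse for any Morse gauge $M$. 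Hence the only thing left to establish is the claimed boundary regularity at $x := c(\infty)$.

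First I would check that $x$ satisfies the hypotheses of \Cref{thm:regularity_equiv_regularity}, namely that it is an exposed $C^1$ extreme point of $\dee\Omega$. Exposedness is immediate since $\Omega$ has exposed boundary by construction, and exposed points are in particular extreme. For the $C^1$ condition I would appeal to the mechanism already present in the proof of \Cref{thm:regularity_equiv_regularity} (or, alternatively, read it off directly from the explicit domain built in \Cref{sec:morse_counterexample}): uniform $(d-1)$-regularity of a tracking sequence forces the supporting hyperplane of $\Omega$ at $c(\infty)$ to be unique, so $c(\infty)$ is a $C^1$ point. Thus $x$ is an exposed $C^1$ extreme point, and \Cref{thm:regularity_equiv_regularity} applies to it.

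Then I would simply quote the conclusion of \Cref{thm:regularity_equiv_regularity}. Uniform $(d-1)$-regularity of $\{\gamma_n\}$ gives $\alpha(x,\Omega) = \alpha_0 > 1$, so $\dee\Omega$ is $C^\alpha$ at $x$ for every $\alpha < \alpha_0$, in particular for some $\alpha > 1$. Uniform $1$-regularity of $\{\gamma_n\}$ gives $\beta(x,\Omega) = \beta_0 < \infty$, so $\dee\Omega$ is $\beta$-convex at $x$ for every $\beta > \beta_0$, in particular for some $\beta < \infty$. Together with the non-Morseness of $c$ noted above, this is precisely the assertion of the corollary.

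The only genuine subtlety — the main obstacle, such as it is — is verifying that $c(\infty)$ is a $C^1$ extreme point \emph{before} invoking \Cref{thm:regularity_equiv_regularity}, since that theorem takes this as a standing hypothesis rather than a conclusion. If it is not already transparent from the construction of \Cref{sec:morse_counterexample}, one extracts it from the uniform $(d-1)$- and $1$-regularity of the tracking sequence as indicated; everything else in the argument is a direct citation of the two main theorems.
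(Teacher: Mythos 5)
Your proposal is correct and follows essentially the same route as the paper: the paper likewise takes the example furnished by \Cref{thm:not_strongly_uniform} and applies \Cref{thm:regularity_equiv_regularity}, using that $\Omega$ has exposed boundary. Your extra care in verifying that $c(\infty)$ is a $C^1$ extreme point (which one can extract from uniform $1$- and $(d-1)$-regularity, e.g.\ via \Cref{prop:face_sv_gap} and its dual) addresses a hypothesis that the paper's one-line proof passes over silently.
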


\subsection{$D$-contracting geodesics and Morse local-to-global
  spaces}
\label{sec:contracting_morse_equivalence}

We now mention a few additional results regarding notions of
``coarsely negatively curved'' geodesic directions in $\Omega$. Recall
that geodesics in hyperbolic metric spaces always satisfy a
\emph{contracting} property, which motivates the following definition:
\begin{definition}
  \label{defn:D_contracting}
  Let $(X, d)$ be a proper metric space, $\ell$ a geodesic (ray,
  segment, line), and let $\pi_{\ell}:X \to 2^\ell$ denote the
  closest-point projection on $\ell$, i.e.
  \[
    \pi_{\ell}(x) = \{y \in \ell : d(x, y) = d(x, \ell)\}.
  \]
  Then $\ell$ is \emph{$D$-contracting} for $D > 0$ if, for any metric
  ball $B_r(x)$ disjoint from $\ell$,
  \[
    \diam(\pi_{\ell}(B_r(x))) \le D.
  \]
  If $\ell$ is $D$-contracting for some $D > 0$, then we simply say
  that $\ell$ is \emph{contracting.}
\end{definition}

A result of Charney-Sultan \cite{CS2015} implies that, if $X$ is a
CAT(0) metric space, then contracting geodesics are exactly the same
as Morse geodesics. We prove:
\begin{theorem}[\cref{sec:morse_contracting}]
  \label{thm:hilbert_morse_equals_contracting}
  Let $\Omega$ be a properly convex domain, and let $c$ be a geodesic
  in $\Omega$. Then $c$ is Morse if and only if $c$ is contracting.
\end{theorem}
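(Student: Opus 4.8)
The plan is to prove the two implications separately, following the template of the Charney--Sultan argument in CAT(0) spaces but replacing ``geodesic'' by ``projective geodesic'' and using the coarse convexity properties of the Hilbert metric in place of CAT(0) convexity.

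\medskip

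\noindent\textbf{Contracting $\implies$ Morse.} This direction should be essentially soft and works in any geodesic metric space. Suppose $c$ is $D$-contracting and let $c'$ be a $(\lambda,a)$-quasi-geodesic with endpoints on $c$. First I would show the closest-point projection $\pi_c$ is coarsely Lipschitz on the complement of $c$ and coarsely surjective onto the subsegment of $c$ between the two endpoints: if $d(x,y)\le 1$ and both lie outside a neighborhood of $c$, then $\diam(\pi_c(\{x,y\}))$ is bounded by $D$ plus a universal constant, since $x,y$ lie in a common ball of radius roughly $1$ disjoint from $c$ after shrinking; iterating along $c'$ gives $\diam(\pi_c(c'))$ controlled in terms of $\mathrm{length}$-type data. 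The standard argument then forces each point of $c'$ to be within a bounded distance of $c$: one decomposes $c'$ into maximal subpaths lying outside the $(D+1)$-neighborhood of $c$, shows each such subpath has projection of diameter $\le D$, hence (since $c'$ is a quasi-geodesic) bounded length, hence bounded distance from $c$. This yields a Morse gauge $M$ depending only on $D$ (and, via the $(\lambda,a)$-data, polynomially on $\lambda,a$). I expect this to be routine and I would only sketch it.

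\medskip

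\noindent\textbf{Morse $\implies$ contracting.} This is the substantive direction and the place where the projective structure enters. The key point is the observation already invoked in the paper: an $M$-Morse geodesic ray in a properly convex $\Omega$ is uniformly (Hausdorff-)close to a \emph{projective} geodesic ray, so without loss of generality I may assume $c$ itself is projective, i.e. its image is a line segment $[p,q]$ in $\Pb(\Rb^d)$. The strategy is: (1) show that nearest-point projection onto a projective geodesic in the Hilbert metric is ``coarsely well-behaved'' --- in particular, if a ball $B_r(x)$ is disjoint from $c$, its projection has bounded diameter \emph{unless} there is a long quasi-geodesic detour, which Morseness forbids. Concretely, suppose for contradiction that $B_r(x)$ is disjoint from $c$ but $\pi_c(B_r(x))$ has large diameter, realized by points $y_1,y_2\in B_r(x)$ with $\pi_c(y_i)=z_i$ and $d_\Omega(z_1,z_2)$ huge. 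Then I would build a path from $z_1$ to $z_2$ by going $z_1\to y_1$ (length $\le r+D_0$ using that $z_i$ is a nearest point, so $d(x,z_i)\le d(x,\ell)+\text{stuff}$), then $y_1\to y_2$ inside $B_r(x)$ (length $\le 2r$), then $y_2\to z_2$ --- a path of length $\lesssim r + d(x,c)$ roughly, but the key is to upgrade it to a uniform quasi-geodesic. The cleanest route is to note that the concatenation $[z_1,y_1]\cup[y_1,y_2]\cup[y_2,z_2]$ (geodesic segments) together with the subsegment of $c$ from $z_1$ to $z_2$ bounds a ``coarse quadrilateral,'' and use the characterization of nearest-point projection: $d(y_i,z_i)=d(y_i,c)$ forces angles at $z_i$ to be ``coarsely right,'' which in Hilbert geometry can be made precise using the fact that balls are convex and the Hilbert metric satisfies a coarse version of the obtuse-angle/projection inequality. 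From such a quadrilateral with near-right angles at $z_1,z_2$, the side $[z_1,z_2]\subset c$ has length bounded in terms of the other three sides --- this is where I'd use Morseness: the concatenated path is a $(\lambda_0,a_0)$-quasi-geodesic with $\lambda_0,a_0$ \emph{universal} (because the detour has length $\lesssim r + d(x,c)$ while the geodesic distance $d(z_1,z_2)$ is what we're bounding), so it stays in the $M(\lambda_0,a_0)$-neighborhood of $c$; but a quasigeodesic of bounded multiplicative/additive constants that fellow-travels $c$ and has controlled length forces $d(z_1,z_2)\le M(\lambda_0,a_0)+\text{(length of detour)}$...

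\medskip

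\noindent\textbf{Main obstacle.} The real difficulty --- and the step I would spend the most care on --- is making the ``coarse quadrilateral with near-right angles'' argument work without any CAT(0) hypothesis. In CAT(0) one uses convexity of the metric and the orthogonal-projection inequality $d(z_1,z_2)^2 \le d(y_1,y_2)^2$-type bounds; in a general Hilbert geometry we only have that the metric balls are convex and the metric is \emph{coarsely} (not strictly) convex along affine lines. So I would replace the exact angle estimate by the following coarse statement, which must be proved from scratch using properness and the Benzécri compactness of the space of pointed properly convex domains: \emph{there is a universal function $\phi$ such that if $z$ is a nearest point on a geodesic $c$ to $y$, and $w\in c$, then $d(y,w)\ge d(y,z)+d(z,w) - \phi(d(y,z))$ when $d(y,z)$ is bounded, degenerating appropriately} --- i.e. projection onto geodesics is ``coarsely $1$-Lipschitz with a coarse reverse triangle inequality.'' Equivalently, closest-point projection onto a projective geodesic in $\Omega$ is coarsely Lipschitz, with constants depending only on the dimension $d$ (via compactness of the Benzécri space) --- this last formulation is probably the most efficient to prove and to cite, since it reduces the whole Morse $\Rightarrow$ contracting implication to the standard metric-space argument. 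If coarse Lipschitzness of the projection is available, then a ball $B_r(x)$ disjoint from $c$ projects to a set of diameter $\le (\text{Lip const})\cdot 2r + (\text{additive const})$, but this only bounds the diameter \emph{linearly in $r$}, not by a constant $D$; to get the genuine contracting property one does need Morseness to kill the $r$-dependence, exactly via the quasigeodesic-detour argument above. So the logical skeleton is: Morse $\Rightarrow$ (reduce to projective $c$) $\Rightarrow$ (coarse Lipschitz projection, from Benzécri compactness) $+$ (Morse forbids long detours) $\Rightarrow$ $D$-contracting.
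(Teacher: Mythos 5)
Your first direction (contracting $\Rightarrow$ Morse) is the standard metric-space argument and matches what the paper does (it simply cites the well-known result, \cref{prop:contracting_implies_morse}). The reduction of the second direction to projective geodesics is also fine. The problem is the substantive direction, Morse $\Rightarrow$ contracting, where your argument has a genuine gap that your own closing paragraph almost, but not quite, diagnoses.

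The quasi-geodesic detour does not have universal constants. If $B_r(x)$ is disjoint from $c$ and $y_1,y_2\in B_r(x)$ project to $z_1,z_2$, the concatenation $[z_1,y_1]\cup[y_1,y_2]\cup[y_2,z_2]$ contains the subpath $[y_1,y_2]$, which has arc length up to $2r$ but endpoints at possibly tiny distance; so the additive quasi-geodesic constant of the concatenation is at least of order $r$, not universal. Applying the Morse gauge then only confines the detour to a neighborhood of $c$ whose size grows with $r$, and the resulting bound on $\hil(z_1,z_2)$ is no better than the trivial triangle-inequality bound $\hil(z_1,z_2)\le 2(\hil(x,c)+r)+2r$, which holds with no hypothesis on $c$ at all. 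Likewise, your fallback lemma (coarse Lipschitzness of $\pi_c$ via Benz\'ecri compactness) can only ever produce a bound linear in $r$, as you note --- so nothing in the proposal actually kills the $r$-dependence, and the ``$D$-contracting'' conclusion is never reached. This is precisely why the paper does not argue metrically here: it inserts the intermediate condition of \emph{projective $\delta$-slimness} of triangles ($\delta$-slim $\Rightarrow$ contracting is the Charney--Sultan-style computation using only the maximum principle, \cref{prop:slim_contracting}), and proves Morse $\Rightarrow$ $\delta$-slim by contradiction: a sequence of non-$\delta_n$-slim triangles with $\delta_n\to\infty$ is renormalized by Benz\'ecri's theorem to produce a \emph{half-triangle} in a limiting domain whose boundary contains a conical limit of an endpoint of $c$ (\cref{lem:halftri_projective_delta_slim}), and Morse geodesics cannot have endpoints conically related to half-triangle vertices (\cref{lem:morse_not_in_halftri}, via Cordes' lemma and the broken-geodesic construction, together with \cref{lem:morse_conically_related}). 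That projective-geometric input --- not a refined projection estimate --- is the missing ingredient in your outline, and without it or a substitute the second implication does not close.
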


It is well-known that in general metric spaces, every
  $D$-contracting geodesic is $M$-Morse for some Morse gauge $M$
  depending only on $D$. So, our main contribution in
  \Cref{thm:hilbert_morse_equals_contracting} is proving the converse,
  i.e. that Morse geodesics are always contracting. Our proof for this
  direction relies on specific features of the projective geometry of
  $\Omega$.

  When we prove this direction, we do \emph{not} in general obtain
  uniform control over the contraction constant $D$ in terms of the
  Morse gauge $M$. However, we do have uniform control if we
  additionally assume that $\Omega$ has a cocompact action by
  automorphisms; see \Cref{cor:morse_contracting_uniform}.
  
  As an application of this uniform control, we obtain that divisible convex domains have the so-called \emph{Morse local-to-global property} \cite{RST22}.  As the name suggests, this property means
the following. Suppose $c$ is a path in a metric space $X$, such that
any sufficiently long finite sub-segment of $c$ is an $M$-Morse
quasi-geodesic. Then the entire path $c$ is an $M'$-Morse
quasi-geodesic, for some Morse gauge $M'$. This property avoids certain pathologies which can arise when considering arbitrary Morse geodesics in general metric spaces, and it is known to have a number of nice consequences; see e.g. \cite{RST22,HSZ24,CRSZ22,MS24}, or the survey in the appendix of \cite{DSZ25}. We show the following.
\begin{theorem}
  \label{thm:morse_local_to_global}
  If $\Omega$ is any convex divisible domain, the metric space
  $(\Omega, \hil)$ is Morse local-to-global.
\end{theorem}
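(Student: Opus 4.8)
The plan is to deduce the statement from the general theory of Russell--Spriano--Tran \cite{RST22}, together with the equivalence between Morse and contracting geodesics established in \Cref{thm:hilbert_morse_equals_contracting}. Recall first that $(\Omega, \hil)$ is a proper, uniquely geodesic metric space---its geodesics are exactly the projective line segments---and that a divisible domain carries a proper cocompact action of $\Gamma \subseteq \Aut(\Omega)$ by isometries. The key external input is the following criterion, which is essentially the mechanism by which \cite{RST22} verifies the Morse local-to-global property for CAT(0) spaces: if $X$ is a proper geodesic metric space and there is a function $M \mapsto \mathfrak{D}(M)$ such that every $M$-Morse geodesic in $X$ is $\mathfrak{D}(M)$-contracting in the sense of \Cref{defn:D_contracting}, then $X$ has the Morse local-to-global property. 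The point is that such a uniform bound allows one to run the concatenation argument of \cite{RST22}: a path all of whose sufficiently long subsegments are uniformly Morse has uniformly contracting subsegments, and contracting subsegments concatenate to a globally contracting---hence uniformly Morse---quasi-geodesic.

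Granting this criterion, it remains to check its hypothesis for $(\Omega,\hil)$ when $\Omega$ is divisible. By \Cref{thm:hilbert_morse_equals_contracting}, every $M$-Morse geodesic in $(\Omega, \hil)$ is $D$-contracting for some $D$. The remark following that theorem notes that $D$ need not in general be controlled by $M$ alone; however, precisely when $\Omega$ admits a cocompact action by automorphisms---which holds here, as $\Omega$ is divisible---\Cref{cor:morse_contracting_uniform} upgrades this to a uniform bound $D = \mathfrak{D}(M)$ depending only on $M$ and the (fixed) domain $\Omega$. This is exactly the uniform contracting hypothesis required, so the criterion yields that $(\Omega, \hil)$ is Morse local-to-global. (Alternatively, since $(\Omega,\hil)$ is quasi-isometric to $\Gamma$ with a word metric and the Morse local-to-global property is a quasi-isometry invariant of proper geodesic spaces, one could instead verify the property on $\Gamma$; but the direct route above is cleaner.)

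The two points requiring care are both conventional rather than substantial. First, one must match notions: the ``contracting'' property of \Cref{defn:D_contracting} should be checked to coincide with, or be comparable with controlled constants to, the ``strongly contracting'' notion used in \cite{RST22}, and one should confirm that their local-to-global argument applies to proper geodesic spaces that are merely uniquely geodesic rather than genuinely CAT(0). Second, \cite{RST22} phrases the local-to-global property for quasi-geodesics, whereas the hypothesis above concerns geodesics; one bridges this using the standard fact that in a proper geodesic space every $M$-Morse quasi-geodesic fellow-travels an actual geodesic with constants controlled by $M$ (and, in $\Omega$, that geodesic is a projective segment), reducing to the geodesic statement. Neither of these is a genuine obstacle: the essential geometric content lies entirely in \Cref{thm:hilbert_morse_equals_contracting} and \Cref{cor:morse_contracting_uniform}, and the divisibility hypothesis enters only to guarantee the uniformity of the contraction constant.
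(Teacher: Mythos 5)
Your overall strategy---reduce to the Russell--Spriano--Tran machinery and feed it the uniform Morse-implies-contracting statement of \Cref{cor:morse_contracting_uniform}---is the same as the paper's, and you correctly identify that divisibility enters precisely to make the contraction constant a function of the Morse gauge. But there is a gap: the ``key external input'' you invoke (that a uniform bound $D=\mathfrak{D}(M)$ for Morse geodesics in an arbitrary proper geodesic space already implies the Morse local-to-global property) is not a theorem of \cite{RST22}, and you do not prove it. Their CAT(0) argument uses the CAT(0) hypothesis in three distinct ways, only one of which is the uniform Morse/contracting equivalence. The other two are a convexity property of distance functions (the maximum principle) and good behavior of nearest-point projections onto geodesics; in a Hilbert geometry the nearest-point projection is set-valued and need not be continuous, so this is a genuine issue, not a ``conventional'' one. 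The paper handles it by substituting \Cref{lem:maximum_principle} for the CAT(0) convexity and \Cref{lem:projection_connected} (projections of convex sets onto projective geodesics are connected) for continuity of projections, alongside \Cref{prop:contracting_implies_morse} and \Cref{cor:morse_contracting_uniform}. Your write-up omits these two substitutions entirely, and without them the concatenation argument you sketch does not go through as stated.

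A secondary but real error: you assert that $(\Omega,\hil)$ is uniquely geodesic with geodesics exactly the projective segments. By \Cref{fact:nonunique_geodesics} this holds only for strictly convex domains, and the divisible domains of interest here (and in \Cref{sec:morse_counterexample}) are typically not strictly convex. This does not sink the argument---the paper works with the preferred projective geodesics throughout---but it should not be used as a hypothesis. Finally, the quasi-isometry-invariance detour you mention in parentheses does not help, since one would still have to establish the property for $\Gamma$ by the same means.
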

This result is an immediate consequence of \Cref{cor:morse_contracting_uniform} and a result of Sisto-Zalloum \cite[Prop. 4.7]{SZ22}. Indeed, \cref{cor:morse_contracting_uniform} and \cref{prop:contracting_implies_morse} imply that in convex divisible domains, Morse quasi-geodesics are \emph{strongly contracting quantitatively} in the sense of \cite{SZ22}. Then \cite[Prop 4.7]{SZ22} implies the Morse local-to-global property; we are grateful to Alex Sisto for pointing this out to us.

\subsection{Comparison to previous results}

\subsubsection{The Gromov-hyperbolic case}
Several of the results in this paper are inspired by previous work of
Benoist and Guichard on convex divisible domains. In particular, we are
motivated by the following theorem:
\begin{theorem}[\cite{B2004}]
  \label{thm:hyperbolic_divisibles}
  Let $\Gamma$ be a group dividing a properly convex domain
  $\Omega \subset \Pb(\Rb^d)$. Then the following are equivalent:
  \begin{enumerate}[label=(\alph*)]
  \item $\Gamma$ is Gromov-hyperbolic;
  \item\label{item:anosov} The inclusion
    $\Gamma \hookrightarrow \PGL(d, \Rb)$ is a \emph{$1$-Anosov
      representation}.
  \item The domain $\Omega$ is strictly convex, i.e. its boundary
    $\dee \Omega$ contains no nontrivial projective segments;
  \item The boundary $\dee \Omega$ is a $C^1$ hypersurface.
  \end{enumerate}
\end{theorem}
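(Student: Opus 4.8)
The plan is to establish the equivalences with conditions (c) and (d) as the geometric heart, treating (b) essentially formally. The implication (b) $\Rightarrow$ (a) is immediate: by definition the domain group of a $1$-Anosov representation is word-hyperbolic, and here that group is $\Gamma$ itself. For the converse (a) $\Rightarrow$ (b) I would proceed last, once the geometry of $\Omega$ is understood: assuming (a), (c), (d), the orbit map $\gamma \mapsto \gamma x_0$ is a quasi-isometry $\Gamma \to (\Omega, \hil)$ (the orbit is $\diam(\Omega/\Gamma)$-dense), the boundary $\dee\Omega$ is a strictly convex $C^1$ hypersurface on which $\Gamma$ acts as a convergence group, so $\dee\Omega$ is equivariantly identified with $\partial_\infty \Gamma$; pairing this map with the analogous map $\partial_\infty\Gamma \to \dee\Omega^*$ into the dual domain gives a transverse pair of limit maps, and the contraction property follows from the north-south dynamics of proximal elements of $\Gamma$ on $\Pb(\Rb^d)$. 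Equivalently, one may invoke \Cref{thm:morse_implies_bdry_reg_and_unif_reg}: since $(\Omega,\hil)$ is Gromov hyperbolic, every projective geodesic ray is $M$-Morse for one fixed gauge, so every tracking sequence is strongly uniformly $1$- and $(d-1)$-regular with uniform constants, and running this over all geodesics in $\Gamma$ yields a uniform exponential singular-value gap $\mu_{1,2}(\gamma) \ge a\,|\gamma|_\Gamma - b$, which is exactly the Bochi--Potrie--Sambarino / Kapovich--Leeb--Porti characterization of $1$-Anosov.

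The core is the equivalence (a) $\Leftrightarrow$ (c). For (a) $\Rightarrow$ (c) I argue contrapositively: if $\dee\Omega$ contains a nontrivial projective segment, then, passing to a maximal such segment and using cocompactness of $\Gamma$ together with a Benz\'ecri-type compactness argument on its $\Gamma$-translates, one extracts an isometrically (or at least quasi-isometrically) embedded flat strip in $(\Omega,\hil)$ -- or a rank-two free abelian subgroup of $\Gamma$ preserving a $2$-plane that meets $\Omega$ -- and either of these obstructs Gromov hyperbolicity. For (c) $\Rightarrow$ (a) I would show directly that $(\Omega,\hil)$ is Gromov hyperbolic when $\Omega$ is strictly convex and $\Aut(\Omega)$ acts cocompactly. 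The tool is the uniform thin-triangles criterion together with Benz\'ecri's theorem that the space of pointed properly convex domains projectively equivalent to $\Omega$ is compact with a cocompact $\Aut(\Omega)$-action: if triangles were not uniformly thin, choose triangles with a point on one side arbitrarily far from the union of the other two sides, translate that point into a fixed compact region of $\Omega$ by elements of $\Gamma$ (which preserve $\Omega$), and pass to a limiting possibly-ideal geodesic triangle in $\Omega$ in which an interior point lies at infinite $\hil$-distance from two sides; strict convexity (which also forces $C^1$ boundary) rules this out, giving the required uniform bound.

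With (a) $\Leftrightarrow$ (c) established, the remaining equivalences follow from projective duality. The dual domain $\Omega^* \subset \Pb((\Rb^d)^*)$ is properly convex and is divided by the contragredient action of the same abstract group $\Gamma$, and $\dee\Omega$ is a $C^1$ hypersurface if and only if $\Omega^*$ is strictly convex (and symmetrically). Applying the already-proved equivalence (a) $\Leftrightarrow$ (c) to $\Omega^*$ gives the chain: $\dee\Omega$ is $C^1$ $\iff$ $\Omega^*$ strictly convex $\iff$ $\Gamma$ Gromov-hyperbolic $\iff$ $\Omega$ strictly convex, i.e.\ (d) $\Leftrightarrow$ (a) $\Leftrightarrow$ (c). Alternatively, (a) $\Rightarrow$ (d) can be extracted from \Cref{thm:regularity_equiv_regularity}: uniform $(d-1)$-regularity of tracking sequences forces each exposed $C^1$ extreme boundary point -- and, with a short additional argument using strict convexity to upgrade ``exposed $C^1$ extreme'' to ``arbitrary'', every boundary point -- to be a $C^\alpha$ point with $\alpha > 1$.

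The main obstacle is the implication (c) $\Rightarrow$ (a): deducing Gromov hyperbolicity of the Hilbert metric from strict convexity plus cocompactness. The difficulty is that Hilbert geodesics need not be unique, and the boundary regularity one would use to control them is only \emph{uniform} by virtue of the cocompact action; making the limiting argument above rigorous requires care about how geodesic segments degenerate as their endpoints tend to $\dee\Omega$, and about verifying that the limiting ``fat triangle'' genuinely contradicts strict convexity rather than the (a priori unknown) hyperbolicity one is trying to prove. This is precisely the point handled in Benoist's original treatment, and routing instead through uniform singular-value gaps is not a genuine shortcut, since the equivalence between such gaps and Gromov hyperbolicity is itself one of the deep inputs -- and is exactly the circle of ideas this paper is organized around.
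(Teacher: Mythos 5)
First, a point of comparison: the paper does not prove this statement at all --- it is quoted as background from Benoist \cite{B2004} --- so the only meaningful benchmark is Benoist's original argument, and your outline reproduces it in most essentials. The hard direction (c) $\Rightarrow$ (a) via Benz\'ecri compactness (\Cref{thm:benzecri}) applied to a sequence of increasingly fat projective triangles is exactly the mechanism of \Cref{lem:halftri_projective_delta_slim}; for a divisible domain the $\PGL(d,\Rb)$-orbit of $\Omega$ is closed, so the limiting half-triangle sits inside $\Omega$ itself and contradicts strict convexity, and your worry about non-unique geodesics evaporates because in a strictly convex domain every geodesic is projective by \Cref{fact:nonunique_geodesics}. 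The equivalence (c) $\Leftrightarrow$ (d) by passing to the dual divisible domain $\Omega^*$ is also Benoist's route, and your second argument for (a) $\Rightarrow$ (b) --- a uniform Morse gauge for all geodesics, hence strong uniform $1$-regularity with uniform constants by \Cref{thm:morse_implies_bdry_reg_and_unif_reg}, combined with the linear growth of $\mu_{1,d}$ from \Cref{prop:dgk_sv_gap_omega} and the Bochi--Potrie--Sambarino characterization of $1$-Anosov --- is sound and is consistent with how the paper itself frames item \ref{item:anosov}.

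The one step that does not go through as written is your contrapositive of (a) $\Rightarrow$ (c). Extracting from a single boundary segment an isometrically embedded flat strip, or a rank-two free abelian subgroup of $\Gamma$, is substantially deeper than the theorem you are proving: those statements belong to the rank-rigidity/geometrization circle of ideas (Benoist in low dimension, Islam--Zimmer in general) and are not available at this stage. The elementary argument is the ``fat ideal triangle'': take a \emph{maximal} segment $[a,b] \subset \dee \Omega$, fix $p \in \Omega$, and let $x_t \to a$, $y_t \to b$ along $[p,a)$ and $[p,b)$. If $z_t \in [x_t, y_t]$ converges to an interior point $m$ of $(a,b)$ while staying within bounded $\hil$-distance of $[p,x_t] \cup [p,y_t]$, then \Cref{lem:hilbert_face_metric} forces $m$ to lie in the same face of $\Omega$ as $a$ or $b$, contradicting maximality of the segment; so these projective geodesic triangles are not uniformly thin, $(\Omega,\hil)$ is not Gromov hyperbolic, and neither is $\Gamma$ by the Milnor--\v{S}varc lemma. (Equivalently, within this paper's machinery: \Cref{lem:boundary_segment_halftri} together with closedness of the orbit of a divisible domain places a half-triangle in $\Omega$, and \Cref{lem:morse_not_in_halftri} then produces a non-Morse geodesic, which cannot exist in a Gromov hyperbolic space.) With that repair, your proposal is a correct reconstruction of Benoist's proof.
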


We can interpret this theorem as giving a link between our
perspectives \ref{item:ggt_hyperbolicity}, \ref{item:la_hyperbolicity}
\ref{item:pg_hyperbolicity} in the hyperbolic setting. In particular,
if $\Gamma$ is a Gromov-hyperbolic group, then every geodesic in
$\Gamma$ is $M_0$-Morse for some uniform Morse gauge $M_0$, so every
geodesic direction in $\Gamma$ is ``hyperbolic'' in the sense of our
perspective \ref{item:ggt_hyperbolicity}. Part \ref{item:anosov} of
the theorem connects to perspective \ref{item:la_hyperbolicity} via
work of Kapovich-Leeb-Porti \cite{KLP2017} and Bochi-Potrie-Sambarino
\cite{BPS}, who poved that a representation $\Gamma \to \PGL(d, \Rb)$
is $1$-Anosov if and only if it is a quasi-isometric embedding and
every geodesic is strongly uniformly $1$-regular. From this viewpoint,
our \Cref{thm:morse_implies_bdry_reg_and_unif_reg},
\Cref{thm:1_morse_ggt_morse}, and \Cref{cor:morse_regular_convex} give
a generalization of \Cref{thm:hyperbolic_divisibles} to the situation
where $\Gamma$ is not necessarily a Gromov-hyperbolic
group. Effectively, we prove that the equivalences in
\Cref{thm:hyperbolic_divisibles} still hold locally, ``along
hyperbolic directions.''

\begin{remark}
  There are explicit constructions for divisible domains in
  $\Pb(\Rb^d)$ that are \emph{not} strictly convex (so none of the
  conditions in \Cref{thm:hyperbolic_divisibles} hold), but still
  contain Morse geodesics (so that our main results apply); for examples, see
  \cite{B2006, CLM2016, BDL2018} when $4 \le d \le 7$,
  and \cite{BV2023} for any $d \ge 4$.
\end{remark}

In \cite{guichard05}, Guichard also investigated the relationship
between regularity of the boundary of a strictly convex divisible
domain $\Omega$, and the linear algebraic properties of the dividing
group $\Gamma$. In particular, Guichard showed that, assuming $\Gamma$
is a hyperbolic group, the \emph{global} H\"older regularity of
$\bdry$ can be computed in terms of the asymptotic behavior of the
eigenvalues of sequences in $\Gamma$; this provides another link
between our perspectives \ref{item:la_hyperbolicity} and
\ref{item:pg_hyperbolicity}, again in the case where $\Gamma$ is
assumed to be a hyperbolic group. We also remark that Crampon \cite{C2009} has related work, which shows that for \emph{strictly} convex
divisible domains, the regularity of the boundary $\Omega$ is related
to the Lyapunov exponent of the geodesic flow. Our Theorem
\ref{thm:regularity_equiv_regularity} can be thought of as a localized
version of Guichard's result, which applies to geodesic directions in
any (not necessarily strictly convex) divisible domain.

\subsubsection{Closed geodesics in rank-one convex projective
  manifolds}
\label{sec:rank_one}
In \cite{Islam2019}, the first author introduced a notion of rank one
properly convex domains, a family that encompasses the Gromov
hyperbolic ones. An infinite order element $\gamma \in \Aut(\Omega)$
is called a \emph{rank one automorphism} if $\gamma$ acts by a
translation along a projective geodesic
$\ell_{\gamma} \subset \Omega$, and $\ell_{\gamma}$ is not contained
in any half triangle (see \cref{defn:half_triangle}). The results in
\cite{Islam2019} show that the axis $\ell_\gamma$ of a rank one
automorphism is always a Morse geodesic, and also characterize
rank-one automorphisms in terms of their eigenvalues. By combining
this with results in the present paper, we obtain the following more
complete description of the relationship between rank one
automorphisms and Morseness.
\begin{proposition}
 \label{prop:rank_one_equiv_morse}
 Suppose an infinite order element $\gamma \in \Aut(\Omega)$ acts by a
 translation along a projective geodesic
 $\ell_{\gamma} \subset \Omega$. Then the following are equivalent:
    \begin{enumerate}
        \item $\gamma$ is a rank one automorphism. 
        \item The geodesic $\ell_{\gamma}$ is Morse (equivalently,
          $\ell_\gamma$ is contracting).
    \end{enumerate}
    If, in addition, there is a discrete group $\Gamma \subseteq \Aut(\Omega)$
    such that $\Gamma$ divides $\Omega$ and $\gamma \in \Gamma$, then
    either of the above conditions is equivalent to:
\begin{enumerate}[resume]
\item $\gamma$ is biproximal, i.e. the matrix representing $\gamma$
  has unique eigenvalues of maximum and minimum modulus.
\end{enumerate}
\end{proposition}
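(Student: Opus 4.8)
The plan is to treat the three implications separately, leaning on the cited results from \cite{Islam2019} for the input concerning rank one automorphisms and using \Cref{thm:hilbert_morse_equals_contracting} to fold ``contracting'' into ``Morse'' at no cost. First, for (1) $\Rightarrow$ (2): this is essentially already proved in \cite{Islam2019}, where it is shown that the axis $\ell_\gamma$ of a rank one automorphism is a Morse geodesic; I would simply cite this, and then invoke \Cref{thm:hilbert_morse_equals_contracting} to note $\ell_\gamma$ is equivalently contracting. For (2) $\Rightarrow$ (1): I would argue the contrapositive. If $\gamma$ is \emph{not} rank one, then since $\gamma$ has infinite order and translates along a projective geodesic $\ell_\gamma$, the only way it fails to be rank one is that $\ell_\gamma$ is contained in a half-triangle $T \subset \overline{\Omega}$ (see \Cref{defn:half_triangle}). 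A half-triangle contains a $2$-dimensional flat (a properly embedded copy of a Euclidean half-plane, or at least a flat strip of infinite width in all directions along $\ell_\gamma$), and a geodesic lying in such a flat admits arbitrarily large quasi-geodesic ``detours'' within the flat with bounded quasi-geodesic constants — exactly as in $\Rb^2$ — so $\ell_\gamma$ cannot be $M$-Morse for any gauge $M$. I would make this precise by exhibiting, for each $D$, a $(1, a)$-quasi-geodesic (or a genuine geodesic of the Hilbert metric restricted to the triangle) between two points of $\ell_\gamma$ that wanders distance $D$ away; the Hilbert metric on the interior of a triangle is isometric to a normed plane (this is a standard fact, e.g.\ from the Birkhoff/Hilbert geometry of the simplex), which gives the needed flat behavior directly.

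For the third equivalence, assume in addition $\gamma \in \Gamma$ with $\Gamma$ dividing $\Omega$. The implication (1) $\Leftrightarrow$ (3) should come essentially from \cite{Islam2019}, where rank one automorphisms of divisible domains are characterized in terms of eigenvalues: I would cite that a rank one element of a dividing group is biproximal and conversely. If a cleaner self-contained route is wanted, I would argue (2) $\Rightarrow$ (3) using the singular-value machinery of this paper: by \Cref{thm:morse_implies_bdry_reg_and_unif_reg}, the sequence $\{\gamma^n\}$, which tracks the axis $\ell_\gamma$ (using cocompactness of the $\Gamma$-action, so $\{\gamma^n\}$ $R$-tracks $\ell_\gamma$ for $R = \diam(\Omega/\Gamma)$), is strongly uniformly $k$-regular for $k = 1$ and $k = d-1$. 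Strong uniform $1$-regularity of $\{\gamma^n\}$, combined with the fact that singular value gaps of powers of a fixed element grow linearly in $n$ with slopes given by $\log$-ratios of eigenvalue moduli (a consequence of the Jordan/Cartan decomposition and the standard asymptotic $\tfrac1n \mu_{i,j}(\gamma^n) \to \log|\lambda_i(\gamma)| - \log|\lambda_j(\gamma)|$), forces $|\lambda_1(\gamma)| > |\lambda_2(\gamma)|$, i.e.\ $\gamma$ is proximal; strong uniform $(d-1)$-regularity symmetrically forces $|\lambda_{d-1}(\gamma)| > |\lambda_d(\gamma)|$, so $\gamma$ is biproximal. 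Conversely, (3) $\Rightarrow$ (1) is again the eigenvalue characterization of rank one from \cite{Islam2019}, or can be derived from \Cref{thm:1_morse_ggt_morse} provided one knows $\Omega$ has exposed boundary along $\ell_\gamma$ — but since \Cref{thm:1_morse_ggt_morse} needs the exposed-boundary hypothesis globally, I would instead just cite \cite{Islam2019} for this direction to avoid an unwarranted assumption on $\Omega$.

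The main obstacle is the (2) $\Rightarrow$ (1) step: I need to be careful about what ``$\ell_\gamma$ is contained in a half-triangle'' gives me metrically. The cleanest handle is that the Hilbert metric restricted to the interior of a triangle $T$ is isometric to $(\Rb^2, \|\cdot\|_{\mathrm{hex}})$ with the hexagonal norm, so $\ell_\gamma$ sitting inside (the closure of) such a $T$ means there is a flat half-plane on at least one side of $\ell_\gamma$ — whence the non-Morse conclusion — but I must make sure that the containment $\ell_\gamma \subset T$ with $T \subset \overline{\Omega}$ really does produce a flat region of \emph{infinite} extent along $\ell_\gamma$ inside $\Omega$ itself (not merely in the boundary), which is where the precise definition of half-triangle and the fact that $\gamma$ translates along $\ell_\gamma$ with $\ell_\gamma$ a bi-infinite line in $\Omega$ both get used. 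One subtlety to flag: I should confirm that \cite{Islam2019} indeed proves both directions of the eigenvalue characterization for elements of dividing groups, or else reconstruct the missing direction via the singular-value argument above, being mindful not to silently import the exposed-boundary hypothesis of \Cref{thm:1_morse_ggt_morse}.
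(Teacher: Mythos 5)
Your treatment of (1)$\Rightarrow$(2) and of (1)$\Leftrightarrow$(3) matches the paper: both are quotations of \cite{Islam2019} (Propositions 1.12 and 6.8 respectively), and folding ``contracting'' into ``Morse'' via \Cref{thm:hilbert_morse_equals_contracting} is harmless. The problem is your (2)$\Rightarrow$(1) step. You reduce to showing that a projective geodesic $\ell_\gamma$ contained in a half-triangle cannot be Morse, and you propose to deduce this from the fact that the Hilbert metric on the interior of a triangle is isometric to a normed (hexagonal) plane. That fact holds for a \emph{properly embedded} triangle, but a half-triangle is by definition a triangle with exactly two edges in $\partial \Omega$; its interior is an open subset of $\Omega$ on which the ambient metric $\hil$ is only \emph{dominated by} --- not equal to, and not a priori quasi-isometric to --- the intrinsic triangle metric. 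The paper explicitly flags this pitfall right after \Cref{defn:half_triangle}: a half-triangle with the restricted Hilbert metric is \emph{not} necessarily isometric to a half-space in a properly embedded triangle. So the ``flat half-plane on one side of $\ell_\gamma$'' you want need not exist, and the $\Rb^2$-style detour argument does not go through as stated.

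The paper's actual route, and where the real work lies, is \Cref{cor:morse_not_in_segment}: the endpoint of a Morse projective geodesic cannot lie even in the \emph{closure} of a nontrivial boundary segment, which immediately rules out $\ell_\gamma$ being the open edge of a half-triangle. That corollary rests on \Cref{lem:morse_not_in_halftri}, whose proof replaces the flatness heuristic with broken geodesics (genuine geodesics by \Cref{fact:nonunique_geodesics}, exploiting precisely the two boundary edges of the half-triangle), Cordes' key lemma (\Cref{lem:cordes_lemma}), and the face-metric estimate \Cref{lem:hilbert_face_metric}, together with the conically-related-points machinery to reduce to a maximal boundary segment. If you keep your contrapositive structure, cite \Cref{cor:morse_not_in_segment} (or reprove \Cref{lem:morse_not_in_halftri}) instead of appealing to flatness. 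Your singular-value alternative for (2)$\Rightarrow$(3) is plausible but unnecessary, and your caution about not importing the exposed-boundary hypothesis of \Cref{thm:1_morse_ggt_morse} is well taken.
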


\begin{proof}\ (1)$\implies$(2) is \cite[Proposition
  1.12]{Islam2019}. (2)$\implies$(1) follows from the results in
  \cref{sec:morse_contracting} of this paper: since $\ell_\gamma$ is
  Morse, \cref{cor:morse_not_in_segment} implies that the endpoints of
  $\ell_\gamma$ cannot lie in the closure of a non-trivial projective
  line segment in $\bdry$. Thus $\ell_\gamma$ is not contained in any
  half triangle.
    
      Finally, (1)$\iff$(3) is \cite[Proposition 6.8]{Islam2019}.
\end{proof}

If $\Gamma \subseteq \Aut(\Omega)$ is a discrete subgroup that
contains a rank one automorphism, then we say that the pair
$(\Omega,\Gamma)$ is \emph{rank one}. \cref{prop:rank_one_equiv_morse}
says that $(\Omega,\Gamma)$ is rank one if and only if $\Omega$ has a
Morse geodesic that projects to a closed geodesic in $\Omega/\Gamma$;
it follows from rank-rigidity (see below) that this occurs if and only
if $\Omega$ contains any Morse geodesic ray. A main result of
\cite{Islam2019} is that, when $(\Omega,\Gamma)$ is rank one and
$\Gamma$ divides $\Omega$, then $\Gamma$ has many rank one
automorphisms and in fact $\Gamma$ is an acylindrically hyperbolic
group. That is to say that $\Gamma$ has a `nice' action on some
(possibly non-proper) Gromov hyperbolic metric space, although
$(\Omega,\hil)$ itself may not be Gromov hyperbolic.

On the other hand, in \cite{Z2020}, Zimmer proved a rank rigidity
theorem for properly convex domains. This result implies that if
$\Gamma$ does not contain any rank one automorphisms, then either
$\Omega$ is reducible (meaning a cone over $\Omega$ splits as a
product of cones) or else $\Omega$ and $\Gamma$ are very restricted:
$\Omega$ must be a projective model for the Riemannian symmetric space
$G/K$ for a simple Lie group $G$ with rank at least two, and $\Gamma$
is isomorphic to a uniform lattice in $G$. Taken together, the results
in \cite{Islam2019} and \cite{Z2020} indicate that a ``generic''
divisible domain contains many projective geodesics that point in
``hyperbolic" directions.

\subsection{Comments on the proofs}

The proof of \cref{thm:morse_implies_bdry_reg_and_unif_reg} (our first
main theorem) relies on two key ingredients. The first is a
``straightness'' lemma (\Cref{lem:geodesic_additivity}) that does not
rely on Morseness at all---it holds for any sequence $\{\gamma_n\}$
tracking a projective geodesic. The lemma says that for any three
elements $\gamma_i, \gamma_j, \gamma_k$ in the tracking sequence, with
$i < j < k$, the gap $\mu_{1,2}(\gamma_i^{-1}\gamma_k)$ is coarsely
bounded below by the sum
$\mu_{1,2}(\gamma_i\gamma_j^{-1}) +
\mu_{1,2}(\gamma_j^{-1}\gamma_k)$. In particular, this implies that
$\mu_{1,2}(\gamma_n)$ is coarsely nondecreasing as a function of $n$,
which is \emph{not} a property satisfied by arbitrary quasi-geodesic
sequences in $\PGLdR$. We also remark that this ``straightness''
property does \emph{not} require any assumption on the regularity of
the sequence $\{\gamma_n\}$, which is critical for a later application
in the proof of \Cref{thm:not_strongly_uniform}.

The second ingredient in the proof of
\Cref{thm:morse_implies_bdry_reg_and_unif_reg} relies crucially on
$M$-Morseness, see \cref{lem:morse_geodesic_uniform_gap}. This lemma
shows that Morseness forces growth in $\mu_{1,2}$ as one shadows a
$M$-Morse geodesic for a sufficiently long time.  The proof of
\cref{thm:morse_implies_bdry_reg_and_unif_reg} then follows by a
telescoping argument splitting up the Morse geodesic into pieces with
sufficiently large $\mu_{1,2}$ growth; see
\cref{prop:morse_geodesic_gap_growth}.

\subsection{Outline of the paper}

The first part of this paper focuses mainly on the relationship
between the coarse metric geometry and projective geometry of a convex
projective domain. We provide some background about convex projective
geometry in \Cref{sec:preliminaries}. In \Cref{sec:morse_contracting},
we give several projective geometric characterizations of Morse
geodesics in Hilbert geometry, prove
\Cref{thm:hilbert_morse_equals_contracting}, and sketch the proof of
\Cref{thm:morse_local_to_global}. This section also introduces the
notion of \emph{conically related} pairs of points in the boundary of
a pair of convex projective domains, which is an important ingredient
in the proof of \Cref{thm:1_morse_ggt_morse}.

The next part of the paper focuses more on the linear algebraic
viewpoint. In \Cref{sec:sv_estimates}, we prove singular value
estimates along sequences $\{\gamma_n\}$ in $\PGL(d, \Rb)$ that tracks
a projective geodesic; in particular, we prove the ``straightness''
Lemma \ref{lem:geodesic_additivity} alluded to previously. Then in
\Cref{sec:sv_morse}, we use results from Section
\ref{sec:sv_estimates} (and also \cref{sec:morse_contracting}) to
prove the relationship between Morse geodesics and strongly uniformly
regular sequences, as described by
\Cref{thm:morse_implies_bdry_reg_and_unif_reg} and
\Cref{thm:1_morse_ggt_morse}.

In \Cref{sec:boundary_regularity_sv} we consider $C^\alpha$ regularity
and $\beta$-convexity of the boundary of a convex projective domain,
and prove \Cref{thm:regularity_equiv_regularity}. Finally, we
construct the counterexample described by
\Cref{thm:not_strongly_uniform} in \Cref{sec:morse_counterexample}.

\subsection{Acknowledgments}

The first author was partially supported by DFG Emmy Noether project 427903332 and DFG  project 338644254 (SPP 2026). 
The second author was partially supported by NSF grant DMS-2202770. The second author thanks Heidelberg University for hospitality where a part of the work was completed. We thank the anonymous referees for their comments and suggestions.
\section{Preliminaries}
\label{sec:preliminaries}

\subsection{Notation}

We standardize some notation for the entire paper. If $(X,d)$ is a
metric space, $A \subseteq X$, and $r>0$, then we denote the (open)
metric $r$-neighborhood of $A$ by
\[
  N^{X}_r(A):=\{ x \in X : d(x,A)<r \}.
\]
If $X$ is clear from context, we will simply write
$N^X_r(A) = N_r(A)$. If $A=\{x\}$, then we will use the notation
$B_r(x)$ to denote the metric $r$-ball $N_r(\{x\})$.

\subsubsection{Projective space}

When $V$ is a real vector space, we let $\Pb(V)$ denote the
projectivization of $V$, i.e. the space of 1-dimensional vector
subspaces of $V$. If $v$ is a nonzero vector in $V$ then $[v]$ denotes
the point in $\Pb(V)$ given by the span of $v$.

If $U \subseteq V$ is a subset, then $\Pb(U)$ denotes the image of
$U \minus \{0\}$ under the projectivization map $v \mapsto [v]$. If
$U$ is a vector subspace of $V$, this notation identifies the
projective space $\Pb(U)$ as a subset of $\Pb(V)$ (a \emph{projective
  subspace}). We will \emph{never} implicitly identify a vector
subspace $W \subseteq V$ with the corresponding projective subspace
$\Pb(W)$. If $P$ is a projective subspace, equal to $\Pb(W)$ for
$W \subseteq V$, we write $W = \lift{P}$.

If $U \subseteq V$ then the \emph{projective span} of $U$ is the
projective subspace $\Pspan{U} := \Pb(\Span(U))$. Similarly if
$P \subset \Pb(V)$, the projective span of $P$ is
$\Pspan{P} := \Pb(\Span(\lift{P}))$, where $\lift{P}$ is a lift of $P$
in $V$.

We let $\Pd(V)$ denote the space of codimension-1 subspaces of $V$. If
$W \in \Pd(V)$, then the projective subspace $\Pb(W) \subset \Pb(V)$
is a \emph{projective hyperplane}.

When $V = \Rb^d$, we have \emph{projective coordinates} on projective
space $\Pb(\Rb^d)$ defined in terms of the standard basis: the
notation $[x_1 : \ldots : x_d]$ denotes the projectivization of the
vector $(x_1, \ldots x_d)$.

\subsection{Properly convex domains}

In this section, we give some reminders about convex projective geometry. 
For a set $X \subset \Pb(\Rb^d)$, we denote by $\overline{X}$  the closure of $X$ in the
subspace topology induced from $\Pb(\Rb^d)$.

\begin{definition}
  A subset $\widetilde{\Omega} \subset \Rb^d$ is a \emph{convex cone} if
  it is convex, nonempty, and closed under multiplication by positive
  scalars. If $\widetilde{\Omega} \subset \Rb^d$ is a convex cone, we say
  that its projectivization $\Omega \subset \Pb(\Rb^d)$ is a
  \emph{properly convex domain} if $\Omega$ is open and
  $\overline{\Omega}$ does not contain any projective line in
  $\Pb(\Rb^d)$ (equivalently, if $\overline{\Omega}$ is a bounded
  convex subset of some affine chart in $\Pb(\Rb^d)$).
\end{definition}

The \emph{boundary} of a properly convex domain 
$\Omega$ is its topological boundary
$\dee \Omega:=\overline{\Omega}-\Omega$. Note that $\overline{\Omega}$
is topologically a closed ball and $\dee \Omega$ is homeomorphic to
the boundary of this ball. A \emph{supporting hyperplane} of a convex projective domain $\Omega$
is a projective hyperplane in $\Pb(\Rb^d)$ which intersects
$\overline{\Omega}$, but not $\Omega$.

If $x,y \in \Pb(\Rb^d)$ is a pair of distinct points, then
$\Pspan{x,y}$ is a projective line that contains both of
them. However, there does not exist a canonical notion of a projective
line segment joining $x$ and $y$ in general. But in the presence of a
properly convex domain $\Omega$ such that
$x, y \in \overline{\Omega}$, we can make a canonical choice.

For $x, y \in \Omega$, we say that the \emph{open projective line
  segment} joining $x$ and $y$ is the unique connected component of
$\Pspan{x,y}-\{x,y\}$ that is contained entirely in
$\overline{\Omega}$. We denote this by $(x,y)$. The \emph{projective
  line segment} joining $x$ and $y$, denoted by $[x,y]$, is the
closure of $(x,y)$ in $\Omega$. We will use the notation
$[x,y):=[x,y]-\{y\}$ and $(x,y]:=[x,y]-\{x\}$. Finally, if $x=y$, we
define $[x,y]=\{x\}$ while $(x,y)=\emptyset.$ Often, we will also
refer to projective line segments as projective geodesics, as we
explain below in \cref{sec:hil_metric}.

\begin{figure}[h]
  \centering
\begingroup%
  \makeatletter%
  \providecommand\color[2][]{%
    \errmessage{(Inkscape) Color is used for the text in Inkscape, but the package 'color.sty' is not loaded}%
    \renewcommand\color[2][]{}%
  }%
  \providecommand\transparent[1]{%
    \errmessage{(Inkscape) Transparency is used (non-zero) for the text in Inkscape, but the package 'transparent.sty' is not loaded}%
    \renewcommand\transparent[1]{}%
  }%
  \providecommand\rotatebox[2]{#2}%
  \newcommand*\fsize{\dimexpr\f@size pt\relax}%
  \newcommand*\lineheight[1]{\fontsize{\fsize}{#1\fsize}\selectfont}%
  \ifx\svgwidth\undefined%
    \setlength{\unitlength}{134.50390673bp}%
    \ifx\svgscale\undefined%
      \relax%
    \else%
      \setlength{\unitlength}{\unitlength * \real{\svgscale}}%
    \fi%
  \else%
    \setlength{\unitlength}{\svgwidth}%
  \fi%
  \global\let\svgwidth\undefined%
  \global\let\svgscale\undefined%
  \makeatother%
  \begin{picture}(1,0.97812751)%
    \lineheight{1}%
    \setlength\tabcolsep{0pt}%
    \put(0,0){\includegraphics[width=\unitlength,page=1]{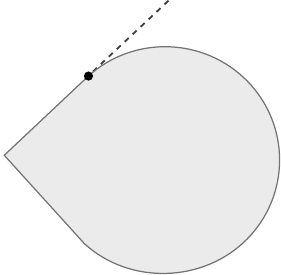}}%
    \put(0.29795715,0.62123421){\color[rgb]{0,0,0}\makebox(0,0)[lt]{\lineheight{1.25}\smash{\begin{tabular}[t]{l}$x$\end{tabular}}}}%
    \put(0.4111134,0.87508906){\color[rgb]{0,0,0}\makebox(0,0)[lt]{\lineheight{1.25}\smash{\begin{tabular}[t]{l}$H$\end{tabular}}}}%
    \put(0,0){\includegraphics[width=\unitlength,page=2]{exposed_face_correct.pdf}}%
  \end{picture}%
\endgroup%

  \caption{The point $x$ is a $C^1$ extreme point and $H$ is the
    unique supporting hyperplane at $x$. Here $F_{\Omega}(x)=\{x\}$
    is a face, but not an exposed face.}
  \label{fig:exposed_face}
\end{figure}

A \emph{face} of $\Omega$ is an equivalence class in $\dee \Omega$ of
the relation $\sim$, where $x \sim y$ if there is an \emph{open}
projective segment in $\dee \Omega$ containing $x$ and $y$.
\begin{definition}[Exposed boundary]
  \label{defn:exposed_boundary}
  We say that a face $F \subset \dee \Omega$ is \emph{exposed} if
  there is a supporting hyperplane $H$ of $\Omega$ whose intersection
  with $\dee \Omega$ is precisely $\overline{F}$; see \cref{fig:exposed_face}. A point $x \in \dee \Omega$ is \emph{exposed} if it lies in an
  exposed face. We say that $\Omega$ has \emph{exposed boundary} if
  every point in $\dee \Omega$ is exposed.
\end{definition}

Note that every known example of a convex divisible domain has exposed
boundary. However, it is still unknown whether this property holds for
every convex divisible domain.

\subsection{The Hilbert metric}
\label{sec:hil_metric}

If $\Omega$ is properly convex, the \emph{automorphism group}
$\Aut(\Omega) \subset \PGLdR$ is the group of projective transformations preserving
$\Omega$. One can always define an $\Aut(\Omega)$-invariant metric
$\hil$ on $\Omega$, called the \emph{Hilbert metric}, as follows. Fix a projective cross-ratio $[\cdot, \cdot ; \cdot, \cdot]$ on $\Rb\Pb^1$. We choose the cross-ratio  given by 
\begin{align*}
    [a, b ; x, y] = \frac{|a - y| \cdot |b - x|}{|a - x| \cdot
      |b - y|},
  \end{align*}
  where $|u - v|$ is the distance measured using any Euclidean metric on
  an affine chart of $\Rb\Pb^1$ containing $u,v$; the choice of chart and metric
  does not matter.

\begin{definition}
  Let $\Omega$ be a properly convex domain. For any $x,y \in \Omega$, let $a,b \in \partial \Omega$ be such that $x,y\in (a,b)$ and  the four points are arranged in the order $a,x,y,b$ along the projective line segment $[a,b]$. The distance between $x, y \in \Omega$ in the \emph{Hilbert metric} is defined as 
  \[
    \hil(x,y) = \frac{1}{2}\log[a, b; x, y].
  \]
\end{definition}

The pair $(\Omega, \hil)$ is always a proper geodesic metric space, on
which $\Aut(\Omega)$ acts by isometries. This ensures that the action
of $\Aut(\Omega)$ on $\Omega$ is always proper. When $\Omega$ is an
ellipsoid, then this metric space is isometric to $(d-1)$-dimensional
hyperbolic space; thus Hilbert geometry is a strict generalization of
hyperbolic geometry.

A projective line segment $[x,y]$ that lies in $\Omega$ (instead of lying entirely in $\dee \Omega$) is a geodesic for the metric $\hil$. Hence, we will call $[x,y]$ the  \emph{projective geodesic segment} joining $x$ and $y$. In the same vein, if $x,y \in \overline{\Omega}$, we call $(x,y)$ a \emph{projective geodesic}. Note that a projective geodesic may be infinite or bi-infinite and, wherever necessary, we will emphasize this by using specific terminology. If $x \in \Omega$ and $y \in \bdry$, then we will call $[x,y)$ (also $(x,y)$) a \emph{projective geodesic ray}. If $x,y \in \bdry$ but $(x,y) \subset \Omega$, we will call $(x,y)$ a \emph{bi-infinite projective geodesic} (or a \emph{projective geodesic line}).  

A projective geodesic segment, however, is often not the only geodesic
joining points $x, y \in \Omega$. One can easily verify the following:
\begin{fact}[Characterizing geodesics]
  \label{fact:nonunique_geodesics}
  For pairwise distinct points $w_1, w_2, w_3 \in \Omega$, we have
  \[
    \hil(w_1, w_2) = \hil(w_1, w_3) + \hil(w_3, w_2)
  \]
  if and only if there are segments $[y, y']$ and $[z, z']$ in
  $\dee \Omega$ such that $y, w_1, w_3, z$ and $y', w_3, w_2, z'$ are
  aligned in that order.
\end{fact}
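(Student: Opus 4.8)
The plan is to argue via the (asymmetric) Funk metric. For $p,q \in \Omega$ write $F_\Omega(p,q) = \log\frac{|p-b|}{|q-b|}$, where $b \in \partial\Omega$ is the point on the ray from $p$ through $q$ lying beyond $q$; then $\hil(p,q) = \frac{1}{2}\big(F_\Omega(p,q) + F_\Omega(q,p)\big)$, which agrees with the cross-ratio definition of $\hil$ above. After fixing an affine chart containing $\overline{\Omega}$, I would first establish the ``dual'' formula
\[
  F_\Omega(p,q) = \sup_{H} \log\frac{d(p,H)}{d(q,H)},
\]
the supremum taken over supporting hyperplanes $H$ of $\Omega$, together with the sharp statement that, for $p \ne q$, a supporting hyperplane $H$ attains the supremum if and only if $b \in H$. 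This is elementary: if $H$ meets the line $\overline{pq}$ at a point $b'$, then $\frac{d(p,H)}{d(q,H)} = \frac{|p-b'|}{|q-b'|}$ by similar triangles, and since $H$ is disjoint from $\Omega$ the point $b'$ cannot lie in the segment $[p,b)$; a short case check (according to whether $b'$ lies beyond $b$ or on the far side of $p$) then gives $\frac{|p-b'|}{|q-b'|} \le \frac{|p-b|}{|q-b|}$, with equality exactly when $b' = b$.

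From this I would read off the equality case of the Funk triangle inequality. Writing $g_H(p,q) = \log\frac{d(p,H)}{d(q,H)}$, the identity $g_H(p,r) = g_H(p,q) + g_H(q,r)$ gives $F_\Omega(p,r) \le F_\Omega(p,q) + F_\Omega(q,r)$; moreover the left-hand supremum is attained by any supporting hyperplane through the boundary point $b_{p,r}$, so equality forces such a hyperplane $H_0$ to satisfy $g_{H_0}(p,q) = F_\Omega(p,q)$ and $g_{H_0}(q,r) = F_\Omega(q,r)$, hence $b_{p,q}, b_{q,r} \in H_0$ by the sharp clause. Conversely, if a supporting hyperplane $H$ contains $b_{p,q}$ and $b_{q,r}$, then $g_H(p,q) = F_\Omega(p,q)$ and $g_H(q,r) = F_\Omega(q,r)$, so $F_\Omega(p,r) \ge g_H(p,r) = F_\Omega(p,q) + F_\Omega(q,r)$. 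Thus $F_\Omega(p,r) = F_\Omega(p,q) + F_\Omega(q,r)$ if and only if $\Omega$ has a supporting hyperplane passing through both $b_{p,q}$ and $b_{q,r}$.

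To conclude, I would apply the Funk inequality along $w_1 \to w_3 \to w_2$ and along $w_2 \to w_3 \to w_1$ and add; this shows $\hil(w_1,w_2) = \hil(w_1,w_3) + \hil(w_3,w_2)$ if and only if \emph{both} Funk inequalities are equalities, equivalently (by the previous step) there exist supporting hyperplanes $H_1$ through $b_{w_1,w_3}$ and $b_{w_3,w_2}$, and $H_2$ through $b_{w_2,w_3}$ and $b_{w_3,w_1}$. Unwinding the definitions, $b_{w_3,w_1}, w_1, w_3, b_{w_1,w_3}$ lie in that order along the chord of $\overline{\Omega}$ through $w_1, w_3$, and $b_{w_2,w_3}, w_3, w_2, b_{w_3,w_2}$ lie in that order along the chord through $w_3, w_2$; so with $y = b_{w_3,w_1}$, $z = b_{w_1,w_3}$, $y' = b_{w_2,w_3}$, $z' = b_{w_3,w_2}$ we get $[z,z'] \subset H_1 \cap \overline{\Omega} \subset \partial\Omega$ and $[y,y'] \subset H_2 \cap \overline{\Omega} \subset \partial\Omega$, which is exactly the stated configuration. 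For the converse, the alignment conditions identify $y,z,y',z'$ with these same boundary points, and a segment contained in $\partial\Omega$ lies inside any supporting hyperplane at one of its relative interior points (an affine function that is nonnegative on an interval and vanishes at an interior point is identically zero), supplying $H_1$ and $H_2$.

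The substantive content is the supporting-hyperplane formula for $F_\Omega$ together with its sharp equality clause; everything afterwards is formal, requiring only careful bookkeeping to match each $b_{p,q}$ with the correct one of $y,z,y',z'$, and a separate (trivial) treatment of the degenerate case where $w_1,w_3,w_2$ are collinear: then either $w_3 \in (w_1,w_2)$, where the claimed identity is just the additivity of $\hil$ along a projective segment and the ``segments'' $[y,y']$, $[z,z']$ collapse to the endpoints of the common chord, or $w_3 \notin [w_1,w_2]$, in which case the identity fails and there is nothing to prove. (In particular, no reduction to a two-dimensional section of $\Omega$ is needed.)
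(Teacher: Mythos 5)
Your argument is correct and complete. Note that the paper itself offers no proof of this Fact --- it is stated with ``one can easily verify'' and left to the reader --- so there is no in-paper argument to compare against; what you have written is essentially the standard proof via the Funk weak metric and its supporting-hyperplane (dual) formula, as found for instance in Foertsch--Karlsson or the Handbook of Hilbert Geometry. The two load-bearing steps both check out: the sharp equality clause in $F_\Omega(p,q)=\sup_H \log\bigl(d(p,H)/d(q,H)\bigr)$ is correct (the one-variable case check on the line through $p,q$ shows the ratio is maximized exactly at $b_{p,q}$ among points outside the open chord, and parallel hyperplanes give ratio $1<e^{F_\Omega(p,q)}$ since $p\neq q$), and the symmetrization trick --- adding the two Funk triangle inequalities $w_1\to w_3\to w_2$ and $w_2\to w_3\to w_1$ --- correctly reduces equality for $\hil$ to simultaneous equality for both Funk inequalities. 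Your bookkeeping matching $b_{w_1,w_3},b_{w_3,w_2}$ to $z,z'$ and $b_{w_3,w_1},b_{w_2,w_3}$ to $y,y'$ agrees with the alignment conventions in the statement, and the converse direction is supplied by the (correct) observation that a boundary segment is contained in any supporting hyperplane at a relative interior point, with degenerate segments only arising in the collinear case you treat separately. One small merit of your route worth keeping: by working with supporting hyperplanes of $\Omega$ itself rather than restricting to the $2$-plane spanned by $w_1,w_2,w_3$, you avoid having to compare supporting lines of a planar section with supporting hyperplanes of the ambient domain.
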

\Cref{fact:nonunique_geodesics} implies that if $\Omega$ is a \emph{strictly} convex domain (i.e. if
there are no nontrivial projective segments in $\dee \Omega$), then
every geodesic in $\Omega$ is projective.

\subsection{Finer metric properties of $\hil$}

As mentioned in the introduction to this paper, the metric space
$(\Omega, \hil)$ is typically not a CAT(0) space, and in fact the
Hilbert metric often fails to satisfy some of the strong convexity
properties enjoyed by general CAT(0) metrics. However, the Hilbert
metric does satisfy a weak convexity property called the \emph{maximum
  principle.}
\begin{lemma}[{Maximum principle; see \cite[Corollary 1.9]{CLT2015}}]
  \label{lem:maximum_principle} If $C$ is a closed convex set in a
  properly convex domain $\Omega$, then for every compact subset
  $K \subset \Omega$, the function $K \to \Rb_{\ge 0}$ given by
  \[
    x \mapsto \hil(x, C)
  \]
  attains its maximum at an extreme point of $K$.
\end{lemma}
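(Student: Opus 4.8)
The plan is to reduce to the case where $K$ is a line segment, and then to analyze the function $t \mapsto \hil(x_t, C)$ along that segment using the explicit cross-ratio formula for $\hil$. First I would observe that the function $x \mapsto \hil(x, C)$ is continuous on the compact set $K$, so it attains its maximum somewhere; the content is that the maximum \emph{can} be achieved at an extreme point. Since $K$ is a convex body in $\Omega$, by the Krein--Milman theorem $K$ is the closed convex hull of its extreme points, and every point of $K$ lies on a segment whose endpoints can be pushed toward extreme points of $K$. So it suffices to show that along any projective segment $[p, q] \subset \Omega$, the function $t \mapsto \hil(c(t), C)$ (where $c$ parametrizes $[p,q]$) attains its maximum at an endpoint — equivalently, that this function is \emph{quasiconvex}, i.e. has no strict interior local maximum, or more precisely that its sublevel sets $\{t : \hil(c(t), C) \le r\}$ are intervals (hence that the set $\{x \in \Omega : \hil(x, C) \le r\} = N_r(C) \cup C$ is convex in $\Omega$).

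This recasts the lemma as the statement: \emph{for $C \subset \Omega$ closed and convex and $r \ge 0$, the closed $r$-neighborhood of $C$ in $(\Omega, \hil)$ is convex.} To prove this I would take two points $x_0, x_1$ with $\hil(x_i, C) \le r$, pick $y_i \in C$ realizing (or nearly realizing) the distance, and show that for any $x_t$ on the segment $[x_0, x_1]$ there is a point $y_t \in [y_0, y_1] \subseteq C$ with $\hil(x_t, y_t) \le r$. The natural candidate is to take $y_t$ to be the intersection of $C$ (or of $[y_0,y_1]$) with an appropriate line, chosen so that $x_0, x_1, y_0, y_1, x_t, y_t$ sit in a controlled planar configuration. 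The key computational input is the monotonicity of the cross-ratio: if one degenerates the planar picture to a "comparison quadrilateral" (sending the relevant supporting lines of $\Omega$ to two fixed lines), the Hilbert distance only decreases, and then one is reduced to a cross-ratio estimate in a simplex or a strip, where convexity of the $r$-ball is a direct computation (indeed in a triangle and in a "band" the Hilbert $r$-neighborhood of a segment is visibly convex). Alternatively — and this may be cleaner — I would invoke the known fact that $(\Omega, \hil)$ satisfies the property that balls are convex and, more generally, that the distance function to a convex set is quasiconvex along projective lines; this follows from the fact that $\Omega$ embeds isometrically-up-to-bilipschitz considerations aside, the one-dimensional slices reduce to cross-ratio monotonicity.

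The main obstacle is precisely the convexity-of-neighborhoods step: unlike in CAT(0) spaces, where $x \mapsto d(x, C)$ is genuinely convex, in Hilbert geometry it is merely quasiconvex, so one cannot argue by convexity of the function directly and must instead exploit the projective/cross-ratio structure. Concretely, the delicate point is choosing the comparison point $y_t \in C$ and the auxiliary lines so that the cross-ratio defining $\hil(x_t, y_t)$ is dominated by a convex combination (in the appropriate multiplicative sense) of the cross-ratios defining $\hil(x_0, y_0)$ and $\hil(x_1, y_1)$; this is where the cited Cooper--Long--Tillmann result \cite{CLT2015} does the real work, and I would follow their argument, the upshot being that the sublevel sets of $x \mapsto \hil(x,C)$ are convex subsets of $\Omega$. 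Granting that, the lemma follows: restricting the continuous function $x \mapsto \hil(x,C)$ to $K$, its maximum $r^* = \max_K \hil(\cdot, C)$ is attained, and the set $\{x \in K : \hil(x, C) = r^*\}$ is the intersection of $K$ with the complement of the open convex set $\{x : \hil(x,C) < r^*\}$; any point of $K$ not on an extreme point lies in the relative interior of a segment in $K$ along which $\hil(\cdot, C) \le r^*$ cannot be constantly $r^*$ unless the endpoints also attain $r^*$ (by quasiconvexity the max on a segment is at an endpoint), so pushing outward along extreme directions we find an extreme point of $K$ where the value $r^*$ is still attained.
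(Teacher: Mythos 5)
The paper offers no proof of this lemma at all---it is quoted directly from Cooper--Long--Tillmann---so there is nothing to compare against except the citation, and your reduction is the standard one: quasiconvexity of $x \mapsto \hil(x,C)$ (equivalently, convexity of closed metric neighborhoods of convex sets) is precisely the content of the cited result, and deferring that cross-ratio comparison to \cite{CLT2015} is exactly what the authors do. One small repair: since $K$ is merely compact, not convex, a non-extreme point of $K$ need not lie in the interior of a segment \emph{contained in $K$}; you should instead run the quasiconvexity/Bauer argument on $\mathrm{conv}(K)\subset\Omega$ and use that the extreme points of $\mathrm{conv}(K)$ belong to $K$, so the maximum over $\mathrm{conv}(K)$ (which dominates, hence equals, the maximum over $K$) is attained at an extreme point of $K$.
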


It is also true that when $C$ is a convex subset of a convex
projective domain $\Omega$, the nearest-point projection map
$\Omega \to C$ is not always well-defined. One can still define a
\emph{set-valued} nearest-point projection map $\pi_C:\Omega \to 2^C$,
but this map is not necessarily continuous with respect to Hausdorff
distance on $2^C$. However, using the maximum principle, one can see
that the nearest-point projection map onto a projective geodesic in
$\Omega$ always maps convex sets to connected sets:
\begin{lemma}
  \label{lem:projection_connected}
  Let $\ell$ be a projective geodesic in a properly convex domain
  $\Omega$, and let $\pi_\ell:\Omega \to 2^\ell$ denote the set-valued
  nearest-point projection map, i.e. the map
  \[
    \pi_\ell(x) = \{y \in \ell : \hil(x, y) = \hil(x, \ell)\}.
  \]
  If $A \subset \Omega$ is convex, then $\pi_\ell(A)$ is connected.
\end{lemma}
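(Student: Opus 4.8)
The plan is to parametrize the projective geodesic $\ell$ by arc length and to show that the image $\pi_\ell(A)$ is an interval, using convexity of $A$ together with the maximum principle (\Cref{lem:maximum_principle}) to control how the projection set moves as we vary the point in $A$. First I would fix an affine chart in which $\overline{\Omega}$ is a bounded convex body, and identify $\ell$ with an (open) line segment in that chart, parametrized as $c:I \to \Omega$ for an interval $I \subseteq \Rb$. For each $x \in \Omega$, the set $\pi_\ell(x) = c(J_x)$ is a compact connected subinterval $J_x \subseteq I$: connectedness of $J_x$ follows from \Cref{lem:projection_connected} applied to the singleton $A = \{x\}$ — or, since this is precisely the content of that lemma for a point, one can instead observe directly that the function $t \mapsto \hil(c(t), x)$ is quasiconvex on $I$ (its sublevel sets $\{t : \hil(c(t),x) \le r\}$ are intersections of $\ell$ with the closed Hilbert ball $\overline{B_r(x)}$, which is convex, hence an interval), so its minimum set is an interval.

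The core of the argument is then to show that $\bigcup_{x \in A} J_x$ is an interval in $I$. Suppose $x, y \in A$ and let $s \in J_x$, $t \in J_y$ with, say, $s < t$. I want to show every $u \in [s,t]$ lies in $J_z$ for some $z$ on the segment $[x,y] \subseteq A$. Consider the function
\[
  z \mapsto c^{-1}\big(\text{some chosen point of }\pi_\ell(z)\big)
\]
along $[x,y]$; the difficulty is that $\pi_\ell$ is set-valued and a priori not continuous, so one cannot naively invoke the intermediate value theorem. The way around this is to work with the lower and upper endpoints of the interval $J_z$. Define $a(z) = \min J_z$ and $b(z) = \max J_z$ as $z$ ranges over the segment $[x,y]$. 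I would show that $a$ is upper semicontinuous and $b$ is lower semicontinuous along $[x,y]$ (these follow from the fact that $\hil$ is continuous and that a limit of nearest points is a nearest point, i.e. $\pi_\ell$ has closed graph, since $\ell$ is a geodesic and closest-point sets are closed; semicontinuity in the right direction comes from the closed-graph property plus compactness of $\overline{\ell}$ in the chart together with the maximum principle ruling out escape of mass). Combined with $a(x) \le s$, $b(y) \ge t$, and the elementary fact that $a(z) \le b(z)$ for all $z$, an intermediate-value-type argument for semicontinuous functions (or a direct connectedness argument: the set $\{z \in [x,y] : u \in J_z\}$ is closed, and its complement in $[x,y]$ splits into $\{z : b(z) < u\}$ and $\{z : a(z) > u\}$, which are disjoint open sets not covering $[x,y]$ since $x$ lies in the closure of the first or its complement and $y$ in the other) shows some $z \in [x,y]$ has $u \in J_z$, i.e. $c(u) \in \pi_\ell(z) \subseteq \pi_\ell(A)$.

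The step I expect to be the main obstacle is establishing the correct semicontinuity of the endpoint functions $a(z), b(z)$ — equivalently, ruling out the possibility that as $z_n \to z$ the projection intervals $J_{z_n}$ jump discontinuously past a point of $J_z$. Here is exactly where the convexity of $\Omega$ (and hence convexity of Hilbert balls and the maximum principle of \Cref{lem:maximum_principle}) is essential: it guarantees that the distance function $\hil(\cdot, \ell)$ restricted to the compact segment $[x,y]$ is continuous and that nearest-point sets vary semicontinuously, so no such jump can occur without the limit point also being a nearest point. Once this semicontinuity is in hand, the connectedness of $\pi_\ell(A) = c\big(\bigcup_{z} J_z\big)$ follows from the connectedness of $[x,y]$ by the topological argument sketched above, and since every pair of points of $\pi_\ell(A)$ is joined within $\pi_\ell(A)$ by such an image, $\pi_\ell(A)$ is an interval, hence connected.
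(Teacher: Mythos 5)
Your overall strategy is sound and genuinely different from the paper's, but there is one directional slip you must fix. You claim that $a(z)=\min J_z$ is upper semicontinuous and $b(z)=\max J_z$ is lower semicontinuous; what the closed-graph property of $\pi_\ell$ (together with the compactness argument keeping nearest points of $z_n$ in a fixed compact part of $\ell$) actually yields is the opposite: if $z_n \to z$ and $a(z_{n_k}) \to \alpha$ along a subsequence realizing the $\liminf$, then $c(\alpha)\in\pi_\ell(z)$, so $\alpha \ge a(z)$; hence $a$ is \emph{lower} semicontinuous, and symmetrically $b$ is \emph{upper} semicontinuous. The directions you wrote down amount to inner semicontinuity of the set-valued projection, which is false in general (the projection set can be strictly larger at the limit point than the limit of the projection sets) and, had you tried to prove it, the step would fail. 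Fortunately, the directions that are true are exactly the ones your final topological argument needs: $\{z : b(z) < u\}$ is open because $b$ is upper semicontinuous and $\{z : a(z) > u\}$ is open because $a$ is lower semicontinuous, and connectedness of $[x,y]$ then forces some $z$ with $u \in J_z$. With that correction the argument is complete; the remaining ingredients (convexity of closed Hilbert balls via \Cref{lem:maximum_principle}, so that each $J_z$ is a compact interval) are fine.

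For comparison, the paper's proof avoids set-valued semicontinuity entirely. Given $x' \in \pi_\ell(x)$, $y' \in \pi_\ell(y)$ and a target point $z' \in (x',y')$, it splits $\ell$ at $z'$ into two closed halves $\ell_\pm$ and considers the single continuous function $f(w) = \hil(w,\ell_-) - \hil(w,\ell_+)$ on the segment $[x,y] \subset A$. Since $f(x) \le 0 \le f(y)$, the intermediate value theorem produces $z \in [x,y]$ equidistant from the two halves, with nearest points $z_\pm \in \ell_\pm$, and the maximum principle applied to the segment $[z_-,z_+]$ shows that all of $[z_-,z_+]$, in particular $z'$, lies in $\pi_\ell(z)$. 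So both arguments reduce to a one-variable intermediate value theorem along $[x,y]$, but the paper trades your semicontinuity analysis of the endpoint functions for a cleverly chosen scalar function, and invokes the maximum principle at the end to fill in the projection set rather than at the beginning to get quasiconvexity of the distance to a point. Your route is more robust in spirit (it is the standard argument in general metric spaces once one knows balls are convex and projections onto $\ell$ are intervals), while the paper's is shorter and stays entirely within elementary continuity.
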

\begin{proof}
  Fix points $x', y' \in \ell$, so that
  $x' \in \pi_\ell(x), y' \in \pi_\ell(y)$ for $x, y \in A$, and let
  $z'$ be a point on the open segment $(x', y') \subset \ell$. We wish
  to show that $z' \in \pi_\ell(A)$. The point $z'$ separates $\ell$
  into two components, so let $\ell_-$ be the closure of the component
  containing $x'$, and let $\ell_+$ be the closure of the component
  containing $y'$.

  Since $A$ is convex, it contains the projective geodesic
  $[x,y]$. Consider the continuous function $f:[x, y] \to \Rb$ given
  by
  \[
    f(u) = \hil(u, \ell_-) - \hil(u, \ell_+).
  \]
  For any $u \in [x, y]$, we know that
  $\hil(u, \ell) = \min\{\hil(u, \ell_-), \hil(u, \ell_+)\}$. So,
  since $\hil(x, \ell) = \hil(x, x') \ge \hil(x, \ell_-)$, it
  follows that $f(x)$ is nonpositive. Similarly, $f(y)$ is
  nonnegative, so there is some $z \in [x, y]$ with $f(z) = 0$,
  i.e. $z$ satisfying $\hil(z, \ell_-) = \hil(z, \ell_+)$. Thus
  there are points $z_\pm \in \ell_\pm$ which satisfy
  \[
    \hil(z, z_+) = \hil(z, z_-) = \hil(z, \ell_\pm) = \hil(z, \ell).
  \]
  Then, applying \Cref{lem:maximum_principle} (taking the compact set $K$ to be $[z_-, z_+]$ and the closed convex set $C$ to be the singleton $\{z\}$) implies that every point $w$ in $[z_-, z_+]$ satisfies $\hil(z, w) \le \hil(z, \ell)$. Therefore, each such $w$ satisfies $\hil(z, w) = \hil(z, \ell)$, i.e.
  $w \in \pi_\ell(z)$. As $z \in A$ and the geodesic $[z_-, z_+]$
  contains the previously chosen point $z'$, this proves the claim.
\end{proof}

\subsection{Space of properly convex domains}

 Suppose $(X,d)$ is a
metric space. This induces a notion of Hausdorff distance between closed subsets $A,B \subset X$ defined by:
\begin{align*}
    d^{\Haus}(A,B)=\max \left\{ \sup_{a\in A}d(a,B), \sup_{b \in B} d(A,b)\right\}.
\end{align*}

Fixing a metric on the projective space $\Pb(V)$, compatible with the standard topology on $\Pb(V)$, defines a notion of Hausdorff distance between subsets of $\Pb(V)$ (or more precisely, their closures). 
\begin{definition}
  \label{defn:domain_space}
  Let $V$ be a real vector space. We denote by $\domains(V)$ the space
  of properly convex projective domains in $\Pb(V)$. The topology on $\domains(V)$
  is the topology induced by Hausdorff distance, with respect to any metrization of $\Pb(V)$.
\end{definition}
Note that the topology on $\domains(V)$ is independent of the metrization on $\Pb(V)$.

\subsection{The Benz\'ecri cocompactness theorem}

\begin{definition}
  \label{defn:domains_and_pdomains}
  Let $\pdomains(V)$ denote the space of \emph{pointed} domains in
  $\Pb(V)$, i.e. the space
  \[
    \pdomains(V) := \set{(\Omega, x) \in \domains(V) \times \Pb(V) : x
      \in \Omega}.
  \]
\end{definition}
The topology on $\pdomains(V)$ is the product topology that it
inherits from $\domains(V) \times \Pb(V)$.  The group $\PGL(V)$ acts
pointwise on $\domains(V)$, and diagonally on $\pdomains(V)$. We have
the following important result:

\begin{theorem}[Benz\'ecri cocompactness \cite{benzecri}]
  \label{thm:benzecri}
  The action of $\PGL(V)$ on $\pdomains(V)$ is both proper and
  cocompact.
\end{theorem}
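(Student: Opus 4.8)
The plan is to prove the Benz\'ecri cocompactness theorem (\Cref{thm:benzecri}) by exhibiting a \textbf{compact slice} for the $\PGL(V)$-action on $\pdomains(V)$, namely the set of pointed domains whose underlying convex body is ``well-rounded'' (sandwiched between two concentric ellipsoids of controlled eccentricity, with the marked point at the common center). First I would set $d = \dim V$ and fix, once and for all, an affine chart $\Ac \cong \Rb^{d-1}$ of $\Pb(V)$ together with a Euclidean norm $\norm{\cdot}$ on $\Ac$; write $B(p, r)$ for the corresponding metric balls. Define
\[
  \Kc := \set{(\Omega, x) \in \pdomains(V) : \overline{\Omega} \subset \Ac,\ x = 0,\ B(0,1) \subseteq \Omega \subseteq B(0,d)}.
\]
This $\Kc$ is compact: it is a closed subset (the conditions $B(0,1) \subseteq \overline{\Omega} \subseteq \overline{B(0,d)}$ are closed in Hausdorff distance, and $x = 0$ is closed) of the space of closed convex subsets of the compact set $\overline{B(0,d)}$ containing $B(0,1)$, which is Hausdorff-compact by the Blaschke selection theorem (a limit of convex sets is convex, and properness is guaranteed by containment in the bounded set $B(0,d)$). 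So the content is really the two separate assertions: (i) \emph{cocompactness} --- every pointed domain can be moved into $\Kc$ by an element of $\PGL(V)$; and (ii) \emph{properness} --- the map $\PGL(V) \times \Kc \to \pdomains(V)$, or rather $g \mapsto g \cdot (\Omega_0, x_0)$ for fixed $(\Omega_0, x_0) \in \Kc$, is proper.

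For cocompactness I would use the classical John ellipsoid: given any $(\Omega, x) \in \pdomains(V)$, realize $\overline{\Omega}$ as a bounded convex body $K$ in some affine chart, let $E$ be the ellipsoid of maximal volume contained in $K$, and recall John's theorem that $K \subseteq (d-1) E$ (using $\dim \Ac = d-1$; any finite dilation factor suffices for us). There is a projective transformation --- in fact an affine one in a suitable chart --- taking $E$ to the unit ball $B(0,1)$ and the center of $E$ to $0$; but we must also send the marked point $x$ to $0$. To handle the marked point, first apply an element of $\PGL(V)$ carrying $x$ into the interior of $E$ (possible since $E$ has nonempty interior and $\PGL(V)$ acts transitively on pairs (point, open ball containing it) up to adjusting — more carefully, one first normalizes $E$ to the unit ball, then applies a projective transformation of the ball fixing the ball setwise and sending the image of $x$ to $0$; such transformations exist because $\PO(d-1,1) = \Aut(B(0,1))$ acts transitively on $B(0,1)$). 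After these normalizations $\overline{\Omega}$ lies in a chart with $B(0, c^{-1}) \subseteq \Omega \subseteq B(0, c)$ for a uniform constant $c = c(d)$, and rescaling by the affine map $v \mapsto c\, v$ lands us in $\Kc$ with the rescaled constant $d$ (choosing $d$ large enough relative to $c(d)$, or simply enlarging the definition of $\Kc$ to use the constant $c(d)$ throughout). Hence $\PGL(V) \cdot \Kc = \pdomains(V)$, which is cocompactness.

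For properness I would argue that if $g_n \in \PGL(V)$ and $(\Omega_n, x_n) = g_n \cdot (\Omega_0, x_0) \in \Kc$ with $(\Omega_0, x_0)$ fixed in $\Kc$, then $\{g_n\}$ has a convergent subsequence in $\PGL(V)$; equivalently, the preimage in $\PGL(V)$ of any compact subset of $\pdomains(V)$ under an orbit map is compact, which combined with the Hausdorffness of $\pdomains(V)$ gives that the action is proper. The key point is that the stabilizer of a pointed properly convex domain in $\PGL(V)$ is compact (it is a closed subgroup acting by isometries of the Hilbert metric, fixing a point, hence contained in a conjugate of the maximal compact --- more elementarily, it preserves the John ellipsoid centered at the marked point, a compact condition), and that two pointed domains both lying in $\Kc$ and in the same orbit differ by an element whose operator norm and inverse operator norm are bounded in terms of the constants defining $\Kc$: indeed such a $g$ sends the pair $B(0,1) \subseteq \Omega_0 \subseteq B(0,d)$ to the pair $B(0,1) \subseteq \Omega_n \subseteq B(0,d)$ (both normalized at $0$), so it distorts the unit ball by a bounded amount, forcing $g$ to lie in a fixed compact subset of $\PGL(V)$ up to the compact stabilizer. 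Concretely: lift $g_n$ to $\widetilde g_n \in \GL(V)$ normalized with $\abs{\det \widetilde g_n} = 1$; the constraint that $\widetilde g_n$ carries the sandwiched pair to a sandwiched pair bounds $\norm{\widetilde g_n}$ and $\norm{\widetilde g_n^{-1}}$, so after passing to a subsequence $\widetilde g_n \to \widetilde g_\infty \in \GL(V)$ and $g_n \to g_\infty$ in $\PGL(V)$.

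The main obstacle I expect is the \emph{bookkeeping around the marked point} in the cocompactness argument: John's theorem normalizes the convex body but says nothing about where $x$ sits, and one must be careful that after normalizing the John ellipsoid to the round ball, the marked point can be moved to the center \emph{by an element of $\PGL(V)$ preserving the ball} (so that the uniform sandwiching constants are not destroyed), using the transitivity of $\PO(d-1,1)$ on the Klein model; getting the constants to be genuinely uniform in $(\Omega, x)$ rather than depending on the position of $x$ is the only subtle step. Everything else --- compactness of $\Kc$ via Blaschke selection, and properness via operator-norm bounds plus a subsequence extraction --- is routine.
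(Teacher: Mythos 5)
The paper itself offers no proof of this statement—it is quoted from Benz\'ecri—so the only meaningful comparison is with the standard argument, whose architecture (a compact ``sandwiched'' slice $\Kc$, John's theorem for cocompactness, operator-norm bounds for properness) your proposal correctly reproduces. The compactness of $\Kc$ via Blaschke selection and the properness half are essentially right, if glossed: the key point that an element of $\PGL(V)$ fixing $0$ and carrying one sandwiched pair to another lies in a fixed compact set is standard, though ``it distorts the unit ball by a bounded amount'' needs the inverse condition $g^{-1}\overline{B(0,1)}\subseteq \overline{B(0,d)}$ as well and a little more care because $g$ acts projectively, not linearly, on the chart.

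The genuine gap is in the cocompactness step, exactly at the point you flag but do not resolve. The John ellipsoid $E$ of $\overline{\Omega}$ is blind to the marked point: first, $x$ need not lie in $E$ at all (only in $\Omega$), so there may be no element of $\Aut(B(0,1))\simeq \PO(d-1,1)$ carrying $x$ to the center; second, even when $x\in E$, the required ball-preserving projective transformation $g$ diverges in $\PGL(V)$ as $x$ approaches $\partial E$, and then $g\bigl(B(0,d-1)\bigr)$ is not contained in any fixed compact subset of the affine chart—the outer containment, hence the uniform sandwiching, is destroyed. (One cannot instead take the largest ellipsoid contained in $\Omega$ and \emph{affinely centered} at $x$: for $x$ near $\partial\Omega$ the outer containment constant blows up, as a square with marked point near one side shows.) The classical repair is to normalize the pair $(\Omega,x)$ jointly \emph{before} invoking any John-type bound: choose the hyperplane at infinity depending on $(\Omega,x)$ so that $x$ becomes a genuine center of $\Omega$ in the resulting chart—for instance the chart in which $x$ is the center of mass of $\Omega$, or equivalently use the $\PGL(V)$-equivariant ellipsoid attached to $(\Omega,x)$ by the Hessian of Vinberg's characteristic function of the cone over $\Omega$. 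Once $x$ is a center in this sense, a centered John-type containment with a dimensional constant does hold, and the rest of your argument goes through. Without some such equivariant centering device, the proof is incomplete.
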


\Cref{thm:benzecri} turns out to be very useful when we consider the
case of a \emph{non-cococompact} group action on a properly convex
domain. Although divisible domains $\Omega$ are often technically
easier to work with than general domains, they require the
automorphism group $\Aut(\Omega)$ to be `large'. In this paper, we
will often be interested in studying general properly convex domains,
not necessarily divisible. In such cases, the Benz\'ecri cocompactness
theorem becomes a powerful tool that we can use to import techniques
for divisible domains to the non-divisible case.

Typically, we apply the theorem to a sequence of points $x_n$
in some domain $\Omega$ which leaves every compact subset of $\Omega$,
to find a sequence of ``approximate automorphisms'' taking $x_n$ back
to some fixed basepoint. The properness part of the theorem ensures
that any choice of ``approximate automorphisms'' differ by elements in
a compact set, which we can often use to obtain information about a
given sequence of divergent elements in $\Aut(\Omega)$.

For divisible domains, \cref{thm:benzecri} admits an easy corollary that we record for future use.
\begin{corollary}
\label{cor:benzecri_for_divisible}
Suppose  $\Omega\subset \Pb(V)$ is a properly convex divisible domain. Then the $\PGL(V)$-orbit of $\Omega$ in $\domains(V)$ is closed.
\end{corollary}
\begin{proof}
    Suppose $\seq{g_n}$ is a sequence in $\PGL(V)$ such that $g_n \Omega \to \Omega_{\infty}$ in $\domains(V)$. Pick $x_{\infty} \in \Omega_{\infty}$
    and $\seq{x_n}$ in $\Omega$ such that $g_n(\Omega,x_n) \to (\Omega_{\infty},x_\infty)$ in $\pdomains(V)$. Since $\Omega$ is divisible, we can find a
    sequence $\seq{\gamma_n}$ in $\Aut(\Omega)$ such that, after passing to a subsequence, we have $\gamma_n^{-1}x_n \to x_0 \in \Omega.$
    
    Fix neighborhoods $K_1$ and $K_2$ of $(\Omega,x_0)$ and $(\Omega_{\infty},x_{\infty})$ in $\pdomains(V)$ such that $\overline{K_i}$ is compact for $i=1,2$. Then observe that for $n$ sufficiently large,
    $(\gamma_n^{-1}g_n^{-1} K_2) \cap K_1 \neq \emptyset$. By \cref{thm:benzecri}, $\gamma_n^{-1}g_n^{-1}$ lies in a compact subset of $\PGL(V)$. So, by passing to a subsequence, we can assume that
    $\gamma_n^{-1}g_n^{-1} \to q^{-1} \in \PGL(V)$. Hence $g_n \Omega \to q  \Omega$, which implies $\Omega_\infty=q  \Omega$, i.e. $\Omega_{\infty}$ lies in
    the $\PGL(V)$-orbit of $\Omega$.
\end{proof}

\subsection{Properties of faces in properly convex domains}

Every face $F$ of a properly convex domain is itself a properly convex
domain in its own projective span. Consequently, $F$ can be endowed
with its own Hilbert metric $\mathrm{d}_F$. This Hilbert metric is
related to the Hilbert metric on the larger domain $\Omega$, and gives
a way to characterize faces in terms of metric (rather than
projective) geometry. This is expressed via
\Cref{lem:hilbert_face_metric} below.

We state this lemma in a fairly general form. In particular, we allow
the domain $\Omega$ to vary continuously in the space of all properly
convex domains $\domains(\Rb^d)$ (see \cref{defn:domain_space}).

\begin{lemma}
  \label{lem:hilbert_face_metric}
  Let $\{\Omega_n\}$ be a sequence of properly convex domains in
  $\Pb(\Rb^d)$, converging in $\domains(\Rb^d)$ to a properly convex
  domain $\Omega_\infty$. Suppose that points $x_n, y_n \in \Omega_n$
  converge to $x, y \in \overline{\Omega_\infty}$. If
  \[
    \liminf_{n \to \infty}d_{\Omega_n}(x_n, y_n) < \infty,
  \]
  then $x$ and $y$ lie in the same face $F$ of $\Omega_\infty$, and
  \begin{align*}
    \mathrm{d}_F(x, y) \le \liminf_{n \to \infty} d_{\Omega_n}(x_n, y_n).
  \end{align*}
\end{lemma}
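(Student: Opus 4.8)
The plan is to pass to a limit argument, using the Hilbert metric's formula in terms of the cross-ratio together with a compactness/selection argument for the relevant boundary points. First I would fix the Euclidean metric on a common affine chart containing all the $\overline{\Omega_n}$ for large $n$ (possible since $\Omega_n \to \Omega_\infty$ in $\domains(\Rb^d)$, so the closures are eventually contained in a fixed compact subset of some affine chart). For each $n$, write the projective line $\Pspan{x_n, y_n}$; it meets $\dee \Omega_n$ in two points $a_n, b_n$ with $a_n, x_n, y_n, b_n$ in that order, so that
\[
  d_{\Omega_n}(x_n, y_n) = \frac{1}{2}\log \frac{|a_n - y_n|\cdot|b_n - x_n|}{|a_n - x_n|\cdot|b_n-y_n|}.
\]
Passing to a subsequence realizing the $\liminf$, and further so that $a_n \to a$ and $b_n \to b$ in $\overline{\Omega_\infty}$ (using compactness of $\overline{\Omega_\infty}$), I would get $a, x, y, b$ aligned and in $\overline{\Omega_\infty}$ with $d_{\Omega_n}(x_n, y_n) \to L := \liminf_n d_{\Omega_n}(x_n, y_n)$.

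The key point is then to analyze this limit. Since $|a_n - x_n|$ and $|b_n - y_n|$ are bounded above (everything lives in a fixed compact set) and the log is finite in the limit, neither $|a_n - x_n|$ nor $|b_n - y_n|$ can go to $0$ unless compensated; more carefully, I would argue that $x \ne a$ and $y \ne b$, OR that if (say) $x = a$ then necessarily $x = y = a = b$ forces $L = 0$ — but in fact the cleanest route is to observe that $x$ and $y$ must be joined by a segment lying in $\overline{\Omega_\infty}$. Indeed the segment $[x_n, y_n] \subset \overline{\Omega_n}$ converges (in Hausdorff distance, along the subsequence) to the segment $[x,y]$ on the line $\Pspan{x,y}$, and this limit segment lies in $\overline{\Omega_\infty}$ because $\overline{\Omega_n} \to \overline{\Omega_\infty}$. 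If $x = y$ there is nothing to prove (they trivially lie in the same face and $d_F(x,y) = 0 \le L$). If $x \ne y$, then $[x,y]$ is a nontrivial segment in $\overline{\Omega_\infty}$. I claim this segment lies entirely in $\dee \Omega_\infty$ cannot be assumed; rather, I should split into cases: either $(x,y) \subset \Omega_\infty$ or $(x,y) \subset \dee\Omega_\infty$ (these are the only options by convexity — an open segment in $\overline{\Omega_\infty}$ either meets $\Omega_\infty$, in which case it lies in $\Omega_\infty$, or lies in $\dee\Omega_\infty$).

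In the case $(x,y) \subset \dee \Omega_\infty$, the points $x, y$ lie in the same face $F$ of $\Omega_\infty$ by definition of a face, and $d_F(x,y)$ is computed using the cross-ratio of $x,y$ with the two endpoints $a', b'$ of the maximal segment of $\dee\Omega_\infty$ through $x, y$ within the projective line $\Pspan{x,y}$; since $a, b \in \overline{\Omega_\infty}$ lie on this line and on the correct sides, the interval $[a',b']$ contains $[a,b]$, and monotonicity of the cross-ratio in the outer points gives $d_F(x,y) \le \frac12 \log[a,b;x,y] = L$, which is the desired inequality. (Here I must also handle the degenerate possibility $a = a'$ or that $a$ escapes to the boundary of the chart; but boundedness in the fixed compact set rules out escape, and if $a = x$ then the cross-ratio would be $\le 1$ forcing $L = 0 = d_F(x,y)$ since then $x$ is an endpoint — handled separately.) In the remaining case $(x,y) \subset \Omega_\infty$, then $x, y$ both lie in $\Omega_\infty \cup \dee\Omega_\infty$; if both are interior this reduces to continuity of $d_{\Omega_\infty}$ under domain convergence (a standard consequence of the cross-ratio formula) giving $d_{\Omega_\infty}(x,y) \le L$ and $F = \Omega_\infty$-trivially $x,y$ in the same face in the relevant degenerate sense — though since the lemma is about $x, y \in \overline{\Omega_\infty}$ and faces, I expect the intended reading is that the interesting content is the boundary case, and when $x$ or $y \in \Omega_\infty$ the "face" is interpreted appropriately or this case is excluded by hypothesis in applications.

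\textbf{Main obstacle.} The technical heart — and the step I expect to require the most care — is controlling the endpoints $a_n, b_n$ of the chords: showing they converge (after subsequence) to points $a, b$ that are genuinely on the correct sides of $x, y$ and in $\overline{\Omega_\infty}$, and ruling out the degenerate collapses ($a_n \to x$, or $a_n$, $b_n$ colliding) except when they force $L = 0$. This is where the finiteness hypothesis $\liminf d_{\Omega_n}(x_n,y_n) < \infty$ is used: it prevents $|a_n - x_n| \to 0$ or $|b_n - y_n| \to 0$ while the complementary distances stay bounded away from $0$, which is exactly what pins $a \ne x$ and $b \ne y$ (hence $[x,y]$ nontrivial in a controlled way) whenever $x \ne y$. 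Once the boundary points are under control, the monotonicity-of-cross-ratio comparison is routine.
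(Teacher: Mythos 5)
Your proposal is correct and takes essentially the same route as the paper's proof: extract the chord endpoints $a_n, b_n$, pass to a subsequence, use the finiteness of $\liminf_n d_{\Omega_n}(x_n,y_n)$ to rule out the collapses $a=x$ and $b=y$, observe that $(a,b)$ lies either in $\Omega_\infty$ or entirely in $\partial\Omega_\infty$, and finish by monotonicity and continuity of the cross-ratio, i.e. $d_F(x,y)\le d_{(a,b)}(x,y)\le \liminf_n d_{(a_n,b_n)}(x_n,y_n)$. The only slip is the parenthetical claim that $a=x$ would make the cross-ratio $\le 1$ and force $L=0$: when $x\ne y$ it in fact makes $[a_n,b_n;x_n,y_n]\to+\infty$, so that degenerate case is excluded by the finiteness hypothesis rather than being a harmless $L=0$ case --- which is exactly what your final paragraph (correctly) states, so the argument is unaffected.
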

Since this version of the lemma is slightly more general than versions
that typically appear in the literature, we provide a proof.
\begin{proof}[Proof of \cref{lem:hilbert_face_metric}]
  Note that there is nothing to prove if $x=y$, so assume that
  $x \neq y$.  Let
  $[a_n,b_n]:=\overline{\Omega_n} \cap \Pspan{x_n,y_n}$ where the
  labels $a_n, b_n$ are assigned in such a way that the four points
  $a_n,x_n,y_n,b_n$ appear in this order along $\Pspan{x_n,y_n}$. Up
  to passing to a subsequence, we can assume that $a_n \to a$ and
  $b_n \to b$ in $\Pb(\Rb^d)$.  Since $\Omega_n \to \Omega_{\infty}$,
  $a,b \in \overline{\Omega_\infty}$ and $[a_n,b_n] \to [a,b]$. Thus
  $x,y \in [a,b]$ which implies that $a \neq b$, since otherwise $x$
  will be equal to $y$. By the ordering of the labels $a_n,b_n$, we
  know that the points $a,x,y,b$ appear in this order along
  $\Pspan{a,b}$. If either $a=x$ or $b=y$, then the cross-ratio
  $[a_n,b_n;x_n,y_n] \to \infty$ and contradicts
  $\liminf_{n\to \infty}(x_n,y_n)<\infty$. Thus, the four points
  $a,x,y,b \in \overline{\Omega_\infty}$ are pairwise distinct. Hence
  $x,y \in (a,b)$.

  Now observe that $(a,b)$, which is an open projective line segment
  in $\overline{\Omega_{\infty}}$, is either disjoint from
  $\partial \Omega_\infty$ or is entirely contained in it. Since
  $x,y \in (a,b) \cap \partial \Omega_\infty$,
  $(a,b) \subset \partial \Omega_{\infty}$. This implies that $x,y$
  belong to a face $F$ in $\Omega_\infty$. Moreover, by continuity of
  cross-ratios,
\begin{equation*}
d_F(x,y) \leq d_{(a,b)}(x,y) \leq \liminf_{n \to \infty}d_{(a_n,b_n)}(x_n,y_n)=\liminf_{n \to \infty}d_{\Omega_n}(x_n,y_n). \qedhere 
\end{equation*}
\end{proof}
When the domain $\Omega$ is fixed, we can use
\Cref{lem:hilbert_face_metric} together with the maximum principle
(\Cref{lem:maximum_principle}) to obtain a related estimate for the
Hausdorff distance between a pair of projective geodesics. For this
lemma, we follow the convention that $F_{\Omega}(x)=\Omega$ if
$x \in \Omega$, while $F_{\Omega}(x)$ is the face containing $x$ if
$x \in \dee \Omega$.
\begin{lemma}
\label{lem:hil_haus_dist_between_geods}
Suppose $\Omega$ is a properly convex domain, $x_{\pm} \in \overline{\Omega}$,
and $y_\pm \in F_{\Omega}(x_{\pm}).$ If $(x_+,x_-)\subset \Omega$,
then $(y_-,y_+)\subset \Omega$ and
\[
  \hil^{\Haus}((y_+,y_-),(x_+,x_-)) \leq \max \left\{
    d_{F_{\Omega}(x_{\pm})}(x_{\pm},y_{\pm})\right\}. 
\]
\end{lemma}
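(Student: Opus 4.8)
The strategy is to reduce the Hausdorff distance bound to a pointwise comparison and then invoke the two tools already developed: \Cref{lem:hilbert_face_metric} (in its fixed-domain incarnation, applied with $\Omega_n = \Omega$ for all $n$) to show $(y_-,y_+) \subset \Omega$ and to control distances between corresponding ``endpoint-region'' points, and \Cref{lem:maximum_principle} to control the Hausdorff distance between the two segments once we know each lies in a controlled neighborhood of the other at the level of ``slices.''

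\textbf{Step 1: The segment $(y_-,y_+)$ lies in $\Omega$.} Pick sequences $x_-^k \to x_-$ and $x_+^k \to x_+$ along the open segments $(x_-,y_-)$ and $(x_+,y_+)$ respectively (if $x_\pm = y_\pm$, take the constant sequence). Since $x_\pm \in \Omega$ and $(x_+,x_-) \subset \Omega$, for each $k$ the segment $(x_-^k, x_+^k)$ is contained in $\Omega$, and $d_\Omega(x_-^k, x_+^k)$ is finite. Hmm — actually the cleaner route: since $x_\pm \in \Omega$, the points $y_\pm$ satisfy $d_{F_\Omega(x_\pm)}(x_\pm, y_\pm) < \infty$ by hypothesis (these are distances in the Hilbert metric of the face), and the face $F_\Omega(x_\pm)$ is $\Omega$ itself when $x_\pm \in \Omega$, or a boundary face otherwise. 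The point is that $y_+$ and $x_+$ are in the same face, and likewise $y_-, x_-$. I claim the open segment $(y_-, y_+)$ cannot meet $\partial\Omega$: an open projective segment in $\overline{\Omega}$ is either disjoint from $\partial\Omega$ or entirely contained in it; if $(y_-,y_+) \subset \partial\Omega$ then $y_-, y_+$ lie in a common face $G$, and one checks using the convexity of $\overline\Omega$ and the fact that $x_+ \in \Omega$ (an interior point) that the segment $(x_-, x_+)$ — which passes ``near'' $(y_-, y_+)$ via the face structure — would also have to touch $\partial\Omega$, contradicting $(x_+,x_-) \subset \Omega$. This is precisely the kind of argument run in the proof of \Cref{lem:hilbert_face_metric}, so I would cite that lemma applied to the constant sequence $\Omega_n \equiv \Omega$ together with suitable sequences converging to $x_\pm$ and $y_\pm$.

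\textbf{Step 2: The Hausdorff bound.} Write $r_\pm := d_{F_\Omega(x_\pm)}(x_\pm, y_\pm)$ and $r := \max\{r_+, r_-\}$. The two geodesics share a ``parametrization by slices'': for a point $w$ on $(x_-, x_+)$, consider the $2$-plane (or the line, generically) spanned by $w$ and the geodesic $(y_-,y_+)$, and let $C_w$ be the closed convex hull in $\overline\Omega$ of $\{x_-, x_+, y_-, y_+\}$ intersected appropriately. The key estimate is $d_\Omega(w, (y_-,y_+)) \le r$, and symmetrically for points on $(y_-,y_+)$. To get this, apply \Cref{lem:hilbert_face_metric} (again with the constant sequence) to sequences interpolating between the four points: the distance from $x_\pm$ to $y_\pm$ in $\Omega$ is infinite when $x_\pm \in \partial\Omega$, so one instead works with the quadrilateral $[x_-,x_+,y_+,y_-]$ and uses that for a point $w \in (x_-,x_+)$, the segment from $w$ through to the opposite side $(y_-,y_+)$ stays in $\overline\Omega$ by convexity; comparing cross-ratios shows the Hilbert distance from $w$ to the corresponding point on $(y_-,y_+)$ is at most $\max\{r_+, r_-\}$ by a standard monotonicity-of-cross-ratio argument (the slice between the two sides is ``no longer'' than the sides $[x_\pm,y_\pm]$). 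Finally, \Cref{lem:maximum_principle} promotes this slice-wise bound to a genuine Hausdorff bound: for any $w \in (x_-,x_+)$ we have exhibited a point of $(y_-,y_+)$ within distance $r$, giving $\sup_{w \in (x_-,x_+)} d_\Omega(w, (y_-,y_+)) \le r$; running the symmetric argument with the roles of the $x$'s and $y$'s swapped gives the other half of the Hausdorff distance, and the two together yield $\hil^{\Haus}((y_+,y_-),(x_+,x_-)) \le r = \max\{d_{F_\Omega(x_\pm)}(x_\pm, y_\pm)\}$.

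\textbf{Main obstacle.} The delicate point is the slice-wise distance estimate in Step 2, because when $x_\pm$ (or $y_\pm$) lies on $\partial\Omega$ the Hilbert distance $d_\Omega(x_\pm, y_\pm)$ is infinite, yet the distance $d_{F_\Omega(x_\pm)}(x_\pm, y_\pm)$ in the face is finite — so one must carefully set up the cross-ratio comparison so that the relevant quantity is the \emph{face} distance, not the ambient one. Concretely, for $w$ in the interior segment one compares the cross-ratio defining $d_\Omega(w, w')$, where $w' \in (y_-,y_+)$ is the point ``opposite'' $w$, against the cross-ratios defining $d_{F_\Omega(x_+)}(x_+,y_+)$ and $d_{F_\Omega(x_-)}(x_-,y_-)$; the geometric input is that the line through $w$ and $w'$ exits $\overline\Omega$ at points lying on the segments $[a_+,a_-]$ and $[b_+,b_-]$ bounding the quadrilateral, where $[a_\pm, b_\pm] = \overline\Omega \cap \Pspan{x_\pm, y_\pm}$. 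Once this cross-ratio bookkeeping is arranged, monotonicity of the cross-ratio under moving the endpoints inward gives the bound $\le \max\{r_+, r_-\}$ directly, and everything else is an application of the two cited lemmas.
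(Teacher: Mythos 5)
The paper states this lemma without proof (it is offered as a consequence of \Cref{lem:hilbert_face_metric} and \Cref{lem:maximum_principle}), so your argument has to stand on its own, and as written it has two genuine gaps. First, Step 1 contains no argument in the only nontrivial case, namely $x_\pm \in \partial\Omega$ (the hypothesis must be read as $x_\pm \in \overline{\Omega}$: if $x_\pm$ literally lie in $\Omega$ then $F_\Omega(x_\pm)=\Omega$ and the first assertion is vacuous, so your repeated appeal to ``$x_+\in\Omega$, an interior point'' signals a misreading). ``One checks\dots that $(x_-,x_+)$ would also have to touch $\partial\Omega$'' is precisely what needs proving, and \Cref{lem:hilbert_face_metric} does not do it. The clean argument: if $(y_-,y_+)\subset\partial\Omega$, take a supporting hyperplane $H$ at a point of that open segment; $H$ contains the whole segment, hence (a supporting hyperplane through a point of a face contains the face) contains $F_\Omega(y_\pm)=F_\Omega(x_\pm)\ni x_\pm$, hence contains $(x_-,x_+)$, contradicting $H\cap\Omega=\emptyset$.

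Second, the slice-wise estimate $\hil(w,w')\le\max\{r_+,r_-\}$ is the entire content of the lemma, and you assert it rather than prove it. Concretely: (i) the stated geometric input is false --- the line through $w$ and $w'$ exits $\overline\Omega$ on $\partial\Omega$, beyond the convex hull $Q$ of the two chords $[a_-,b_-]=\overline\Omega\cap\Pspan{x_-,y_-}$ and $[a_+,b_+]$, not on the segments $[a_+,a_-]$, $[b_+,b_-]$; what is true is that the chord of $\Omega$ contains the chord of $Q$, so $\hil\le d_{\mathrm{int}(Q)}$, and one must then actually compute $d_{\mathrm{int}(Q)}(w,w')$ in the simplex spanned by $a_\pm,b_\pm$. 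That computation works, but only for the correct pairing: the ``corresponding point'' depends on a choice of lifts, and for an unbalanced choice the affine pairing violates the bound, so ``standard monotonicity'' does not give it directly. (ii) \Cref{lem:hilbert_face_metric} yields $d_F\le\liminf \hil$, which is the wrong direction for every estimate you need here. A route that avoids the slice-wise bound altogether: fix $z\in(x_-,x_+)$, set $\phi_s(u)=(1-s)z+su$; a one-line cross-ratio comparison (the chord of $\Omega$ through $\phi_s(x_\pm),\phi_s(y_\pm)$ contains $\phi_s([a_\pm,b_\pm])$, and $\phi_s$ preserves cross-ratios) gives $\hil(\phi_s(x_\pm),\phi_s(y_\pm))\le d_{F_\Omega(x_\pm)}(x_\pm,y_\pm)$; the maximum principle applied to the compact segments $\phi_s([x_-,x_+])$, $\phi_s([y_-,y_+])$ then bounds their Hausdorff distance by $\max\{r_\pm\}$ --- this is where \Cref{lem:maximum_principle} does real work, not as a ``promotion'' of a bound you already have pointwise --- and letting $s\to1$ finishes.
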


\subsection{Properly embedded simplices}
\label{sec:properly_embedded_simplices}

For any $k\geq 0$, a \emph{standard projective $k$-simplex} in $\Pb(\Rb^d)$ is $$S_k:=\{ [x_1:x_2:\dots:x_{k+1}:0:\dots:0] ~|~ x_1,\dots,x_{k+1}>0 \}.$$ We say that $S_k$ is the simplex spanned by $[e_1], \dots, [e_{k+1}]$. A \emph{projective $k$-simplex} is any set in $\Pb(\Rb^d)$ that is projectively equivalent to a standard projective $k$-simplex.

\begin{definition}
\label{defn:prop_embedded}
Suppose $\Omega$ is a properly convex domain and $A \subset \Omega$ is a convex subset. Then we say that:
\begin{enumerate}
\item $A$ is a \emph{properly embedded} subset if $A \hookrightarrow
  \Omega$ is a proper map, or equivalently if $\partial A \subset \partial \Omega$.  
\item $A$ is a \emph{properly embedded $k$-simplex} if $A$ is properly
  embedded in $\Omega$ and a projective $k$-simplex.
\end{enumerate}
\end{definition}

Properly embedded simplices are projective analogs of totally geodesic
flats in CAT(0) spaces. Consider, for example, a properly embedded
triangle, or 2-simplex.  Suppose the vertices of such a triangle
$\Delta$ are represented by the standard basis vectors in
$\Rb^3$. Then the group
\[
  \left\{
    \begin{pmatrix}
      2^a\\ & 2^b\\ & & 2^c
    \end{pmatrix} : a, b, c \in \Zb, a + b + c = 0 \right\}
\]
acts properly discontinuously and cocompactly on $\Delta$. So $(\Delta,d_{\Delta})$ equipped with its Hilbert metric is
quasi-isometric to a 2-flat. Hence properly embedded simplices serve
as analogs of isometrically embedded flats in CAT(0) spaces.

\subsection{Singular values and the Cartan projection}
\label{sec:sv_background}

In this section we briefly recall the definitions and basic properties
of the Cartan projection $\GLdR \to \Rb^d$. We will always equip
$\Rb^d$ with its standard Euclidean inner product.

\begin{definition}
  For any $g \in \GLdR$, we let
  $\sigma_1(g) \ge \sigma_2(g) \ge \ldots \ge \sigma_d(g) > 0$ denote
  the \emph{singular values} of $g$, counted with multiplicity. We let
  $\mu:\GLdR \to \Rb^d$ denote the \emph{Cartan projection},
  given by $\mu_i(g) = \log \sigma_i(g)$. The Cartan projection can be
  also be defined via the \emph{Cartan decomposition} of a group
  element $g \in \GLdR$: $\mu(g)$ is the unique vector in
  $\Rb^d$ with nonincreasing entries such that
  \[
    g = k \cdot \exp(\op{diag}(\mu_1(g), \ldots \mu_d(g))) \cdot \ell,
  \]
  for some $k, \ell \in \mathrm{O}(d)$. For $1 \le i \le j \le d$, we let $\mu_{i,j}(g)$ denote the
  nonnegative quantity $\mu_i(g) - \mu_{j}(g)$.
\end{definition}

\begin{remark}
  Although the Cartan projection $\mu$ is only defined on
  $\GLdR$, the values of $\mu_{i,j}$ are well-defined on the
  quotient $\PGL(d, \Rb)$.
\end{remark}

The singular values of any $g \in \GL(V)$ have an interpretation in
terms of the norm and the conorm of $g$. Recall that if
$g \in \GL (V)$, the operator norm is
\[
  ||g|| = \sup_{v \in \Rb^d \minus \set{0}} \frac{||gv||}{||v||},
\]
while the conorm is 
\[\conorm{g} = ||g^{-1}||^{-1}.\]
The largest singular value is given by $\sigma_1(g)=\norm{g}$ while
the smallest singular value is given by $\sigma_d(g)=\conorm{g}$. More
generally, for any $1 \le k \le d$, we let $\mathrm{Gr}(k, d)$ denote
the Grassmannian of $k$-dimensional subspaces of $\Rb^d$. Then one has
the ``minimax'' formula:
\begin{equation}
  \label{eq:sv_minimax}
  \sigma_k(g) = \max_{W \in \mathrm{Gr}(i, d)} \conorm{g|_W}.
\end{equation}

Note that if $g \in \SLdR$, we
have $\prod \sigma_i(g) = 1$ and thus $\sum \mu_i(g) = 0$. Using this,
we see that for any $g \in \SLdR$, we have
\[
  \mu_{1,d}(g) = \log(||g||) + \log(||g^{-1}||).
\]

\begin{lemma}[Additivity of Cartan projection, see {\cite[Fact
    2.18]{ggkw2017anosov}}]
  \label{lem:additive_root_bound}
  There is a constant $K_0 > 0$ so that for any
  $g, h_1, h_2 \in \GLdR$, we have
  \begin{equation}
    \label{eq:cartan_inequality}
    ||\mu(h_1gh_2) - \mu(g)|| \le K_0(||\mu(h_1)|| + ||\mu(h_2)||).
  \end{equation}
  In particular, for any $1 \le i < j \le d$, there is a constant
  $K > 0$ such that
  \begin{equation}
    \label{eq:cartan_inequality_ii}
    |\mu_{i,j}(h_1gh_2) - \mu_{i,j}(g)| \le K(||\mu(h_1)|| +
    ||\mu(h_3)||).
  \end{equation}
\end{lemma}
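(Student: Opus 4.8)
The plan is to deduce both displayed inequalities from the minimax formula \eqref{eq:sv_minimax}, together with the elementary sub- and super-multiplicativity of the operator norm and the conorm. The key step is the following estimate, valid for all $g, h_1, h_2 \in \GLdR$ and all $1 \le k \le d$:
\begin{equation}
  \label{eq:cartan_key_estimate}
  \sigma_k(h_1 g h_2) \ge \conorm{h_1}\,\conorm{h_2}\,\sigma_k(g).
\end{equation}
To prove \eqref{eq:cartan_key_estimate}, I would fix a subspace $W_0 \in \Gr(k, d)$ realizing the maximum in \eqref{eq:sv_minimax} for $g$, so that $\conorm{g|_{W_0}} = \sigma_k(g)$, and set $W := h_2^{-1}(W_0)$, which is again $k$-dimensional since $h_2$ is invertible. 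For any unit vector $w \in W$ we have $h_2 w \in W_0$, hence
\[
  \norm{h_1 g h_2 w} \ge \conorm{h_1}\,\norm{g h_2 w} \ge \conorm{h_1}\,\conorm{g|_{W_0}}\,\norm{h_2 w} \ge \conorm{h_1}\,\sigma_k(g)\,\conorm{h_2},
\]
where the three inequalities use, in order, the definition of $\conorm{h_1}$, the fact that $h_2 w \in W_0$, and the definition of $\conorm{h_2}$. Taking the infimum over unit $w \in W$ yields $\conorm{(h_1 g h_2)|_W} \ge \conorm{h_1}\conorm{h_2}\sigma_k(g)$, so \eqref{eq:sv_minimax} applied to $h_1 g h_2$ gives \eqref{eq:cartan_key_estimate}.

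Next I would pass to logarithms. Since $\conorm{h} = \sigma_d(h)$, inequality \eqref{eq:cartan_key_estimate} reads $\mu_k(h_1 g h_2) - \mu_k(g) \ge \mu_d(h_1) + \mu_d(h_2)$. Applying \eqref{eq:cartan_key_estimate} with $g$ replaced by $h_1 g h_2$ and $h_1, h_2$ replaced by $h_1^{-1}, h_2^{-1}$ (so that $h_1^{-1}(h_1 g h_2) h_2^{-1} = g$), and using $\sigma_d(h^{-1}) = \sigma_1(h)^{-1}$, gives the complementary bound $\mu_k(h_1 g h_2) - \mu_k(g) \le \mu_1(h_1) + \mu_1(h_2)$. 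Since $|\mu_1(h)| \le \norm{\mu(h)}$ and $|\mu_d(h)| \le \norm{\mu(h)}$ for every $h$, the two bounds combine to give
\[
  |\mu_k(h_1 g h_2) - \mu_k(g)| \le \norm{\mu(h_1)} + \norm{\mu(h_2)}, \qquad 1 \le k \le d.
\]
Summing the squares over $k$ and taking a square root proves \eqref{eq:cartan_inequality} with $K_0 = \sqrt{d}$; and since $\mu_{i,j} = \mu_i - \mu_j$, the triangle inequality immediately yields \eqref{eq:cartan_inequality_ii} with $K = 2$, independently of $i$ and $j$.

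I do not expect any serious obstacle. The one point requiring care is the bookkeeping of domains and codomains of the various restriction maps when unwinding $\conorm{(h_1 g h_2)|_W}$, and keeping the direction of each conorm inequality correct. Alternatively, one could simply cite \cite[Fact 2.18]{ggkw2017anosov}, but the argument above is short enough to include for completeness.
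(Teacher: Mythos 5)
Your proof is correct. The paper itself offers no argument for this lemma --- it simply cites \cite[Fact 2.18]{ggkw2017anosov} (and remarks that with a suitable choice of norm one can even take $K_0=1$) --- so there is no ``paper proof'' to compare against line by line; what you have written is a valid self-contained substitute. The key estimate $\sigma_k(h_1gh_2)\ge \conorm{h_1}\conorm{h_2}\sigma_k(g)$ is proved correctly: the subspace $W=h_2^{-1}(W_0)$ has the right dimension, each of the three inequalities in your chain points the right way, and the complementary upper bound $\mu_k(h_1gh_2)-\mu_k(g)\le \mu_1(h_1)+\mu_1(h_2)$ follows cleanly by substituting $h_i^{-1}$ and using $\conorm{h^{-1}}=\sigma_1(h)^{-1}$. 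The passage from the componentwise bound $|\mu_k(h_1gh_2)-\mu_k(g)|\le \norm{\mu(h_1)}+\norm{\mu(h_2)}$ to \eqref{eq:cartan_inequality} with $K_0=\sqrt d$, and to \eqref{eq:cartan_inequality_ii} with $K=2$ via $\mu_{i,j}=\mu_i-\mu_j$, is routine and correct (note the $\norm{\mu(h_3)}$ in \eqref{eq:cartan_inequality_ii} is a typo in the statement for $\norm{\mu(h_2)}$, which your argument handles anyway). This is essentially the standard argument in the literature, phrased via the Courant--Fischer minimax formula \eqref{eq:sv_minimax} rather than via submultiplicativity of the norms on exterior powers $\Lambda^k\Rb^d$; both routes are equally elementary, and yours has the advantage of using only facts already recorded in the paper.
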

\begin{remark}
  For an appropriate choice of norm on $\Rb^d$ (which is typically not
  the standard norm), the inequality \eqref{eq:cartan_inequality}
  can be strengthened to
  \[
    ||\mu(h_1gh_2) - \mu(g)|| \le ||\mu(h_1)|| + ||\mu(h_2)||.
  \]
  This immediately implies the version of the inequality we have
  stated above.
\end{remark}

\begin{lemma}
\label{lem:singular_value_ratios}
Suppose $g \in \GLdR$ and there exist $C>0$ and
$1 \leq i \leq j \leq d$ such that
\begin{equation*}
  \abs{\mu_{i,j}(g) - \mu_{1,d}(g)} \leq C.
\end{equation*}
Then:
\begin{enumerate}
\item $\mu_{1,k}(g) \leq C$ for $k \in \set{1, \ldots, i}$,
\item $\mu_{k,d}(g) \leq C$ for $k \in \set{j, \ldots, d}$, and
\item $\mu_{k, k+1}(g) \leq C$ for
  $k \in \set{1, \ldots, i-1} \cup \set{j, \ldots, d-1}$.
\end{enumerate}
\end{lemma}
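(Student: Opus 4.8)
The plan is to unwind the hypothesis $|\mu_{i,j}(g) - \mu_{1,d}(g)| \le C$ using only the basic arithmetic of the nonincreasing sequence $\mu_1(g) \ge \mu_2(g) \ge \cdots \ge \mu_d(g)$. The key identity is the telescoping decomposition
\[
  \mu_{1,d}(g) = \mu_{1,i}(g) + \mu_{i,j}(g) + \mu_{j,d}(g),
\]
valid for any $1 \le i \le j \le d$. Since all three summands on the right are nonnegative (the Cartan projection has nonincreasing entries), and since $\mu_{1,d}(g) - \mu_{i,j}(g) \le C$ by hypothesis, we immediately conclude
\[
  \mu_{1,i}(g) + \mu_{j,d}(g) = \mu_{1,d}(g) - \mu_{i,j}(g) \le C,
\]
and hence $\mu_{1,i}(g) \le C$ and $\mu_{j,d}(g) \le C$ separately, again by nonnegativity of each term.

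From here the three claims follow by monotonicity of partial gaps. For claim (1): for $k \in \{1, \ldots, i\}$ we have $\mu_{1,k}(g) = \mu_1(g) - \mu_k(g) \le \mu_1(g) - \mu_i(g) = \mu_{1,i}(g) \le C$, using $\mu_k(g) \ge \mu_i(g)$. For claim (2): for $k \in \{j, \ldots, d\}$ we have $\mu_{k,d}(g) = \mu_k(g) - \mu_d(g) \le \mu_j(g) - \mu_d(g) = \mu_{j,d}(g) \le C$, using $\mu_k(g) \le \mu_j(g)$. For claim (3): if $k \in \{1, \ldots, i-1\}$ then $\mu_{k,k+1}(g) \le \mu_{1,i}(g) \le C$ since $[k, k+1] \subseteq [1,i]$ and consecutive gaps are dominated by any containing gap; similarly if $k \in \{j, \ldots, d-1\}$ then $\mu_{k,k+1}(g) \le \mu_{j,d}(g) \le C$.

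There is essentially no obstacle here — the statement is a purely formal consequence of the fact that $\mu(g)$ is a nonincreasing vector and that gaps over nested intervals are nested in value. The only thing to be slightly careful about is bookkeeping the index ranges so that the telescoping decomposition and the monotonicity comparisons line up correctly (in particular handling the degenerate cases $i = 1$, $j = d$, or $i = j$, in which some of the asserted index sets are empty and there is nothing to prove).
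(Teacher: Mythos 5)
Your proposal is correct and follows essentially the same route as the paper's proof: the telescoping identity $\mu_{1,d}(g) = \mu_{1,i}(g) + \mu_{i,j}(g) + \mu_{j,d}(g)$ together with nonnegativity of all gaps gives $\mu_{1,i}(g), \mu_{j,d}(g) \le C$, and the three claims then follow by monotonicity of nested gaps. No issues.
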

\begin{proof}
Since the values of $\mu_k(g)$ are non-increasing, $\mu_{i',j'}(g)\geq 0$ for any $1 \leq i'\leq j' \leq d$. But 
  $\mu_{1,d}(g)$ is equal to the sum $\mu_{1,i}(g) + \mu_{i,j}(g) + \mu_{j,d}(g)$. Thus $\mu_{1,i}(g)\leq C$ and $\mu_{j,d}(g) \leq C$. The first two inequalities are then immediate as 
  $\mu_{1,k}(g) \le \mu_{1,i}(g)$ whenever
  $k \in \set{1, \ldots, i - 1}$, and $\mu_{k,d}(g) \le \mu_{j,d}(g)$
  for any $k \in \set{j, \ldots, d}$. The third inequality follows from the first two and
  the fact that $\mu_{k,k+1}(g)$ is bounded by either $\mu_{1,i}(g)$
  or $\mu_{j,d}(g)$ whenever
  $k \in \set{1, \ldots, i-1} \cup \set{j, \ldots, d-1}$.
\end{proof}

Let $\angle$ be the standard angle in $\Rb^d$ induced by the standard Euclidean inner product. Note that $\angle$ also defines a Riemannian metric $\dproj$ on
 $\Pb(\Rb^d)$, by setting $\dproj(u,v) = \angle(u,v)$ for any $u,v \in \Pb(\Rb^d)$. There is an analogous notion of angles between subspaces.

\begin{definition}
  If $U, W$ are two transverse subspaces of $\Rb^d$, we define the
  angle $\angle(U, W)$ by
  \[
    \angle(U, W) = \inf_{\substack{u \in U \minus \set{0}\\ w \in W
        \minus \set{0}}} \angle(u, w).
  \]
\end{definition}

\begin{lemma}
  \label{lem:transverse_decomp_compact}
  For any $\eps > 0$, there exists $C \equiv C(\eps)$ satisfying the
  following. Suppose we have two decompositions
  \begin{align*}
    \Rb^d &= U_1 \oplus \ldots \oplus U_k,\\
    \Rb^d &= W_1 \oplus \ldots \oplus W_k,
  \end{align*}
  such that $\dim(U_i) = \dim(W_i)$ for all $i$, and
  $\angle(U_i, U_j) \ge \eps$ and $\angle(W_i, W_j) \ge \eps$ for all
  $i \ne j$. Then there is some $q \in \GLdR$ such that
  $q(U_i) = W_i$ for all $i$ and $\mu_{1,d}(q) \le C$.
\end{lemma}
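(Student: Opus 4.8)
The plan is to reduce the statement to a compactness argument about the space of pairs of transverse flag-type decompositions, combined with the fact that an element moving one frame to another has bounded Cartan projection. First I would observe that the set of decompositions $\Rb^d = U_1 \oplus \cdots \oplus U_k$ with prescribed dimensions $(d_1, \ldots, d_k)$ and with $\angle(U_i, U_j) \ge \eps$ for $i \ne j$ is a compact subset of the product of Grassmannians $\prod_i \Gr(d_i, d)$: it is clearly closed (the angle condition is closed), and it sits inside the compact ambient product. Call this compact set $\Xc_\eps$. Given a decomposition $(U_1, \ldots, U_k) \in \Xc_\eps$, the transversality and angle bounds guarantee that choosing an orthonormal-ish basis adapted to the $U_i$ yields an element $g \in \GLdR$ with $g(\Rb^{d_1} \oplus \cdots \oplus \Rb^{d_j}\text{-block}) = U_i$ and with $\mu_{1,d}(g)$ controlled; more precisely, the map $(U_1, \ldots, U_k) \mapsto$ (a canonically chosen such $g$, e.g. obtained via Gram–Schmidt on a continuously-varying basis) can be taken continuous on $\Xc_\eps$, so $\mu_{1,d}$ of this $g$ is a continuous function on a compact set and hence bounded by some $C_1 = C_1(\eps)$.

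The key step is then: given the two decompositions $\{U_i\}$ and $\{W_i\}$ in the hypothesis, produce $g_U, g_W \in \GLdR$ as above, each sending the fixed "coordinate decomposition" $\Rb^d = E_1 \oplus \cdots \oplus E_k$ (with $E_i$ the span of the appropriate block of standard basis vectors) to $\{U_i\}$ and $\{W_i\}$ respectively, with $\mu_{1,d}(g_U), \mu_{1,d}(g_W) \le C_1(\eps)$. Then set $k := g_W g_U^{-1}$. By construction $k(U_i) = g_W g_U^{-1}(U_i) = g_W(E_i) = W_i$ for all $i$, which is exactly the required equivariance. It remains to bound $\mu_{1,d}(k)$: we have $\mu_{1,d}(g_U^{-1}) = \mu_{1,d}(g_U) \le C_1$ since singular-value gaps are invariant under inversion, and then \Cref{lem:additive_root_bound} (applied with $i = 1$, $j = d$, $g = \Id$, $h_1 = g_W$, $h_2 = g_U^{-1}$, using that $\mu_{1,d}(\Id) = 0$) gives
\[
  \mu_{1,d}(k) = \mu_{1,d}(g_W \cdot \Id \cdot g_U^{-1}) \le K\bigl(\norm{\mu(g_W)} + \norm{\mu(g_U^{-1})}\bigr).
\]
Now $\norm{\mu(g_W)}$ and $\norm{\mu(g_U^{-1})}$ can themselves be bounded in terms of $\mu_{1,d}$ only up to an additive constant — but here we are free to normalize $g_U, g_W$ to lie in $\SLdR$, or simply to rescale each so that, say, $\sigma_1 \sigma_d = 1$; then $\norm{\mu(g)} \le \mu_{1,d}(g)$ (up to a dimensional factor), and we conclude $\mu_{1,d}(k) \le C$ for $C = C(\eps)$ depending only on $\eps$ (through $C_1(\eps)$ and the absolute constant $K$).

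The main obstacle I anticipate is making the assignment $(U_1, \ldots, U_k) \mapsto g_U$ genuinely continuous (or at least locally bounded) on the compact set $\Xc_\eps$, and verifying that the chosen $g_U$ has $\mu_{1,d}(g_U)$ bounded purely in terms of $\eps$. The natural approach is Gram–Schmidt: pick, for each $U_i$, an orthonormal basis depending continuously on $U_i$ (possible locally, and a partition-of-unity/global-section argument or simply a direct estimate works since we only need boundedness, not continuity), assemble these into a matrix $g_U$ sending $E_i$ to $U_i$, and then estimate its singular values. The angle lower bound $\angle(U_i, U_j) \ge \eps$ is exactly what prevents the columns of $g_U$ from becoming nearly linearly dependent, so $\sigma_d(g_U)$ is bounded below and $\sigma_1(g_U)$ above by quantities depending only on $\eps$ and $d$; this is the one routine-but-essential estimate underlying the whole argument. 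An alternative that sidesteps continuity entirely: argue by contradiction — if no such $C(\eps)$ existed, take sequences of decompositions violating the bound, pass to convergent subsequences inside the compact set $\Xc_\eps$ for both families, and derive a contradiction from the fact that transverse limiting decompositions admit a single $g$ with finite $\mu_{1,d}$.
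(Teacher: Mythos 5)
Your proposal follows essentially the same route as the paper's own proof: build frames $g_U, g_W \in \GLdR$ adapted to the two decompositions (after reducing to orthonormal bases inside each summand), set $k = g_W g_U^{-1}$, and bound $\mu_{1,d}(k)$ by combining \Cref{lem:additive_root_bound} with a compactness claim for the set of $\eps$-separated frames. The composition step and the appeal to \Cref{lem:additive_root_bound} are fine.

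The gap sits exactly at the step you flag as ``routine-but-essential,'' and it is not routine: the assertion that the pairwise bounds $\angle(U_i,U_j)\ge\eps$ keep $\sigma_d(g_U)$ bounded below --- equivalently, that your $\Xc_\eps$ is a compact set of genuine decompositions --- fails as soon as there are three or more summands, because pairwise transversality does not control the angle between one summand and the span of the rest. Concretely, in $\Rb^3$ take $U_i=\langle e_i\rangle$ and $W_1=\langle e_1\rangle$, $W_2=\langle e_2\rangle$, $W_3=\langle w_\delta\rangle$ with $w_\delta=(\tfrac{1}{\sqrt2},\tfrac{1}{\sqrt2},\delta)$. For every $\delta>0$ these are direct sum decompositions with all pairwise angles at least $\pi/4$, yet any $g$ with $g(U_i)=W_i$ has columns $ae_1$, $be_2$, $cw_\delta$, and then $\norm{g}\ge|c|$ while $\norm{g^{-1}e_3}\ge(|c|\delta)^{-1}$, so $\mu_{1,3}(g)=\log(\norm{g}\,\norm{g^{-1}})\ge\log(1/\delta)\to\infty$. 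Thus $\Xc_\eps$ is not closed (its closure contains degenerate tuples), no choice of $g_U$ on it has uniformly bounded $\mu_{1,d}$, and in fact the lemma as stated is false for $k\ge 3$. You have, in effect, reproduced the paper's argument together with its flaw: the paper's set $K(\eps)$ of unit-column matrices with pairwise angles $\ge\eps$ is likewise not a compact subset of $\GLdR$, since it accumulates on singular matrices. Both the statement and your argument become correct if the hypothesis is strengthened to $\angle(U_i,\bigoplus_{j\ne i}U_j)\ge\eps$ for all $i$ (then the adapted frames with unit columns do form a compact subset of $\GLdR$ and your Gram--Schmidt/contradiction argument goes through); this stronger form is what the paper's applications actually supply, since the compactness argument behind \Cref{lem:triples_uniformly_transverse} yields the summand-versus-complement angle bounds just as readily as the pairwise ones.
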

\begin{proof}
  By choosing orthogonal bases for each $U_i$ and each $W_i$, we can
  reduce to the case where the subspaces $U_i$ and $W_i$ are all
  one-dimensional. Then, using \Cref{lem:additive_root_bound}, we can
  further reduce to the case where the subspaces $U_i$ give the
  decomposition of $\Rb^d$ into the lines spanned by the standard
  basis vectors $e_1, \ldots, e_d$.

  We can pick unit vectors $w_1, \ldots, w_d$ spanning each $W_i$, and
  consider the matrix $q$ whose columns are $w_1, \ldots, w_d$. Then
  $q$ takes $U_i$ to $W_i$, and lies in the compact subset $K(\eps)$
  of $\GLdR$ consisting of matrices whose columns are unit
  vectors having pairwise angles at least $\eps$. By compactness there
  is a uniform upper bound $C$ on $\mu_{1,d}(K(\eps))$, and the result
  follows. 
\end{proof}

\section{Morse geodesics are contracting}
\label{sec:morse_contracting}

Our main goal in this section is to prove
\cref{thm:hilbert_morse_equals_contracting}, which says that Morse
geodesics (\cref{defn:morse_gauge}) in a convex projective domain
$\Omega$ are equivalent to contracting geodesics
(\cref{defn:D_contracting}). As part of the proof, we also introduce
the framework of \emph{conically related} pairs of points in
boundaries of convex projective domains, and use this to provide a
number of other characterizations of Morse geodesics in
$\Omega$. These ideas will reappear later in \Cref{sec:sv_morse}, when
we use them to study the linear algebraic behavior of Morse geodesics.

Our proof of the equivalence between Morse and contracting geodesics
goes through a \emph{$\delta$-slimness} property for geodesic
triangles, which is reminiscent of a similar property that also
characterizes Morseness in CAT(0) spaces. We define this property
below. Note that our definition only refers to projective geodesic triangles in $(\Omega,\hil)$ (and not \emph{arbitrary} geodesic triangles). 
\begin{definition}
  Let $\ell$ be a projective geodesic in a properly convex domain
  $\Omega$ and $\delta \geq 0$. We say that $\ell$ is
  \emph{projectively $\delta$-slim} if, any projective geodesic
  triangle $[x,y]\cup[y,z]\cup[z,x]$ with $x,y, z \in \Omega$ and
  $[x,y]\subset \ell$ is $\delta$-slim, i.e., for
  $\{a, b, c\} = \{x, y, z\}$, we have
  \[
    [a,c] \subset N_\delta([a,b]) \cup N_\delta([b,c]).
  \] 
\end{definition}
\begin{remark}
\label{rem:proj_delta_slim_defn}
    \cite[Lemma 13.8]{IZ2019} implies that for a projective geodesic
    triangle to be $\delta$-slim, it suffices that one of its edges is
    contained in the $\frac{\delta}{2}$ neighborhood of its other two
    edges. More precisely, $[x,y]\cup[y,z]\cup[z,x] \subset \Omega$ is
    $\delta$-slim if $[x,y] \subset N_{r}([x,z]) \cup N_r([z,y])$ with
    $r:=\frac{\delta}{2}$.
\end{remark}

Our main result in this section is the following:
\begin{proposition}
  \label{prop:morse_contracting_slim}
  Let $\Omega$ be a properly convex domain and let $\ell$ be a
  projective geodesic in $\Omega$. The following are equivalent:
  \begin{enumerate}
  \item $\ell$ is Morse.
    \item $\ell$ is projectively $\delta$-slim.
  \item $\ell$ is contracting.
  \end{enumerate}
\end{proposition}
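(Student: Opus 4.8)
The plan is to establish the cycle of implications $(1) \Rightarrow (2) \Rightarrow (3) \Rightarrow (1)$, of which only the first two require real work in the convex-projective setting, since $(3) \Rightarrow (1)$ is the standard general-metric-space fact noted in the remark after \Cref{thm:hilbert_morse_equals_contracting}. So I would focus on $(1) \Rightarrow (2)$ and $(2) \Rightarrow (3)$.

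For $(1) \Rightarrow (2)$: suppose $\ell$ is $M$-Morse, and take a projective geodesic triangle $[x,y] \cup [y,z] \cup [z,x]$ with $[x,y] \subset \ell$. The point is that the concatenation of the two sides $[x,z] \cup [z,y]$ is a $(1, a)$-quasi-geodesic from $x$ to $y$ — indeed, concatenations of two geodesics are always quasi-geodesics, with constant $a$ controlled only by the Gromov product / the ``amount of backtracking'', and in fact for Hilbert geometry the maximum principle (\Cref{lem:maximum_principle}) gives good control here. Actually, the cleaner route in light of \Cref{rem:proj_delta_slim_defn} is: since $[x,y] \subset \ell$ and $\ell$ is $M$-Morse, any $(1,a)$-quasi-geodesic from $x$ to $y$ lies in the $M(1,a)$-neighborhood of $[x,y] \subset \ell$; applying this to the quasi-geodesic $[x,z]*[z,y]$ shows that $[x,z] \cup [z,y]$ is contained in the $M(1,a)$-neighborhood of $[x,y]$. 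Then \Cref{rem:proj_delta_slim_defn} (with the roles of the edges permuted — one checks the slimness condition needs only one edge near the other two) gives that the triangle is $\delta$-slim with $\delta$ depending only on $M$. I would need to be slightly careful about which edge plays which role in the definition of projective $\delta$-slimness versus in \Cref{rem:proj_delta_slim_defn}, but this is bookkeeping. The main subtlety is producing a uniform quasi-geodesic constant $a$ for the concatenation $[x,z]*[z,y]$ independent of the triangle; I expect this follows from an elementary Hilbert-geometry estimate (e.g. via the maximum principle or a direct cross-ratio computation), and this is the technical heart of this implication.

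For $(2) \Rightarrow (3)$: assume $\ell$ is projectively $\delta$-slim, and let $B_r(x)$ be a metric ball disjoint from $\ell$; I want $\diam(\pi_\ell(B_r(x))) \le D$ for $D = D(\delta)$. By \Cref{lem:projection_connected}, $\pi_\ell(B_r(x))$ is connected, so it suffices to bound the diameter of $\pi_\ell(\{y, y'\})$ for the two ``extreme'' nearest-point images, or rather to argue that any two points $p, p' \in \pi_\ell(B_r(x))$, say with $p \in \pi_\ell(y)$, $p' \in \pi_\ell(y')$ and $y, y' \in B_r(x)$, satisfy $\hil(p,p') \le D$. The standard CAT(0)-style argument adapts: consider the projective geodesic triangle with vertices $p$, $p'$ and $y$ (or a point realizing the ball); $\delta$-slimness forces the side $[y,p']$ to pass within $\delta$ of $[p, p'] \subset \ell$ or of $[y,p]$, and combined with the fact that $p, p'$ are \emph{nearest} points — so the distance from $y$ to $\ell$ is realized at $p$ and from $y'$ at $p'$ — one derives that $\hil(p,p')$ cannot exceed a bound depending on $\delta$ and $\diam(B_r(x)) \le 2r$, but a closer look (mimicking the CAT(0) contraction proof via the ``if $\hil(p,p')$ is large then the slim triangle forces a point of $[y,p']$ close to $\ell$ but far from $p$ and $p'$, contradicting minimality of the projection distance'') removes the dependence on $r$. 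I expect this to be the main obstacle: making the slimness-to-contraction argument work without the strong convexity of the distance function that CAT(0) provides. The tools available are the maximum principle, \Cref{lem:projection_connected}, and \Cref{fact:nonunique_geodesics}; I would use the maximum principle to control how nearest-point projection interacts with the slim triangle, and I'd be prepared to first prove an intermediate ``bounded geodesic image''-type statement (geodesics staying far from $\ell$ project to bounded subsets of $\ell$) and then deduce contraction from it in the usual way.

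Finally, $(3) \Rightarrow (1)$ is the general metric-space implication: a $D$-contracting geodesic is $M$-Morse with $M$ depending only on $D$, which is classical and can be cited; combined with the above this closes the cycle and proves \Cref{prop:morse_contracting_slim}.
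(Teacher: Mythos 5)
Your outline of $(2)\Rightarrow(3)$ (adapting the Charney--Sultan argument using the maximum principle and \cref{lem:projection_connected}) and of $(3)\Rightarrow(1)$ (the standard citation) matches the paper. The gap is in $(1)\Rightarrow(2)$, and it is not a bookkeeping issue but a failure of the central step. The concatenation $[x,z]*[z,y]$ is a $(1,a)$-quasi-geodesic only with $a$ comparable to the Gromov product $\hil(x,z)+\hil(z,y)-\hil(x,y)$, and this quantity is unbounded as $z$ ranges over $\Omega$: already in $\Hb^2$, taking $z$ at distance $N$ from $\ell$ makes the concatenation backtrack by roughly $2N$, so no uniform constant $a$ exists and no elementary cross-ratio or maximum-principle estimate can produce one. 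Moreover, even if the concatenation were a uniform quasi-geodesic, the Morse property would only give $[x,z]\cup[z,y]\subset N_{M(1,a)}(\ell)$, which is the \emph{wrong} containment for slimness (what is needed, per \cref{rem:proj_delta_slim_defn}, is $[x,y]\subset N_r([x,z])\cup N_r([z,y])$) and is in any case false for the $\Hb^2$ example just described. So this implication cannot be closed by the proposed local argument.

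The paper's actual proof of $(1)\Rightarrow(2)$ is a global compactness argument and occupies most of \cref{sec:morse_contracting}. One argues by contradiction: if $\ell$ is not projectively $\delta$-slim for any $\delta$, one takes a sequence of triangles violating $n$-slimness, renormalizes by Benz\'ecri's theorem (\cref{thm:benzecri}), and extracts a limit in which an endpoint of $\ell$ is forward or backward conically related to a point on the boundary of a half-triangle (\cref{lem:halftri_projective_delta_slim}). This contradicts Morseness because Morseness passes to conically related points (\cref{lem:morse_conically_related}) and Morse rays cannot terminate at points lying in closures of boundary segments or on half-triangles (\cref{lem:morse_not_in_halftri}, \cref{cor:morse_not_in_segment}). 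To repair your proof you would need to replace the concatenation argument with machinery of this kind; the implication is genuinely the hard direction of the proposition.
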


In \Cref{prop:morse_contracting_slim}, the implication (3) $\implies$
(1) follows from a well-known general result, stated below. The proof
is standard; see e.g. \cite[Lemma 3.3]{Sultan2014}.
\begin{proposition}
  \label{prop:contracting_implies_morse}
  Let $X$ be a proper geodesic metric space and let $D > 0$. There exists a Morse gauge $M$, depending only on $D$ and $X$, so that
  any $D$-contracting geodesic in $X$ is $M$-Morse.
\end{proposition}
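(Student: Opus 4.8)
The plan is to run the standard argument (cf.\ \cite[Lemma~3.3]{Sultan2014}), organized around a single estimate: \emph{excursions of a quasi-geodesic away from $\ell$ have uniformly bounded length}. Fix $D>0$, a $D$-contracting geodesic $\ell$ in $X$ (\Cref{defn:D_contracting}), and a $(\lambda,a)$-quasi-geodesic $\alpha\colon[0,T]\to X$ with endpoints $\alpha(0),\alpha(T)\in\ell$. The first step is to \emph{tame} $\alpha$: by a standard taming procedure for quasi-geodesics in a geodesic space, after replacing $\alpha$ by a continuous $(\lambda,a')$-quasi-geodesic with the same endpoints, staying within some Hausdorff distance $H=H(\lambda,a)$ of the original, we may further assume that $\alpha$ is rectifiable and satisfies a bound of the form $\operatorname{length}(\alpha|_{[s,t]})\le\mu\,d(\alpha(s),\alpha(t))+\mu$ for a constant $\mu=\mu(\lambda,a)$. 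Since the conclusion of \Cref{defn:morse_gauge} is stable under passing to a path within bounded Hausdorff distance, it is enough to bound $d(\alpha(r),\ell)$, for all $r$, by a constant depending only on $\lambda$, $a$, and $D$.

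Fix a threshold $R_1=R_1(\mu,D)$ to be chosen, and call a maximal subinterval $[s,t]\subseteq[0,T]$ on which $d(\alpha(\cdot),\ell)>R_1$ an \emph{excursion}. Because $\alpha(0)$ and $\alpha(T)$ lie on $\ell$, every excursion is interior, so $d(\alpha(s),\ell)=d(\alpha(t),\ell)=R_1$. The key claim is that the length $L$ of an excursion is at most some $\Lambda=\Lambda(\mu,D)$, provided $R_1$ is comparable to $\mu D$. To see it, chop $\alpha|_{[s,t]}$ into $O(L/R_1)$ consecutive subsegments of small enough length that each lies in a metric ball disjoint from $\ell$ (possible since $\alpha|_{[s,t]}$ stays at distance $\ge R_1$ from $\ell$); applying $D$-contraction to each and summing gives $d\big(\pi_\ell(\alpha(s)),\pi_\ell(\alpha(t))\big)\le D\big(O(L/R_1)+1\big)$. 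Combining this with $d(\alpha(s),\alpha(t))\le 2R_1+d\big(\pi_\ell(\alpha(s)),\pi_\ell(\alpha(t))\big)$ and the taming bound $L\le\mu\,d(\alpha(s),\alpha(t))+\mu$ produces an inequality of the shape $L\big(1-O(\mu D/R_1)\big)\le O(\mu R_1+\mu D+\mu)$; choosing $R_1$ on the scale of $\mu D$ so that the left-hand coefficient exceeds $\tfrac12$ yields $L\le\Lambda$.

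Granting the claim, the proof finishes immediately: a point $\alpha(r)$ lying on no excursion satisfies $d(\alpha(r),\ell)\le R_1$, while a point $\alpha(r)$ on an excursion $[s,t]$ satisfies $d(\alpha(r),\ell)\le d(\alpha(r),\alpha(s))+R_1\le L+R_1\le\Lambda+R_1$. Hence the tamed $\alpha$ lies in $N_{R_1+\Lambda}(\ell)$, so the original quasi-geodesic lies in $N_{R_1+\Lambda+H}(\ell)$, and $M(\lambda,a):=R_1+\Lambda+H$ is the required Morse gauge; it depends only on $\lambda$, $a$, and $D$ — in fact the ambient space $X$ enters only through the existence of geodesics, so one does not genuinely need dependence on $X$ at all.

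The only real subtlety is the taming step: it is what gives meaning to ``length of an excursion'' and hence lets the contraction hypothesis bite on each piece. The other mild point is that the threshold $R_1$ must be taken on the scale of $\mu D$, and no smaller, so that the self-referential inequality for $L$ actually closes; once this is arranged, everything else is a routine telescoping and triangle-inequality computation.
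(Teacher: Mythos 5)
Your argument is correct and is precisely the standard proof that the paper invokes by citation (cf.\ \cite[Lemma 3.3]{Sultan2014}): tame the quasi-geodesic, bound the length of each excursion beyond a threshold on the scale of $\mu D$ by telescoping the $D$-contraction estimate over balls disjoint from $\ell$, and close the resulting self-referential inequality. The observation that the gauge depends only on $D$, $\lambda$, $a$ (and not on $X$) is also accurate and consistent with the way \Cref{prop:contracting_implies_morse} is used later.
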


The implication (2) $\implies$ (3) in
\Cref{prop:morse_contracting_slim} is also straightforward, and we
provide a quick proof below. Most of the rest of this section is then
devoted to the proof of the implication (1) $\implies$ (2).

\subsection{Projectively $\delta$-slim implies contracting}
This is the implication (2) $\implies$ (3) in
\Cref{prop:morse_contracting_slim}. For this result, we mostly imitate
the proof, due to Charney-Sultan, of an analogous statement for CAT(0)
spaces (see Theorem 2.14 in \cite{CS2015}). It turns out that in most
situations, the Charney-Sultan proof does not use the full strength of
the CAT(0) condition, but only the weaker \emph{maximum principle}
(see \Cref{lem:maximum_principle}).

The Charney-Sultan proof does also appeal to the CAT(0) condition in
ways not covered by the maximum principle. However, it is not
difficult to modify the proof to avoid this, at the cost of increasing
some of the constants appearing in the proof. The first step is the
following lemma.

\begin{lemma}
  \label{lem:thin_triangle_i}
  Let $\Omega$ be a properly convex domain and let
  $\ell\subset \Omega$ be a projective geodesic. Suppose that $\ell$
  is projectively $\delta$-slim. Then, for any $x \in \Omega$,
  $y \in \ell$, and $z \in \pi_\ell(x)$, we have
  $\hil(z, [x,y]) < 4\delta$.
\end{lemma}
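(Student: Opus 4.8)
The plan is to consider the projective geodesic triangle with vertices $x$, $y$, and $z$, where $[x,y] \subset \ell$ is the segment of $\ell$ between $z$'s "foot" and $y$ — wait, more precisely I would set up the triangle $[x,y] \cup [y,z] \cup [z,x]$, but to use projective $\delta$-slimness I need one edge contained in $\ell$. Since $z \in \pi_\ell(x) \subset \ell$ and $y \in \ell$, the segment $[z,y]$ lies on $\ell$. So I would apply projective $\delta$-slimness to the triangle with the edge $[z,y] \subset \ell$: every point of $[x,y]$ lies in $N_\delta([x,z]) \cup N_\delta([z,y])$. Hmm, but what I actually want is to bound $\hil(z, [x,y])$, i.e. find a point of $[x,y]$ close to $z$. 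The idea: walk along $[x,y]$ starting from $y$; near $y$ we are within $\delta$ of $[z,y]$, and $y \in [z,y]$. I want to find a point $w \in [x,y]$ that is simultaneously close to $[z,y]$ (hence has a nearby point $w' \in [z,y] \subset \ell$) and such that $w'$ is forced to be near $z$.

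First I would use projective $\delta$-slimness on the triangle $x,y,z$ with special edge $[z,y]\subset \ell$: so $[x,y] \subset N_\delta([x,z]) \cup N_\delta([z,y])$. Now partition $[x,y]$ into the part $A$ within $\delta$ of $[x,z]$ and the part $B$ within $\delta$ of $[z,y]$. The point $x \in A$ and $y \in B$, so by connectedness there is a point $p \in [x,y]$ lying in $\overline{A} \cap \overline{B}$; thus there are $p_1 \in [x,z]$ and $p_2 \in [z,y] \subset \ell$ with $\hil(p,p_1) \le \delta$ and $\hil(p, p_2) \le \delta$, whence $\hil(p_1, p_2) \le 2\delta$. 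Next I would exploit that $z \in \pi_\ell(x)$: since $p_1 \in [x,z]$, we have $\hil(x, p_1) + \hil(p_1, z) = \hil(x,z) = \hil(x, \ell)$ (as $z$ realizes the distance), and also $\hil(x, p_2) \ge \hil(x, \ell) = \hil(x, z)$ since $p_2 \in \ell$. Combining: $\hil(x,z) \le \hil(x, p_2) \le \hil(x, p_1) + \hil(p_1, p_2) \le \hil(x,p_1) + 2\delta$, so $\hil(p_1, z) = \hil(x,z) - \hil(x,p_1) \le 2\delta$. Therefore $\hil(z, p) \le \hil(z, p_1) + \hil(p_1, p) \le 2\delta + \delta = 3\delta$, and since $p \in [x,y]$ we get $\hil(z, [x,y]) \le 3\delta < 4\delta$.

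The main obstacle I anticipate is the subtlety in choosing which edge of the triangle to designate as the one lying in $\ell$ — projective $\delta$-slimness as stated requires $[x,y] \subset \ell$, i.e. the distinguished edge must be a sub-segment of $\ell$, and one must be careful that $[z,y]$ is genuinely a projective geodesic segment contained in $\ell$ (which it is, since both $z,y \in \ell$ and $\ell$ is a projective geodesic). A second delicate point is the set-valuedness of $\pi_\ell$: I only need the single chosen foot $z$ to satisfy $\hil(x,z) = \hil(x,\ell)$, which is exactly the hypothesis $z \in \pi_\ell(x)$, so this causes no trouble. I would also double-check the connectedness step — $[x,y]$ is connected, $\overline{A}$ and $\overline{B}$ are closed subsets covering it with $x \in A$, $y \in B$, so they must intersect — and note that the slightly lossy bound $3\delta$ vs. the claimed $4\delta$ gives room to absorb any minor constant inefficiencies (e.g. if one prefers to work with the open neighborhoods in \Cref{rem:proj_delta_slim_defn} and pick up an extra factor).
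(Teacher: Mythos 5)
Your argument is correct, and it takes a genuinely different (and slightly sharper) route than the paper's. The paper applies the slimness containment to the edge $[y,z]\subset\ell$: assuming $\hil(y,z)>2\delta$, it picks a point $w\in[y,z]$ with $2\delta<\hil(w,z)<3\delta$, uses the minimizing property of $z$ to show $\hil(w,[x,z])>\delta$, deduces from slimness that $\hil(w,[x,y])<\delta$, and concludes $\hil(z,[x,y])\le\hil(z,w)+\hil(w,[x,y])<4\delta$. You instead apply the containment to the edge $[x,y]$, locate by connectedness a transition point $p\in[x,y]$ lying within $\delta$ of both $[x,z]$ and $[z,y]$, and then use the minimizing property of $z$ (together with additivity of $\hil$ along the projective segment $[x,z]$) to force the nearby point $p_1\in[x,z]$ to lie within $2\delta$ of $z$. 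Both proofs rest on the same two ingredients -- $\delta$-slimness of the triangle with vertices $x,y,z$ and the fact that $z$ realizes $\hil(x,\ell)$ -- but the decompositions are dual: the paper transports a point near $z$ onto $[x,y]$, while you transport a point of $[x,y]$ back toward $z$. Your version yields the bound $3\delta$, which comfortably implies the stated $4\delta$. All the details you flag check out: $[z,y]\subset\ell$ since $\ell$ is a projective segment containing both endpoints, so the relabelled triangle satisfies the hypothesis of the slimness definition; the compact segments $[x,z]$ and $[z,y]$ admit nearest points $p_1,p_2$; and the two relatively open sets $A,B$ covering the connected segment $[x,y]$ with $x\in A$, $y\in B$ must intersect.
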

\begin{proof}
  If $\hil(y, z) \le 2\delta$ we are done, so assume that
  $\hil(y,z) > 2\delta$, and then choose a point $w \in [y,z]$ so that
  $2\delta < \hil(w, z) < 3\delta$. Then let $u$ be a point on
  $[x, z]$ so that
  \[
    \hil(w, u) = \hil(w, [x, z]).
  \]
  From the triangle inequality we have
  \[
    \hil(x, u) + \hil(u, w) \ge \hil(x, w),
  \]
  and since $z \in \pi_\ell(x)$ and $w \in \ell$ we know that
  \[
    \hil(x, w) \ge \hil(x, z) = \hil(x, u) + \hil(u, z).
  \]
  Putting the previous two lines together we see that
  $\hil(u, z) \le \hil(u, w)$. But then
  \[
    2\delta < \hil(w, z) \le \hil(w, u) + \hil(u, z) \le 2\hil(u, w),
  \]
  which implies that $\hil(w, [x,z]) = \hil(w, u) > \delta$.
  
  Now, as $\ell$ is projectively $\delta$-slim, the projective
  geodesic triangle $[x,y] \cup [y,z] \cup [z,x]$ is
  $\delta$-slim. Since $\hil(w, [x,z]) > \delta$, we have
  $\hil(w, [x,y]) < \delta$. Thus
  \begin{equation*}
    \hil(z, [x,y]) \leq  \hil(z, w) + \hil(w, [x,y]) < 4\delta.
  \end{equation*} \end{proof}

The following completes the proof that (2) $\implies$ (3) in
\Cref{prop:morse_contracting_slim}.
\begin{proposition}
  \label{prop:slim_contracting}
  Let $\Omega$ be a properly convex domain and let $\ell$ be a
  projective geodesic in $\Omega$. If $\ell$ is projectively
  $\delta$-slim, then $\ell$ is $24\delta$-contracting.
\end{proposition}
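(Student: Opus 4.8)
The plan is to follow the Charney–Sultan strategy for CAT(0) spaces, replacing every appeal to CAT(0) convexity by the maximum principle (Lemma~\ref{lem:maximum_principle}) and by Lemma~\ref{lem:thin_triangle_i}, which we already have available. Suppose $B_r(x)$ is a metric ball disjoint from $\ell$, and pick $x_1, x_2 \in B_r(x)$; we must bound $\hil(z_1, z_2)$ where $z_i \in \pi_\ell(x_i)$. Since $B_r(x)$ misses $\ell$, we have $r \le \hil(x, \ell) \le \hil(x, z_i)$, and so $\hil(x_i, z_i) \le \hil(x_i, z_{3-i})$ for each $i$, i.e.\ each $x_i$ is at least as close to its own projection point as to the other. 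First I would handle the ``far'' case: assume $\hil(z_1, z_2)$ is large (say larger than some explicit multiple of $\delta$, to be optimized to land at $24\delta$) and derive a contradiction, which then gives the bound in the complementary ``near'' case directly.

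The key geometric input is a bound on $\hil(x_i, \ell)$ in terms of $r$ and $\delta$. Consider the projective geodesic triangle with vertices $x_1, x_2$, and $z_1 \in \pi_\ell(x_1)$; by Lemma~\ref{lem:thin_triangle_i} (applied with $y = z_2$, or with $y$ an appropriate point of $\ell$) the projection point $z_1$ lies within $4\delta$ of the segment $[x_1, x_2]$ or $[x_1, z_2]$, and we can iterate this to relate the distances $\hil(x_i, z_i)$ to $\hil(x_1, x_2) \le 2r$ and $\delta$. The upshot should be an inequality of the form $\hil(x_i, \ell) \le 2r + O(\delta)$, hence $r \le \hil(x, \ell) \le \hil(x, z_i) \le \hil(x, x_i) + \hil(x_i, z_i) \le r + \hil(x_i, \ell)$, which must be combined carefully. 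Then, using the maximum principle on the convex hull of $\{x_1, x_2, z_1, z_2\}$ together with the $\delta$-slimness of the two triangles $[x_1, x_2]\cup[x_2, z_1]\cup[z_1, x_1]$ and $[x_1, z_2]\cup[z_2, z_1]\cup[z_1, x_1]$, one shows that if $\hil(z_1, z_2)$ were too large then a point in the middle of $[z_1, z_2] \subset \ell$ would be forced to be much closer to one of the $x_i$ than $z_i$ is — contradicting that $z_i$ realizes $\hil(x_i, \ell)$.

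I expect the main obstacle to be bookkeeping the constants so that the final bound is exactly $24\delta$ (rather than some larger multiple of $\delta$), since the CAT(0) argument gets to use sharp convexity inequalities that we must replace by the cruder estimate $\hil(z,[x,y]) < 4\delta$ from Lemma~\ref{lem:thin_triangle_i}. A secondary technical point is that $\pi_\ell$ is only set-valued and not continuous in Hausdorff distance, so I should avoid any continuity/connectedness argument about $\pi_\ell$ itself and instead work pointwise, choosing fixed representatives $z_i \in \pi_\ell(x_i)$ at the outset; Lemma~\ref{lem:projection_connected} is not needed here. Once the contradiction in the far case is set up, the conclusion $\diam(\pi_\ell(B_r(x))) \le 24\delta$ follows by taking the supremum over $x_1, x_2 \in B_r(x)$ and over choices of projection points.
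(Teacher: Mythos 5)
There is a genuine gap. The correct engine here is exactly the one you name---\cref{lem:thin_triangle_i} plus the maximum principle---but the way you deploy it does not work, because you try to compare two arbitrary points $x_1,x_2\in B_r(x)$ directly. The maximum-principle step in this argument needs the inequality $\hil(p,q)\le \hil(p,\ell)$ for the point $p$ being projected and the auxiliary point $q$, and that inequality is only guaranteed when $p$ is the \emph{center} $x$ of the ball: disjointness of $B_r(x)$ from $\ell$ gives $\hil(x,y)<r\le\hil(x,\ell)$ for every $y\in B_r(x)$, so the maximum of $\hil(x,\cdot)$ on the segment $[y,x']$ (with $x'\in\pi_\ell(x)$) is attained at $x'$. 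For two off-center points this fails: $\hil(x_1,x_2)$ can be close to $2r$ while $\hil(x_1,\ell)$ is arbitrarily small, so the maximum principle applied to the hull of $\{x_1,x_2,z_1,z_2\}$ gives you nothing. Indeed, running the two-triangle comparison you sketch without the center only yields a bound of the shape $\hil(z_1,z_2)\le \hil(x_1,x_2)+O(\delta)\le 2r+O(\delta)$, which depends on $r$ and therefore does not establish the contracting property at all. Relatedly, your proposed ``key geometric input'' $\hil(x_i,\ell)\le 2r+O(\delta)$ is simply false: the ball can sit at arbitrarily large distance from $\ell$ (disjointness only gives $r\le\hil(x,\ell)$, a lower bound).

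The repair is to route everything through the center: for each $y\in B_r(x)$ with $y'\in\pi_\ell(y)$ and $x'\in\pi_\ell(x)$, one application of \cref{lem:thin_triangle_i} to the triangle with vertices $x,y,x'$ together with the maximum principle on $[y,x']$ gives $\hil(x,y')\le\hil(x,x')+4\delta$, and a second application to the triangle with vertices $x,y',x'$ plus additivity along the projective segment $[x,y']$ gives $\hil(x,y')\ge\hil(x,x')+\hil(x',y')-8\delta$; combining yields $\hil(x',y')<12\delta$, and the $24\delta$ bound for an arbitrary pair then follows from the triangle inequality through $x'$. Your instinct to work with fixed representatives of the set-valued projection (and to avoid \cref{lem:projection_connected}) is correct, but without the center-based comparison the argument as proposed does not close.
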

\begin{proof}
  Let $B = B(x,r)$ be a ball not intersecting $\ell$. Let $y \in B$
  and let $x' \in \pi_\ell(x), y' \in \pi_\ell(y)$.

  By \Cref{lem:thin_triangle_i}, there is a point $u \in [y,x']$ such
  that $d(y', u) < 4\delta$. The maximum principle
  (\Cref{lem:maximum_principle}) implies that
  \[
    \hil(x, u) \le \max\{\hil(x,y), \hil(x, x')\} = \hil(x, x'),
  \]
  so
  \[
    \hil(x, y') \le \hil(x, x') + 4\delta.
  \]
  Then we apply \Cref{lem:thin_triangle_i} again to see that there is
  a point $w \in [y',x]$ so that $\hil(x', w) < 4\delta$. Then
  \begin{align*}
    \hil(x, y') &= \hil(x, w) + \hil(w, y')\\
                &\ge \hil(x, x') - \hil(x', w) + \hil(y', x') -
                  \hil(x', w)\\ 
                &\ge \hil(x, x') + \hil(y', x') - 8\delta.
  \end{align*}
  That is, we have
  \[
    \hil(x, x') + \hil(y', x') - 8\delta \le \hil(x, y') \le \hil(x,
    x') + 4\delta,
  \]
  which implies $\hil(y', x') < 12\delta$. Thus the diameter of the
  nearest-point projection of $B$ onto $\ell$ is at most $24\delta$.
\end{proof}

Having proved that (2) $\implies$ (3) $\implies$ (1) in
\Cref{prop:morse_contracting_slim}, we now turn to the implication (1)
$\implies$ (2). Our proof of this implication relies much more heavily
on the convex projective geometry of the domain $\Omega$. In
particular, we develop a notion of \emph{conically related} pairs of
points in the boundary of certain pairs of properly convex domains,
and show that Morseness is preserved between conically related
points. This allows us to develop several other more technical
characterizations of Morse geodesics in a convex projective domain
$\Omega$, which we ultimately use to establish the desired implication
in \Cref{prop:morse_contracting_slim}.

We state all of our different equivalences below in
\Cref{prop:morse_hilbert_equivalences}. First, however, we need a few
more definitions.

\subsection{Half triangles}

Half-triangles in convex projective domains extend the analogy between
properly embedded triangles and flats in CAT(0) spaces (see
\Cref{sec:properly_embedded_simplices}) to isometrically embedded
\emph{half-flats} in CAT(0) spaces.
\begin{definition}
\label{defn:half_triangle}
  Let $\Omega$ be a properly convex domain. A \emph{half-triangle} in
  $\Omega$ is a convex subset given by ($\Pb(\Span\{x,y,z\}) \cap \Omega$) where $x,y,z \in \partial \Omega$ are such that $[x,y]\cup[y,z] \subset \partial \Omega$ and $(x,z) \subset \Omega$. 
  
  A half triangle is uniquely determined by such a triple $x,y,z \in \partial \Omega$ satisfying $[x,y]\cup[y,z] \subset \partial \Omega$ and $(x,z) \subset \Omega$. Moreover, given such a triple $x,y,z \in \partial \Omega$, we say that $(x,z)$ is \emph{contained in a half triangle in $\Omega$}.  
\end{definition}

Note that, as a subspace of $\Omega$ with its restricted Hilbert
metric, a half-triangle is \emph{not} necessarily isometric to a
half-space (i.e. a subspace bounded by a geodesic) in a properly
embedded triangle. Nevertheless half-triangles still bear some
resemblance to half-flats, since segments in the boundary of a
properly convex domain correspond roughly to ``flat directions'' (see
e.g. \Cref{lem:int_segments_triangles}).

\subsection{Conically related points}

The idea behind our next definition (that of \emph{conically related
  points}) is to consider what a properly convex domain $\Omega$
``looks like'' from the perspective of a sequence of points traveling
along a projective geodesic ray $c$ towards the ideal endpoint
$c(\infty)$ in $\partial \Omega$. If $\Omega$ has a cocompact action
by projective automorphisms, we can consider a sequence of points
$\{x_n\}$ limiting to an ideal endpoint $z$ of $c$, and a sequence of
group elements $\{\gamma_n\}$ in $\Aut(\Omega)$ taking $x_n$ back to
some fixed compact subset of $\Omega$. The projective geometry of the
accumulation points of the sequence $\{\gamma_n z\}$ in $\dee \Omega$
should inform the metric geometry of the geodesic $c$.

More generally, when $\Aut(\Omega$) does \emph{not} act cocompactly on
$\Omega$, we can use the Benz\'ecri cocompactness theorem
(\Cref{thm:benzecri}) to find elements $g_n$ in $\PGL(V)$ which
``translate'' points in the sequence $\{x_n\}$ into a fixed compact
subset of some limiting domain $\Omega_\infty$. Again, we can
understand the metric geometry of the geodesic $c$ by looking at
accumulation points of $\{g_nz\}$ in $\dee \Omega_\infty$.

In \cite{B2003}, Benoist used essentially this approach to investigate
the global hyperbolicity of arbitrary convex projective domains. The definition below gives one way to formalize this idea. (For another,
see e.g. \cite[Section 5]{W2020}).
\begin{definition}
\label{defn:conically_related_points}
  Let $\Omega_1, \Omega_2$ be properly convex domains, let
  $z_1 \in \dee \Omega_1$, and let $z_2^+, z_2^-$ be points in
  $\dee \Omega_2$ such that $(z_2^+, z_2^-) \subset \Omega_2$. Suppose that:
    \begin{enumerate}
    \item there is a sequence of points $\{x_n\}$ in the projective
      geodesic ray $[x, z_1) \subset \Omega_1$ such that $x_n$
      converges to $z_1$, and
    \item there is a divergent sequence of group elements $\{g_n\}$ in
      $\PGL(V)$ (i.e. a sequence $\{g_n\}$ which leaves every compact
      set in $\PGL(V)$) such that $g_n(z_1, x_n)$ converges to
      $(z_2^+,z_2^-) \subset \Omega_2$ and $g_n \Omega_1$ converges to
      $\Omega_2$.
    \end{enumerate}
    Then we say that $(z_1, \Omega_1)$ is \emph{forward conically
      related to} $(z_2^+, \Omega_2)$ by the sequence $\{g_n\}$, and
    \emph{backward conically related to} $(z_2^-, \Omega_2)$ by the
    sequence $\{g_n\}$.
  
  If the domains $\Omega_1, \Omega_2$ are understood from context, we
  will sometimes just say that $z_1$ is (forward or backward)
  conically related to $z_2^+$ or $z_2^-$.
\end{definition}

\begin{figure}[h]
    \centering
    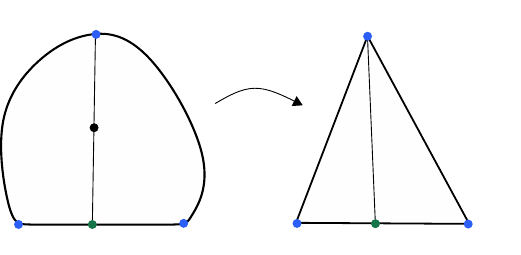
    \caption{Forward and backward conically related points (\cref{eg:fb_conical_pts})}
    \label{fig:conically_rel}
\end{figure}
\begin{example}[See \Cref{fig:conically_rel}]
\label{eg:fb_conical_pts}
    Let $\{e_1,e_2,e_3\}$ be the standard basis of $\Rb^3$ and $m:=(e_2+e_3)$.
    Consider a properly convex domain $\Omega_1$ such that $[e_1], [e_2], [e_3] \in \partial \Omega_1$; the projective line segment  $([e_2],[e_3]) \subset \partial \Omega_1$; and $([e_1],[m]) \subset \Omega_1$. Let $\ell_1$ be the projective geodesic ray in $\Omega_1$ starting at $[e_1+m]$ with endpoint $[m]$. For each $n\geq 1$, let $g_n$ be a diagonal matrix ${\rm diag}(n^2, 1/n, 1/n)$ in $\PGL(\Rb^3)$. Then $g_n\Omega_1$ converges to the projective 2-simplex $\Delta:=\{ [x:y:z] \in \Pb(\Rb^3) | x,y,z>0\}$ while $g_n \ell_1$ converges to the bi-infinite projective geodesic $\ell_0:=\{ [t:s:s] | t,s>0\}$ in $\Delta$. Thus $([m],\Omega_1)$ is forward conically related to $([m],\Delta)$ and backward conically related to $([e_1],\Delta).$ 
\end{example}

Observe that if $(z_1, \Omega_1)$ is (forward or backward) conically
related to $(z_2, \Omega_2)$, it follows immediately that for any
$g_1, g_2 \in \PGL(V)$, $(g_1z_1, g_1\Omega_1)$ is also (forward or
backward) conically related to $(g_2z_2, g_2\Omega_2)$. That is:
\begin{proposition}
  \label{prop:conical_relation_projective_equiv}
  The relation ``$(z_1, \Omega_1)$ is conically related to
  $(z_2, \Omega_2)$'' is well-defined when we regard $(z_i, \Omega_i)$
  as elements in the quotient set
  \[
    \{(x, \Omega) \in \Pb(V) \times \domains(V) : x \in \dee \Omega\} /
    \PGL(V),
  \]
  where $\PGL(V)$ acts diagonally on $\Pb(V) \times \domains(V)$.
\end{proposition}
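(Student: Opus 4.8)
The plan is to simply unwind \Cref{defn:conically_related_points} and exploit the fact that every object appearing in it (properly convex domains, projective geodesic rays, open segments, and divergent sequences in $\PGL(V)$) behaves equivariantly under postcomposition by a fixed projective transformation. Concretely, suppose $(z_1, \Omega_1)$ is forward conically related to $(z_2^+, \Omega_2)$ by a divergent sequence $\{g_n\}$, witnessed by points $\{x_n\}$ on the projective geodesic ray $[x, z_1) \subset \Omega_1$ with $x_n \to z_1$, with $g_n(z_1, x_n) \to (z_2^+, z_2^-)$, and with $g_n\Omega_1 \to \Omega_2$. Fixing $g_1, g_2 \in \PGL(V)$, I want to produce the data exhibiting $(g_1 z_1, g_1\Omega_1)$ as forward conically related to $(g_2 z_2^+, g_2\Omega_2)$.

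First I would set $x_n' := g_1 x_n$. Since $g_1$ is a projective transformation it carries the properly convex domain $\Omega_1$ to the properly convex domain $g_1\Omega_1$ and the canonical projective geodesic ray $[x, z_1)$ to the canonical projective geodesic ray $[g_1 x, g_1 z_1) \subset g_1\Omega_1$; as $g_1$ is a homeomorphism of $\Pb(V)$, also $x_n' \to g_1 z_1$. This gives condition (1) of \Cref{defn:conically_related_points} for the ray $[g_1 x, g_1 z_1)$. Next I would set $h_n := g_2 g_n g_1^{-1}$. This sequence is divergent in $\PGL(V)$: if some subsequence of $\{h_n\}$ lay in a compact set $K$, the corresponding subsequence of $g_n = g_2^{-1} h_n g_1$ would lie in the compact set $g_2^{-1} K g_1$, contradicting divergence of $\{g_n\}$. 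Then $h_n(g_1 z_1, g_1 x_n) = g_2 g_n(z_1, x_n) \to g_2(z_2^+, z_2^-) = (g_2 z_2^+, g_2 z_2^-)$ by continuity of the $\PGL(V)$-action, and since $(z_2^+, z_2^-) \subset \Omega_2$ we get $(g_2 z_2^+, g_2 z_2^-) \subset g_2\Omega_2$. Likewise $h_n(g_1\Omega_1) = g_2 g_n \Omega_1 \to g_2\Omega_2$, using that the $\PGL(V)$-action on $\domains(V)$ is continuous (part of the setup surrounding \Cref{thm:benzecri}). This establishes condition (2) with the sequence $\{h_n\}$, so $(g_1 z_1, g_1\Omega_1)$ is forward conically related to $(g_2 z_2^+, g_2\Omega_2)$; the backward case is identical, with $z_2^-$ in place of $z_2^+$.

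To finish, recall that two elements $(z, \Omega)$ and $(z', \Omega')$ of the set $\{(x,\Omega) \in \Pb(V)\times\domains(V) : x \in \dee\Omega\}$ lie in the same $\PGL(V)$-orbit precisely when $(z', \Omega') = (g z, g\Omega)$ for some $g \in \PGL(V)$. The computation above shows that conical relatedness of $(z_1, \Omega_1)$ and $(z_2, \Omega_2)$ implies conical relatedness of $(g_1 z_1, g_1\Omega_1)$ and $(g_2 z_2, g_2\Omega_2)$ for arbitrary $g_1, g_2 \in \PGL(V)$; applying this with $g_1^{-1}$ and $g_2^{-1}$ yields the reverse implication. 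Hence conical relatedness depends only on the $\PGL(V)$-orbits of $(z_1, \Omega_1)$ and $(z_2, \Omega_2)$, which is the assertion.

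I do not expect a genuine obstacle here: the argument is a bookkeeping exercise in equivariance. The only points needing any care are that postcomposing by a fixed element of $\PGL(V)$ preserves divergence of a sequence in $\PGL(V)$, and that it preserves all the relevant convergences (of points in $\Pb(V)$, of open segments, and of domains in $\domains(V)$) — both of which are immediate from continuity of the $\PGL(V)$-actions.
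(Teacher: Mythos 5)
Your proposal is correct and is exactly the argument the paper has in mind: the authors state the proposition as an immediate consequence of equivariance (replacing $\{g_n\}$ by $\{g_2 g_n g_1^{-1}\}$), and your write-up just makes that bookkeeping explicit, including the two points that need checking (divergence is preserved under pre/post-composition by fixed elements, and all the relevant convergences are preserved by continuity of the $\PGL(V)$-actions).
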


Later, we will prove a number of other straightforward but useful
properties of conically related points. In particular, we will show
that Morseness is preserved between geodesics with conically related
endpoints (see \Cref{lem:morse_conically_related}).

\subsection{Characterizations of Morseness}

We can now state our full characterization of Morse projective
geodesics, giving an expanded version of
\Cref{prop:morse_contracting_slim}. For convenient reading, we have given the various characterizations suggestive labels instead of numbers: M for ``Morse'', HT for ``half triangle'', V for ``visibility'',  and C for ``contracting.''

\begin{proposition}
  \label{prop:morse_hilbert_equivalences} Suppose $\ell$ is a
  projective geodesic in a properly convex domain $\Omega$. Then the
  following are equivalent:
  \begin{enumerate}
    \labelitem{($M$)} \label{item:nondiv_morse} The geodesic $\ell$ is
    Morse for some Morse gauge $M$.

	\labelitem{($HT$)} \label{item:halftri_forward} For every endpoint
    $z_1$ of $\ell$ in $\dee \Omega$, if $z_1$ is forward conically
    related to a point $z_2 \in \dee \Omega_2$, then $z_2$ does not
    lie in the boundary of a half-triangle in $\Omega_2$.

    \labelitem{($HT-$)} \label{item:halftri_backward} For every
    endpoint $z_1$ of $\ell$ in $\dee \Omega$, if $z_1$ is backward
    conically related to a point $z_2 \in \dee \Omega_2$, then $z_2$
    does not lie in the boundary of a half-triangle in $\Omega_2$.

    \labelitem{($V$)} \label{item:segclose_forward} For every endpoint
    $z_1$ of $\ell$ in $\dee \Omega$, if $z_1$ is forward conically
    related to a point $z_2 \in \dee \Omega_2$, then
    $(z_2, w) \subset \Omega_2$ for every
    $w \in \partial \Omega_2 \minus \{z_2\}$.

    \labelitem{($V-$)} \label{item:segclose_backward} For every
    endpoint $z_1$ of $\ell$ in $\dee \Omega$, if $z_1$ is backward
    conically related to a point $z_2 \in \dee \Omega_2$, then
    $(z_2, w) \subset \Omega_2$ for every
    $w \in \partial \Omega_2 \minus \{z_2\}$.

    \labelitem{($\delta$)} \label{item:delta_slim} The geodesic $\ell$
    is projectively $\delta$-slim for some $\delta > 0$.

    \labelitem{($C$)} \label{item:contracting} The geodesic $\ell$ is
    $D$-contracting for some $D$.
  \end{enumerate} 
\end{proposition}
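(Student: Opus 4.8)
The plan is to establish the seven-way equivalence by running a cycle of implications through the Benz\'ecri cocompactness theorem (\Cref{thm:benzecri}), using the two implications already in hand: $(\delta)\Rightarrow(C)$ (\Cref{prop:slim_contracting}) and $(C)\Rightarrow(M)$ (\Cref{prop:contracting_implies_morse}). The conditions $(HT)/(HT-)$ and $(SC)/(SC-)$ occur in forward/backward pairs that are interchanged by reversing the orientation of $\ell$, so it is enough to treat the forward versions, the backward ones following by symmetry. Moreover $(SC)\Rightarrow(HT)$ is immediate from the definition of a half-triangle: a half-triangle has exactly two of its three sides contained in $\partial\Omega_2$, so at least one of the two sides incident to any prescribed boundary point $z_2$ lying on it is a nontrivial boundary segment with $z_2$ in its closure, which is exactly what $(SC)$ forbids. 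Thus the remaining tasks are $(M)\Rightarrow(SC)$ and $(HT)\Rightarrow(\delta)$, which close the loop $(M)\Rightarrow(SC)\Rightarrow(HT)\Rightarrow(\delta)\Rightarrow(C)\Rightarrow(M)$ (and likewise through the backward conditions).

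For $(M)\Rightarrow(SC)$, the key input is the lemma, to be established in this section, that Morseness is transported along conically related points: if an endpoint $z_1$ of an $M$-Morse geodesic is forward conically related to $z_2\in\partial\Omega_2$, then every projective geodesic ray in $\Omega_2$ landing at $z_2$ is $M'$-Morse for a gauge $M'$ depending only on $M$ (\Cref{lem:morse_conically_related}). It then suffices to check that a projective geodesic ray whose ideal endpoint lies in the closure of a nontrivial boundary segment is never Morse. This is where the projective geometry of the domain is used: a boundary segment at the endpoint provides a ``flat'' direction for the Hilbert metric along which one can construct a quasi-geodesic with endpoints on the ray that fellow-travels it for arbitrarily long and then escapes every fixed tubular neighborhood, contradicting the Morse property. (This is the content of \Cref{cor:morse_not_in_segment}, which also recovers, via the half-flat picture behind \Cref{defn:half_triangle}, the statement that geodesics into half-triangles are never Morse.) Applying this in $\Omega_2$ gives $(SC)$, and $(SC-)$ follows identically.

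For $(HT)\Rightarrow(\delta)$ I would argue the contrapositive. If $\ell$ is not projectively $\delta$-slim for any $\delta$, then by \Cref{rem:proj_delta_slim_defn} there are projective geodesic triangles $[x_n,y_n]\cup[y_n,z_n]\cup[z_n,x_n]$ with $[x_n,y_n]\subset\ell$ and points $w_n\in[x_n,y_n]$ satisfying $\hil(w_n,[x_n,z_n])\to\infty$ and $\hil(w_n,[y_n,z_n])\to\infty$. Using \Cref{thm:benzecri}, choose $g_n\in\PGL(\Rb^d)$ carrying $w_n$ into a fixed compact set, and pass to a subsequence so that $g_n\Omega\to\Omega_\infty$ in $\domains(\Rb^d)$, $g_nw_n\to o\in\Omega_\infty$, $g_nx_n\to a$, $g_ny_n\to b$, $g_nz_n\to z$, and $g_n\ell\to\ell_\infty$, a projective geodesic through $o$ with ideal endpoints $p^\pm\in\partial\Omega_\infty$ and $(p^+,p^-)\subset\Omega_\infty$. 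The divergence hypotheses force $g_n[x_n,z_n]$ and $g_n[y_n,z_n]$ to leave every compact subset of $\Omega_\infty$, and their Hausdorff limits $[a,z]$ and $[b,z]$ then lie in $\partial\Omega_\infty$; in particular $a,b\in\partial\Omega_\infty$, and since $a,b$ lie on $\overline{\ell_\infty}$ with $o$ on the segment between them we get $\{a,b\}=\{p^+,p^-\}$. Finally $z\notin\{p^+,p^-\}$, for otherwise $[a,z]$ or $[b,z]$ being in $\partial\Omega_\infty$ would force $[p^+,p^-]\subset\partial\Omega_\infty$, contradicting $(p^+,p^-)\subset\Omega_\infty$. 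Hence $[p^+,z]$ and $[p^-,z]$ are nontrivial boundary segments while $[p^+,p^-]\not\subset\partial\Omega_\infty$, so $\{p^+,p^-,z\}$ spans a half-triangle in $\Omega_\infty$ whose boundary contains $p^+$ (and $p^-$). As $p^+$ is a forward conically related image of an endpoint of $\ell$ and $p^-$ a backward one, this contradicts $(HT)$ (and $(HT-)$), completing the cycle.

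The main obstacle will be the bookkeeping in this Benz\'ecri limiting argument: one must carefully track which of $g_nx_n,g_ny_n,g_nz_n$ converge into $\partial\Omega_\infty$ rather than into $\Omega_\infty$, verify that the limiting segments $[p^+,z]$ and $[p^-,z]$ are nondegenerate, and confirm that the configuration one extracts really is a \emph{half-triangle} --- two sides in the boundary, one not --- rather than a weaker ``one boundary segment at the endpoint'' configuration; it is precisely here that failing slimness against both edges $[x_n,z_n]$ and $[y_n,z_n]$ simultaneously (the content of \Cref{rem:proj_delta_slim_defn}) is essential. A secondary technical point is \Cref{lem:morse_conically_related} together with the quasi-geodesic construction showing that a boundary segment at the endpoint destroys Morseness.
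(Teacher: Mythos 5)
Your overall scheme is the same as the paper's: the cycle $(M)\Rightarrow(SC)\Rightarrow(HT)\Rightarrow(\delta)\Rightarrow(C)\Rightarrow(M)$ (and its backward twin), with $(M)\Rightarrow(SC)$ obtained from \Cref{lem:morse_conically_related} together with \Cref{cor:morse_not_in_segment}, $(SC)\Rightarrow(HT)$ immediate, and $(HT)\Rightarrow(\delta)$ by a Benz\'ecri limiting argument in the contrapositive. Those parts are fine, and your bookkeeping in the limit (identifying $\{a,b\}=\{p^+,p^-\}$, checking $z\notin\{p^+,p^-\}$, and using failure of slimness against \emph{both} edges to get two boundary segments) matches the paper's \Cref{lem:halftri_projective_delta_slim}.

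There is, however, one genuine gap in the $(HT)\Rightarrow(\delta)$ step: your concluding sentence ``$p^+$ is a forward conically related image of an endpoint of $\ell$'' is only justified when the Benz\'ecri sequence $\{g_n\}$ is \emph{divergent} in $\PGL(V)$ --- the definition of the conical relation (\Cref{defn:conically_related_points}) requires both a divergent sequence of group elements and a sequence of points on $\ell$ converging to the ideal endpoint $z_1$. Nothing in the failure of slimness forces the witnessing points $w_n$ to escape along $\ell$: the triangles are not $n$-slim, so $x_n$ and $y_n$ must run off to the two ends of $\ell$, but $w_n$ itself may stay in a compact piece of $\ell$, in which case the $g_n$ may be taken bounded and your limiting configuration is just a projective image of $(\Omega,\overline\ell)$. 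In that case you have shown that the endpoints of $\ell$ lie in the boundary of a half-triangle in $\Omega$ itself, but since the conical relation is not reflexive you cannot directly read off a violation of $(HT)$. The paper closes this case by invoking the weak reflexivity statement (\Cref{lem:conical_related_reflexive}) to produce \emph{some} forward/backward conically related pair, and then \Cref{lem:boundary_segment_halftri} --- a nontrivial two-stage rescaling argument --- to show that a point in the \emph{closure} of a boundary segment is always conically related to a point in the boundary of a half-triangle. You should either split into the divergent/non-divergent cases and supply this extra argument, or prove the analogue of \Cref{lem:boundary_segment_halftri} yourself; without it the implication $(HT)\Rightarrow(\delta)$ as written does not go through.
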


\begin{remark}
  \label{rem:compact_segment_morse_trivial}
  We allow the projective geodesic $\ell$ in the statement of
  \Cref{prop:morse_hilbert_equivalences} to have zero, one, or two
  endpoints in the boundary of the properly convex domain $\Omega$. In
  the case where $\ell$ has zero ideal endpoints (meaning it is a
  compact segment in $\Omega$), then the conditions
  \ref{item:halftri_forward}, \ref{item:halftri_backward},
  \ref{item:segclose_forward}, and \ref{item:segclose_backward} are
  vacuous. In this case, conditions \ref{item:nondiv_morse} and
  \ref{item:contracting} hold trivially since $\ell$ has finite
  diameter, and condition \ref{item:delta_slim} follows from
  \Cref{lem:hil_haus_dist_between_geods}.
\end{remark}

The proof of \Cref{prop:morse_hilbert_equivalences} follows the scheme
given in \Cref{fig:morse_hilbert_diagram} below. Each implication is
labeled with the number of the intermediate result(s) that provide
its proof.
\begin{figure}[h!]
  \begin{center}
    \begin{tikzcd}[arrows=Rightarrow]
      &  &  & (V) \arrow[r] & (HT) \arrow[dr, "\ref{lem:halftri_projective_delta_slim}" description] &  \\
      (\delta) \arrow{r}{\ref{prop:slim_contracting}} & (C) \arrow{r}{\ref{prop:contracting_implies_morse}} & (M) \arrow[ur,"\ref{cor:morse_not_in_segment} + \ref{lem:morse_conically_related}" description] \arrow[dr,"\ref{cor:morse_not_in_segment} + \ref{lem:morse_conically_related}" description] &  & & (\delta)\\
      &  &  & (V-) \arrow[r] & (HT-) \arrow[ur,"\ref{lem:halftri_projective_delta_slim}" description] & 
    \end{tikzcd}
  \end{center}
  \caption{Proof outline for \Cref{prop:morse_hilbert_equivalences}}
  \label{fig:morse_hilbert_diagram}
\end{figure}

Note that we have already shown the implications \ref{item:delta_slim}
$\implies$ \ref{item:contracting} $\implies$
\ref{item:nondiv_morse}. There are no labels on the implications
\ref{item:segclose_forward} $\implies$ \ref{item:halftri_forward} and
\ref{item:segclose_backward} $\implies$ \ref{item:halftri_backward} as
they are immediate. Indeed, if $z_2 \in \partial \Omega_2$ is in the
boundary of a half-triangle in $\Omega_2$, then there exists
$w \in \partial \Omega_2-\{z_2\}$ such that
$[z_2,w] \subset \partial \Omega_2$.

\begin{remark}
\label{rem:morseness_basept_indep}
    If $z$ is a point in $\bdry$, and $x, x'$ are points in $\Omega$, then \Cref{lem:hil_haus_dist_between_geods} implies that the two projective geodesic rays $[x, z)$ and $[x', z)$ are within bounded Hausdorff distance of each other. It follows easily that if $[x, z)$ is $M$-Morse for some Morse gauge $M$, then $[x', z)$ is $M'$-Morse for a (possibly different) Morse gauge $M'$. For this reason, we will sometimes call a point $z \in \bdry$ ``Morse'' if some (any) projective geodesic ray $[x, z)$ is Morse.  
\end{remark}

\subsection{Projective geodesics in triangles and half-triangles}

The first step towards proving the remaining implications in
\Cref{prop:morse_hilbert_equivalences} is to observe that Morse
geodesics cannot have endpoints lying in the boundary of triangles or
half-triangles. This should be unsurprising if we accept that
triangles and half-triangles are analogs of flats and half-flats.

\begin{lemma}
  \label{lem:morse_not_in_tri}
  Suppose that $y \in \dee \Omega$ lies in the boundary of a properly
  embedded triangle in $\Omega$. Then for any $x \in \Omega$, the
  projective geodesic $[x, y)$ is not Morse.
\end{lemma}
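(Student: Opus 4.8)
The plan is to reduce to the case where the basepoint $x$ lies inside the triangle exhibited by the hypothesis, and then use that a properly embedded triangle, equipped with its Hilbert metric, is quasi-isometric to $\Rb^2$ and so contains no Morse geodesic.

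Fix a properly embedded triangle $\Delta \subset \Omega$ with $y \in \dee\Delta$, and fix any point $o$ in the relative interior of $\Delta$; by convexity the projective geodesic ray $[o,y)$ is then contained in $\Delta$. The first step is to show that $[x,y)$ and $[o,y)$ are at finite Hausdorff distance. Parametrizing $[x,y)$ by $c:[0,\infty)\to\Omega$ with $c(0)=x$ and $c(n)\to y$, \Cref{lem:hil_haus_dist_between_geods} (applied with the convention $F_\Omega(p)=\Omega$ for $p\in\Omega$, taking $x_\pm=x,c(n)$ and $y_\pm=o,c(n)$) gives $\hil^{\Haus}([x,c(n)],[o,c(n)])\le\hil(x,o)$ for every $n$; letting $n\to\infty$, so that these projective segments converge to $[x,y)$ and $[o,y)$ respectively, yields $\hil^{\Haus}([x,y),[o,y))\le\hil(x,o)<\infty$. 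Since a geodesic at finite Hausdorff distance from an $M$-Morse geodesic is $M'$-Morse for a gauge $M'$ depending only on $M$ and the distance, it now suffices to prove that $[o,y)$ is \emph{not} Morse.

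Next I would observe that $\hil$ restricts on $\Delta$ to the intrinsic Hilbert metric $d_\Delta$. Indeed, for $p,q\in\Delta$ the chords $\overline{\Delta}\cap\Pspan{p,q}$ and $\overline{\Omega}\cap\Pspan{p,q}$ have the same endpoints: if the second extended strictly beyond an endpoint $a'$ of the first, then $a'$ would lie on the open segment joining the interior point $p\in\Omega$ to a point of $\overline{\Omega}$, hence in $\Omega$ --- contradicting $a'\in\dee\Delta\subset\dee\Omega$, which holds because $\Delta$ is properly embedded (\Cref{defn:prop_embedded}). So the two chords, and hence their cross-ratios, agree, giving $\hil|_\Delta=d_\Delta$. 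In particular $(\Delta,\hil|_\Delta)=(\Delta,d_\Delta)$ is quasi-isometric to $\Rb^2$ (see \Cref{sec:properly_embedded_simplices}), and any $(\lambda,a)$-quasi-geodesic supported in $\Delta$ is a $(\lambda,a)$-quasi-geodesic of $(\Omega,\hil)$ with the same constants.

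Finally, assuming $[o,y)$ is $M$-Morse, pushing it through a quasi-isometry $(\Delta,d_\Delta)\to\Rb^2$ produces a Morse quasi-geodesic ray in $\Rb^2$ (Morseness of quasi-geodesics being preserved under quasi-isometries, up to the gauge), which is impossible: joining two far-apart points $P,Q$ of such a ray by a ``wide $V$'' --- travel a distance comparable to $\hil(P,Q)$ in a fixed transverse direction, then go straight to $Q$ --- gives $(\lambda',a')$-quasi-geodesics with $\lambda',a'$ independent of $P,Q$ that stray arbitrarily far from the ray, contradicting any fixed Morse gauge. (Equivalently one runs this detour directly inside $\Delta$, where in affine coordinates $[o,y)$ is asymptotic to a straight ray.) I expect the only delicate point to be the first reduction --- making precise that Morseness of $[x,y)$ is insensitive to the choice of $x$ and transferring the question into $\Delta$; once inside $\Delta$, the absence of Morse directions is just the familiar fact that flats, here properly embedded triangles, contain no Morse geodesics.
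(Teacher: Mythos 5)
Your proposal is correct and follows essentially the same route as the paper: reduce to a basepoint inside the properly embedded triangle, note that the geodesic then lives in the triangle, and conclude because the triangle with its (restricted, equal to intrinsic) Hilbert metric is quasi-isometric to a 2-flat, which contains no Morse quasi-geodesics. The paper simply takes for granted the details you spell out (basepoint independence of Morseness and the agreement of the two Hilbert metrics on $\Delta$), and your filling of those in is accurate.
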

\begin{proof}
  Since Morseness does not depend on the choice of basepoint, we can
  assume that $x$ lies in the interior of the properly embedded
  triangle $\Delta$ whose boundary contains $y$. Then the projective
  geodesic $[x, y)$ is also contained in $\Delta$. But $\Delta$ is
  quasi-isometric to a 2-flat, and 2-flats contain no Morse
  quasi-geodesics, so $[x, y)$ cannot be Morse.
\end{proof}

\begin{lemma}
  \label{lem:morse_not_in_halftri}
  Let $x, y, z$ be the vertices of a half-triangle in $\dee \Omega$
  with $(y, z) \subset \Omega$, and suppose that $[x, y]$ is a maximal
  segment in $\dee \Omega$. Then for any $w \in \Omega$, the
  projective geodesic $[w, y)$ is not Morse.
\end{lemma}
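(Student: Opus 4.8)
The plan is to reduce the statement to a two-dimensional problem inside the half-triangle, and then argue as in the proof of \Cref{lem:morse_not_in_tri}, with ``$2$-flat'' replaced by ``half-flat''. Since $(y,z)\subset\Omega$, the two sides of the half-triangle that lie in $\dee\Omega$ are $[x,y]$ and $[x,z]$. Put $\Delta=\CH\{x,y,z\}$. By convexity $\Delta\subset\overline\Omega$, and in fact $\interior\Delta\subset\Omega$: for $p\in\interior\Delta$ the line $\Pspan{x,p}$ meets the open edge $(y,z)$ at a point $q\in\Omega$, and since $\Omega$ is open and convex with $x\in\overline\Omega$ the segment $(x,q]$ lies in $\Omega$, whence $p\in(x,q)\subset\Omega$. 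As $\Delta$ is convex, projective segments between points of $\interior\Delta$ are $\hil$-geodesics lying in $\interior\Delta$, so $\hil$ restricted to $\interior\Delta$ coincides with its intrinsic length metric, and any $(\lambda,a)$-quasi-geodesic whose image lies in $\interior\Delta$ is also a $(\lambda,a)$-quasi-geodesic of $(\Omega,\hil)$. Therefore it suffices to show that $[w,y)$ is not Morse as a ray of the metric space $(\interior\Delta,\hil)$; and since Morseness of a ray does not depend on the choice of basepoint (as used in the proof of \Cref{lem:morse_not_in_tri}), we may assume $w\in\interior\Delta$, so that $[w,y)\subset\interior\Delta$.

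To show $[w,y)$ is not Morse in $(\interior\Delta,\hil)$, I would exhibit, for each $T>0$, a $(\lambda_0,a_0)$-quasi-geodesic with $\lambda_0,a_0$ independent of $T$, with both endpoints on $[w,y)$, and whose image leaves the $T$-neighbourhood of $[w,y)$. These will come from the characterization of geodesics in \Cref{fact:nonunique_geodesics} applied with the two boundary segments $[x,y]$ and $[x,z]$. First note that $\Pspan{x,y}$ is a supporting line of $\Omega$: a supporting line at an interior point of the boundary segment $(x,y)$ must contain all of $(x,y)$, and maximality of $[x,y]$ forces such a line to meet $\overline\Omega$ only along $[x,y]$, hence to miss $\Omega$. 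So near the corner $x$ the region $\interior\Delta$ has the shape of the corner of a half-flat. Now fix $w_1\in[w,y)$ and take $w_2\in[w,y)$ close to $y$, so that $\hil(w_1,w_2)$ is large; I would choose a ``detour point'' $w_3$ far out towards $x$, on a chord of $\Delta$ from $(x,y)$ to $(x,z)$, so that the broken path $[w_1,w_3]\cup[w_3,w_2]$ is itself a geodesic of $(\interior\Delta,\hil)$ --- by \Cref{fact:nonunique_geodesics} this amounts to aligning the two ends of each of the lines $\Pspan{w_1,w_3}$, $\Pspan{w_3,w_2}$ with sub-segments of $[x,y]$ and of $[x,z]$. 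Such a broken geodesic is a $(1,0)$-quasi-geodesic, and one can arrange $w_3$ so that $\hil(w_3,[w_1,w_2])$ is of order $\hil(w_1,w_2)$; letting $w_2\to y$ then produces $(1,0)$-quasi-geodesics with endpoints on $[w,y)$ and unbounded deviation from it, so $[w,y)$ is not $M$-Morse for any $M$.

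The step I expect to be the main obstacle is making the previous paragraph precise: verifying that the broken paths really are (quasi-)geodesics, i.e. that the alignment hypotheses of \Cref{fact:nonunique_geodesics} can be met as $w_2\to y$, and estimating their deviation from $[w,y)$. The delicate point is that, while $\Pspan{x,y}$ is supporting (so that a line exiting $\Delta$ through $(x,y)$ also exits $\Omega$ there), $[x,z]$ is only assumed to lie in $\dee\Omega$ and need not be maximal, so $\Omega$ might bulge past the line $\Pspan{x,z}$; one must check this only shortens $\hil(w_1,w_3)$ and $\hil(w_3,w_2)$ by a controlled amount, so that the broken paths remain $(\lambda_0,a_0)$-quasi-geodesics with $\lambda_0,a_0$ independent of $w_2$. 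An alternative, parallel to \Cref{lem:morse_not_in_tri}, is to prove directly that $(\interior\Delta,\hil)$ is quasi-isometric to a Euclidean half-plane $H$, with $[x,y]$ corresponding to $\dee H$ and the vertex $y$ (the endpoint of the maximal segment) to a point at infinity of $H$ lying on $\dee H$ --- here one uses the principle that boundary segments are ``flat directions'' (cf. \Cref{lem:int_segments_triangles}) --- and then to observe that a geodesic ray of $H$ running to infinity parallel to $\dee H$ is not Morse, via the two-segment ``corner'' detour of Euclidean deviation $T/2$ between its points at distance $T$.
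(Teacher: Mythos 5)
There is a genuine gap, and it sits exactly where you flag ``the main obstacle'': the V-shaped detour $[w_1,w_3]\cup[w_3,w_2]$, with both endpoints on the ray $[w,y)$ and $w_3$ near the vertex $x$, cannot be made a geodesic via \Cref{fact:nonunique_geodesics}, and there is no control making it a uniform quasi-geodesic either. In \Cref{fact:nonunique_geodesics} the two boundary segments must join the exit points of the two chords in the order consistent with the direction of travel: the endpoint of $\Pspan{w_1,w_3}$ lying \emph{behind} $w_1$ must be joined by a boundary segment to the endpoint of $\Pspan{w_3,w_2}$ lying \emph{behind} $w_3$. For your V, the chord $[w_1,w_3]$ extended behind $w_1$ points away from $x$ and, because $(y,z)\subset\Omega$, exits $\Omega$ somewhere beyond the missing edge, while $\Pspan{w_3,w_2}$ extended behind $w_3$ exits near the corner $x$; no segment of $\dee\Omega$ joins these two points, so the alignment hypothesis fails for every choice of $w_3$ off the ray. (In the model case where the relevant $2$-sector is an exact simplex, the hexagonal-norm computation shows the V-path exceeds $\hil(w_1,w_2)$ by $2\hil(w_1,w_3)$, so it is never a geodesic; whether it is a $(\lambda_0,a_0)$-quasi-geodesic with uniform constants depends on the restriction of $\hil$ to $\interior\Delta$, which is governed by all of $\Omega$ and not by $\Delta$.) Your fallback is also unavailable: half-triangles are in general \emph{not} quasi-isometric to half-flats --- this is precisely why this lemma is harder than \Cref{lem:morse_not_in_tri} --- and even when they are, the flat model is a polyhedral normed plane, not a Euclidean one.

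The paper's proof replaces the V by genuine geodesic \emph{rays} asymptotic to $y$: fix $u\in(y,x)$ with the backward endpoint of $\Pspan{w,u}$ on $(z,x)$, travel along $[w,u)$ for time $n$, then turn toward $y$. For this broken ray the two segments required by \Cref{fact:nonunique_geodesics} are a sub-segment of $[x,y]$ (joining the two forward exits, $u$ and $y$) and a sub-segment of $[x,z]$ (joining the two backward exits), so it really is a geodesic. Since its endpoints are not both on $[w,y)$, non-Morseness does not follow directly; instead one invokes Cordes' Key Lemma (\Cref{lem:cordes_lemma}): if $[w,y)$ were $M$-Morse, these rays would $\delta_M$-fellow-travel it \emph{synchronously}, and since the time-$n$ corner point tends to $u$ while $c(n)\to y$, \Cref{lem:hilbert_face_metric} would force $u\in F_\Omega(y)$, contradicting the maximality of $[x,y]$. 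Note that maximality is used here to produce the final contradiction, not merely (as in your sketch) to make $\Pspan{x,y}$ a supporting line; your proposal never engages with this step, nor with \Cref{lem:hilbert_face_metric} or Cordes' lemma, which are the essential inputs.
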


The proof of \Cref{lem:morse_not_in_halftri} is somewhat more
complicated than the proof of \Cref{lem:morse_not_in_tri}, because
half-triangles in a properly convex domain are not necessarily
quasi-isometric to half-flats. Our proof instead takes advantage of
the following result of Cordes:
\begin{lemma}[{\cite[Key Lemma]{Cordes17}}]
  \label{lem:cordes_lemma}
  Let $X$ be a geodesic metric space. For any Morse gauge $M$, there
  exists a constant $\delta_M$ so that, if $\alpha:[0, \infty) \to X$
  is an $M$-Morse geodesic ray, and $\beta:[0, \infty) \to X$ is a
  geodesic ray such that $\beta(0) = \alpha(0)$ and the images of
  $\alpha, \beta$ have finite Hausdorff distance, then for all
  $t \in [0, \infty)$ we have $d_X(\alpha(t), \beta(t)) < \delta_M$.
\end{lemma}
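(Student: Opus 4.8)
The plan is to reprove Cordes' Key Lemma along the lines of \cite{Cordes17}, combining an elementary ``synchronization from the common basepoint'' estimate with an application of the Morse property of $\alpha$ to carefully chosen quasi-geodesics. Throughout, write $o = \alpha(0) = \beta(0)$ and let $D < \infty$ denote the Hausdorff distance between the images of $\alpha$ and $\beta$.

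\emph{Step 1: a crude bound.} First I would check that $d_X(\alpha(t),\beta(t)) \le 2D$ for every $t \ge 0$. Since $\beta(t)$ lies within $D$ of $\mathrm{im}(\alpha)$, choose $s$ with $d_X(\beta(t),\alpha(s)) \le D$. Because $\alpha$ and $\beta$ are unit-speed rays based at $o$, we have $|s-t| = |d_X(o,\alpha(s)) - d_X(o,\beta(t))| \le d_X(\alpha(s),\beta(t)) \le D$, and hence $d_X(\alpha(t),\beta(t)) \le d_X(\alpha(t),\alpha(s)) + d_X(\alpha(s),\beta(t)) \le 2D$. This already yields fellow-travelling, but with a constant that a priori depends on $\beta$ rather than on $M$ alone.

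\emph{Step 2: improving the constant.} The content of the lemma is to replace $2D$ by a constant depending only on the gauge $M$. Fix $t_0$, and for a large $T > t_0$ consider the geodesic $\beta|_{[0,T]}$ from $o$ to $\beta(T)$; choosing $\alpha(r)$ within $D$ of $\beta(T)$ (so $|r-T| \le D$ as in Step 1), the concatenation $\beta|_{[0,T]} * [\beta(T),\alpha(r)]$ is a $(1,2D)$-quasi-geodesic whose endpoints $o$ and $\alpha(r)$ both lie on $\alpha$ (concatenating a geodesic of length $\le D$ to a geodesic always produces a $(1,2D)$-quasi-geodesic). The Morse hypothesis confines this path to $N_{M(1,2D)}(\alpha)$; in particular $\beta(t_0) \in N_{M(1,2D)}(\alpha)$, and one further application of the basepoint estimate gives $d_X(\alpha(t_0),\beta(t_0)) \le 2M(1,2D)$. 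Since $\alpha$ and $\beta$ are now known to $2M(1,2D)$-fellow-travel, the ``return jump'' $[\beta(T),\alpha(r)]$ in the construction can be shortened accordingly, so this improvement can be iterated.

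\emph{The main obstacle} — and the technical core of the argument — is to organize this iteration so that it stabilizes at a bound depending only on $M$; equivalently, to show that a geodesic ray issued from $o$ cannot remain within finite Hausdorff distance of the $M$-Morse ray $\alpha$ while wandering arbitrarily far from it, with the maximal excursion controlled purely by $M$. Unlike the crude estimates of Steps 1--2, this step is genuinely sensitive to the shape of the gauge $M$ and requires careful bookkeeping of how the additive constants of the auxiliary quasi-geodesics interact with $M$. Granting this, one obtains a bound $D_M$ depending only on $M$ for the stable fellow-travelling constant, and Step 1 then upgrades it to the uniform constant $\delta_M$ (say $\delta_M = 2D_M + 1$, to make the inequality strict) asserted in the statement.
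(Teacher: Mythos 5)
Your Steps 1 and 2 are correct as far as they go: the synchronization estimate $|s-t|\le d(\alpha(s),\beta(t))$ coming from the common basepoint is right, the concatenation $\beta|_{[0,T]}*[\beta(T),\alpha(r)]$ is indeed a $(1,2D)$-quasi-geodesic with endpoints on $\alpha$, and the Morse property plus synchronization then gives $d(\alpha(t),\beta(t))\le 2M(1,2D)$ for all $t$. But this bound depends on $D$, the Hausdorff distance, which depends on $\beta$; the entire content of the Key Lemma is that the final constant depends on $M$ alone. You explicitly defer this step (``granting this, one obtains a bound $D_M$''), so the proof is incomplete precisely where the lemma has content.

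Moreover, the mechanism you propose for closing the gap --- iterating $D\mapsto 2M(1,2D)$ --- does not work. The iteration $D_{k+1}=\min\{D_k,\,2M(1,2D_k)\}$ is non-increasing, hence converges, but its limit $D_\infty$ only satisfies $D_\infty\le 2M(1,2D_\infty)$, which imposes no bound whatsoever in terms of $M$: for a gauge with $M(1,a)\ge a$ (which is perfectly admissible, since any function dominating a valid gauge is a valid gauge), the very first step already produces no improvement, and the iteration stabilizes at a quantity comparable to $D$ itself. The missing idea is not better bookkeeping of the same construction but a different construction: one must manufacture quasi-geodesics with endpoints on $\alpha$ whose quasi-geodesic \emph{constants} are independent of $D$ (in \cite{Cordes17} this is done with multiplicative-constant quasi-geodesics such as $(5,0)$-quasi-geodesics built from the configuration, so that the gauge is only ever evaluated at arguments like $M(5,0)$ and $M(1,2M(5,0))$ that depend on $M$ alone). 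Without that ingredient the argument cannot produce the uniform $\delta_M$. Note also that the paper itself does not prove this statement --- it is quoted from \cite{Cordes17} --- so if you intend to include a proof you need to supply this core step, not just the elementary reductions around it.
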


\begin{proof}[Proof of \Cref{lem:morse_not_in_halftri}]
  Let $x, y, z, w$ be as in the statement of the lemma. As in \Cref{rem:morseness_basept_indep}, since
  Morseness of a projective geodesic ray is independent of the choice of initial point for the ray, we may assume that $w$ actually
  lies in the convex hull of $x, y, z$. Consider the projective
  geodesic $[w, y)$, and fix a point $u \in (y, x)$ so that the
  projective line spanned by $u, w$ has its other ideal endpoint in
  the interval $(z, x)$. Let $c:[0, \infty) \to \Omega$ be a
  unit-speed parameterization of the geodesic ray $[w, y)$, and let
  $s:[0, \infty) \to \Omega$ be a unit-speed parameterization of
  $[w, u)$.

  For each $n \in \Nb$, let $r_n:[0, \infty) \to \Omega$ be a
  unit-speed parameterization of the projective geodesic $[s(n),
  y)$. Consider the sequence of ``broken geodesics''
  $c_n:[0, \infty) \to \Omega$ given by
  \[
    c_n(t) =
    \begin{cases}
      s(t), & t \le n\\
      r_n(t - n), &t > n
    \end{cases}.
  \]
  \Cref{fact:nonunique_geodesics} implies that each $c_n$ is actually
  a geodesic in $\Omega$, with endpoint $y$ (see
  \Cref{fig:nonmorse_geodesics}). Moreover, by
  \Cref{lem:hil_haus_dist_between_geods}, the Hausdorff distance
  between $c_n([0, \infty))$ and $(w,y)$ is bounded by
  $\hil(c_n(n),(w,y))$.

  Now suppose that $(w,y)$ is a $M$-Morse geodesic for some $M$. Then
  \Cref{lem:cordes_lemma} tells us that $\hil(c_n(n), c(n))$ is
  bounded above by a constant that depends only on $M$. As
  $n \to \infty$, the sequence $c_n(n)$ approaches $u$, and $c(n)$
  approaches $y$. Then \Cref{lem:hilbert_face_metric} implies that
  $u\in F_{\Omega}(y)$, which contradicts the maximality of the line
  segment $[x, y] \in \dee \Omega$. Thus $(w, y)$ cannot be Morse.\end{proof}
  
  \begin{figure}
    \centering
    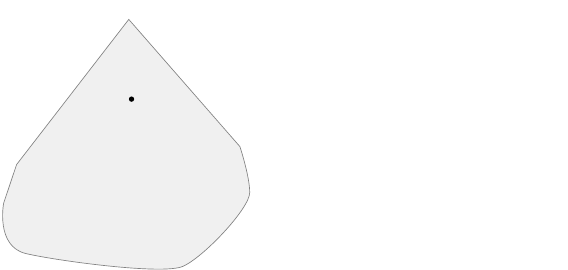
    \caption{Left: the sequence of ``broken geodesics'' $c_n$. Right:
      verifying that each $c_n$ is actually a geodesic, using
      \Cref{fact:nonunique_geodesics}.}
    \label{fig:nonmorse_geodesics}
  \end{figure}

\subsection{Properties of conically related points}

The previous two lemmas show that, for a projective geodesic $\ell$ in
$\Omega$, having an endpoint in a triangle or half-triangle is an
obstruction to Morseness for $\ell$. For the proof of
\Cref{prop:morse_hilbert_equivalences}, we want to show that having an
endpoint which is \emph{conically related} to an endpoint in a
triangle or half-triangle also obstructs Morseness; this will follow
from \Cref{lem:morse_conically_related} below. Before we state and
prove this lemma, however, we make a brief digression to develop the
theory of conically related points a little further.

First, observe that ``$(z_1, \Omega_1)$ is conically related to
$(z_2, \Omega_2)$'' is \emph{not} an equivalence relation, since it is
not in general symmetric. In addition, the relation is not even
necessarily reflexive, since we require the sequence of group elements
$g_n$ appearing in the definition to be divergent. However, the
relation does satisfy the following:
\begin{lemma}
\label{lem:conical_related_reflexive}
    Suppose $\Omega$ is a properly convex domain and $x \in \dee
    \Omega$. Then there exists a properly convex domain $\Omega'$ and
    $x'_{\pm}\in \partial \Omega'$ such that $(x,\Omega)$ is forward
    conically related to $(x'_+,\Omega')$ and backward conically
    related to $(x'_-,\Omega')$.
\end{lemma}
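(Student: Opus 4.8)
The plan is to build $\Omega'$ and the points $x'_\pm$ from the Benz\'ecri cocompactness theorem (\Cref{thm:benzecri}), applied along a projective geodesic ray pointing at $x$ but renormalized at a point traveling ``twice as fast'' as the sequence $\{x_n\}$. Concretely, fix a basepoint $y_0 \in \Omega$ and let $c\colon[0,\infty) \to \Omega$ be the unit-speed parametrization of the projective geodesic ray $[y_0, x)$, so that $c(t) \to x$. Set $x_n := c(n)$; this is the sequence on the ray $[y_0,x) \subset \Omega$ converging to $x$ demanded by condition (1) of \Cref{defn:conically_related_points}. Now apply \Cref{thm:benzecri} to the pointed domains $(\Omega, c(2n))$: there is a compact $K \subset \pdomains(V)$ and elements $g_n \in \PGL(V)$ with $g_n\cdot(\Omega, c(2n)) \in K$ for all $n$. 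Passing to a subsequence, $g_n\Omega \to \Omega'$ in $\domains(V)$ and $g_n c(2n) \to p$, where $(\Omega', p) \in K$; in particular $p$ lies in the open domain $\Omega'$. We take this $\Omega'$ and this sequence $\{g_n\}$ as the data witnessing the two conical relations.

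Next I would verify the remaining conditions. \emph{Divergence of $\{g_n\}$:} if a subsequence converged to some $g \in \PGL(V)$, then $g_n\Omega \to g\Omega$ would give $\Omega' = g\Omega$, while $g_n c(2n) \to gx$ by continuity of the $\PGL(V)$-action and $c(2n) \to x$; but $x \in \partial\Omega$ forces $gx \in \partial(g\Omega) = \partial\Omega'$, contradicting $p = gx \in \Omega'$. \emph{The limit points:} after a further subsequence, set $x'_+ := \lim g_n x$ and $x'_- := \lim g_n c(n)$, both in $\overline{\Omega'}$. Since $x \in \partial\Omega$ we have $g_n x \in \partial(g_n\Omega)$, and because $g_n\Omega \to \Omega'$ the boundaries $\partial(g_n\Omega)$ converge in Hausdorff distance to $\partial\Omega'$; hence $x'_+ \in \partial\Omega'$. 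The point $g_n c(n)$, by contrast, is an \emph{interior} point of $g_n\Omega$, so this argument fails for it; instead note $d_{g_n\Omega}(g_n c(n), g_n c(2n)) = d_\Omega(c(n), c(2n)) = n \to \infty$. Were $x'_-$ an interior point of $\Omega'$, then by continuity of the Hilbert metric under $g_n\Omega \to \Omega'$ (a consequence of continuity of the cross-ratio, using that the endpoints of the chords $\Pspan{g_n c(n), g_n c(2n)} \cap \overline{g_n\Omega}$ converge to those of $\Pspan{x'_-, p} \cap \overline{\Omega'}$; compare \Cref{lem:hilbert_face_metric}) the left-hand side would converge to a finite limit, a contradiction. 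Hence $x'_- \in \partial\Omega'$.

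Finally I would identify the limit of the transported segments. Since $2n \ge n$, the point $c(2n)$ lies on the sub-segment $[x, x_n]$ of the ray, so $g_n c(2n) \in [g_n x, g_n c(n)]$; as these are honest projective segments in $\overline{g_n\Omega}$ whose endpoints converge to $x'_+$ and $x'_-$, they converge in Hausdorff distance to $[x'_+, x'_-]$, which therefore contains $p$. Since $p \in \Omega'$ while $x'_+ \in \partial\Omega'$ we have $p \neq x'_+$, so $[x'_+, x'_-]$ is nondegenerate, i.e. $x'_+ \neq x'_-$; and since $[x'_+, x'_-] \subset \overline{\Omega'}$ is a projective segment through the interior point $p$ with both endpoints on $\partial\Omega'$, it coincides with the chord $\Pspan{x'_+, x'_-} \cap \overline{\Omega'}$, so $(x'_+, x'_-) \subset \Omega'$. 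Thus all requirements of \Cref{defn:conically_related_points} hold (with $z_1 = x$, $\Omega_1 = \Omega$, $\Omega_2 = \Omega'$, $z_2^\pm = x'_\pm$, and the witnessing sequence $\{g_n\}$), so $(x, \Omega)$ is forward conically related to $(x'_+, \Omega')$ and backward conically related to $(x'_-, \Omega')$, as desired.

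The step I expect to be the main obstacle is arranging that $g_n c(n)$ accumulates on $\partial\Omega'$ rather than in the interior of $\Omega'$; this is exactly why the renormalization is performed at $c(2n)$ (so that $g_n$ pushes $g_n c(n)$ "infinitely far back" from the normalized point $p$), and making it rigorous relies on the standard, if slightly delicate, facts that Hilbert distances and boundaries vary continuously under Hausdorff convergence of properly convex domains.
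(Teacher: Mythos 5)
Your proposal is correct and follows essentially the same route as the paper: pick a sequence $c(n)$ on a ray toward $x$, renormalize via Benz\'ecri at a point $c(2n)$ lying Hilbert-distance $n$ further along the ray, and use the divergence of $d_{\Omega}(c(n),c(2n))$ together with \Cref{lem:hilbert_face_metric}-type continuity to force $\lim g_nc(n)$ onto $\partial\Omega'$ while $\lim g_nc(2n)$ stays interior. The only cosmetic difference is that the paper deduces $x'_+\neq x'_-$ and $(x'_+,x'_-)\subset\Omega'$ directly from the interior limit point lying on the limiting chord, which is a slightly cleaner way to phrase your final paragraph.
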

\begin{proof}
  Fix a basepoint $x_0 \in \Omega$ and pick a sequence $\{p_n\}$ in
  $[x_0,x)$ such that $p_n \to x$. Pick another sequence $\{q_n\}$
  such that $q_n \in [p_n,x)$ and $\hil(p_n,q_n)=n$. By
  \cref{thm:benzecri}, there exists a sequence $\{g_n\}$ in $\PGL(V)$
  such that $g_n(\Omega,q_n)$ converges to
  $(\Omega_{\infty},q_\infty) \in \Cc_{*}(V)$. Up to passing to a
  subsequence, we can assume that $g_np_n \to p_{\infty}$ and
  $g_nx \to x_\infty$.

  Since $x \in \partial \Omega$, we have
  $x_\infty \in \partial \Omega_\infty$. We also know that
  $p_\infty \in \partial \Omega_\infty$, because
  $\hil(q_n,p_n)\to \infty$ and $q_\infty \in \Omega_\infty$. But
  $(p_\infty,x_\infty) \subset \Omega_{\infty}$ as
  $q_\infty \in (p_\infty,x_\infty) \cap \Omega_\infty$. Thus
  $g_n(p_n,x) \to (p_\infty,x_\infty)$. Hence
  $g_n(x_0,x) \to (p_\infty,x_\infty)$.

  As the sequence $q_n \in (x_0,x)$ converges to
  $q_\infty \in \Omega_{\infty}$ under $g_n$, we see that $(x,\Omega)$
  is forward (resp. backward) conically related to
  $(x_\infty,\Omega_\infty)$ (resp. $(p_\infty,\Omega_\infty)$).
\end{proof}

\begin{remark}
  It turns out that the conical relation is also transitive, in the
  sense that, if $(z_1, \Omega_1)$ is forward conically to
  $(z_2, \Omega_2)$, and $(z_2, \Omega_2)$ is forward conically
  related to $(z_3, \Omega_3)$, then $(z_1, \Omega_1)$ is forward
  conically related to $(z_3, \Omega_3)$. The proof of this fact is
  straightforward; we omit it as we have no need for it in this paper.
\end{remark}

\subsubsection{Conically related points along $k$-sections}

It is often useful to consider the behavior of projective
automorphisms on a lower-dimensional ``projective slice'' of a convex
projective domain. Adopting the terminology of Benoist and Benz\'ecri, we refer to such slices as \emph{$k$-sections.}
\begin{definition}[{$k$-section \cite[Definition 2.5]{B2003}}]
  \label{defn:k_section}
  Let $\Omega$ be a properly convex domain in $\Pb(V)$. A
  \emph{$k$-section} $\omega$ of $\Omega$ is a nonempty intersection
  $\Pb(W) \cap \Omega$, where $\Pb(W)$ is a projective subspace of
  dimension $k$.
\end{definition}

Fix a $k$-dimensional projective space $\Pb(W_0)$ of $\Pb(V)$. Then
the space of $k$-dimensional projective subspaces of $\Pb(V)$ is the 
$\PGL(V)$ orbit of $\Pb(W_0)$. Thus, any $k$-section in $\Omega$ can be
canonically identified with a projective equivalence class of properly
convex domains in $W_0$. So, owing to
\Cref{prop:conical_relation_projective_equiv}, if $\Omega_1$ and
$\Omega_2$ are properly convex domains in $\Pb(V)$ and
$x_i \in \dee \omega_i$ for $k$-sections $\omega_i$ of $\Omega_i$
($i=1,2$), it makes sense to say that $(x_1, \omega_1)$ is (forward or
backward) conically related to $(x_2, \omega_2)$, as elements in
$W_0 \times \domains(W_0)$. The lemma below essentially follows from
\cite[Lemma 2.8]{B2003}:
\begin{lemma}
  \label{lem:conically_related_sections}
  Let $\Omega_1, \Omega_2$ be properly convex domains in $\Pb(V)$, and
  fix $1 \leq k < d$. Then $(x_1, \Omega_1)$ is (forward or backward)
  conically related to $(x_2, \Omega_2)$ if and only if there are
  $k$-sections $\omega_1, \omega_2$ so that $x_i \in \dee \omega_i$ for
  $i = 1,2$, and $(x_1, \omega_1)$ is (forward or backward) conically
  related to $(x_2, \omega_2)$.
\end{lemma}

\subsubsection{Uniqueness for conically related points}

In general, a pair $(x_1, \Omega_1)$ may be conically related to many
different pairs $(x_2, \Omega_2)$, even up to projective
equivalence. However, as a basic application of \Cref{thm:benzecri},
we can recover some uniqueness given additional information about the
sequence $\{g_n\}$ realizing the conical relation.

\begin{definition}
\label{defn:conically_related_along_a_sequence}
If $(x_1, \Omega_1)$ is (forward or backward) conically related to
$(x_2, \Omega_2)$ by some $g_n \in \PGL(V)$, and there is some
sequence $\{p_n\}$ in $\Omega_1$ so that $g_n(\Omega_1, p_n)$
converges in $\pdomains(V)$, then we say that $(x_1, \Omega_1)$ is
conically related to $(x_2, \Omega_2)$ \emph{along the sequence
  $\{p_n\}$}.
\end{definition}

\begin{lemma}
  \label{lem:changing_conically_related_sequence}
  Let $x_1$ be a point in the boundary of a properly convex domain
  $\Omega_1$, and suppose that $(x_1, \Omega_1)$ is forward
  (resp. backward) conically related to both $(x_2, \Omega_2)$ and
  $(x_2', \Omega_2')$ along the same sequence $\{p_n\}$ in
  $\Omega$. Then there is some $h \in \PGL(V)$ such that
  $(hx_2, h\Omega) = (x_2', \Omega_2')$.
\end{lemma}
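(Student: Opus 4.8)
The plan is to reduce everything to the properness half of the Benz\'ecri cocompactness theorem (\Cref{thm:benzecri}), applied to the space of \emph{pointed} domains $\pdomains(V)$; the common tracking sequence $\{p_n\}$ is exactly what lets us work in $\pdomains(V)$ rather than in $\domains(V)$, on which the $\PGL(V)$-action fails to be proper. I would treat the forward case, the backward case being entirely analogous.

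First I would unwind the hypotheses. There are divergent sequences $\{g_n\}$ and $\{g_n'\}$ in $\PGL(V)$ realizing the two forward conical relations along $\{p_n\}$. Thus $g_n\Omega_1 \to \Omega_2$ and $g_n'\Omega_1\to\Omega_2'$ in $\domains(V)$; the forward endpoint of each relation is the limit of the image of $x_1$, so $g_nx_1\to x_2$ and $g_n'x_1\to x_2'$ in $\Pb(V)$; and there are points $q\in\Omega_2$ and $q'\in\Omega_2'$ with $(g_n\Omega_1, g_np_n)\to(\Omega_2,q)$ and $(g_n'\Omega_1, g_n'p_n)\to(\Omega_2',q')$ in $\pdomains(V)$.

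Next I would set $h_n := g_n'g_n^{-1}$, so that $h_n\cdot(g_n\Omega_1, g_np_n) = (g_n'\Omega_1, g_n'p_n)$ for every $n$. Since the left-hand sequence converges to $(\Omega_2,q)$ and the right-hand sequence converges to $(\Omega_2',q')$, both eventually lie in fixed compact subsets of $\pdomains(V)$, so properness of the $\PGL(V)$-action on $\pdomains(V)$ forces $\{h_n\}$ to be relatively compact in $\PGL(V)$; after passing to a subsequence I may assume $h_n\to h$ for some $h\in\PGL(V)$. Then continuity of the $\PGL(V)$-actions on $\pdomains(V)$ and on $\Pb(V)$ gives $h\cdot(\Omega_2,q) = \lim_n h_n\cdot(g_n\Omega_1, g_np_n) = \lim_n(g_n'\Omega_1, g_n'p_n) = (\Omega_2',q')$, hence $h\Omega_2 = \Omega_2'$; and likewise $hx_2 = h\bigl(\lim_n g_nx_1\bigr) = \lim_n h_n(g_nx_1) = \lim_n g_n'x_1 = x_2'$. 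This yields $(hx_2, h\Omega_2) = (x_2',\Omega_2')$, as desired.

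The step requiring the most care is the passage to a convergent subsequence of $\{h_n\}$: one must use the points $p_n$ genuinely, so as to stay inside $\pdomains(V)$ (where the action is proper) rather than merely inside $\domains(V)$; this is the only place the common anchor sequence enters, and the statement is false without it. For the backward case one runs the same argument with $x_1$ replaced by a sequence $\{x_n\}$ of points along the projective ray $[x,x_1)$ whose $g_n$-images (respectively $g_n'$-images) converge to the backward endpoints, using the same such sequence for both relations.
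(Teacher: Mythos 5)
Your argument is correct and is essentially the paper's own proof: both form $h_n = g_n'g_n^{-1}$, invoke the properness half of Benz\'ecri's theorem on the space of pointed domains (using the common anchor sequence $\{p_n\}$ to stay in $\pdomains(V)$), extract a convergent subsequence $h_n \to h$, and pass to the limit. The only cosmetic difference is that the paper phrases the properness step via compact sets $\Kc, \Kc'$ with $g_n'g_n^{-1}\Kc \cap \Kc' \neq \emptyset$, which is the same observation you make.
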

\begin{proof}
  Consider sequences $\{g_n\}, \{g_n'\}$ in $\PGL(V)$ so that
  $(x_1, \Omega_1)$ is conically related to $(x_2, \Omega_2)$ by
  $g_n$, $(x_1, \Omega_1)$ is conically related to $(x_2', \Omega_2')$
  by $\{g_n'\}$, and the sequences $g_n(\Omega_1, p_n)$ and
  $g_n'(\Omega_1, p_n)$ both converge in the space $\pdomains(V)$ of
  pointed domains in $\Pb(V)$.

  This means that we can find compact subsets $\Kc, \Kc'$ in
  $\pdomains(V)$ so that the intersection
  $g_n'g_n^{-1}\Kc \cap \Kc' \ne \emptyset$. From \Cref{thm:benzecri},
  it then follows that $g_n' = k_ng_n$ for $k_n$ in a fixed compact
  subset of $\PGL(V)$. Any subsequence of $k_n$ has a further
  subsequence which converges to some $h \in \PGL(V)$; it follows that
  $h$ takes the limit of $g_n(x_1, \Omega_1)$ to the limit of
  $g_n'(x_1, \Omega_1)$, i.e. $h(x_2, \Omega_2) = (x_2', \Omega_2')$.
\end{proof}

\subsection{Points conically related to Morse points}

We now return to our main task of proving
\Cref{prop:morse_hilbert_equivalences}. The next lemma is a key tool
we need for several of the equivalences in the proposition. It says
that Morseness is preserved (in one direction) along a conical
relation.

\begin{lemma}
  \label{lem:morse_conically_related}
  Let $y_1 \in \dee \Omega_1$, and suppose that the projective
  geodesic $[x_1, y_1)$ is Morse for some (any) $x_1 \in \Omega_1$.
  If $y_1$ is forward or backward conically related to
  $y_2 \in \dee \Omega_2$, then for some (any) $x_2 \in \Omega_2$, the
  projective geodesic $[x_2, y_2)$ is Morse.
\end{lemma}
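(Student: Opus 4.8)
The plan is to transport the Morse property from $\Omega_1$ to $\Omega_2$ along the sequence $\{g_n\}$ realizing the conical relation, exploiting three facts: each $g_n$ is a Hilbert isometry $(\Omega_1, d_{\Omega_1}) \to (g_n\Omega_1, d_{g_n\Omega_1})$; the domains $g_n\Omega_1$ converge to $\Omega_2$ in $\domains(V)$; and the Hilbert metric depends continuously on the domain, uniformly on compact sets.

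Concretely, assume $y_1$ is \emph{forward} conically related to $y_2$ (the backward case is identical). Fix the data of \Cref{defn:conically_related_points}: a basepoint $x \in \Omega_1$, points $x_n \in [x, y_1)$ with $x_n \to y_1$, and a divergent sequence $g_n \in \PGL(V)$ with $g_n\Omega_1 \to \Omega_2$ and $g_n(y_1, x_n) \to (y_2, z_2^-) \subset \Omega_2$. I would pick an interior point $m \in (z_2^-, y_2)$ and, using the convergence of segments, points $p_n \in (x_n, y_1) \subseteq [x, y_1)$ with $g_np_n \to m$; then $(g_n\Omega_1, g_np_n) \to (\Omega_2, m)$ in $\pdomains(V)$. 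Set $\Omega_n' := g_n\Omega_1$ and $\ell_n := g_n[p_n, y_1) = [g_np_n, g_ny_1)$. Since $[p_n, y_1)$ is a subray of the Morse ray $[x, y_1)$, it is $M'$-Morse with $M'$ depending only on the gauge of $[x, y_1)$ (a standard stability property of Morse geodesics), and since $g_n$ is an isometry onto its image, $\ell_n$ is $M'$-Morse in $\Omega_n'$ with the \emph{same} gauge. Moreover $g_np_n \to m$, $g_ny_1 \to y_2 \in \dee\Omega_2$, and $g_n(p_n, y_1) \subseteq g_n(x_n, y_1) \to (z_2^-, y_2) \subset \Omega_2$, so $\ell_\infty := [m, y_2)$ is an honest projective geodesic ray in $\Omega_2$ and $\ell_n \to \ell_\infty$. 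Because Morseness of a projective ray toward an ideal point is basepoint-independent, it suffices to show $\ell_\infty$ is Morse.

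The heart of the argument is a ``Morseness passes to the limit'' step. Given $(\lambda, a)$ and a $(\lambda, a)$-quasigeodesic $\sigma$ in $\Omega_2$ with endpoints $p, q \in \ell_\infty$, standard quasigeodesic estimates confine $\sigma$ to a compact set $K \subset \Omega_2$. I would choose $p_n', q_n' \in \ell_n$ with $p_n' \to p$ and $q_n' \to q$; the subsegments $[p_n', q_n'] \subseteq \ell_n$ then converge to the \emph{compact} segment $[p, q] \subseteq \ell_\infty$, so they eventually lie in a fixed compact $K' \subset \Omega_2$. For $n$ large, $d_{\Omega_n'}$ is uniformly close to $d_{\Omega_2}$ on a compact neighborhood of $K \cup K'$; hence $\sigma$, together with the short geodesics joining $p_n'$ to $p$ and $q$ to $q_n'$, assembles into a $(\lambda', a')$-quasigeodesic $\tilde\sigma_n$ in $\Omega_n'$ with endpoints on $\ell_n$, where $\lambda', a'$ depend only on $\lambda, a$. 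The Morse property of $\ell_n$ (in its segment form, with gauge controlled by $M'$) then gives $\sigma \subseteq \tilde\sigma_n \subseteq N^{\Omega_n'}_{R_0}([p_n', q_n'])$ with $R_0$ depending only on $M', \lambda, a$. Passing to the limit — using $d_{\Omega_n'} \to d_{\Omega_2}$ on $K \cup K'$ and $[p_n', q_n'] \to [p, q]$ — yields $\sigma \subseteq N^{\Omega_2}_{R_0}(\ell_\infty)$. Since $R_0$ is independent of $\sigma$, this exhibits a Morse gauge $M''$ for $\ell_\infty$ depending only on $M'$, completing the proof.

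The point that needs genuine care — and the main potential obstacle — is keeping everything uniform in the limit. It is essential to work with the subsegments $[p_n', q_n']$ (which converge to a compact piece of $\Omega_2$) rather than the full rays $\ell_n$: this is what prevents the near-point projections $w_n$ of a point $w \in \sigma$ onto $\ell_n$ from escaping toward $\dee\Omega_2$ as $n \to \infty$, and it is why one needs the segment form of the Morse property with a controlled gauge. One also needs the uniform convergence $d_{\Omega_n'} \to d_{\Omega_2}$ on compact subsets of $\Omega_2$; this is standard (it follows from continuity of cross-ratios and a compactness argument, as already used in \Cref{lem:hilbert_face_metric}), but it must be invoked so that the final constant $R_0$ depends only on $M'$ and the quasigeodesic constants and not on the particular quasigeodesic $\sigma$.
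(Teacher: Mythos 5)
Your argument is correct, but it runs in the opposite logical direction from the paper's, and the difference is worth noting. The paper proves the contrapositive: assuming $[x_2, y_2)$ is \emph{not} Morse, it takes, for each $m$, a single $(K,C)$-quasi-geodesic $q_m$ in $\Omega_2$ escaping the $m$-neighborhood of $[x_2,y_2)$, confines it to a compact set $D_m$, and (for $n$ large depending on $m$) pulls it back via $g_n^{-1}$ to a $(K',C')$-quasi-geodesic in $\Omega_1$ with endpoints on $[x_1,y_1)$ escaping the $(m-1)$-neighborhood. The payoff of that direction is that ``escaping a ball'' is checked on a single compact set and survives the uniform convergence $d_{g_n\Omega_1} \to d_{\Omega_2}$ with no further input; no quantifier over all quasi-geodesics has to be controlled in the limit. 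Your direct argument instead pushes an arbitrary quasi-geodesic of $\Omega_2$ into $g_n\Omega_1$, applies the Morse property of $g_n[p_n,y_1)$ there, and passes the neighborhood bound back to the limit. This makes the dependence of the limiting gauge on the original gauge explicit, but it costs you two extra (standard) ingredients that the contrapositive avoids entirely and that you rightly flag: the segment form of the Morse property, i.e.\ that the quasi-geodesic lies in a controlled neighborhood of the subsegment $[p_n', q_n']$ rather than merely of the full ray $\ell_n$ --- which is exactly what keeps the nearby points of $\ell_n$ inside a fixed compact subset of $\Omega_2$ where the metrics actually converge --- and the fact that a subray of an $M$-Morse ray is Morse with gauge controlled by $M$. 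With those two facts granted, your proof is complete; the core mechanism (uniform convergence of the Hilbert metrics on compacta as $g_n\Omega_1 \to \Omega_2$, used to transfer quasi-geodesics with a bounded loss in the constants) is the same as the paper's.
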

\begin{proof}
  We first remark that the choice of $x_1$ and $x_2$ in the statement
  of the lemma is not significant, since the Morseness of a geodesic
  ray is independent of its basepoint. So, fix any $x_1$ in $\Omega_1$ and $y_1 \in \partial \Omega_1$. We
  will prove the contrapositive of the desired statement, and show
  that if $y_1 \in \dee \Omega_1$ is forward or backward conically
  related to $y_2 \in \dee \Omega_2$, and $[x_2,y_2)$ is non-Morse for
  some $x_2 \in \Omega_2$, then $[x_1,y_1)$ is also non-Morse.
  
  Let $(z_1,y_1)$ be the bi-infinite projective geodesic in $\Omega_1$
  that contains $[x_1,y_1)$. As $y_1$ is conically related to $y_2$,
  there is a sequence $\{g_n\}$ in $\PGL(V)$ so that
  $g_n\Omega_1 \to \Omega_2$ and $y_2$ is the limit of either $g_ny_1$
  or $g_nz_1$ (depending on whether $y_1$ is forward or backward
  conically related to $y_2$). By definition of the conical relation,
  there exists $(z_2,y_2) \subset \Omega_2$ such that
  $g_n(x_1,y_1) \to (z_2,y_2)$. Fix a point $x_2 \in (z_2,y_2)$.
  
  Assume that the projective geodesic ray $[x_2, y_2)$ is not
  Morse. This means that there exist quasi-geodesic constants
  $K \ge 1, C \ge 0$ such that for every $m \in \Nb$, there is a
  $(K,C)$-quasi-geodesic $q_m:[0, T_m] \to \Omega_2$ with endpoints in
  $[x_2, y_2)$ such that the image of $q_m$ does not lie in the
  $m$-neighborhood of $[x_2, y_2)$.

  We now claim that there exist constants $K',C'$ such that: for any
  $m\in \Nb$, there exists a $(K', C')$-quasi-geodesic
  $q_m':[0, T_m] \to \Omega_1$ with endpoints on $[x_1, y_1)$, but not
  contained in the $(m-1)$-neighborhood of $[x_1,y_1)$ in the metric
  $d_{\Omega_1}$. This claim essentially follows from the fact that
  the convergence of $g_n\Omega_1$ to $\Omega_2$ in $\Cc(V)$ is
  uniform on compact subsets of $\Pb(V)$ that intersect $\Omega_2$.

  Fix any $m \in \Nb$, and pick a compact convex set
  $D_m \subset \Omega_2$ large enough to contain the $m$-neighborhood
  of the set $q_m([0, T_m])$. Then for sufficiently large $n$
  (depending on $m$), the subset $D_m$ is contained in
  $g_n\Omega_1$. Moreover, we have
  \[
    d_{g_n \Omega_1}|_{D_m\times D_m} \to d_{\Omega_2}|_{D_m\times
      D_m} 
  \]
  uniformly as $n \to \infty$.

  As $q_m(0),q_m(T_m) \in (z_2,y_2)$, the projective geodesic
  $(z_2,y_2)$ intersects $D_m$ in a finite length projective geodesic
  segment. As $n$ tends to infinity, we have
  $g_n(x_1,y_1) \cap D_m \to (z_2,y_2) \cap D_m$. Hence, for sufficiently large $n$, the endpoints
  $q_m(0),q_m(T_m)$ lie at a distance at most 1 from $g_n[x_1, y_1)$,
  with respect to the Hilbert metric on $\Omega_2$. So, for each
  sufficiently large $n$, we can define a map
  $q_{m,n}:[0, T_m] \to \Omega_2$, agreeing with $q_m$ on the open
  interval $(0, T_m)$, and whose endpoints lie on the ray
  $g_n[x_1, y_1)$ at a distance at most 1 from the endpoints of
  $q_m([0,T_m])$. The image of each $q_{m,n}$ lies in the set $D_m$. Since
  $q_m$ is a $(K, C)$-quasi-geodesic with respect to the Hilbert
  metric on $\Omega_2$, $q_{m,n}$ must be a
  $(K, C + 1)$-quasi-geodesic with respect to the same metric.

  Now, we know that the Hilbert on $g_n\Omega_1$ converges to the
  Hilbert distance on $\Omega_2$ uniformly on $D_m$. So, if we fix
  $K' = K + 1$ and $C' = C + 2$, then for $n$ large enough, the map
  $q_{m,n}:[0, T_m) \to \Omega_2$ must also be a
  $(K', C')$-quasi-geodesic with respect to the Hilbert metric on
  $g_n\Omega_1$.

  By construction of $q_m$, we also know that there is some
  $t_m \in (0, T_m)$ so that the $(m-1)$-ball $B_2$ about $q_m(t_m)$
  (with respect to the Hilbert metric on $\Omega_2$) does not
  intersect the geodesic $(z_2, y_2)$. Letting $B_{1,n}$ be the
  $(m-1)$-ball about $q_m(t_m)$ with respect to the Hilbert metric on
  $g_n\Omega_1$, the uniform convergence of Hilbert metrics on $D_m$
  implies that $B_{1,n} \subset D_m$ for large enough $n$ and that
  $B_{1,n} \to B_2$ as $n \to \infty$. Then, as
  $g_n(x_1, y_1) \cap D_m$ converges to $(z_2, y_2) \cap D_m$, for
  large enough $n$ we see that $B_{1,n}$ cannot intersect the
  projective geodesic $g_n(x_1, y_1)$.

  This implies that, for all sufficiently large $n$, the
  quasi-geodesic $q_{m,n}$ is not contained in the
  $(m-1)$-neighborhood of $g_n(x_1, y_1)$ with respect to the Hilbert
  metric on $g_n\Omega_1$. But then $g_n^{-1}q_{m,n}$ is a
  $(K', C')$-quasi-geodesic with endpoints on $[x_1, y_1)$, whose
  image does not lie in the $(m-1)$-neighborhood of $[x_1, y_1)$ with
  respect to the Hilbert metric on $\Omega_1$. Since $m$ was
  arbitrary, and $K', C'$ are independent of $m$, this proves that
  $[x_1, y_1)$ cannot be Morse.
\end{proof}

Combining the above lemma with \Cref{lem:morse_not_in_tri} and
\Cref{lem:morse_not_in_halftri}, we obtain a direct proof of the
implications \ref{item:nondiv_morse} $\implies$
\ref{item:halftri_forward} and \ref{item:nondiv_morse} $\implies$
\ref{item:halftri_backward} in
\Cref{prop:morse_hilbert_equivalences}. However, we need to do some
more work to prove the implications \ref{item:nondiv_morse} $\implies$
\ref{item:segclose_forward} and \ref{item:nondiv_morse} $\implies$
\ref{item:segclose_backward}. Note that these implications are not necessary for the proof of \Cref{prop:contracting_implies_morse}. We will use them at the end of the section to prove another characterization of Morseness (\Cref{cor:morse_point_is_C1_extreme}) which will be needed in \Cref{sec:sv_morse}.

\subsection{Conically related points in triangles and half-triangles}

The lemma below is well-known to experts, and a similar proof already
appears in \cite{benzecri}. This result expresses the idea that, in
any domain $\Omega$, segments (or non-$C^1$ points) in the boundary
correspond to ``flat directions:'' as we follow a projective geodesic
towards a segment or corner in $\dee \Omega$, the domain ``looks more
like'' a domain containing a properly embedded triangle, with the
original segment or corner in its boundary. We give a statement which
uses the language of conically related points, and provide a proof for
convenience.
\begin{lemma}
  \label{lem:int_segments_triangles}
  Suppose that $x_1 \in \dee \Omega_1$ is forward conically related to
  $x_2^+ \in \dee \Omega_2$, and backward conically related to
  $x_2^- \in \dee \Omega_2$.
  \begin{enumerate}
  \item If $x_1$ lies in the interior of a nontrivial segment in
    $\dee \Omega_1$, then there is a properly embedded triangle
    $\Delta$ in $\Omega_2$ so that $x_2^+$ lies in the interior of an
    edge of $\Delta$, and $x_2^-$ is a vertex of $\Delta$.
  \item If $x_1$ is not a $C^1$ point, then there is a properly
    embedded triangle $\Delta$ in $\Omega_2$ so that $x_2^+$ is a
    vertex of $\Delta$ and $x_2^-$ is on the interior of an edge of
    $\Delta$.
  \end{enumerate}
\end{lemma}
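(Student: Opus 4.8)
The plan is to run a ``zooming in'' argument. In both parts, \Cref{defn:conically_related_points} supplies a basepoint $x_0 \in \Omega_1$, a sequence $x_n \to x_1$ along the ray $[x_0, x_1)$, and a divergent sequence $\{g_n\}$ in $\PGL(V)$ with $g_n\Omega_1 \to \Omega_2$, $g_nx_1 \to x_2^+$, $g_nx_n \to x_2^-$, and $(x_2^+, x_2^-) \subset \Omega_2$. I would first choose $q_n \in (x_1, x_n)$ with $g_nq_n \to q_\infty \in (x_2^+, x_2^-)$ and pass freely to subsequences so that every relevant sequence $\{g_n(\cdot)\}$ converges in $\Pb(V)$. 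For part (1), pick a segment $[a,b] \subset \partial\Omega_1$ through $x_1$ that is maximal on the line $\ell_1 := \Pspan{a,b}$, so $\ell_1 \cap \overline{\Omega_1} = [a,b]$, and set $P := \Pspan{x_0, x_1, a}$, a genuine $2$-plane since $\ell_1$ misses $\Omega_1$. Passing to the $2$-sectors $\omega_1 := \Omega_1 \cap \Pb(P)$ and $\omega_2 := \Omega_2 \cap \Pb(\lim g_nP)$ (cf.\ \Cref{lem:conically_related_sectors}), $\omega_1$ contains the ray $[x_0,x_1)$ and carries $[a,b]$ on its boundary with $x_1 \in (a,b)$, while $\omega_2$ is genuinely $2$-dimensional because it contains $q_\infty$. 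This reduces everything to the planar picture, since a properly embedded triangle (\Cref{defn:prop_embedded}) of $\Omega_2$ lying in the plane $\Pb(\lim g_nP)$ is exactly a properly embedded triangle of $\omega_2$.

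In the planar picture the goal becomes to produce a properly embedded triangle $\Delta$ of $\omega_2$ --- equivalently, since a properly embedded triangle in a planar domain is the whole domain, to show $\omega_2$ is a triangle --- with $x_2^+$ interior to one edge and $x_2^-$ the opposite vertex. The candidate vertices are $A := \lim g_na$, $B := \lim g_nb$, and $x_2^-$, the candidate edge through $x_2^+$ being $[A,B]$; this last is indeed contained in $\partial\omega_2$, being the limit of the boundary segments $g_n[a,b] \subset \partial(g_n\omega_1)$, and $x_2^+ \in [A,B]$. The substance of the argument is then: (i) $A \neq B$ (the segment $[a,b]$ is not collapsed by $\{g_n\}$); (ii) the remaining two edges $[A,x_2^-]$ and $[B,x_2^-]$ also lie in $\partial\omega_2$, obtained as limits of chords of $\omega_1$ through $x_n$ and using that the relative interior of a projective segment in $\overline{\omega_2}$ is either entirely interior or entirely in $\partial\omega_2$; and (iii) $A,B,x_2^-$ are in general position with $x_2^+$ interior to $[A,B]$. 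A number of boundary-position degeneracies must be excluded along the way (e.g.\ $x_2^- \in \{A,B\}$, $x_2^+ \in \{A,B\}$, $x_2^- \in \Pspan{A,B}$); each of these would force the open segment $(x_2^+,x_2^-)$ to lie in $\partial\omega_2$, contradicting $(x_2^+,x_2^-)\subset\omega_2$.

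The heart of the proof --- and the step I expect to be the main obstacle --- is item (i): showing $\{g_n\}$ does not distort the line $\ell_1$ so drastically as to collapse the segment $[a,b]$ to a point. The difficulty is that $\{g_n\}$ is divergent, hence far from equicontinuous, so one cannot argue by naively taking limits of moving points; instead one must track $g_n$-invariant cross-ratios of quadruples that remain nondegenerate in the limit, playing the distortion of $\ell_1$ (the ``flat'' direction along the segment) against that of the chord $\Pspan{x_0,x_1}$ (the geodesic-ray direction, which is genuinely expanded since $g_nx_1 \to x_2^+ \neq x_2^- \leftarrow g_nx_n$). The essential structural input is that $\omega_2$ is a genuine properly convex planar domain: were $[a,b]$ to collapse, the limiting domain would ``open up'' in the flat direction into a half-plane, which is not properly convex. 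This is the Benz\'ecri-type argument referred to in the statement, and it --- with \Cref{thm:benzecri} used to keep track of the approximating sequence --- is where I would concentrate the technical work; the outcome can be packaged as the statement that zooming in at an interior point of a boundary segment of a properly convex planar domain yields in the limit a triangle, the segment direction giving an edge and the transverse geodesic direction the opposite vertex.

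Once $\Delta := \CH(A,B,x_2^-)$ is known to be a nondegenerate $2$-simplex in $\Pb(\lim g_nP)$ with $\partial\Delta \subset \partial\omega_2$, no interior point of $\Delta$ can lie on a supporting line of $\omega_2$ (such a line would have to contain the $2$-dimensional set $\Delta$), so $\Delta^\circ \subset \omega_2$; thus $\Delta$ is a properly embedded triangle in $\omega_2$, hence in $\Omega_2$, with $x_2^+$ in the interior of the edge $[A,B]$ and $x_2^-$ the opposite vertex, proving (1). Part (2) follows from (1) by projective duality: passing to the dual convex body in $\Pb(V^*)$ interchanges ``$x_1$ is not a $C^1$ point'' with ``the dual point is interior to a boundary segment'', interchanges the forward and backward conical relations (swapping the roles of $x_2^+$ and $x_2^-$), and carries properly embedded triangles to properly embedded triangles; alternatively, (2) may be obtained by repeating the argument of (1) with the roles of chords through $x_1$ and supporting hyperplanes at $x_1$ interchanged.
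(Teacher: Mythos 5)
Your reduction to dimension two and your identification of the target picture (the limit domain should be the open triangle with vertices $A$, $B$, $x_2^-$) agree with the paper, but the proposal has a genuine gap exactly where you flag the ``main obstacle'': you never establish (i) that $g_n[a,b]$ does not collapse, nor (ii) that the transverse segments $[A,x_2^-]$ and $[B,x_2^-]$ end up in $\partial\omega_2$. For (ii) in particular, the limits of the chords $g_n[a,x_n]$ a priori lie only in $\overline{\omega_2}$, and your ``all-or-nothing'' dichotomy for segments requires first exhibiting a boundary point in the relative interior of each, which is essentially the original difficulty restated. The cross-ratio strategy is gestured at but not carried out.

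The paper's proof supplies precisely the missing mechanism. Rather than analyzing the arbitrary sequence $\{g_n\}$ directly, one writes down an explicit diagonal sequence $h_n = \mathrm{diag}(\lambda_n^{-1},\lambda_n^{-1},\lambda_n^{2})$ in the projective basis consisting of the two segment endpoints and the backward endpoint $x_1^-$ of the geodesic; for this sequence the convergence $h_n\Omega_1 \to \CH(b,c,x_1^-)^\circ$ is a direct computation (the segment is pointwise fixed, so it visibly does not collapse, while the transverse directions visibly flatten onto the other two edges). Choosing $\lambda_n$ so that $h_n$ and $g_n$ realize the conical relation along the same sequence $\{p_n\}$, one invokes \Cref{lem:changing_conically_related_sequence} --- i.e.\ the properness half of \Cref{thm:benzecri} --- to get $g_n = k_nh_n$ with $\{k_n\}$ precompact, which transfers the triangle, with the correct incidences of $x_2^{\pm}$, to $\Omega_2$. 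Without some such uniqueness step you have no control over the arbitrary realizing sequence. Finally, your duality reduction for part (2) is unjustified as stated (you would need to check that duality intertwines forward and backward conical relations, which does not follow immediately from the definitions); the paper instead reruns the explicit construction with inverse weights $\mathrm{diag}(\lambda_n^{-2},\lambda_n,\lambda_n)$ relative to a basis built from two distinct supporting lines at the non-$C^1$ point.
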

\begin{proof}
  Via \Cref{lem:conically_related_sections}, it suffices to consider
  the case where $\Omega_1, \Omega_2$ are 2-dimensional. The
  The definition of conically related points implies that there exists a
  projective geodesic ray $[a,x_1) \subset \Omega_1$, a sequence in
  $[a,x_1)$, and a sequence $\seq{g_n}$ in $\PGLdR$ such that
  $g_n[a,x_1) \to (x_2^-,x_2^+)\subset \Omega_2$. Let $x_1^-$ be a
  point in $\partial \Omega_1$ so that $[a,x_1) \subset (x_1^-, x_1)$,
  and hence, $(x_1^-,x_1^+)\subset \Omega_1$. Then $g_n(x_1^-, x_1)$
  converges to $(x_2^-, x_2^+)$. Let $\{p_n\}$ be a sequence in
  $[a, x_1)$ so that $g_np_n$ converges to some point in the interior
  of $(x_2^-,x_2^+)$.

  (1)  See \Cref{fig:conically_rel} for an illustration of the main idea behind this proof.
  By assumption, there exists a maximal nontrivial projective line
  segment $[b,c] \subset \partial \Omega_1$ with $x_1 \in
  (b,c)$. Consider a sequence of projective transformations $h_n$,
  defined (with respect to the projective basis $\{b, c, x_1^-\}$) by
  \[
    h_n :=
    \begin{pmatrix}
      \lambda_n^{-1}\\ & \lambda_n^{-1} \\ & & \lambda_n^2
    \end{pmatrix},
  \]
  where $\lambda_n$ is chosen so that $h_np_n$ converges to a point in
  the interior of the line segment $x_1, x_1^-$. Then $h_n\Omega_1$
  converges to the triangle with vertices at $b, c, x_1^-$. The result
  then follows from \Cref{lem:changing_conically_related_sequence}.

  (2) This case is similar. Fix a supporting line $L_-$ for $\Omega_1$
  at the point $x_1^-$. Since $x_1$ is not a $C^1$-point, we can
  choose two distinct supporting hyperplanes of $\Omega_1$ at $x_1$
  that we label $H_b$ and $H_c$. Let $b=H_b \cap L_-$ and
  $c=H_c \cap L_-$. Here we consider the sequence of projective
  transformations (defined with respect to the projective basis
  $\{x_1, b, c\}$) by
  \[
    h_n :=
    \begin{pmatrix}
      \lambda_n^{-2} \\ & \lambda_n \\ & & \lambda_n
    \end{pmatrix},
  \]
  where $\lambda_n$ is again chosen so that $h_np_n$ converges to a
  point in the interior of $(x_1, x_1^-)$. This time, since $\Omega_1$
  is not $C^1$ at $x_1$, the sequence of domains converges to a
  triangle with a vertex at $x_1$, and an edge containing $x_1^-$ in
  its interior and again we are done by
  \Cref{lem:changing_conically_related_sequence}.
\end{proof}

The next lemma does not appear to be well-known. It says that, just as
a point $z$ in the interior of a boundary segment in a properly convex
domain $\Omega$ can be thought of as a ``flat direction,'' a point $z$
in the \emph{closure} of a segment can be thought of as a
``half-flat'' direction: as we approach $z$ along a projective
geodesic, the domain ``looks more'' like it contains a properly
embedded half-triangle.

\begin{lemma}
  \label{lem:boundary_segment_halftri}
  Suppose that $x_1 \in \dee \Omega$ is forward conically related to
  $x_2^+ \in \dee \Omega_2$ and backward conically related to
  $x_2^- \in \dee \Omega_2$. If $x_1$ lies in the closure of a
  nontrivial segment in $\dee \Omega$, then both $x_2^+$ and $x_2^-$
  lie in the boundary of a half-triangle in $\Omega_2$.
\end{lemma}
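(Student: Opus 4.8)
The plan is to reduce to a planar situation and then, by blowing up $\Omega$ near $x_1$, exhibit a properly embedded triangle inside a projective copy of $\Omega_2$ that has both $x_2^+$ and $x_2^-$ on its boundary. This suffices because of the following elementary observation: if a properly convex domain $\Lambda$ contains a properly embedded triangle $\Delta$, then every point $z \in \partial \Delta$ lies in the boundary of a half-triangle in $\Lambda$. Indeed, choose a vertex $v$ of $\Delta$ with $z$ on one of the two edges through $v$, let $w$ be the opposite endpoint of that edge, and let $s$ lie in the relative interior of the other edge through $v$; then $[v,w]$ and $[v,s]$ lie in $\partial\Delta \subset \partial\Lambda$, while the chord $(w,s)$ lies in the relative interior of $\Delta$, hence in $\Lambda$, so $\{v,w,s\}$ is a half-triangle with $z$ on one of its two flat edges. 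Thus it is enough to produce a properly embedded triangle $\Delta \subset \Omega_2$ with $x_2^\pm \in \partial\Delta$.

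First I would set up and reduce to $\dim V = 3$. If $x_1$ lies in the interior of a nontrivial boundary segment, then \Cref{lem:int_segments_triangles}(1) already supplies such a triangle (with $x_2^+$ on an edge and $x_2^-$ a vertex), so I may assume $x_1$ is an endpoint of a maximal boundary segment $[b,x_1] \subset \partial\Omega$ with $b \ne x_1$. Let $[a,x_1) \subset \Omega$ and $\{g_n\} \subset \PGL(V)$ witness the conical relations, and let $x_1^-$ be the other endpoint of the bi-infinite projective geodesic in $\Omega$ containing $[a,x_1)$. As in the proof of \Cref{lem:int_segments_triangles}, the points $a, b, x_1$ are not collinear (since $(b,x_1) \subset \partial\Omega$ while $a \in \Omega$), so they span a projective $2$-plane $\Pb(W)$; the $2$-sector $\omega := \Pb(W) \cap \Omega$ contains $[a,x_1)$, and $[b,x_1]$ is still a maximal boundary segment of $\omega$ with $x_1$ an endpoint. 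After passing to a subsequence so that $g_n W$ converges in the Grassmannian, $g_n\omega$ converges to a $2$-sector $\omega_2$ of $\Omega_2$ with $x_2^\pm \in \partial\omega_2$, and $(x_1,\omega)$ is forward (resp.\ backward) conically related to $(x_2^+,\omega_2)$ (resp.\ $(x_2^-,\omega_2)$). Since $\partial\omega_2 \subset \partial\Omega_2$ while $\omega_2 \subset \Omega_2$, any half-triangle in $\omega_2$ is also a half-triangle in $\Omega_2$, so I may replace $\Omega, \Omega_2$ by $\omega, \omega_2$ and assume $\dim V = 3$.

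The heart of the argument is the blow-up. Let $L_+$ be the supporting line of $\Omega$ containing $[b,x_1]$; since $(x_1^-,x_1) \subset \Omega$ we have $x_1^- \notin L_+$, so $\{b,x_1,x_1^-\}$ is a projective basis of $\Pb(\Rb^3)$, and I would take $h_n := \operatorname{diag}(\mu_n,\mu_n,\mu_n^{-2})$ with respect to it, with $\mu_n \to 0$: this fixes $L_+$ pointwise and fixes $x_1^-$, and drives every point off $L_+$ towards $x_1^-$. Since $\overline\Omega \cap L_+ = [b,x_1]$, every point of $\partial\Omega \setminus [b,x_1]$ has nonzero $x_1^-$-coordinate and is carried to $x_1^-$; a local analysis of $\partial\Omega$ near $b$ and near $x_1$ shows that the two ends of the complementary boundary arc sweep out the segments $[b,x_1^-]$ and $[x_1,x_1^-]$. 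Since $h_n$ preserves the triangle $\Delta$ on $b, x_1, x_1^-$ (which lies in $\overline\Omega$), it follows that $h_n\Omega \to \Delta$ in $\domains(\Rb^3)$, with $\lim h_n x_1 = x_1$ and $\lim h_n x_1^- = x_1^-$ being vertices of $\Delta$. To transport this to $\Omega_2$, I would choose $p_n$ in the short sub-segment $(x_1,x_n)$ of the ray so that $g_n p_n$ converges to an interior point of $(x_2^+,x_2^-)$ — this forces $p_n \to x_1$ — and then tune $\mu_n$ (depending on $p_n$) so that $h_n p_n$ converges to an interior point of $(x_1,x_1^-) \subset \Delta$; both $g_n(\Omega,p_n)$ and $h_n(\Omega,p_n)$ then converge in $\pdomains(V)$. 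So $(x_1,\Omega)$ is, along the \emph{same} sequence $\{p_n\}$, forward (resp.\ backward) conically related both to $(x_2^\pm,\Omega_2)$ via $\{g_n\}$ and to $(x_1,\Delta)$, $(x_1^-,\Delta)$ via $\{h_n\}$, and \Cref{lem:changing_conically_related_sequence} gives $h \in \PGL(V)$ with $h(x_2^+,\Omega_2) = (x_1,\Delta)$ and $h(x_2^-,\Omega_2) = (x_1^-,\Delta)$. Hence $\Omega_2 \cong \Delta$ is a triangle with $x_2^\pm$ at two of its vertices, and the observation from the first paragraph completes the proof.

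I expect the main obstacles to be: the limit computation in the blow-up step (pinning down the local picture of $\partial\Omega$ at the special points $b$ and $x_1$, together with a convexity argument showing the limit domain cannot bulge past the edges $[b,x_1^-]$ and $[x_1,x_1^-]$ into something strictly larger than $\Delta$); and, more delicately, the bookkeeping needed to invoke \Cref{lem:changing_conically_related_sequence}, i.e.\ producing one sequence $\{p_n\}$ in $\Omega$ witnessing both conical relations \emph{simultaneously}, which is what dictates the choice $p_n \in (x_1,x_n)$ and the $p_n$-dependent choice of $\mu_n$. The planar reduction also requires a little care to ensure the $2$-sector is chosen so as to contain the segment $[b,x_1]$.
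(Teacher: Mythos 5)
Your reduction to dimension $3$ via sectors and to the case where $x_1$ is an endpoint of a maximal boundary segment matches the paper, and your blow-up $h_n$ is, up to relabelling the basis, the same diagonal sequence the paper uses. The gap is in the step where you invoke \Cref{lem:changing_conically_related_sequence}: you assert that $h_n(\Omega,p_n)$ converges in $\pdomains(V)$ to a pointed triangle with marked point in the interior, but the limit point $p_0'=\lim h_np_n$ lies on the open segment $(x_1,x_1^-)$, which is an \emph{edge} of the limit triangle $\Delta$ on $\{b,x_1,x_1^-\}$, not an interior chord. Since the ray $(x_1,x_1^-)$ is $h_n$-invariant, no choice of $p_n$ on the ray (and no tuning of $\mu_n$) can make the limit land in $\interior \Delta$. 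Consequently $h_n(\Omega,p_n)$ does not converge in the space of pointed domains, $(x_1,\Omega)$ is \emph{not} conically related to $(x_1,\Delta)$ by $\{h_n\}$ in the sense of \Cref{defn:conically_related_points} (which requires the limiting segment to lie in the open limit domain), and \Cref{lem:changing_conically_related_sequence} cannot be applied. A symptom that something must be wrong is your conclusion that $\Omega_2$ is projectively a triangle: this is false in general. In \Cref{ex:nondivisible_nonmorse}, taking $x_1=\ell^+$ and $g_n=h^{-2n}$ with $x_n=h^nx_0$ exhibits $\ell^+$ as forward (resp.\ backward) conically related to $\ell^+$ (resp.\ $\ell^-$) in $\Omega_2=\Omega$, the convex hull of $\Hb^2$ and $\ell^*$, which is not a triangle even though its boundary does contain a half-triangle through both points.

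The missing idea is a second rescaling. Because $h_np_n$ is interior to $h_n\Omega$ yet converges to a point of the axis $\Pspan{x_1,x_1^-}$, the domains $h_n\Omega$ must protrude slightly past that axis, in a sliver of height $\eps_n\to 0$ on the side opposite to $\Delta$. Composing with a vertical rescaling $f_n$ with $f_n\eps_n$ of definite size restores that sliver to a bounded nonempty convex set while blowing the triangle side up to an unbounded strip $I\times(0,\infty)$, i.e.\ a half-triangle; the marked point now lies in the interior of the limit domain $\Omega_2'$, which contains a half-triangle (not a triangle) with $x_1$ and $x_1^-$ on its boundary. Only at that stage can \Cref{lem:changing_conically_related_sequence} be applied. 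Your first-paragraph observation about half-triangles inside properly embedded triangles is correct and correctly disposes of the case where $x_1$ is interior to a segment via \Cref{lem:int_segments_triangles}, but it does not rescue the endpoint case, since there the triangle you produce is the (degenerate) limit domain itself rather than a properly embedded triangle inside $\Omega_2$.
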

\begin{proof}
  After applying \Cref{lem:conically_related_sections} we may assume
  that $\Omega$ and $\Omega_2$ are both two-dimensional, and using
  \Cref{lem:int_segments_triangles}, we can further reduce to the case
  where $x_1$ is the endpoint of a maximal nontrivial segment in
  $\dee \Omega$. Let $z$ be the other endpoint of this segment, and
  let $L_+$ be the projective span of $x_1$ and $z$, so that $L$ is a
  supporting line of $\Omega$ at $x_1$.

  Fix a sequence $\seq{g_n}$ realizing the conical relations between
  $x_1$ and $x_2^\pm$, so that $g_nx_1 \to x_2^+$ and for some
  $x_1^- \in \dee \Omega$, we have $(x_1^-, x_1) \subset \Omega$ and
  $g_nx_1^- \to x_2^-$. Let $L_-$ be a supporting line of $\Omega$ at
  $x_1^-$, let $x_0 = L_- \cap L_+$, and let $p_n \in (x_1, x_1^-)$ be
  a sequence converging to $x_1$ so that $g_np_n$ converges to a point
  $p_0 \in \Omega_2$.

  We fix lifts $v_1, v_0, v_1^-$ for $x_1, x_0, x_1^-$ respectively,
  so that $\{v_1, v_0, v_1^-\}$ is a basis for $\Rb^3$ and the
  projectivization of the convex hull of $v_1, v_0, v_1^-$ lies in
  $\Omega$. We consider a sequence of linear maps $\{h_n\}$, defined
  with respect to this basis by
  \[
    h_n :=
    \begin{pmatrix}
       \lambda_n^{-1} \\ & \lambda_n^{-1} \\ & & \lambda_n^2
    \end{pmatrix}.
  \]
  Here $\lambda_n > 0$ is chosen so that $h_np_n$ converges to a point
  $p_0' \in (x_1, x_1^-)$. The sequence of domains $h_n\Omega$
  converges to a triangle with vertices $x_1, x_1^-, z$ (so this
  triangle does \emph{not} contain $p_0'$ in its interior). See \Cref{fig:halftri_conically_related}.

        \begin{figure}[h]
      \centering
      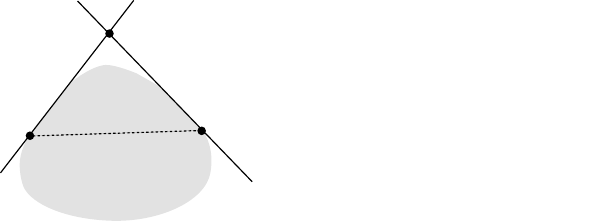
      \caption{Showing that $x_1$ is conically related to a point in the boundary of a half-triangle in \Cref{lem:boundary_segment_halftri} (Step 1). The domains $h_n\Omega$ converge to a triangle.}
      \label{fig:halftri_conically_related}
  \end{figure}

  Our chosen basis $\{v_1, v_0, v_1^-\}$ also determines projective
  coordinates $[x : y : z]$ for projective space
  $\Pb(\Rb^3)$. Consider the affine chart
  \[
    \{[x : y : 1 - x] \st x, y \in \Rb\} \simeq \Rb^2,
  \]
  which has affine coordinates given by $(x,y)$. In these coordinates,
  the projective line spanned by $x_1, x_1^-$ corresponds to the
  horizontal axis $y = 0$, so without loss of generality the triangle
  limited to by $h_n\Omega$ is a bounded convex subset of the upper
  half-plane. Therefore, since $h_np_n$ lies in the interior of
  $h_n\Omega$, the intersection of $h_n\Omega$ with the lower-half
  plane is nonempty, and contained in an open subset of the form
  $I \times (0, -\eps_n)$, where $I$ is a fixed interval and $\eps_n$
  is a positive constant tending to zero.

  We can then compose $h_n$ with a projective transformation $f_n$
  given by a ``vertical rescaling'' (i.e. a transformation which
  preserves vertical lines, and acts on them by homotheties centered
  at zero) so that the intersection of $f_nh_n\Omega$ with the lower
  half-plane converges to a bounded nonempty convex set; explicitly,
  in our chosen projective basis, each $f_n$ has the form
  \[
    \begin{pmatrix}
      \eta_n^{-1}\\ & \eta_n^2 \\ & & \eta_n^{-1}
    \end{pmatrix}.
  \]
  Since $\eps_n \to 0$, the vertical scaling factor of each $f_n$ must
  tend to infinity, which means that the intersection of
  $f_nh_n\Omega$ with the upper half-plane limits to a subset of the
  form $I \times (0, \infty)$.  But this subset is projectively
  equivalent to a half-triangle in the limit of $f_nh_n\Omega$. See \Cref{fig:halftri_conically_related_2}.
  \begin{figure}
      \centering
      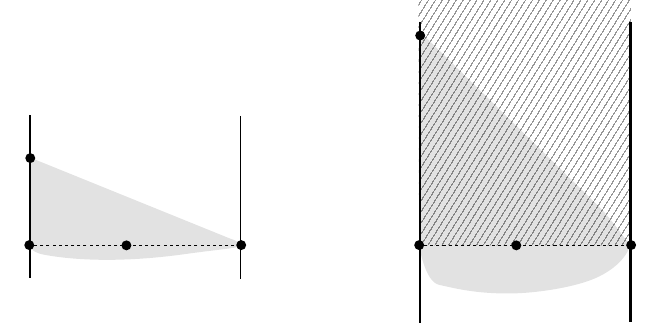
      \caption{Step 2 in \Cref{lem:boundary_segment_halftri}. The point $x_0$ is at infinity, so the diagonally shaded region is projectively equivalent to a half-triangle.}
      \label{fig:halftri_conically_related_2}
  \end{figure}

  Altogether, we have seen that the sequence of pointed domains
  $f_nh_n(\Omega, p_n)$ converge to a pointed domain
  $(\Omega_2', p_0')$, so that $f_nh_nx_1 = x_1$ and
  $x_1^- = f_nh_nx_1^-$ both lie in the boundary of a half-triangle in
  $\Omega_2'$. We can then apply
  \Cref{lem:changing_conically_related_sequence} to complete the proof.
\end{proof}

\subsection{Projective geodesics with endpoints in segments}

We can combine our previous results regarding Morse geodesics and
conically related points to prove some more facts about the endpoints of
Morse geodesics. 
\begin{corollary}
    \label{cor:morse_point_is_C1_extreme}
    Suppose that $y \in \bdry$ is the endpoint of a $M$-Morse geodesic
    ray. Then $y$ is a $C^1$ extreme point of $\bdry$.
\end{corollary}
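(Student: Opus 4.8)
The plan is to package together, in a short argument by contradiction, three pieces of machinery already set up above: reflexivity of the conical relation, preservation of Morseness along conical relations, and the non-existence of Morse geodesics whose endpoints lie in a properly embedded triangle. So let $[x,y)$ be the given $M$-Morse geodesic ray, with $y \in \bdry$. The first step is to apply \Cref{lem:conical_related_reflexive} to produce a single auxiliary properly convex domain $\Omega'$ together with points $y'_\pm \in \dee \Omega'$ such that $(y, \Omega)$ is forward conically related to $(y'_+, \Omega')$ and backward conically related to $(y'_-, \Omega')$. Since $[x,y)$ is Morse, \Cref{lem:morse_conically_related} then transfers Morseness across the forward conical relation: the projective geodesic $[x', y'_+)$ is Morse for any (equivalently, some) $x' \in \Omega'$.

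The second step is to rule out the two ways $y$ could fail to be a $C^1$ extreme point, using part (1) and part (2) of \Cref{lem:int_segments_triangles} respectively. Suppose first that $y$ is \emph{not} an extreme point of $\bdry$, i.e. $y$ lies in the relative interior of a nontrivial segment of $\bdry$. Then \Cref{lem:int_segments_triangles}(1) yields a properly embedded triangle $\Delta \subset \Omega'$ with $y'_+$ in the interior of an edge of $\Delta$; in particular $y'_+$ lies on the boundary of a properly embedded triangle, so \Cref{lem:morse_not_in_tri} forces $[x', y'_+)$ to be non-Morse, contradicting the first step. Suppose instead that $y$ is not a $C^1$ point of $\bdry$. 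Then \Cref{lem:int_segments_triangles}(2) produces a properly embedded triangle $\Delta \subset \Omega'$ with $y'_+$ a vertex of $\Delta$, and again \Cref{lem:morse_not_in_tri} contradicts Morseness of $[x', y'_+)$. Hence $y$ is both extreme and $C^1$, which is the claim.

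I do not expect any genuine obstacle here: essentially all the content has been absorbed into the earlier lemmas, and the only points requiring a little care are bookkeeping. First, \Cref{lem:int_segments_triangles} requires a forward \emph{and} a backward conical relation into the \emph{same} target domain, which is exactly what \Cref{lem:conical_related_reflexive} provides. Second, \Cref{lem:morse_not_in_tri} only needs the relevant point to lie somewhere on the boundary of the properly embedded triangle — vertex or edge interior are both fine, since the geodesic ray toward it stays inside the triangle regardless — so both cases of \Cref{lem:int_segments_triangles} feed into it cleanly. The corollary is therefore just a convenient repackaging of the conically-related-points framework developed in this section.
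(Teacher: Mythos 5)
Your proposal is correct and follows essentially the same route as the paper's proof: apply \cref{lem:conical_related_reflexive} to get forward and backward conical relations into a common domain $\Omega'$, use \cref{lem:int_segments_triangles} (parts (1) and (2)) to produce a properly embedded triangle through the conically related point in either failure case, and derive a contradiction with \cref{lem:morse_conically_related} and \cref{lem:morse_not_in_tri}. The only difference is the inessential ordering of when Morseness is transferred across the conical relation.
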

\begin{proof}
  Fix a projective geodesic ray $[y_0,y)$ that is $M$-Morse. By
  \cref{lem:conical_related_reflexive}, $(y,\Omega)$ is forward
  conically related to $(y',\Omega')$. Now suppose that $y$ is not an
  extreme point, meaning that $y$ is contained in the interior of a
  projective line segment in $\bdry$. So
  \cref{lem:int_segments_triangles} part (1) implies that there is a
  properly embedded triangle $\Delta' \subset \Omega'$ whose boundary
  contains $y'$. Let $p' \in \Delta'.$ By
  \cref{lem:morse_conically_related}, $[p',y')$ is a Morse geodesic
  ray. This contradicts \cref{lem:morse_not_in_tri}, so $y$ is an
  extreme point. That $y$ is a $C^1$ point follows from similar
  reasoning, applying part (2) of \cref{lem:int_segments_triangles}
  instead of part (1).
\end{proof}

We can use this result to obtain:
\begin{corollary}
  \label{cor:morse_not_in_segment}
  Suppose that $y$ lies in the closure of a nontrivial segment in
  $\partial \Omega$. Then for any $x \in \Omega$, the projective
  geodesic $[x, y)$ is not Morse.
\end{corollary}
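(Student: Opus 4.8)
The plan is to argue by contradiction, transporting the half-triangle obstruction of \cref{lem:morse_not_in_halftri} along a conical relation, in exact parallel with the proof of \cref{cor:morse_point_is_C1_extreme} (which uses triangles and \cref{lem:morse_not_in_tri} in place of half-triangles). So suppose $y$ lies in the closure of a nontrivial segment in $\partial\Omega$, fix $x \in \Omega$, and assume toward a contradiction that the projective geodesic ray $[x,y)$ is $M$-Morse for some Morse gauge $M$.

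First I would apply \cref{lem:conical_related_reflexive} to produce a properly convex domain $\Omega'$ and points $y'_+, y'_- \in \partial\Omega'$ such that $(y,\Omega)$ is forward conically related to $(y'_+, \Omega')$ and backward conically related to $(y'_-, \Omega')$. Since $y$ lies in the closure of a nontrivial segment, the hypotheses of \cref{lem:boundary_segment_halftri} are met, so $y'_+$ (and $y'_-$) lies in the boundary of a half-triangle $\Delta'$ in $\Omega'$. I would then pin down the position of $y'_+$ relative to $\Delta'$: either $y'_+$ lies in the relative interior of a segment of $\partial\Omega'$, in which case it is not an extreme point and \cref{cor:morse_point_is_C1_extreme} shows no Morse ray can end at $y'_+$; or $y'_+$ is the vertex of $\Delta'$ that is the common endpoint of the free edge of $\Delta'$ (the one lying in $\Omega'$) and of a maximal segment of $\partial\Omega'$, in which case \cref{lem:morse_not_in_halftri} applies verbatim and again no Morse ray ends at $y'_+$. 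In either case, for every $x' \in \Omega'$ the ray $[x', y'_+)$ fails to be Morse. Finally, \cref{lem:morse_conically_related} (applied with $y_1 = y$, $y_2 = y'_+$, using that $y$ is forward conically related to $y'_+$) forces $[x,y)$ to be non-Morse as well, contradicting our assumption. Hence $[x,y)$ is not Morse.

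The main obstacle is the middle step: confirming that the point $y'_+$ coming out of \cref{lem:boundary_segment_halftri} really does sit in one of these two configurations, rather than (say) at the ``apex'' vertex of $\Delta'$ where the two boundary edges meet, or at an endpoint of a non-maximal boundary segment. Reading off the construction in the proof of \cref{lem:boundary_segment_halftri} — where, after reduction to dimension two via \cref{lem:conically_related_sectors} and to the case of an endpoint of a maximal segment via \cref{lem:int_segments_triangles}, the relevant point becomes a base vertex of the half-strip $I\times(0,\infty)$, adjacent both to the free edge and to the edge lying along a supporting line — shows this is exactly what occurs; the fallback to \cref{cor:morse_point_is_C1_extreme} disposes of the degenerate possibilities cleanly, since any point in the relative interior of a boundary segment fails to be extreme. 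Apart from this bookkeeping, the argument is a direct assembly of results already established.
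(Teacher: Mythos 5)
Your proposal is correct and follows essentially the same route as the paper's proof: both pass through \cref{lem:conical_related_reflexive} and \cref{lem:boundary_segment_halftri} to place a conically related point in the boundary of a half-triangle, use \cref{cor:morse_point_is_C1_extreme} to rule out the non-extreme configuration, apply \cref{lem:morse_not_in_halftri}, and transport (non-)Morseness along the conical relation via \cref{lem:morse_conically_related}. The only cosmetic difference is that the paper applies \cref{cor:morse_point_is_C1_extreme} to $y$ itself up front to reduce to the endpoint case, whereas you perform the corresponding case analysis downstream at $y'_+$; your attention to the maximality hypothesis of \cref{lem:morse_not_in_halftri} is a point the paper leaves implicit.
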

\begin{proof}
  Suppose, for a contradiction, that $[x,y)$ is $M$-Morse for some
  $x \in \Omega$. In this case \Cref{cor:morse_point_is_C1_extreme}
  implies that $y$ is a $C^1$ extreme point. So, we may assume that
  $y$ is the endpoint of a nontrivial segment in $\bdry$.

  By \Cref{lem:conical_related_reflexive}, $(y,\Omega)$ is forward
  conically related to $(y_+,\Omega_{\infty})$ and backward conically
  related to $(y_-,\Omega_{\infty})$ for some properly convex domain
  $\Omega_\infty$. Fix a point $p \in \Omega_\infty$. By
  \Cref{lem:morse_conically_related}, $[p,y_\infty)$ is also Morse,
  and by \Cref{lem:boundary_segment_halftri}, $y_\infty$ lies in the
  boundary of a half-triangle in $\partial \Omega_\infty$.

  Now, \Cref{cor:morse_point_is_C1_extreme} again implies that
  $y_\infty$ cannot lie in the interior of a segment in
  $\partial \Omega_\infty$. But in this case
  \Cref{lem:morse_not_in_halftri} implies that $[p, y_\infty)$ cannot
  be Morse and we get a contradiction.
\end{proof}

Combining the above with \Cref{lem:morse_conically_related}
immediately yields the implications \ref{item:nondiv_morse} $\implies$
\ref{item:segclose_forward} and \ref{item:nondiv_morse} $\implies$
\ref{item:segclose_backward} in
\Cref{prop:morse_hilbert_equivalences}.

\subsection{$\delta$-slimness}

To finish the proof of \Cref{prop:morse_hilbert_equivalences}, we need
to prove the final two implications \ref{item:halftri_forward}
$\implies$ \ref{item:delta_slim} and \ref{item:halftri_backward}
$\implies$ \ref{item:delta_slim} (again see
\Cref{fig:morse_hilbert_diagram}). These both follow from the lemma
below.
\begin{lemma}
  \label{lem:halftri_projective_delta_slim}
  Let $\ell$ be a projective geodesic in $\Omega$. If $\ell$ is not
  projectively $\delta$-slim for any $\delta > 0$, then there is an
  endpoint $y$ of $\ell$ in $\dee \Omega$ and points $z_+, z_-$ lying
  in the boundary of a half-triangle in some domain $\Omega_2$ so that
  $y$ is forward (resp. backward) conically related to $z_+$
  (resp. $z_-$).
\end{lemma}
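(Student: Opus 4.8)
The plan is to deduce from the failure of $\delta$-slimness a sequence of increasingly ``fat'' projective geodesic triangles based on $\ell$, and then to degenerate them, via a Benz\'ecri rescaling (\Cref{thm:benzecri}) centered at the fat point, into a half-triangle in a limiting domain onto one of whose vertices the ideal endpoint of $\ell$ gets carried. Concretely: assuming $\ell$ is not projectively $\delta$-slim for any $\delta$, for each $n$ I would pick a projective geodesic triangle $[x_n,y_n]\cup[y_n,z_n]\cup[z_n,x_n]$ with $x_n,y_n,z_n\in\Omega$, $[x_n,y_n]\subset\ell$, that is not $n$-slim; by the contrapositive of \Cref{rem:proj_delta_slim_defn} there is a point $w_n\in[x_n,y_n]\subset\ell$ with $\hil(w_n,[x_n,z_n]\cup[z_n,y_n])\ge n/2$, hence $\hil(w_n,x_n),\hil(w_n,y_n),\hil(w_n,z_n)\ge n/2$. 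Passing to a subsequence I can assume $w_n,x_n,y_n,z_n$ converge in $\overline\Omega$ and the edges converge in Hausdorff distance, and I distinguish whether $w_n$ leaves every compact subset of $\Omega$ (Case A) or $w_n\to w\in\Omega$ (Case B). In both cases $x_n,y_n$ cannot both remain in a compact subset of $\Omega$, so some vertex sequence on $\ell$ escapes and $\ell$ has an ideal endpoint.

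In Case A, $w_n\to y\in\dee\Omega$ is an ideal endpoint of $\ell$. I would apply \Cref{thm:benzecri} to get $g_n\in\PGLdR$ with $g_n(\Omega,w_n)\to(\Omega_\infty,w_\infty)\in\pdomains(\Rb^d)$; the $g_n$ are divergent since $w_n\to y\in\dee\Omega$ while $w_\infty\in\Omega_\infty$. After a further subsequence the rescaled vertices converge to points $x_\infty,y_\infty,z_\infty$ in $\overline{\Omega_\infty}$ and the rescaled edges to the corresponding projective segments. Because $d_{g_n\Omega}(g_nw_n,g_nv_n)=\hil(w_n,v_n)\to\infty$ for each vertex $v_n$, and Hilbert metrics converge uniformly on compact subsets of $\Omega_\infty$, the points $x_\infty,y_\infty,z_\infty$ lie in $\dee\Omega_\infty$; running the same estimate on points of $[x_n,z_n]$ and $[z_n,y_n]$ gives $[x_\infty,z_\infty],[z_\infty,y_\infty]\subset\dee\Omega_\infty$, whereas $[x_\infty,y_\infty]=\lim g_n[x_n,y_n]$ contains $w_\infty\in\Omega_\infty$, so $\{x_\infty,y_\infty,z_\infty\}$ is a half-triangle in $\Omega_\infty$ with interior edge $[x_\infty,y_\infty]$. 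To match the data of \Cref{defn:conically_related_points}, relabel so that $x_n,w_n,y_n$ appear in this order towards $y$ along $\ell$; a routine limiting argument then shows $g_ny\to y_\infty$ and $\lim g_n\ell=(x_\infty,y_\infty)\subset\Omega_\infty$. I would pick $x_0\in\ell\cap\Omega$ with $w_n\in[x_0,y)$ for large $n$ and $p_n\in[x_0,w_n]\subset\ell$ with $\hil(p_n,w_n)=\lceil\hil(x_0,w_n)/2\rceil$, so that $p_n\to y$ while $d_{g_n\Omega}(g_np_n,g_nw_n)\to\infty$; then $g_np_n$ escapes every compact subset of $\Omega_\infty$, converges (on a subsequence) to some $p^-\in\dee\Omega_\infty$, and since $p_n\in\ell$ lies on the far side of $w_n$ from $y$, the point $p^-$ must be a point of $[x_\infty,y_\infty]\cap\dee\Omega_\infty$ on the $x_\infty$ side, i.e.\ $p^-=x_\infty$. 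Finally $g_n(y,p_n)=(g_ny,g_np_n)\to(y_\infty,x_\infty)\subset\Omega_\infty$ (it contains $w_\infty$), so $y$ is forward conically related to $y_\infty$ and backward conically related to $x_\infty$, both vertices of the half-triangle $\{x_\infty,y_\infty,z_\infty\}$, which gives the conclusion in this case.

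In Case B, $w_n\to w\in\Omega$, so $x_n,y_n$ escape $\Omega$, converging to distinct ideal endpoints $\xi_-,\xi_+$ of $\ell$ (hence $\ell=(\xi_-,\xi_+)$), and $z_n\to z_*\in\dee\Omega$. Since $\hil(w,[x_n,z_n])\to\infty$ and $\hil(w,[y_n,z_n])\to\infty$, the limit segments $[\xi_-,z_*]$ and $[\xi_+,z_*]$ contain no point of $\Omega$, hence lie in $\dee\Omega$, and $z_*\ne\xi_\pm$ (otherwise one of these segments would contain $w$), so $\{\xi_-,\xi_+,z_*\}$ is a half-triangle in $\Omega$ and $\xi_+$ lies in the closure of the nontrivial boundary segment $[\xi_+,z_*]$. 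Then I would apply \Cref{lem:conical_related_reflexive} to obtain a domain $\Omega'$ with $(\xi_+,\Omega)$ forward (resp.\ backward) conically related to $(\xi_+',\Omega')$ (resp.\ $(\xi_-',\Omega')$), and \Cref{lem:boundary_segment_halftri} to conclude that $\xi_+'$ and $\xi_-'$ both lie in the boundary of a half-triangle in $\Omega'$; taking $y:=\xi_+$, $\Omega_2:=\Omega'$, $z_+:=\xi_+'$, $z_-:=\xi_-'$ completes the argument.

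I expect the delicate part to be the conical-relation bookkeeping in Case A: the auxiliary sequence $p_n$ on $\ell$ must be chosen so that it tends to the ideal endpoint $y$, so that $g_np_n$ escapes to $\dee\Omega_\infty$ (so that the limiting image is a genuinely bi-infinite geodesic, as \Cref{defn:conically_related_points} requires), and so that its limit lands on a vertex of the half-triangle rather than an unrelated boundary point---the last being forced precisely because $\lim g_n\ell$ coincides with the interior edge $(x_\infty,y_\infty)$. A secondary annoyance is that when the fat point $w_n$ does not itself escape $\Omega$ there is no rescaling centered at it, so Case B has to be handled by producing the half-triangle in $\Omega$ directly and transporting the conclusion through \Cref{lem:conical_related_reflexive} and \Cref{lem:boundary_segment_halftri}.
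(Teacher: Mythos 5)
Your proposal is correct and follows essentially the same route as the paper: extract non-$n$-slim triangles based on $\ell$, locate a point of $\ell$ far from the other two edges, Benz\'ecri-rescale at that point to produce a half-triangle in the limiting domain, and in the degenerate case (your Case B, where the fat point stays in a compact set, equivalently where the rescaling sequence is bounded) fall back on \cref{lem:conical_related_reflexive} followed by \cref{lem:boundary_segment_halftri}. The only difference is presentational: you spell out the conical-relation bookkeeping in the divergent case (the choice of $p_n$ and the identification of $\lim g_n y$ with a vertex of the half-triangle) more explicitly than the paper does.
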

\begin{proof}
  The argument is essentially identical to the proof of Proposition
  2.5 in \cite{B2004}; we reproduce it here for convenience. Fix a
  projective geodesic $\ell$ which is not projectively $\delta$-slim
  for any $\delta$. We choose a sequence of triples
  $\{(a_n, b_n, c_n)\}$ in $\Omega$, with $a_n, b_n \in \ell$, such
  that the projective geodesic triangle with vertices $a_n, b_n, c_n$
  is not $2n$-slim. Then, by \cref{rem:proj_delta_slim_defn}, the
  segment $[a_n, b_n]$ cannot be contained in the union of metric
  $n$-neighborhoods
  \[
    N_n([a_n, c_n]) \cup N_n([b_n, c_n]).
  \]
   Since the projective geodesic segment $[a_n, b_n]$ is connected, there is a point
  $x_n \in [a_n, b_n]$ so that $\hil(x_n, [a_n, c_n]) \ge n$ and
  $\hil(x_n, [b_n, c_n]) \ge n$. Applying \Cref{thm:benzecri}, we can
  choose elements $g_n \in \PGL(V)$ and extract a subsequence so that
  the pointed domains $g_n(\Omega, x_n)$ converge to some limiting
  pointed domain $(\Omega_\infty, x_\infty)$, and the points
  $g_na_n, g_nb_n, g_nc_n$ converge to points $a, b, c$ in
  $\dee \Omega_\infty$.

  Since $g_nx_n$ converges to $x_\infty \in \Omega_\infty$, the
  distances $\dist_{g_n\Omega}(g_nx_n, g_n[a_n, c_n])$ and
  $\dist_{g_n\Omega}(g_nx_n, g_n[b_n, c_n])$ must tend to infinity,
  which means the segments $[a,c]$ and $[b,c]$ must converge to
  subsets of $\dee \Omega_\infty$. However, since the limit of
  $g_nx_n$ lies in the interior of $(a, b) \cap \Omega_\infty$, the
  segments $[a,c]$ and $[b,c]$ must also be nontrivial and
  distinct. As $(a,b)$ contains the limit of
  $g_nx_n \in \Omega_\infty$, the points $a, b, c$ are the vertices of
  a half-triangle in $\Omega_\infty$.

  If $\{g_n\}$ is a divergent sequence in $\PGL(V)$, then the
  properness condition in \cref{thm:benzecri} implies that $x_n$ must
  tend towards an endpoint of $\ell$ in $\dee \Omega$. This endpoint
  is forward conically related to one of the limiting endpoints $a, b$
  of $g_n[a_n, b_n]$, and backward conically related to the other. In
  this case, we have proved the lemma.

  On the other hand, if $\{g_n\}$ is \emph{not} divergent in
  $\PGL(V)$, then
  $(\Omega_\infty, [a,b]) = g(\Omega, \overline{\ell})$ for some
  $g \in \PGL(V)$. Thus both endpoints of $\ell$ already lie in a
  half-triangle in $\Omega$. If the conical relation were reflexive,
  this would finish the proof. But since conical relation satisfies
  only a weak form of reflexivity, we must appeal to
  \cref{lem:conical_related_reflexive} followed by
  \cref{lem:boundary_segment_halftri}. This leads to the conclusion
  that the endpoints of $\ell$ are both forward and backward conically
  related to points in a half-triangle $\Delta$ in some properly
  convex domain $\Omega'$, as required.
\end{proof}

This concludes the proof of \Cref{prop:morse_hilbert_equivalences},
hence of \Cref{prop:morse_contracting_slim}.

\subsection{Uniformity}
\label{sec:uniform_morse_contracting}

\Cref{prop:contracting_implies_morse} gives us a stronger version of
the implication \ref{item:contracting} $\implies$
\ref{item:nondiv_morse} in \Cref{prop:morse_hilbert_equivalences}: it
says that any $D$-contracting projective geodesic in a properly convex
domain $\Omega$ is $M$-Morse for a Morse gauge $M$ determined solely
by $D$ and $\Omega$. In the case where $\Omega$ is divisible, we can
strengthen the opposite implication in a similar manner.

\begin{proposition}
  \label{prop:morse_contracting_uniform}
  Let $\Omega$ be a properly convex divisible domain. For every Morse
  gauge $M$, there exists a constant $\delta > 0$ (depending only on $M$
  and $\Omega$) so that any $M$-Morse geodesic in $\Omega$ is
  projectively $\delta$-slim.
\end{proposition}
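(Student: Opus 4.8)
The plan is to argue by contradiction, using the cocompact action of a dividing group to renormalize a hypothetical family of counterexamples into a limiting half-triangle configuration, which is then ruled out by the results of \cref{sec:morse_contracting}. Suppose the proposition fails for some Morse gauge $M$. Then for every $n$ there is an $M$-Morse projective geodesic $\ell_n$ in $\Omega$ that is not projectively $2n$-slim, so by \Cref{rem:proj_delta_slim_defn} there is a projective geodesic triangle with vertices $p_n,q_n,r_n\in\Omega$ with $[p_n,q_n]\subseteq\ell_n$, and (since $[p_n,q_n]$ is connected) a point $w_n\in[p_n,q_n]\subseteq\ell_n$ with $\hil(w_n,[p_n,r_n])\ge n$ and $\hil(w_n,[r_n,q_n])\ge n$.

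Next I would normalize using divisibility. Fix a group $\Gamma$ dividing $\Omega$ and a compact set $K\subset\Omega$ whose $\Gamma$-translates cover $\Omega$, and choose $\gamma_n\in\Gamma$ with $\gamma_nw_n\in K$. Since each $\gamma_n$ is a Hilbert isometry, $\gamma_n\ell_n$ is again $M$-Morse for the \emph{same} gauge $M$. Passing to a subsequence, arrange that $\gamma_nw_n\to w_\infty\in\Omega$, that $\gamma_np_n\to x'$, $\gamma_nq_n\to y'$, $\gamma_nr_n\to z'$ in $\overline{\Omega}$, and that the geodesics $\gamma_n\ell_n$ converge locally in $\Omega$ to a projective geodesic $\ell_\infty$ through $w_\infty$. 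Exactly as in the proof of \Cref{lem:halftri_projective_delta_slim} — the distances from $\gamma_nw_n$ to $\gamma_n[p_n,r_n]$ and to $\gamma_n[r_n,q_n]$ blow up while $\gamma_nw_n\to w_\infty\in\Omega$ — one gets that $[x',z']$ and $[z',y']$ are nontrivial distinct segments in $\partial\Omega$ and that $w_\infty$ lies in the interior of $(x',y')\subseteq\Omega$; hence $x',y',z'$ are the vertices of a half-triangle in $\Omega$. A short proper-convexity argument (the line through $x',y'$ meets $\Omega$ at the interior point $w_\infty$, so $x',y'\in\partial\Omega$ must be the two extreme points of $\overline{\Omega}$ on that line, which squeezes the limit of the segments $\gamma_n\ell_n$) then forces $\ell_\infty=(x',y')$, a bi-infinite projective geodesic.

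The contradiction now comes from two incompatible facts about $\ell_\infty$. On the one hand, its endpoint $x'$ lies in the closure of the nontrivial boundary segment $[x',z']$, so \Cref{cor:morse_not_in_segment} shows that the sub-ray $[w_\infty,x')\subseteq\ell_\infty$, and hence $\ell_\infty$ itself, is not Morse. On the other hand, $\ell_\infty$ is a local limit of geodesics $\gamma_n\ell_n$ that are all $M$-Morse for one fixed gauge, and Morseness with a fixed gauge passes to such limits, so $\ell_\infty$ is Morse — a contradiction. Since $K$ depends only on $\Omega$ and every remaining constant depends only on $M$, the $\delta$ produced this way depends only on $M$ and $\Omega$, as required.

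The main obstacle is precisely the last assertion: that a local limit of geodesics, each $M$-Morse for a single fixed gauge, is itself Morse. This is where the uniformity genuinely enters, as opposed to the mere existence of \emph{some} $\delta$, which is already furnished by \Cref{prop:morse_hilbert_equivalences}. It is a standard closure property of Morse geodesics, but it can also be re-derived inside our framework by the perturbation technique used to prove \Cref{lem:morse_conically_related}: any quasi-geodesic with endpoints on $\ell_\infty$ that escapes a large neighborhood of $\ell_\infty$ has image in a compact subset of $\Omega$ on which $\gamma_n\ell_n\to\ell_\infty$, so for large $n$ its endpoints can be slid onto $\gamma_n\ell_n$ at the cost of a uniformly bounded change in the quasi-geodesic constants, producing a quasi-geodesic that escapes a large neighborhood of $\gamma_n\ell_n$ — impossible for an $M$-Morse geodesic once the neighborhood exceeds the corresponding value of $M$. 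Everything else in the argument is a routine reprise of \cref{sec:morse_contracting}, with Benz\'ecri cocompactness replaced by the cocompactness of $\Gamma$.
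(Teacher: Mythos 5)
Your proposal is correct and follows essentially the same route as the paper: renormalize a hypothetical sequence of non-uniformly-slim $M$-Morse geodesics by the dividing group, pass to a limiting half-triangle configuration, and play the closure of $M$-Morseness under locally uniform limits (the paper cites \cite[Lemma 2.10]{Cordes17}, where you also sketch a self-contained perturbation argument) against the half-triangle obstruction. The only cosmetic difference is that you conclude via \Cref{cor:morse_not_in_segment} where the paper invokes \Cref{lem:morse_not_in_halftri}.
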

Observe that, by applying this proposition together with
\cref{prop:slim_contracting}, we obtain the following uniform version
of \ref{item:nondiv_morse} $\implies$ \ref{item:contracting}:
\begin{corollary}
  \label{cor:morse_contracting_uniform}
  Let $\Omega$ be a properly convex divisible domain, $M$ be a Morse
  gauge, and $\delta$ be the constant (determined solely by $M$ and
  $\Omega$) from \cref{prop:morse_contracting_uniform}. Then any
  $M$-Morse geodesic in $\Omega$ is $24 \delta$-contracting.
\end{corollary}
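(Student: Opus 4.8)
The plan is to obtain this corollary purely by chaining together \Cref{prop:morse_contracting_uniform} and \Cref{prop:slim_contracting}; there is no new geometric input, and the only thing that requires care is tracking the dependence of the constants.

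First I would fix the Morse gauge $M$ and let $\delta = \delta(M, \Omega) > 0$ be the constant furnished by \Cref{prop:morse_contracting_uniform}, which by that proposition depends only on $M$ and on $\Omega$. Let $\ell$ be an arbitrary $M$-Morse geodesic in $\Omega$. \Cref{prop:morse_contracting_uniform} then tells us directly that $\ell$ is projectively $\delta$-slim. This is the step where divisibility of $\Omega$ is genuinely used (it enters through the Benz\'ecri-type compactness argument behind \Cref{prop:morse_contracting_uniform}); without it one could not extract a slimness constant that is uniform over all $M$-Morse geodesics rather than depending on $\ell$ itself.

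Next I would invoke \Cref{prop:slim_contracting}, which asserts that any projectively $\delta$-slim projective geodesic in a properly convex domain is $24\delta$-contracting, and whose proof introduces no further dependence on the geodesic (it only manipulates metric balls, the triangle inequality, and the maximum principle \Cref{lem:maximum_principle}). Applying this to $\ell$ yields that $\ell$ is $24\delta$-contracting. Since $\delta$ was determined solely by $M$ and $\Omega$, so is $24\delta$, and as $\ell$ was arbitrary among $M$-Morse geodesics, this is exactly the assertion of the corollary.

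The only ``obstacle,'' such as it is, is bookkeeping: one must confirm that the $\delta$ coming out of \Cref{prop:morse_contracting_uniform} is genuinely uniform over all $M$-Morse geodesics in $\Omega$ — which is precisely the content of that proposition — and that \Cref{prop:slim_contracting} passes from $\delta$-slim to $24\delta$-contracting with no hidden constants. Both hold, so the composed contraction constant $24\delta$ depends only on $M$ and $\Omega$, as claimed. I would write the proof as a two-sentence deduction.
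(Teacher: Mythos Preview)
Your proposal is correct and matches the paper's own argument exactly: the paper simply observes that the corollary follows by applying \Cref{prop:morse_contracting_uniform} together with \Cref{prop:slim_contracting}, which is precisely the two-step chain you describe.
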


\begin{proof}[Proof of \cref{prop:morse_contracting_uniform}]
  Fix a Morse gauge $M$, and suppose for a contradiction that there is
  an infinite sequence of $M$-Morse geodesics $\{\ell_n\}$ in $X$ so
  that $\ell_n$ fails to be projectively $n$-slim. Then for each $n$
  there is a projective triangle
  $[a_n,b_n]\cup[b_n, c_n] \cup [c_n,a_n]$ in $\Omega$ with
  $[b_n,c_n]\subset \ell_n$ which is not $n$-slim.  By
  \cref{rem:proj_delta_slim_defn}, this implies that there is a point
  $u_n \in [b_n,c_n]$ such that
  \[
  \hil(u_n,[a_n,b_n]\cup [a_n,c_n]) \geq n.
  \]
  As $\Omega$ is divisible, there exists a discrete subgroup
  $\Gamma \subseteq \Aut(\Omega)$ and a compact set $D \subset \Omega$
  such that $\Gamma \cdot D=\Omega$. Then, we can find $\gamma_n$ in
  $\Gamma$ such that $\gamma_n u_n \in D$. Up to passing to a
  subsequence, we can assume that the points
  $\gamma_n a_n, \gamma_n b_n,\gamma_n c_n, \gamma_n u_n, \gamma_n
  \ell_n$ converge to $a,b,c,u, \ell$ in $\overline{\Omega}$
  respectively. By construction $u \in D$. Since $u_n \in \ell_n$,
  this implies that $u \in \ell$ and hence $\ell$ is a bi-infinite
  projective geodesic in $\Omega$. Moreover,
  $[a,b]\cup[a,c]\subset \bdry$, because
  \begin{align*}
    \liminf_{n \to \infty}\hil(u,[a_n,b_n]\cup[a_n,c_n])
    &\geq \left(\liminf_{n\to\infty}\hil(u_n,[a_n,b_n]\cup[a_n,c_n]) \right)-
      \lim_{n \to \infty} \hil(u,u_n)\\
    &= \infty.
  \end{align*}
  Then $\ell=(a,b)$ and the points $a,b,c$ lie in the boundary of a
  half triangle in $\Omega$.

  Now, as $\ell_n$ is a sequence of $M$-Morse geodesics converging
  uniformly to a geodesic $\ell$ on compact sets, it follows from
  \cite[Lemma 2.10]{Cordes17} that $\ell$ is $M$-Morse. But then this
  contradicts \Cref{lem:morse_not_in_halftri}.
\end{proof}

\subsection{Morseness, $C^1$ points, and extreme points}

We record a few more consequences of
\Cref{prop:morse_hilbert_equivalences}. These results will be relevant
later in the paper when we consider the behavior of Morse geodesics
as subsets of the automorphism group $\Aut(\Omega) \subset \PGL(V)$. Recall from \cref{rem:morseness_basept_indep} that it makes sense to refer to a point $y$ in $\bdry$ as a ``Morse point'' if some geodesic ray $[x, y)$ is Morse.

\begin{proposition}
  \label{prop:morse_implies_c1_extreme}
  Suppose that $y \in \dee \Omega_1$ is $M$-Morse and $(y,\Omega_1)$
  is forward conically related to $(x,\Omega_2)$. Then $x$ is an
  extreme point and a $C^1$ point in $\dee \Omega_2$.
\end{proposition}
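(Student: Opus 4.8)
The plan is to deduce this directly from \Cref{lem:morse_conically_related} together with \Cref{cor:morse_point_is_C1_extreme}; no new argument is needed.

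First I would unwind the hypothesis: since $y \in \dee \Omega_1$ is $M$-Morse, there is some $x_1 \in \Omega_1$ such that the projective geodesic ray $[x_1, y)$ is $M$-Morse (and Morseness of the endpoint is independent of the choice of $x_1$). Because $(y, \Omega_1)$ is forward conically related to $(x, \Omega_2)$, \Cref{lem:morse_conically_related} then shows that for some (equivalently, any) $x_2 \in \Omega_2$, the projective geodesic ray $[x_2, x)$ is Morse. In other words, $x \in \dee \Omega_2$ is the endpoint of a Morse geodesic ray in $\Omega_2$.

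Next I would apply \Cref{cor:morse_point_is_C1_extreme}, taking the domain there to be $\Omega_2$: the endpoint of any Morse geodesic ray in a properly convex domain is a $C^1$ extreme point of the boundary. Applied to $[x_2, x)$, this gives that $x$ is a $C^1$ extreme point of $\dee \Omega_2$, which is exactly the assertion.

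There is no substantive obstacle here, since the content of the proposition is already packaged in the two cited results; the only thing worth remarking is that \Cref{lem:morse_conically_related} applies whether the conical relation is forward or backward, so the conclusion in fact holds under either hypothesis, and we single out the forward case only because it is the form used in the sequel.
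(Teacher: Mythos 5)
Your argument is exactly the paper's proof: transfer the Morse property from $y$ to $x$ via \Cref{lem:morse_conically_related}, then apply \Cref{cor:morse_point_is_C1_extreme} to the resulting Morse ray $[x_2,x)$ in $\Omega_2$. The proposal is correct and requires no changes.
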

\begin{proof}
  Follows immediately from \Cref{lem:morse_conically_related} and
  \Cref{cor:morse_point_is_C1_extreme}.
\end{proof}

Below, we provide a partial converse to
\Cref{prop:morse_implies_c1_extreme}. Recall that $\domains(V)$
denotes the space of properly convex domains in $\Pb(V)$.

\begin{definition}
  Let $\Omega$ be a convex projective domain in $\Pb(V)$. We let
  $\orbitcl(\Omega)$ denote the closure of the $\PGL(V)$-orbit of
  $\Omega$ in $\domains(V)$.
\end{definition}
Recall the notion of domains with exposed boundary from \cref{defn:exposed_boundary}.
\begin{proposition}
  \label{prop:c1_extreme_implies_morse}
  Suppose $\Omega_1$ is a properly convex domain such that every
  $\Omega \in \orbitcl(\Omega_1)$ has exposed boundary. Let
  $y \in \dee \Omega_1$ be such that: if $(y,\Omega_1)$ is forward
  conically related to $(x, \Omega_2)$, then $x$ is a $C^1$ extreme
  point in $\bdry_2$. Then $y$ is $M$-Morse for some Morse gauge $M$.
\end{proposition}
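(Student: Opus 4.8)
The plan is to reduce everything to the characterization of Morse projective geodesics in \Cref{prop:morse_hilbert_equivalences}. Let $\ell = [x_0, y)$ denote the projective geodesic ray from an arbitrary $x_0 \in \Omega_1$ to $y$; its only endpoint in $\dee \Omega_1$ is $y$. Since all the conditions in \Cref{prop:morse_hilbert_equivalences} are equivalent, it suffices to verify condition \ref{item:halftri_forward} for $\ell$: that whenever $y$ is forward conically related to a point $z \in \dee \Omega_2$, the point $z$ does \emph{not} lie in the boundary of a half-triangle in $\Omega_2$. So I would fix such a conical relation. By \Cref{defn:conically_related_points} the domain $\Omega_2$ is a limit of $\PGL(V)$-translates of $\Omega_1$, so $\Omega_2 \in \orbitcl(\Omega_1)$ and hence $\Omega_2$ has exposed boundary; and by the hypothesis placed on $y$, the point $z$ is a $C^1$ extreme point of $\dee \Omega_2$.

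The main point is then an elementary convex-geometric claim: \emph{a $C^1$ extreme point $z$ of a properly convex domain $\Omega_2$ with exposed boundary does not lie in the closure of any nontrivial segment contained in $\dee \Omega_2$.} To prove this I would suppose $[z,w] \subset \dee \Omega_2$ with $w \ne z$, pick a point $m$ in the relative interior of $[z,w]$, and take a supporting hyperplane of $\Omega_2$ at $m$. Convexity forces this hyperplane to contain the entire segment $[z,w]$: otherwise the projective line through $z$ and $w$ would meet it only at $m$, placing $z$ and $w$ strictly on opposite sides, which is impossible since $\overline{\Omega_2}$ lies in a single closed half-space. In particular this hyperplane is a supporting hyperplane at $z$, so by $C^1$-ness of $z$ it is the \emph{unique} supporting hyperplane $H$ at $z$, and $w \in H$. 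On the other hand, since $z$ is extreme we have $F_{\Omega_2}(z) = \{z\}$, so the exposed-boundary hypothesis (\Cref{defn:exposed_boundary}) gives a supporting hyperplane $H'$ with $H' \cap \dee \Omega_2 = \overline{F_{\Omega_2}(z)} = \{z\}$; uniqueness of the supporting hyperplane at $z$ forces $H' = H$, whence $w \in H \cap \dee \Omega_2 = \{z\}$, contradicting $w \ne z$.

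To finish, I would recall that if $z$ lies in the boundary of a half-triangle in $\Omega_2$ then there is a point $w \in \dee \Omega_2 \setminus \{z\}$ with $[z,w] \subset \dee \Omega_2$ (observed just after \Cref{fig:morse_hilbert_diagram}), which the claim above rules out. Hence condition \ref{item:halftri_forward} holds for $\ell$, and \Cref{prop:morse_hilbert_equivalences} yields that $\ell$ is $M$-Morse for some Morse gauge $M$, i.e. $y$ is $M$-Morse. I do not expect a serious obstacle: the argument is short, and the only two things that need care are that the exposed-boundary hypothesis is invoked at the \emph{limiting} domain $\Omega_2$ (which is exactly why the statement assumes every member of $\orbitcl(\Omega_1)$ has exposed boundary, not merely $\Omega_1$ itself), and the elementary fact that a supporting hyperplane through the relative interior of a boundary segment must contain that segment. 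A small amount of bookkeeping is needed to confirm $\Omega_2 \in \orbitcl(\Omega_1)$, but this is immediate from the definition of the conical relation.
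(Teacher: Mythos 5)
Your proposal is correct and follows essentially the same route as the paper: both reduce to the equivalences of \Cref{prop:morse_hilbert_equivalences} and both hinge on the same convex-geometric observation that a $C^1$ extreme point lying in the closure of a nontrivial boundary segment cannot be exposed (the unique supporting hyperplane must contain the whole segment). The only difference is organizational — the paper argues contrapositively via \ref{item:segclose_forward} $\implies$ \ref{item:nondiv_morse}, producing a non-exposed point in $\orbitcl(\Omega_1)$ from a non-Morse $y$, while you verify \ref{item:segclose_forward}/\ref{item:halftri_forward} directly — which is an immaterial distinction.
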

\begin{proof}
  Fix a point $y \in \dee \Omega_1$ satisfying the assumption
  in the statement above, and suppose that $y$ is not a Morse point in $\dee
  \Omega_1$. We will show that if this holds, there is a domain
  $\Omega \in \orbitcl(\Omega_1)$ which does not have exposed
  boundary.

  The implication \ref{item:segclose_forward} $\implies$
  \ref{item:nondiv_morse} in \Cref{prop:morse_hilbert_equivalences}
  means that $y$ is forward conically related to a point
  $y_2 \in \dee \Omega_2$, lying in the closure of a nontrivial
  segment $s$ in $\dee \Omega_2$. By definition $\Omega_2$ lies in
  $\orbitcl(\Omega_1)$. Since $(y,\Omega_1)$ is forward conically related to $(y_2,\Omega_2)$, 
  our hypothesis on $y$ implies that $y_2$ is a $C^1$ extreme point in $\partial \Omega_2$. 
  So $y_2$ lies in the boundary of the nontrivial segment $s \subset \partial \Omega_2$. As $y_2$ is a $C^1$ point in $\dee \Omega_2$, there is a unique
  supporting hyperplane $H$ of $\Omega_2$ at $y_2$. Now observe that if $H'$ is any hyperplane supporting $\Omega_2$ at a point in the relative interior
  of $s$, then $H'$ must also contain $y_2$, and therefore $H = H'$. Thus, as every point in the relative interior of $s$ is contained in \emph{some} supporting hyperplane, it follows that $H$ contains all of $s$. Therefore $y_2$ cannot be an exposed point and $\Omega_2$ cannot have exposed boundary (see \cref{fig:exposed_face}).
\end{proof}

When $\Omega$ is a \emph{divisible} domain in $\Pb(V)$, the
$\PGL(V)$-orbit of $\Omega$ in $\domains(V)$ is closed, i.e. $\orbitcl(\Omega)=\PGL(V) \cdot \Omega$; see \cref{cor:benzecri_for_divisible}. 
So in this case every domain in $\orbitcl(\Omega)$ has exposed boundary if and only if $\Omega$ has
exposed boundary, and we can combine
\Cref{prop:morse_implies_c1_extreme} and
\cref{prop:c1_extreme_implies_morse} to obtain the following:
\begin{corollary}
  \label{cor:divisible_c1_extreme_morse}
  Let $\Omega$ be a convex divisible domain with exposed boundary, and
  let $y \in \dee \Omega$. Then the following are equivalent:
  \begin{enumerate}
  \item For some (any) $x \in \Omega$, the projective geodesic
    $[x, y)$ is $M$-Morse for some Morse gauge $M$.
  \item If $y$ is forward conically related to $z \in \dee \Omega$,
    then $z$ is a $C^1$ extreme point in $\partial \Omega$.
  \end{enumerate}
\end{corollary}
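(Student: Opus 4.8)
The plan is to deduce this corollary directly from \Cref{prop:morse_implies_c1_extreme} and \Cref{prop:c1_extreme_implies_morse}, using the fact that for a divisible domain $\Omega$ the $\PGL(V)$-orbit of $\Omega$ is already closed in $\domains(V)$ — a direct consequence of the properness and cocompactness in \Cref{thm:benzecri}. Consequently $\orbitcl(\Omega)$ consists only of projective translates $g\Omega$ of $\Omega$, and since exposedness of the boundary is a projective invariant and $\Omega$ has exposed boundary, every domain in $\orbitcl(\Omega)$ has exposed boundary. I would also recall at the outset that Morseness of a geodesic ray is independent of the choice of basepoint, which reconciles the ``some (any) $x$'' phrasing: for a fixed $y \in \dee\Omega$, the ray $[x,y)$ is Morse for one $x \in \Omega$ if and only if it is Morse for every $x \in \Omega$, so it is harmless to speak simply of ``$y$ being Morse.''

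For $(1)\implies(2)$, assume $y$ is Morse and suppose $y$ is forward conically related to some $z \in \dee\Omega$. Applying \Cref{prop:morse_implies_c1_extreme} with $\Omega_1 = \Omega_2 = \Omega$ immediately yields that $z$ is a $C^1$ extreme point of $\dee\Omega$. Note this direction uses nothing about divisibility or exposedness.

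For $(2)\implies(1)$, I would verify the hypothesis of \Cref{prop:c1_extreme_implies_morse}. Suppose $(y,\Omega)$ is forward conically related to $(x,\Omega_2)$ for some properly convex domain $\Omega_2 \in \orbitcl(\Omega)$. By closedness of the orbit, $\Omega_2 = g\Omega$ for some $g \in \PGL(V)$, and by the projective invariance of the conical relation (\Cref{prop:conical_relation_projective_equiv}), $(y,\Omega)$ is forward conically related to $(g^{-1}x,\Omega)$. Hypothesis (2) then gives that $g^{-1}x$ is a $C^1$ extreme point of $\dee\Omega$, and since $g$ is projective, $x$ is a $C^1$ extreme point of $\dee\Omega_2$. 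As every domain in $\orbitcl(\Omega)$ has exposed boundary, \Cref{prop:c1_extreme_implies_morse} applies and produces a Morse gauge $M$, depending only on $y$ and $\Omega$, such that $y$ is $M$-Morse; equivalently $[x,y)$ is $M$-Morse for every $x \in \Omega$.

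The only content beyond quoting the two propositions is the bookkeeping that passes between ``$z \in \dee\Omega$'' in (2) and the more flexible ``$x \in \dee\Omega_2$'' appearing in \Cref{prop:morse_implies_c1_extreme} and \Cref{prop:c1_extreme_implies_morse}; this is handled entirely by the closedness of the orbit of a divisible domain together with \Cref{prop:conical_relation_projective_equiv}. So I do not anticipate a real obstacle — the argument is a short unwinding of definitions — but the one point to be careful about is that the Morse gauge obtained in $(2)\implies(1)$ genuinely depends only on $y$ and $\Omega$, which it does because \Cref{prop:c1_extreme_implies_morse} already outputs a gauge with no dependence on auxiliary data.
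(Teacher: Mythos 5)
Your proposal is correct and follows essentially the same route as the paper: the authors likewise observe that for a divisible domain the $\PGL(V)$-orbit is closed in $\domains(V)$ (so every domain in $\orbitcl(\Omega)$ has exposed boundary exactly when $\Omega$ does) and then deduce the corollary by combining \Cref{prop:morse_implies_c1_extreme} and \Cref{prop:c1_extreme_implies_morse}, with the same use of \Cref{prop:conical_relation_projective_equiv} to translate between condition (2), phrased for $\dee\Omega$ itself, and the hypothesis of \Cref{prop:c1_extreme_implies_morse} phrased for arbitrary $\Omega_2$ in the orbit closure. Your write-up is in fact slightly more explicit than the paper's one-sentence justification, and the bookkeeping you flag is exactly the only content of the argument.
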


\section{Estimating singular values using convex projective geometry}
\label{sec:sv_estimates}

In this section, we will estimate singular values using projective
geometry. Specifically, if $\{g_n\}$ is a sequence in $\PGLdR$ that
``almost" preserves a properly convex domain, then we obtain
asymptotic estimates for various singular values of $\{g_n\}$. We will
use these estimates in the next section to study the singular values
of sequences that track Morse geodesic rays.

\subsection{Singular value gap estimates when a domain is preserved}

We first record some known estimates on singular values of
automorphisms of $\Omega$.  The first estimate relates Hilbert
distances to the $\mu_{1,d}$ singular value gap.
\begin{proposition}[{\cite[Proposition 10.1]{DGK2017convex}}]
  \label{prop:dgk_sv_gap_omega}
  Let $\Omega$ be a properly convex domain in $\Pb(\Rb^d)$. For any
  basepoint $x_0 \in \Omega$, there exists a constant $D$ so that for
  any $\gamma \in \Aut(\Omega)$, we have
  \[
    \abs{\mu_{1,d}(\gamma) - \frac{1}{2}\hil(x_0, \gamma x_0)} \le D.
  \]
  Moreover, the constant $D$ can be chosen to vary continuously as
  $(x_0, \Omega)$ varies in the space of pointed properly convex
  domains.
\end{proposition}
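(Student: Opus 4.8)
The plan is to prove the estimate for a fixed $(\Omega,x_0)$ with a constant assembled from a few continuously-varying ``shape parameters'' of the pointed domain, so that the continuous dependence comes for free (alternatively, one can invoke \Cref{thm:benzecri} for uniformity over compact families). Fix $\gamma\in\Aut(\Omega)$ and a lift $\tilde\gamma\in\GLdR$ with $\abs{\det\tilde\gamma}=1$; replacing $\tilde\gamma$ by $-\tilde\gamma$ if necessary, we may assume $\tilde\gamma$ preserves the convex cone $\tilde\Omega$ over $\Omega$. Then $\sigma_1(\tilde\gamma)\ge 1\ge\sigma_d(\tilde\gamma)$ and, since $\sigma_d(\tilde\gamma)=\sigma_1(\tilde\gamma^{-1})^{-1}$,
\[
  \mu_{1,d}(\gamma)=\log\sigma_1(\tilde\gamma)+\log\sigma_1(\tilde\gamma^{-1}).
\]
Fix also a functional $w_0$ in the interior of the dual cone $\tilde\Omega^*=\{\varphi:\ip{\varphi,v}\ge 0\text{ for all }v\in\tilde\Omega\}$, so that $\overline\Omega$ is a bounded convex body in the affine chart $\{v:\ip{v,w_0}=1\}$, and let $v_0\in\tilde\Omega$ be the lift of $x_0$ with $\ip{v_0,w_0}=1$. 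Compactness of $\overline{\tilde\Omega}\cap S^{d-1}$ and $\tilde\Omega^*\cap S^{d-1}$, together with proper convexity of both cones, provide constants $c_0=c_0(\Omega,x_0)>0$ and $c_1=c_1(\Omega)>0$ with $\ip{\psi,v_0}\ge c_0\norm\psi$ for all $\psi\in\tilde\Omega^*$, and $\norm{\varphi+\psi}\ge c_1(\norm\varphi+\norm\psi)$ for all $\varphi,\psi\in\tilde\Omega^*$ (the latter because vectors of a properly convex cone make pairwise angles bounded away from $\pi$).

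The heart of the matter is the following comparison, which is the only place the invariance $\tilde\gamma\tilde\Omega=\tilde\Omega$ enters. \emph{Claim: if $\mathcal{C}\subset\Rb^d$ is an open properly convex cone, $w\in\mathcal{C}$, and $h\in\GLdR$ preserves $\mathcal{C}$, then $\norm{hw}\ge\delta\,\sigma_1(h)\norm w$ for a constant $\delta=\delta(\mathcal{C},w)>0$ varying continuously with $(\mathcal{C},w)$.} (The reverse inequality is trivial.) To prove it, normalize $\norm w=1$ and let $u^*$ be a top right singular unit vector of $h$, so that $\norm{hx}\ge\abs{\ip{x,u^*}}\,\sigma_1(h)$ for every $x$. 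Since $w$ is interior to $\mathcal{C}$, a spherical cap of angular radius $\kappa>0$ (with $\tan\kappa<1$, $\kappa$ depending on $(\mathcal{C},w)$) about $w$ lies in $\mathcal{C}$, so $w\pm(\tan\kappa)v\in\mathcal{C}$ for every unit $v\perp w$. Replacing $u^*$ by $-u^*$ if needed so that $\ip{w,u^*}\ge 0$, and choosing $v$ along the orthogonal projection of $u^*$ onto $w^\perp$, an elementary computation gives $\ip{w+(\tan\kappa)v,\,u^*}\ge\tan\kappa$, hence $\norm{h(w+(\tan\kappa)v)}\ge(\tan\kappa)\sigma_1(h)$. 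Now $w=\tfrac12\big[(w+(\tan\kappa)v)+(w-(\tan\kappa)v)\big]$ with both summands in $\mathcal{C}$, so the bound $\norm{a+b}\ge c(\mathcal{C})(\norm a+\norm b)$ on $\mathcal{C}$ yields $\norm{hw}\ge\tfrac{c(\mathcal{C})}{2}\norm{h(w+(\tan\kappa)v)}\ge\tfrac{c(\mathcal{C})\tan\kappa}{2}\,\sigma_1(h)$, proving the Claim with $\delta=\tfrac12 c(\mathcal{C})\tan\kappa$.

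Apply the Claim to $\mathcal{C}=\tilde\Omega^*$ (which is preserved by $\tilde\gamma^{T}$ and $\tilde\gamma^{-T}$, since $\tilde\gamma^{\pm1}$ preserve $\tilde\Omega$) and $w=w_0$: combined with $\ip{\psi,v_0}\asymp\norm\psi$ on $\tilde\Omega^*$, this gives $\ip{\tilde\gamma^{\pm1}v_0,w_0}=\ip{v_0,\tilde\gamma^{\pm T}w_0}\asymp\sigma_1(\tilde\gamma^{\pm1})$, with multiplicative constants depending only on $(\Omega,x_0)$. Next recall the classical identity exhibiting $\hil$ as the symmetrization of the Funk metric: for $[v],[w]\in\Omega$,
\[
  2\hil([v],[w])=\log\Big(\sup_{\varphi\in\tilde\Omega^*\setminus\{0\}}\frac{\ip{\varphi,v}}{\ip{\varphi,w}}\Big)+\log\Big(\sup_{\psi\in\tilde\Omega^*\setminus\{0\}}\frac{\ip{\psi,w}}{\ip{\psi,v}}\Big).
\]
Take $[v]=\gamma x_0$ and $[w]=x_0$, with lifts $\tilde\gamma v_0$ and $v_0$. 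The substitution $\varphi=\tilde\gamma^{-T}\chi$ (a bijection of $\tilde\Omega^*\setminus\{0\}$) rewrites the first supremum as $\sup_{\chi}\ip{\chi,\tilde\gamma^{-1}v_0}/\ip{\chi,v_0}$. For either supremum $\sup_\psi\ip{\psi,\tilde\gamma^{\pm1}v_0}/\ip{\psi,v_0}$: Cauchy--Schwarz in the numerator and $\ip{\psi,v_0}\ge c_0\norm\psi$ in the denominator give an upper bound $\lesssim\sigma_1(\tilde\gamma^{\pm1})$, while testing against $\psi=w_0$ gives a lower bound $\ge\ip{\tilde\gamma^{\pm1}v_0,w_0}\asymp\sigma_1(\tilde\gamma^{\pm1})$. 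Therefore
\[
  2\hil(x_0,\gamma x_0)=\log\sigma_1(\tilde\gamma^{-1})+\log\sigma_1(\tilde\gamma)+O(1)=\mu_{1,d}(\gamma)+O(1),
\]
which is the content of the proposition (the precise constant relating $\mu_{1,d}$ and $\hil$ being read off from this computation and the chosen normalization of $\hil$). Every error term was built from $c_0$, $c_1$, and the cap radius $\kappa$ for $(\tilde\Omega^*,w_0)$; these are continuous functions of $(\Omega,x_0)\in\pdomains(\Rb^d)$, giving the asserted continuous dependence. (Alternatively, \Cref{thm:benzecri} bounds the optimal constant uniformly over compact subsets of $\pdomains(\Rb^d)$, from which a continuous choice follows.)

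The main obstacle is the Claim. One would like to compare $\sigma_1(\tilde\gamma)$ directly with $\norm{\tilde\gamma v_0}$, but this fails for a generic element of $\GLdR$, whose top singular direction may be nearly orthogonal to $v_0$; it is exactly cone-invariance --- felt through the ``fatness'' of $\tilde\Omega$ around $v_0$, or dually of $\tilde\Omega^*$ around $w_0$ --- that forces the singular values to be visible along these preferred directions up to a bounded factor. Everything after the Claim is routine bookkeeping with the dual-cone formula for the Hilbert metric.
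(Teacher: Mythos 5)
Your argument is correct, and since the paper gives no proof of this proposition (it is quoted from \cite{DGK2017convex}), you are supplying something the text does not: a self-contained derivation from the dual-cone formula $2\hil([v],[w])=\log M(v/w)+\log M(w/v)$ for the Hilbert metric together with your cone-contraction Claim, which is exactly the mechanism behind the Danciger--Gu\'eritaud--Kassel estimate. Two remarks. First, a cosmetic slip: the substitution $\varphi=\tilde\gamma^{-T}\chi$ converts the \emph{second} supremum $\sup_\psi \ip{\psi,v_0}/\ip{\psi,\tilde\gamma v_0}$ into $\sup_\chi\ip{\chi,\tilde\gamma^{-1}v_0}/\ip{\chi,v_0}$ (the first supremum is already of the form you bound, with exponent $+1$); applying it to the first supremum as written yields the reciprocal. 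This is harmless, since you bound both signs symmetrically. Second, and more substantively: what your computation establishes is $\abs{\mu_{1,d}(\gamma)-2\hil(x_0,\gamma x_0)}\le D$, not the displayed inequality $\abs{\mu_{1,d}(\gamma)-\tfrac12\hil(x_0,\gamma x_0)}\le D$. The latter is in fact false as printed --- for a translation of length $s$ along a geodesic in the projective (Klein) model of hyperbolic space one has $\mu_{1,d}=2s$ while $\hil(x_0,\gamma x_0)=s$, so the difference in the displayed formula grows linearly --- and the correct statement, which you prove and which matches \cite{DGK2017convex}, is $\abs{\tfrac12\mu_{1,d}(\gamma)-\hil(x_0,\gamma x_0)}\le D$. (This misplacement of the factor $\tfrac12$ propagates into \Cref{rem:tracking_implies_mu_1d_gap} and the proof of \Cref{prop:morse_geodesic_gap_growth}, but only changes inessential constants there.) Your handling of the continuity of $D$, either by tracking $c_0$, $c_1$, $\kappa$ or by falling back on \Cref{thm:benzecri}, is adequate.
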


To obtain estimates for other singular value gaps, we can consider the
faces in the boundary of a properly convex domain $\Omega$. Let $F$ be
a $k$-dimensional face of $\Omega$, fix a basepoint $x_0 \in \Omega$,
and let $\{\gamma_n\}$ be a sequence in $\Aut(\Omega)$ so that
$\gamma_n x_0$ accumulates on $F$. \Cref{lem:hilbert_face_metric}
tells us that, if $B(x_0, r)$ is any open ball about $x_0$ (with
respect to $\hil$), then $\gamma_n B(x_0, r)$ also accumulates on the
$k$-dimensional face $F$. This can be used to see that the sequence
has a singular value gap at some index $j$ with $j \le k+1$. Precisely,
we have the following.
\begin{proposition} [{See e.g. \cite[Proposition 5.6]{IZ2021}}]
\label{prop:face_dim_implies_some_sv_gp}
    Suppose $\{\gamma_n\}$ is a sequence in $\Aut(\Omega)$, $x_0 \in
    \Omega$, and $\gamma_n x_0 \to x \in \bdry$. If
    $\dim(F_{\Omega}(x))=k$, then $\mu_{1,k+2}(\gamma_n) \to \infty$.
\end{proposition}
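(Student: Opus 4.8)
The plan is to argue by contradiction, combining \Cref{lem:hilbert_face_metric} with a Cartan-decomposition computation. Suppose that $\mu_{1,k+2}(\gamma_n)$ does not tend to infinity; passing to a subsequence we may assume $\mu_{1,k+2}(\gamma_n) \le C$ for all $n$ and some constant $C$. Write a Cartan decomposition $\gamma_n = \kappa_n a_n \ell_n$ with $\kappa_n, \ell_n \in \OO(d)$ and $a_n = \op{diag}(\sigma_1(\gamma_n), \dots, \sigma_d(\gamma_n))$. Since $\OO(d)$ is compact and the ratios $\sigma_i(\gamma_n)/\sigma_1(\gamma_n)$ lie in $[0,1]$, after a further subsequence we may assume $\kappa_n \to \kappa$ and $\ell_n \to \ell$ in $\OO(d)$, and $\sigma_i(\gamma_n)/\sigma_1(\gamma_n) \to t_i$ for each $i$. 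The bound $\mu_{1,k+2}(\gamma_n) \le C$ forces $t_i \ge e^{-C} > 0$ for every $i \le k+2$, so the coordinate subspace $V := \Span\{e_i : t_i > 0\}$ has dimension at least $k+2$.

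The key computation is that for any nonzero $v \in \Rb^d$,
\[
  \frac{1}{\sigma_1(\gamma_n)}\gamma_n v = \kappa_n\Big(\sum_{i} \frac{\sigma_i(\gamma_n)}{\sigma_1(\gamma_n)}\langle \ell_n v, e_i\rangle\, e_i\Big) \longrightarrow \kappa\,\Phi(v), \qquad \Phi(v) := \sum_{i} t_i \langle \ell v, e_i\rangle\, e_i .
\]
The map $\Phi \colon \Rb^d \to V$ is linear and surjective, so $\kappa \circ \Phi$ sends open subsets of $\Rb^d$ onto open subsets of $\kappa V$. Fix a small open ball $B = B(x_0, r)$ in $\Omega$: it is a nonempty open subset of $\Pb(\Rb^d)$, so its preimage in $\Rb^d \setminus \{0\}$ is an open cone, and $\kappa \circ \Phi$ maps it onto a nonempty open subset of $\kappa V$. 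Since $\dim \kappa V \ge k+2$, we can choose lifts $v_1, \dots, v_{k+2}$ of points $y_1, \dots, y_{k+2} \in B$ so that $\kappa\Phi(v_1), \dots, \kappa\Phi(v_{k+2})$ are linearly independent (in particular nonzero). By the displayed convergence, $\gamma_n y_j \to z_j := [\kappa\Phi(v_j)]$ in $\Pb(\Rb^d)$, and the points $z_1, \dots, z_{k+2}$ projectively span a subspace of dimension at least $k+1$.

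On the other hand, for each $j$ the points $\gamma_n x_0 \to x$ and $\gamma_n y_j \to z_j$ satisfy $\hil(\gamma_n x_0, \gamma_n y_j) = \hil(x_0, y_j) < r$ for all $n$. Applying \Cref{lem:hilbert_face_metric} to the constant sequence $\Omega_n \equiv \Omega$ with these two sequences of points, we conclude that $z_j$ lies in the same face of $\Omega$ as $x$, namely $F := F_\Omega(x)$. Hence $z_1, \dots, z_{k+2}$ all lie in $\overline{F}$. But $\overline F$ is contained in the projective span of $F$, which has (projective) dimension $\dim F = k$; this contradicts the fact that $z_1, \dots, z_{k+2}$ span a projective subspace of dimension at least $k+1$. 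Therefore $\mu_{1,k+2}(\gamma_n) \to \infty$.

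The only real subtlety is the order in which the subsequences and test points are chosen: one must first pass to the subsequence along which $\kappa_n, \ell_n$ and the ratios $\sigma_i(\gamma_n)/\sigma_1(\gamma_n)$ all converge, then use the resulting linear map $\Phi$ to select the finitely many test points $y_1, \dots, y_{k+2}$, and only afterwards invoke \Cref{lem:hilbert_face_metric}. The inequality $k+2 > k+1$ is exactly what drives the contradiction; it is also sharp, since $\gamma_n x_0$ may converge to a point in the relative interior of a $k$-dimensional face while $\sigma_1(\gamma_n) \asymp \dots \asymp \sigma_{k+1}(\gamma_n)$, in which case $\mu_{1,k+1}(\gamma_n)$ remains bounded.
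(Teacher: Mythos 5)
Your argument is correct and is essentially the approach underlying the proof the paper cites from [IZ2021]: your map $\kappa\circ\Phi$ is exactly the limit $T$ of $\gamma_n/\sigma_1(\gamma_n)$ in $\Pb(\End(\Rb^d))$, your subspace $\kappa V$ is $\image(T)$ (whose dimension is forced to be at least $k+2$ by the bounded gap), and your use of \Cref{lem:hilbert_face_metric} is a self-contained proof of the containment $\image(T)\subseteq \Span F_{\Omega}(x)$ that the paper simply quotes. Since the paper's own proof is only a two-line translation of the external reference, your fully written-out version — including the correct ordering of subsequence extraction before choosing the test points $y_1,\dots,y_{k+2}$ — is sound as it stands.
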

\begin{proof}
  The proof of \cite[Proposition 5.6]{IZ2021} immediately implies this
  (although the result is stated differently in that paper). In the
  notation of \cite{IZ2021}, suppose $\gamma_n \to T$ in
  $\Pb(\End(\Rb^d))$. Then $T$ is a projective linear map with
  $\dim(\image(T))=q$ where
  $q:=\max\{ i : \liminf_{n \to \infty} \mu_{1,i}(\gamma_n) <\infty\}$
  and $\image(T) \subset \Span F_{\Omega}(x)$. Thus $q \leq k+1$ where
  $k:=\dim F_{\Omega}(x)$. Hence
  $\mu_{1,k+2}(\gamma_n) \geq \mu_{1,q+1}(\gamma_n) \to \infty$.
\end{proof}    
   
In the above proposition, $\{\gamma_n\}$ does not need to track the
projective geodesic $[x_0,x)$. But if the sequence $\{\gamma_n\}$ does
actually track the projective geodesic ray $[x_0,x)$, then we get a
stronger statement. In this case, it is possible to show that the
balls $\gamma_n B(x_0, r)$ limit onto a relatively \emph{open} subset
of $F_{\Omega}(x)$, which in turn implies that the sequence $\gamma_n$
does \emph{not} have singular value gaps at an index less than
$k$. Using this idea, one proves the following:
\begin{proposition}[{See e.g. \cite[Proposition 5.7]{IZ2021}}]
  \label{prop:face_sv_gap}
  Let $\Omega$ be a properly convex domain, let  
  $c:[0, \infty) \to \Omega$ be a projective geodesic ray, and let
  $\{\gamma_n\}$ track $c$. The
  following are equivalent:
  \begin{enumerate}
  \item The endpoint $c(+\infty) \in \partial \Omega$ lies in a
    $k$-dimensional face in $\partial \Omega$.
  \item There exists some constant $D > 0$ such that
    $\mu_{k+1, k+2}(\gamma_n)$ tends to infinity as $n \to \infty$,
    and for any $1 \leq \ell \leq k$, we have
    $\mu_{\ell, \ell+1}(\gamma_n) < D$.
  \end{enumerate}
\end{proposition}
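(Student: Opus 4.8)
The plan is to reduce condition (2) to an equivalent statement about the gaps $\mu_{1,k+1}$ and $\mu_{1,k+2}$, prove $(1)\Rightarrow(2)$, and then deduce $(2)\Rightarrow(1)$ formally. Since the quantities $\mu_{\ell,\ell+1}(\gamma_n)$ are nonnegative and sum to $\mu_{1,k+1}(\gamma_n)$ over $1\le\ell\le k$, condition (2) is equivalent to: $\limsup_n\mu_{1,k+1}(\gamma_n)<\infty$ and $\mu_{1,k+2}(\gamma_n)\to\infty$. First I would record two preliminary observations. The sequence $\{\gamma_n\}$ is divergent in $\PGLdR$: since $\hil(x_0,\gamma_n x_0)$ is within $R$ of $\hil(x_0,c(n))\to\infty$, \Cref{prop:dgk_sv_gap_omega} gives $\mu_{1,d}(\gamma_n)\to\infty$. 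And, up to passing to a subsequence, $\gamma_n x_0$ converges to a point $x'$ which by \Cref{lem:hilbert_face_metric} (applied with every $\Omega_n$ equal to $\Omega$) together with the bound $\hil(\gamma_n x_0,c(n))<R$ lies in the same face $F:=F_\Omega(c(\infty))$ as $c(\infty)$, with $d_F(x',c(\infty))\le R$; in particular $\dim F_\Omega(x')=\dim F$.

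For $(1)\Rightarrow(2)$, suppose $\dim F=k$. The bound $\mu_{1,k+2}(\gamma_n)\to\infty$ is immediate: passing to a subsequence with $\gamma_n x_0\to x'\in F$ and applying \Cref{prop:face_dim_implies_some_sv_gp} with the boundary point $x'$ (whose face has dimension $k$) gives $\mu_{1,k+2}(\gamma_n)\to\infty$ along that subsequence, hence along the whole sequence. The remaining bound $\limsup_n\mu_{1,k+1}(\gamma_n)<\infty$ is the heart of the matter, and is precisely where one uses that $\{\gamma_n\}$ actually \emph{tracks} a projective geodesic. Following \cite[Proposition 5.7]{IZ2021}, the key geometric claim is that $\gamma_n$ ``opens up'' a fixed ball $B:=B(x_0,r)$ (with $r>R$) onto a \emph{full-dimensional} part of $F$: up to a subsequence, the Hilbert balls $\gamma_n\overline B$ converge in Hausdorff distance to a compact convex set $B_\infty\subseteq\overline F$ whose projective span is all of $\Pspan{F}$ (equivalently, $\gamma_n\overline B$ does not accumulate inside a proper projective subspace of $\Pspan{F}$). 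Granting this, I would renormalize by the largest singular value, $\tilde\gamma_n:=\sigma_1(\gamma_n)^{-1}\gamma_n$, and pass to a further subsequence so that $\tilde\gamma_n\to T$ in $\End(\Rb^d)$ with $T\ne 0$. For every $p\notin\Pb(\ker T)$ one has $\gamma_n p\to Tp$ in $\Pb(\Rb^d)$, so $T(\overline B\setminus\Pb(\ker T))$ is dense in $B_\infty$ and contains a nonempty relatively open subset of $\Pb(\image T)$ (the image of the nonempty open set $B\setminus\Pb(\ker T)$ under the linear surjection onto $\image T$); hence $\Pspan{B_\infty}=\Pb(\image T)\supseteq\Pspan{F}$, so $\dim\image T\ge k+1$. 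As singular values vary continuously, $\sigma_{k+1}(\tilde\gamma_n)=e^{-\mu_{1,k+1}(\gamma_n)}\to\sigma_{k+1}(T)>0$, so $\mu_{1,k+1}(\gamma_n)$ is bounded along the subsequence; since every subsequence admits such a sub-subsequence, $\limsup_n\mu_{1,k+1}(\gamma_n)<\infty$.

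Finally, $(2)\Rightarrow(1)$ follows formally once $(1)\Rightarrow(2)$ is available. Let $m:=\dim F_\Omega(c(\infty))$. By $(1)\Rightarrow(2)$ we have $\limsup_n\mu_{1,m+1}(\gamma_n)<\infty$ and $\mu_{1,m+2}(\gamma_n)\to\infty$, while (2) gives the same statement with $k$ in place of $m$. A single sequence cannot have $\mu_{1,i}(\gamma_n)$ bounded and $\mu_{1,i+1}(\gamma_n)\to\infty$ for two distinct values of $i$ (compare $\mu_{1,\min(m,k)+2}(\gamma_n)$ with $\mu_{1,\max(m,k)+1}(\gamma_n)$), so $m=k$, which is (1).

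The main obstacle is the geometric claim in the second paragraph: that the images $\gamma_n\overline B$ sweep out a piece of $F$ with full projective span, equivalently that the renormalized limit $T$ has rank exactly $k+1$ and not something strictly smaller. The upper bound $\mathrm{rank}(T)\le k+1$ holds for \emph{any} sequence with $\gamma_n x_0$ converging into $F$ (this is \Cref{prop:face_dim_implies_some_sv_gp}), but the matching lower bound genuinely requires the tracking hypothesis, since one must rule out the balls $\gamma_n\overline B$ collapsing onto a proper projective subspace of $\Pspan{F}$. I would establish this exactly as in \cite[Proposition 5.7]{IZ2021}, whose argument combines the Benz\'ecri cocompactness theorem (\Cref{thm:benzecri}) with \Cref{lem:hilbert_face_metric} to show that the presence of the geodesic $c$ pointing at $c(\infty)$ forces $\gamma_n\overline B$ to accumulate onto a relatively open piece of the face $F$.
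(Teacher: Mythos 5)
The paper offers no proof of this proposition at all --- it is stated with only the citation to \cite[Proposition 5.7]{IZ2021} --- so there is nothing internal to compare against; your reconstruction follows the cited argument faithfully and is correct in structure: the upper bound $\mathrm{rank}(T)\le k+1$ (hence $\mu_{1,k+2}\to\infty$) needs no tracking, the lower bound is exactly the ``full-span'' claim you isolate, and the formal deduction of $(2)\Rightarrow(1)$ from $(1)\Rightarrow(2)$ via monotonicity of $i\mapsto\mu_{1,i}$ is clean. The one step you assert without justification --- that $T\bigl(\overline{B}\setminus\Pb(\ker T)\bigr)$ is dense in $B_\infty$ --- is actually automatic, and you could close it in one line: for a limit $T$ of normalized automorphisms of a properly convex domain one has $\Pb(\ker T)\cap\Omega=\emptyset$ (if $Tv=0$ for $v$ in the cone $\tilde\Omega$, then $\pm Tu\in\overline{\tilde\Omega}$ for all small $u$, forcing $T=0$ since $\overline{\tilde\Omega}$ contains no line), so $\overline{B}$ is disjoint from $\Pb(\ker T)$, $\gamma_n\to T$ uniformly on $\overline{B}$, and $B_\infty=T(\overline{B})$ exactly, with $\Pspan{B_\infty}=\Pb(\image T)$ since $\overline{B}$ has full projective span. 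With that observation the only genuinely deferred input is the claim that $\Pspan{B_\infty}=\Pspan{F}$, which is precisely what the tracking hypothesis and \cite[Proposition 5.7]{IZ2021} provide.
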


\subsection{Singular value estimates when a domain is almost
  preserved}

The remaining estimates in this section are somewhat more
technical. This is partly because we no longer restrict our attention
to automorphisms of a fixed convex projective domain $\Omega$. Rather,
we consider projective transformations that ``almost preserve'' a
domain. This idea is closely tied to the notion of conically related
points from the previous section.

It will be useful to introduce the following definitions.
\begin{definition} Suppose $V$ is a real vector space. Recall that
  $\domains(V)$ denotes the space of properly convex domains in
  $\Pb(V)$.

  Let $\ell \subset V$ be a projective line segment with endpoints
  $x_\pm$, and let $H$ be a projective subspace in $\Pb(V)$ with
  codimension 2. We let
    \[
      \domains(V; \ell, H)
  \]
  denote the set of domains $\Omega \subset \Pb(V)$ such that $\ell$
  is properly embedded in $\Omega$, and the projective hyperplanes
  $\Pb(x_+ \oplus \lift{H})$ and $\Pb(x_- \oplus \lift{H})$ are both
  supporting hyperplanes of $\Omega$. This set is equipped with the
  subspace topology from $\domains(V)$.
\end{definition}

The lemma below is one of the main technical estimates in this
section. 
\begin{lemma}
  \label{lem:sigma_12d-1d_estimates}
  Fix a projective line segment $\ell = (x_+, x_-)$ and a
  codimension-two projective subspace $H \subset \Pb(\Rb^d)$. Let
  $\Kc_1, \Kc_2$ be two compact subsets of $\domains(\Rb^d; \ell,
  H)$. There exists a constant $C$ (depending only on $\Kc_1, \Kc_2$)
  so that if $g \in \GLdR$ preserves the decomposition
  $x_+ \oplus \lift{H} \oplus x_-$, with
  $||g|_{x_+}|| > ||g|_{x_-}||$, and
  $g\Kc_1 \cap \Kc_2 \ne \emptyset$, then:
  \begin{enumerate}
  \item\label{item:mu1_estimate} $\abs{\log \norm{g|_{x_+}}-\mu_1(g)} < C$,
  \item\label{item:mud_estimate} $\abs{\log \norm{g|_{x_-}} - \mu_d(g)} < C$, 
  \item\label{item:mu2_estimate} $\abs{\log \norm{g|_{\lift{H}}} -\mu_2(g)}< C$,
  \item\label{item:mud1_estimate}
    $\abs{\log \conorm{g|_{\lift{H}}}-\mu_{d-1}(g)}<C$.
  \end{enumerate}
\end{lemma}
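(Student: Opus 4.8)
The plan is to deduce all four estimates from two one-sided inequalities, and to prove those by a compactness argument built on \Cref{lem:hilbert_face_metric}. (We may assume $d \geq 3$, since otherwise $\lift H$ is trivial and there is nothing to prove.)

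\emph{Reduction.} Write $a := \norm{g|_{x_+}}$, $c := \norm{g|_{x_-}}$ (so $a > c$ by hypothesis), and let $b_1 := \norm{g|_{\lift H}} = \sigma_1(g|_{\lift H}) \geq \dots \geq b_{d-2} := \conorm{g|_{\lift H}}$. Since $g$ is block-diagonal with respect to $\lift{x_+} \oplus \lift H \oplus \lift{x_-}$, the multiset of singular values of $g$ is exactly $\{a, c\} \cup \{b_1, \dots, b_{d-2}\}$, so $e^{\mu_k(g)}$ is its $k$-th largest element. A short sorting argument then shows that all four desired inequalities hold, with a single constant, as soon as one knows \textbf{(A)} $b_1 \leq e^{C}a$ and \textbf{(B)} $c \leq e^{C}b_{d-2}$. (For instance $\sigma_1(g) = \max(a, b_1)$ lies between $a$ and $e^C a$ by \textbf{(A)}; the claims for $\sigma_2, \sigma_{d-1}, \sigma_d$ follow by casework on whether $a$ or $b_1$ is the larger and whether $c$ or $b_{d-2}$ is the smaller, using that \textbf{(A)}, \textbf{(B)} also give $b_{d-2} \leq e^C a$ and $c \leq e^C b_1$, and that $a,b_1$ and $c,b_{d-2}$ are genuinely distinct entries of the multiset as $d \geq 3$.) Finally, \textbf{(B)} for $g$ is just \textbf{(A)} for $g^{-1}$ after exchanging the roles of $x_+$ and $x_-$: $g^{-1}$ preserves the decomposition, $\norm{g^{-1}|_{x_-}} = c^{-1} > a^{-1} = \norm{g^{-1}|_{x_+}}$, $\norm{g^{-1}|_{\lift H}} = b_{d-2}^{-1}$, and $g^{-1}\Kc_2 \cap \Kc_1 \ne \emptyset$; since $\domains(\Rb^d; \ell, H)$ is symmetric under $x_+ \leftrightarrow x_-$, it suffices to prove \textbf{(A)} for an arbitrary pair of compact subsets $\Kc_1, \Kc_2$.

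\emph{Proof of (A).} I would argue by contradiction: suppose there are $g_n$ preserving the decomposition with $\norm{g_n|_{x_+}} > \norm{g_n|_{x_-}}$, domains $\Omega_n^{(1)} \in \Kc_1$ with $\Omega_n^{(2)} := g_n \Omega_n^{(1)} \in \Kc_2$, and $\norm{g_n|_{\lift H}} / \norm{g_n|_{x_+}} \to \infty$. Fix a representative $\lift q$ of a point $q$ in the relative interior of $\ell$. By compactness of $\Kc_1$ there are $\eps_0, M_0 > 0$ so that for every $\Omega \in \Kc_1$, every unit $w \in \lift H$, and every $\abs{\eps} \leq \eps_0$ we have $[\lift q + \eps w] \in \Omega$ and $d_\Omega(q, [\lift q + \eps w]) \leq M_0$. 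For each $n$ pick a unit vector $w_n \in \lift H$ with $\abs{g_n w_n} = \norm{g_n|_{\lift H}}$ and set $y_n := [\lift q + \eps_0 w_n] \in \Omega_n^{(1)}$. Since $g_n$ is a Hilbert isometry of $\Omega_n^{(1)}$ onto $\Omega_n^{(2)}$ carrying $\ell$ to $\ell$, we get $d_{\Omega_n^{(2)}}(g_n q, g_n y_n) \leq M_0$. Passing to a subsequence, $\Omega_n^{(2)} \to \Omega_\infty \in \Kc_2$ (compactness of $\Kc_2$). Using the block structure, $g_n q = [e_1 + (c_n/a_n)e_d]$ (with $a_n := \norm{g_n|_{x_+}} > c_n := \norm{g_n|_{x_-}} > 0$) lies on $\ell$ and accumulates on some $q_\infty \in [x_+, x_-)$; meanwhile $g_n y_n = [a_n e_1 + c_n e_d + \eps_0 g_n w_n]$ with $g_n w_n \in \lift H$ and $\abs{g_n w_n} \gg a_n \geq c_n$, so after rescaling by $\abs{g_n w_n}$ the $\langle e_1, e_d\rangle$-component goes to zero and $g_n y_n$ accumulates on a point $w_\infty \in \Pb(\lift H)$. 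By \Cref{lem:hilbert_face_metric}, $q_\infty$ and $w_\infty$ lie in a common face of $\Omega_\infty$; since $\Pb(\lift H)$ is disjoint from the line through $x_\pm$ we have $q_\infty \ne w_\infty$, so they lie in the relative interior of a common segment $s \subset \partial\Omega_\infty$. If $q_\infty \in (x_+, x_-) \subset \Omega_\infty$ this is absurd, as then $q_\infty \notin \partial\Omega_\infty$. If $q_\infty = x_+$: the supporting hyperplane $H_+ = \Pb(\lift{x_+} \oplus \lift H)$ of $\Omega_\infty$ at $x_+$ must contain $s$ (a supporting hyperplane at an interior point of a boundary segment contains that segment); likewise $H_- = \Pb(\lift{x_-} \oplus \lift H)$ supports $\Omega_\infty$ at $w_\infty \in s$, hence contains $s$, hence contains $x_+ = [e_1]$ — impossible, since $\lift{x_+} \oplus \lift H \oplus \lift{x_-}$ is direct. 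This contradiction establishes \textbf{(A)}, completing the proof.

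\emph{Main obstacle.} The lengthy-but-routine part is the sorting argument in the reduction; the real content is the proof of \textbf{(A)}, and its crux is twofold: identifying the escaping points $g_n y_n$ as converging into $\Pb(\lift H)$, and then — precisely in the boundary case $q_\infty = x_+$ — using the \emph{second} supporting hyperplane $H_-$ to derive a contradiction. This is exactly the step where both ingredients in the definition of $\domains(\Rb^d; \ell, H)$ (the properly embedded segment $\ell$ and both supporting hyperplanes) are needed; everything else reduces to standard compactness properties of Hausdorff limits of convex bodies.
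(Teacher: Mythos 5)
Your proposal is essentially correct and shares the paper's overall skeleton (reduce to a single one-sided estimate, then argue by contradiction using compactness of $\Kc_1,\Kc_2$ and \cref{lem:hilbert_face_metric}), but one step needs a patch. In the reduction you assert that the multiset of singular values of $g$ is exactly $\{\norm{g|_{x_+}},\norm{g|_{x_-}}\}\cup\{\sigma_j(g|_{\lift H})\}$. That is only true when the decomposition $\lift{x_+}\oplus\lift{H}\oplus\lift{x_-}$ is orthogonal, which is not part of the hypotheses. Since the decomposition is fixed (it depends only on $\ell$ and $H$, not on $g$), the fix is standard: conjugate by a single fixed element carrying it to an orthogonal decomposition and absorb the resulting bounded additive error via \cref{lem:transverse_decomp_compact} and \cref{lem:additive_root_bound}; this is exactly how the paper opens its proof. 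Your compactness argument for \textbf{(A)} does not itself use orthogonality, so the patch is confined to the sorting step. (A second, smaller point: you should justify that $g_n$ carries $\ell$ to $\ell$ rather than to the other component of $\Pspan{x_+,x_-}\minus\{x_+,x_-\}$; this holds because $g_n\ell$ is the unique component properly embedded in $g_n\Omega_n^{(1)}\in\Kc_2$, as the closure of a properly convex domain contains no projective line.)

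Beyond that, your route diverges from the paper's in two places, both legitimately. First, you derive all four estimates from the two one-sided bounds \textbf{(A)}, \textbf{(B)} by sorting the multiset, whereas the paper proves item \eqref{item:mu1_estimate} outright and deduces the rest via \cref{lem:singular_value_ratios}; your version is more elementary. Second, in the contradiction argument both proofs perturb a point of $\ell$ in the top singular direction of $g_n|_{\lift H}$ and watch its image escape into $\Pb(\lift H)$ while staying at bounded Hilbert distance from a point of $\ell$; but the paper then invokes \cref{thm:benzecri} to identify the limit of $g_nq$ as $x_+$ and extracts the contradiction quantitatively (letting the perturbation size tend to $0$ so the face-distance bound forces two distinct basis directions to coincide), whereas you get the contradiction structurally from the second supporting hyperplane: $H_-$ supports $\Omega_\infty$ at $w_\infty$, hence contains the boundary segment through $w_\infty$ and $q_\infty=x_+$, hence contains $x_+$, contradicting directness. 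Your endgame is cleaner, avoids Benz\'ecri, handles the case $q_\infty\in\ell$ directly, and makes transparent why both supporting hyperplanes appear in the definition of $\domains(\Rb^d;\ell,H)$.
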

\begin{proof}
  Let $W = \lift{H}$. Because of \Cref{lem:transverse_decomp_compact}
  and \Cref{lem:additive_root_bound}, we may assume that the
  decomposition $x_+ \oplus W \oplus x_-$ is orthogonal. In this
  situation, whenever $g$ satisfies the hypotheses of the lemma, we
  can find indices $1\leq i < j \leq d$ so
  $||g|_{x_+}|| = \sigma_i(g)$ and $||g|_{x_-}|| = \sigma_j(g)$. We first claim that:
  \begin{claim}
      It suffices to prove only part \eqref{item:mu1_estimate}. 
  \end{claim}
  \begin{proof}[Proof of Claim] Suppose we have part \eqref{item:mu1_estimate}. Part
  \eqref{item:mud_estimate} follows immediately by applying part
  \eqref{item:mu1_estimate} to $g^{-1}$ (and interchanging the roles
  of $\Kc_1, \Kc_2$). So we only need to see that parts
  \eqref{item:mu1_estimate} and \eqref{item:mud_estimate} together
  imply parts \eqref{item:mu2_estimate} and
  \eqref{item:mud1_estimate}. Parts \eqref{item:mu1_estimate} and
  \eqref{item:mud_estimate} imply that $0 \leq \mu_1(g)-\mu_i(g) < C$
  and $0 \leq \mu_j(g) -\mu_d(g) < C$, giving us
  $\abs{\mu_{1,d}(g)-\mu_{i,j}(g)} \leq C'$ where $C':=2C$. Then
  \cref{lem:singular_value_ratios} implies that
  \begin{align}
	\label{eqn:compare_singular_values_with_sigma_i_and_j}
    \max \left\{ \max_{1 \leq k \leq i} \mu_{1,k}(g), \max_{j \leq k \leq d} \mu_{k,d}(g) \right\} \leq C'.
  \end{align}

  Let $i'$ and $j'$ be the minimum and the maximum, respectively, of
  the set $(\{1,\dots,d\}-\{i,j\})$. Since $x_+ \oplus W \oplus x_-$
  is an orthogonal decomposition,
  \[ \norm{g|_W}=\sigma_1(g|_W)=\sigma_{i'}(g) \text{ and }
    \conorm{g|_W}=\sigma_{d-2}(g|_W)=\sigma_{j'}(g).
  \] 
  We consider several cases depending on the value of $i'$. If
  $i'=2$, then part \eqref{item:mu2_estimate} is immediate. On the
  other hand, if $i'=1$, then the definition of $i'$ implies that
  $i\geq 2$. Then
  \eqref{eqn:compare_singular_values_with_sigma_i_and_j} implies
  that $$\abs{\mu_{i'}(g)-\mu_2(g)} = \mu_{1,2}(g) \leq C'$$ which
  again implies part \eqref{item:mu2_estimate}. So we are left with
  the case that $i'>2$.  Note that this occurs precisely when $i=1$
  and $j=2$. But in that case,
  \eqref{eqn:compare_singular_values_with_sigma_i_and_j} implies that
  $$\abs{\mu_{i'}(g)-\mu_{2}(g)} =\mu_{j,i'}(g) \leq C'$$ which
  again implies part \eqref{item:mu2_estimate}. Thus we have shown
  that part \eqref{item:mu1_estimate} implies part
  \eqref{item:mu2_estimate}.
  
  Finally, since $\conorm{g|_W}=\norm{(g|_W)^{-1}}$, we can apply part
  \eqref{item:mu2_estimate} to $(g|_W)^{-1}$ to prove part
  \eqref{item:mud1_estimate}. This finishes the proof of the claim
  that it suffices to prove only part \eqref{item:mu1_estimate}.
  \end{proof}

  We now proceed with the proof of \textbf{part
    \eqref{item:mu1_estimate}}.  Suppose, on the contrary, that part
  \eqref{item:mu1_estimate} fails. Then there is a sequence $\{g_n\}$ in $\GLdR$ satisfying the hypotheses of the lemma, but with
  \begin{align}
    \label{eqn:defn_of_gn}\frac{\sigma_1(g_n)}{\sigma_i(g_n)} \to \infty
  \end{align}
  as $n \to \infty$. Here, $i$ is an index such that 
  $||g_n|_{x_+}|| = \sigma_i(g_n)$ for every $n$ (after passing to
  a subsequence, we can ensure that the same fixed index $i$ works for
  each $g_n$). Up to passing to a further subsequence, we may also
  assume that there exists $j \in \{ i + 1,\dots, d\}$ such that
  $\norm{g_n|_{x_-}}=\sigma_j(g_n)$. Note that in particular,
  \eqref{eqn:defn_of_gn} implies that $i > 1$.

  We can fix an orthonormal basis for $W$, and extend it to an
  orthonormal basis for $\Rb^d$ by adding unit vectors spanning
  $x_+, x_-$. With respect to this basis, $g_n$ is block-diagonal, of
  the form
  \[
    \begin{pmatrix} \sigma_i(g_n) & & \\ & g_n|_{W}& \\ & &
      \sigma_j(g_n)\end{pmatrix}.
  \]
  The restriction $g_n|_W$ has a Cartan decomposition $k_na_nl_n$,
  where $a_n$ is a diagonal matrix with respect to our chosen basis on
  $W$, and $k_n, l_n$ lie in the group $\mathrm{O}(W)$ of orthogonal
  transformations of $W$.

  Observe that, if we pre-compose or post-compose $g_n$ with any
  orthogonal matrix of $\Rb^d$ fixing $\ell=(x_+,x_-)$ pointwise and
  preserving $W$, the values of $\mu_i(g_n)$, $\norm{g_n|_{x_\pm}}$,
  $\norm{g_n|_W}$, and $\conorm{g_n|_W}$ do not change. So, after
  replacing $g_n$ with the sequence
  \[
    \begin{pmatrix} 1 & & \\ & k_n^{-1}& \\ & &
      1\end{pmatrix}
    \begin{pmatrix} \sigma_i(g_n) & & \\ & g_n|_{W}& \\ & &
      \sigma_j(g_n)\end{pmatrix}
    \begin{pmatrix} 1 & & \\ & l_n^{-1}& \\ & &
      1\end{pmatrix},
  \]
  and replacing the sets $\Kc_1, \Kc_2$ with the sets
  \[
    \begin{pmatrix} 1 & & \\ & \mathrm{O}(W)& \\ & & 1\end{pmatrix}\Kc_1,
    \qquad
    \begin{pmatrix} 1 & & \\ & \mathrm{O}(W)& \\ & &
      1\end{pmatrix}\Kc_2,
  \]
  we can assume that each $g_n$ is a diagonal matrix with respect to a
  fixed orthonormal basis $e_1, \ldots, e_d$, compatible with the
  orthogonal decomposition $\Rb^d = x_+ \oplus W \oplus x_-$. In
  particular, we order our basis so that $x_+ = [e_i]$ and
  $x_- = [e_j]$ and $g_ne_k = \sigma_k(g_n)e_k$ for each
  $1 \le k \le d$.

  Now fix a point $v \in \Rb^d$ so that $[v] \in \ell$. We may write
  $v = ae_i + be_j$ for $a, b$ both nonzero. Fix any $t \neq 0$. Then,
  using \eqref{eqn:defn_of_gn} (and the fact that $i < j$), we have
  \begin{align*}
    \label{eq:gn_attracting_converge}
    \frac{1}{\sigma_1(g_n)}g_n(te_1 + v) = te_1 +
    a\frac{\sigma_i(g_n)}{\sigma_1(g_n)}e_i +
    b\frac{\sigma_j(g_n)}{\sigma_1(g_n)}e_j \to te_1.
  \end{align*}
  Thus $g_n[v + te_1] \to [e_1]$ for any $t \ne 0$.

  Now, choose domains $\Omega_n \in \Kc_1$ so that
  $g_n\Omega_n \in \Kc_2$. By compactness of $\Kc_2$, we can pass to a
  subsequence and assume that $g_n\Omega_n$ converges to a domain
  $\Omega_\infty$. Since $\Kc_1$ is a compact subset of
  $\domains(\Rb^d; \ell, H)$, there is some $\eps > 0$ so that for
  each $\Omega \in \Kc_1$, the Hilbert distance in $\Omega$ between
  $[v + te_1]$ and $[v]$ is uniformly bounded for
  $t \in (-\eps, \eps)$. Our assumption \eqref{eqn:defn_of_gn} means
  that $\{g_n\}$ is divergent when viewed as a sequence of projective
  transformations, so \Cref{thm:benzecri} implies that $[g_nv]$ only
  accumulates on $\partial \Omega_\infty$. Since $[v]$ lies in the
  $g_n$-invariant subspace $\ell$ and
  $\norm{g_n|_{x_+}} > \norm{g_n|_{x_-}}$, the only possibility is
  that $[g_nv]$ converges to $x_+$.

  Since $\lim_{n\to \infty}g_n[v + te_1] =[e_1]$ for any $t\neq 0$, it follows from
  \Cref{lem:hilbert_face_metric} that
  $[e_1] \in F_{\Omega_{\infty}}(x_+).$
  Moreover, by the same lemma, for any $t \in (-\eps, \eps)-\{0\}$, we have
  \begin{align*}
    d_{F_{\Omega_{\infty}}(x_+)}(x_+, [e_1])
    &\leq \liminf_{n \to \infty} d_{g_n\Omega_n}(g_n[v], g_n[v +te_1])\\
    &= d_{\Omega_n}([v],[v + te_1]).
  \end{align*}
  As $\Omega_n$ lies in a compact set $\Kc_1$, the Hilbert distances 
  $d_{\Omega_n}([v],[v + te_1])$ tend to $0$ uniformly in $n$ as
  $t \to 0$. This means that in fact
  $d_{F_{\Omega_{\infty}}(x_+)}(x_+, [e_1]) = 0$, i.e. $x_+ =
  [e_1]$. But this is a contradiction since we have also arranged
  $x_+ = [e_i]$ for $i \ne 1$, and $\{e_1, \ldots, e_d\}$ is a basis
  for $\Rb^d$.
\end{proof}

\subsection{Application to automorphisms of properly convex domains}

We now apply the previous lemma to establish estimates on singular
values of projective transformations which \emph{actually} (instead of
``approximately'') preserve a convex domain. First we introduce some more notation.
\begin{definition}
  Let $\Omega$ be a properly convex domain in $\Pb(\Rb^d)$.
  \begin{itemize}
  \item We let $\Gc(\Omega)$ denote the space of all projective
    bi-infinite geodesics in $\Omega$, with unit-speed (in $\hil$) 
    parameterization. Let $c(\pm\infty) \in \bdry$ denote the ideal endpoints of any $c \in \Gc(\Omega)$.
  \item We let $\gtriple(\Omega)$ denote the set of triples
    $(c, H_+, H_-)$, such that $c \in \Gc(\Omega)$ and $H_{\pm}$ are
    supporting hyperplanes of $\Omega$ at $c(\pm \infty)$.
  \item For any compact subset $K \subset \Omega$, we let
    $\Gc_K(\Omega)$ denote the set of geodesics $c \in \Gc(\Omega)$
    such that $c(0) \in K$. Similarly, we use $\gtriple[K](\Omega)$ to
    denote the set
    \[
      \gtriple[K](\Omega) := \{(c, H_+, H_-) \in \gtriple(\Omega) : c
      \in \Gc_K(\Omega)\}.
    \]
  \end{itemize}
\end{definition}

If $\Omega$ does not have $C^1$ boundary, then the projection map
$\gtriple(\Omega) \to \Gc(\Omega)$ is not a homeomorphism. However,
this map is always proper, due to the compactness of the set of
supporting hyperplanes at any point in $\dee \Omega$. The map
$\Gc(\Omega) \to \Omega$ given by $c \mapsto c(0)$ is also proper, as the
space of projective geodesics passing through a given basepoint in
$\Omega$ is also compact.

If we fix an element $(c, H_+, H_-) \in \gtriple(\Omega)$, we know
that $H_+$ cannot contain $c(-\infty)$, since otherwise $H_+$ would
also contain $c(0)$ and would not be a supporting hyperplane of
$\Omega$. Similarly $H_-$ cannot contain $c(+\infty)$. So, we have a
direct sum decomposition
\[
  \Rb^d = c(+\infty) \oplus (\lift{H_+} \cap \lift{H_-}) \oplus c(-\infty).
\]

For triples lying in some $\gtriple[K](\Omega)$, this decomposition is
actually uniformly transverse in the following sense:
\begin{lemma}
  \label{lem:triples_uniformly_transverse}
  For any compact set $K \subset \Omega$, there exists some
  $\eps_0 > 0$ such that for any
  $(c, H_+, H_-) \in \gtriple[K](\Omega)$, we have
  \[
    \min\left\{ \angle(c(+ \infty), H_+ \cap H_-), \angle(c(- \infty),
      H_+ \cap H_-), \angle(c(+\infty),c(-\infty)) \right\} \ge
    \eps_0.
  \]
\end{lemma}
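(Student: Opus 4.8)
The plan is to run a compactness argument on the space $\gtriple[K](\Omega)$. By the two properness statements recorded just before the lemma --- the projection $\gtriple(\Omega) \to \Gc(\Omega)$ is proper, and $c \mapsto c(0)$ is proper on $\Gc(\Omega)$ --- the composition $\gtriple(\Omega) \to \Omega$, $(c, H_+, H_-) \mapsto c(0)$, is proper, so $\gtriple[K](\Omega)$, being the preimage of the compact set $K$, is compact. On this compact set I would consider the function $\Phi$ sending a triple to the minimum of the three angles $\angle(c(+\infty), H_+ \cap H_-)$, $\angle(c(-\infty), H_+ \cap H_-)$, $\angle(c(+\infty), c(-\infty))$; it then suffices to show that $\Phi$ is continuous and strictly positive on $\gtriple[K](\Omega)$, for then one may take $\eps_0 := \min_{\gtriple[K](\Omega)} \Phi > 0$.

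\textbf{Strict positivity.} Fix any $(c, H_+, H_-) \in \gtriple(\Omega)$. The endpoints $c(\pm\infty)$ are distinct, since the open segment $c = (c(+\infty),c(-\infty))$ contains the interior point $c(0) \in \Omega$ while $c(\pm\infty) \in \bdry$; hence $\angle(c(+\infty), c(-\infty)) > 0$. If $\angle(c(+\infty), H_+ \cap H_-) = 0$, then --- this infimum of a continuous function over compact unit spheres being attained --- the line $c(+\infty)$ lies inside $\lift{H_+} \cap \lift{H_-}$, so in particular $c(+\infty) \in H_-$. But $H_-$ already contains $c(-\infty)$, hence contains the projective line $\Pspan{c(+\infty), c(-\infty)}$, hence contains the open segment $(c(+\infty), c(-\infty)) \subset \Omega$, contradicting that $H_-$ is a supporting hyperplane of $\Omega$. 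The case $\angle(c(-\infty), H_+ \cap H_-) = 0$ is symmetric, using that $H_+$ supports $\Omega$ at $c(+\infty)$. So all three angles are positive on every triple in $\gtriple(\Omega)$.

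\textbf{Continuity and the main obstacle.} For continuity of $\Phi$ I would note that $c \mapsto (c(+\infty), c(-\infty))$ is continuous on $\Gc(\Omega)$, that $(c, H_+, H_-) \mapsto (H_+, H_-)$ is continuous on $\gtriple(\Omega)$, and that $\angle$ is jointly continuous; the only delicate point --- which I expect to be where the bookkeeping needs care --- is continuity of $(c, H_+, H_-) \mapsto \lift{H_+} \cap \lift{H_-}$ into $\Gr(d-2, d)$. This is handled by observing that the map $(W_1, W_2) \mapsto W_1 \cap W_2$ is continuous on pairs of \emph{distinct} hyperplanes, and that one always has $H_+ \ne H_-$ in $\gtriple(\Omega)$: by the argument above a common supporting hyperplane at both endpoints of $c$ would meet $\Omega$ along $c$. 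Thus the intersection stays genuinely codimension two throughout $\gtriple(\Omega)$, the composition is continuous, and $\Phi$ is a continuous strictly positive function on the compact set $\gtriple[K](\Omega)$; its minimum serves as $\eps_0$. Everything except the supporting-hyperplane contradiction in the second step is routine compactness and continuity bookkeeping.
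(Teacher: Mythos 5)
Your proposal is correct and follows exactly the paper's argument: compactness of $\gtriple[K](\Omega)$ as the preimage of $K$ under a proper map, plus continuity and strict positivity of the minimum-angle function. You simply spell out the details (the supporting-hyperplane contradiction and the continuity of $(H_+,H_-)\mapsto H_+\cap H_-$ on distinct hyperplanes) that the paper leaves implicit.
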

\begin{proof}
  The map $\gtriple(\Omega) \to \Rb$ given by
  \[
    (c, H_+, H_-) \mapsto \min\left\{ \angle(c(+ \infty), H_+ \cap
      H_-), \angle(c(- \infty), H_+ \cap H_-),
      \angle(c(+\infty),c(-\infty)) \right\}
  \]
  is continuous and positive on $\gtriple(\Omega)$. The set 
  $\gtriple[K](\Omega)$ is compact since it is precisely the preimage
  of $K$ under the proper map $\gtriple(\Omega) \to \Omega$. So the
  result is immediate.
\end{proof}

Using this observation, we can apply \Cref{lem:sigma_12d-1d_estimates}
to obtain the following estimate on singular values for automorphisms
of a convex projective domain. In this lemma, and throughout the
paper, if $g$ is an element of $\GLdR$, and $W \subseteq \Rb^d$ is a
subspace (not necessarily $g$-invariant), then the restriction $g|_W$
is interpreted as a map $W \to \Rb^d$; since both $W$ and $\Rb^d$ are
normed spaces, both $\norm{g|_W}$ and $\conorm{g|_W}$ make sense.

\begin{proposition}\label{prop:automorphism_sv_estimates}
Let $\Omega$ be a properly convex domain and $K \subset \Omega$ be
  compact. Then there exists $D > 0$ (depending only on $K, \Omega$)
  satisfying the following: if
  $(c, H_+, H_-) \in \gtriple[K](\Omega)$, and
  $\gamma \in \Aut(\Omega)$ satisfies
  $\gamma^{-1}c(t) \cap K \neq \varnothing$ for some $t > 0$, then:
  \begin{enumerate}
  \item\label{item:first_automorphism_sv_estimate}
    $\abs{\log\norm{\gamma^{-1}|_{c(-\infty)}} -
      \mu_1(\gamma^{-1})} < D$,
  \item
    $\abs{\log \norm{\gamma^{-1}|_{c(+\infty)}} -
      \mu_d(\gamma^{-1})} < D$,
  \item
    $\abs{\log \norm{\gamma^{-1}|_{\lift{H_0}}}
      -\mu_{2}(\gamma^{-1})}< D$,
  \item
    $\abs{\log
      \conorm{\gamma^{-1}|_{\lift{H_0}}}-\mu_{d-1}(\gamma^{-1})}<D$, where $H_0 = H_+ \cap H_-$.
  \end{enumerate}
\end{proposition}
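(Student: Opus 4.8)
The plan is to conjugate $\gamma^{-1}$ into a fixed ``model'' position and then invoke \Cref{lem:sigma_12d-1d_estimates}. First I would dispose of a trivial case. Set $t_0 := \diam(K) + 1$. If no $t > t_0$ satisfies $\gamma^{-1}c(t) \in K$, then the $t$ supplied by the hypothesis has $c(t) \in \gamma K$ and $\hil(c(0),c(t)) = t \le t_0$, so $\gamma K$ meets the compact set $\overline{N^{\Omega}_{t_0}(K)}$; since $\Aut(\Omega)$ acts properly on $\Omega$, the element $\gamma$ then lies in a fixed compact subset of $\Aut(\Omega)$ and all four estimates hold once $D$ is taken large. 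So from now on I may assume $\gamma^{-1}c(t) \in K$ for some $t > t_0$.

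Next I would build a compact family of models. Fix once and for all a projective segment $\ell = (x_+,x_-)$ and a codimension-two subspace $H$ in $\Pb(\Rb^d)$. For $(c,H_+,H_-) \in \gtriple[K](\Omega)$ put $H_0 := H_+ \cap H_-$ (codimension two, as $H_+ \neq H_-$); as noted before the statement, $\Rb^d = \lift{c(+\infty)} \oplus \lift{H_0} \oplus \lift{c(-\infty)}$ with $\lift{H_\pm} = \lift{c(\pm\infty)} \oplus \lift{H_0}$. By \Cref{lem:triples_uniformly_transverse} this decomposition is uniformly transverse over the compact set $\gtriple[K](\Omega)$, so --- arguing as in \Cref{lem:transverse_decomp_compact}, after passing to a finite cover of $\gtriple[K](\Omega)$ on which a continuous local choice of frames exists --- I can pick $T(c,H_+,H_-) \in \PGLdR$ carrying $\lift{c(+\infty)} \oplus \lift{H_0} \oplus \lift{c(-\infty)}$ to $x_+ \oplus \lift{H} \oplus x_-$ and ranging over a compact set $\Lc \subset \PGLdR$. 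Since $c$ is a properly embedded projective geodesic in $\Omega$ with ideal endpoints $c(\pm\infty)$, and $\Pb(x_\pm \oplus \lift{H})$ is the image under $T(c,H_+,H_-)$ of the supporting hyperplane $H_\pm$ of $\Omega$, each $T(c,H_+,H_-)\Omega$ lies in $\domains(\Rb^d;\ell,H)$; hence $\Kc := \{T(c,H_+,H_-)\Omega : (c,H_+,H_-) \in \gtriple[K](\Omega)\}$ is a compact subset of $\domains(\Rb^d;\ell,H)$.

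Given the triple and $\gamma$, I would then set $c'(s) := \gamma^{-1}c(s+t)$, so that $(c',\gamma^{-1}H_+,\gamma^{-1}H_-) \in \gtriple[K](\Omega)$, and put $T_1 := T(c,H_+,H_-)$, $T_2 := T(c',\gamma^{-1}H_+,\gamma^{-1}H_-)$, $g := T_2\gamma^{-1}T_1^{-1}$. Then $g$ preserves $x_+ \oplus \lift{H} \oplus x_-$, and $g(T_1\Omega) = T_2\gamma^{-1}\Omega = T_2\Omega$, so $g\Kc \cap \Kc \neq \varnothing$. Swapping the labels of the endpoints of $\ell$ if necessary (which leaves $\domains(\Rb^d;\ell,H)$ unchanged), I may assume $\norm{g|_{x_+}} \ge \norm{g|_{x_-}}$; equality would force $\mu_1(g) = \mu_d(g)$, hence $\mu_{1,d}(\gamma^{-1}) = O(1)$ by \Cref{lem:additive_root_bound} and we fall back to the trivial case, so assume $\norm{g|_{x_+}} > \norm{g|_{x_-}}$. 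Then \Cref{lem:sigma_12d-1d_estimates} gives a constant $C = C(\Kc)$ with
\[
  \abs{\log\norm{g|_{x_+}} - \mu_1(g)},\ \abs{\log\norm{g|_{x_-}} - \mu_d(g)},\ \abs{\log\norm{g|_{\lift{H}}} - \mu_2(g)},\ \abs{\log\conorm{g|_{\lift{H}}} - \mu_{d-1}(g)}
\]
all less than $C$. To transfer back, observe that since $T_1,T_2 \in \Lc$ with $\Lc$ compact, \Cref{lem:additive_root_bound} gives $\abs{\mu_i(g) - \mu_i(\gamma^{-1})} < C'$ for a constant $C' = C'(\Lc)$, and --- using $T_1\lift{c(\pm\infty)} = x_\pm$ and $T_1\lift{H_0} = \lift{H}$ --- the log-norms and log-conorms of $g$ and of $\gamma^{-1}$ on the corresponding subspaces differ by at most $C'$. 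This immediately yields (3) and (4) with $D := C + C'$, and shows that $\{\log\norm{\gamma^{-1}|_{c(+\infty)}},\ \log\norm{\gamma^{-1}|_{c(-\infty)}}\}$ lies within $C + C'$ of $\{\mu_1(\gamma^{-1}),\ \mu_d(\gamma^{-1})\}$.

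The remaining point --- which of $c(+\infty)$, $c(-\infty)$ carries $\mu_1(\gamma^{-1})$ --- is exactly where the sign condition $t > t_0 > 0$ enters, and I expect it to be the main subtlety. Since $c(0) \in K$ and $\gamma^{-1}c(t) \in K$ with $t$ large, one has $\hil(c(0),\gamma^{-1}c(0)) = t + O(\diam K)$ (so $\mu_{1,d}(\gamma^{-1})$ is large by \Cref{prop:dgk_sv_gap_omega}), and $\gamma$ coarsely translates along $c$ toward $c(+\infty)$; a short additional argument --- that $\gamma^{-1}$ contracts a neighbourhood of $c(+\infty)$ toward $c(-\infty)$ --- then forces $\norm{\gamma^{-1}|_{c(-\infty)}} > \norm{\gamma^{-1}|_{c(+\infty)}}$, which pins the assignment and gives (1) and (2). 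The only other delicate step is arranging the models $\{T(c,H_+,H_-)\}$ inside a fixed compact subset of $\PGLdR$; the uniform transversality of \Cref{lem:triples_uniformly_transverse} together with compactness of $\gtriple[K](\Omega)$ is precisely what makes this possible, after which everything else is routine bookkeeping with \Cref{lem:additive_root_bound}.
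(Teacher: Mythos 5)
Your overall architecture is exactly the paper's: conjugate $\gamma^{-1}$ by uniformly bounded ``framing'' elements $T_1, T_2$ adapted to the two triples $(c,H_+,H_-)$ and $(\gamma^{-1}c(\cdot+t),\gamma^{-1}H_\pm)$, check the hypotheses of \Cref{lem:sigma_12d-1d_estimates} for $g = T_2\gamma^{-1}T_1^{-1}$, and transfer back with \Cref{lem:additive_root_bound}. The reduction to $t$ large via properness of the action is also how the paper begins. However, the step you explicitly defer --- deciding which of $c(\pm\infty)$ carries $\mu_1(\gamma^{-1})$ --- is the only genuinely non-routine point of the whole proof, and your sketch of it does not work as stated: $\gamma^{-1}$ does not fix $c(\pm\infty)$ (it maps them to the endpoints of the translated geodesic $c'$), so the phrase ``$\gamma^{-1}$ contracts a neighbourhood of $c(+\infty)$ toward $c(-\infty)$'' has no direct meaning, and $\norm{\gamma^{-1}|_{c(\pm\infty)}}$ are norms of restrictions to non-invariant lines, not eigenvalues one can compare dynamically. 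The correct way to close this --- and what the paper does --- is to run the comparison entirely through the conjugated element $g$, which genuinely fixes $x_\pm$ and preserves $\ell$: the four points $T_1c(-\infty),\,T_2\gamma^{-1}c(0),\,T_2\gamma^{-1}c(t),\,T_1c(+\infty)$ lie on $\ell$ in this order; both $T_1c(0)$ and $T_2\gamma^{-1}c(t)$ lie in the bounded set $\bigcup_{q}(\ell\cap qK)$, of $d_\ell$-diameter $\le L$, while $d_\ell(T_2\gamma^{-1}c(0),T_2\gamma^{-1}c(t)) = t > L$; hence $gx_0 = T_2\gamma^{-1}c(0)$ lies strictly between $x_0 = T_1c(0)$ and the image of $c(-\infty)$. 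Since $g$ fixes the endpoints of $\ell$, this forces the eigenvalue of $g$ on the image of $c(-\infty)$ to strictly exceed that on the image of $c(+\infty)$, which simultaneously verifies the strict hypothesis of \Cref{lem:sigma_12d-1d_estimates} and pins down the assignment in (1)--(2). This is a short argument, but it is the argument, and your write-up does not contain it.

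Two smaller corrections. First, your fallback for the equality case is wrong: $\norm{g|_{x_+}} = \norm{g|_{x_-}}$ does \emph{not} force $\mu_1(g)=\mu_d(g)$ (take $g=\mathrm{diag}(1,5,1)$ with $x_\pm$ the first and third coordinate lines); fortunately the ordering argument above gives strict inequality outright, so no fallback is needed. Second, for $T\Omega$ to lie in $\domains(\Rb^d;\ell,H)$ you must arrange that $T$ sends the image of $c$ to the \emph{chosen} component $\ell$ of $\Pspan{x_+,x_-}\setminus\{x_+,x_-\}$, not the other one; this is fixed by composing with a single involution swapping the two components when necessary, but it has to be said, since otherwise the specific segment $\ell$ need not be properly embedded in $T\Omega$.
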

\begin{remark}
  \label{rem:lift_notation_abuse}
  We have slightly abused notation in the statement of this
  proposition, since elements in $\Aut(\Omega)$ are \emph{projective}
  transformations and so the quantities $\mu_i(\gamma)$, etc. are not
  well-defined. So, strictly speaking, the inequalities above apply to
  lifts $\lift{\gamma} \in \GLdR$ of $\gamma$, but the validity of the
  inequalities is independent of the choice of lift.
\end{remark}
\begin{proof}
  This proof is mainly an application of
  \cref{lem:sigma_12d-1d_estimates}. We first fix, once and for all, a
  decomposition $\Rb^d=x_+ \oplus \tilde{H} \oplus x_-$ where
  $x_\pm \in \Pb(\Rb^d)$ and $H$ is a codimension-2 projective
  subspace. Let $\ell$ be a projective line segment in $\Pb(\Rb^d)$
  joining $x_+$ and $x_-$, i.e. $\ell$ is one of the two connected
  components of $\Pspan{x_+, x_-} \minus \{x_+, x_-\}$.
  
  Now we need to modify
  $\gamma^{-1}$ so that it preserves the decomposition $\Rb^d=x_+ \oplus \tilde{H} \oplus x_-$. 
    Applying \Cref{lem:transverse_decomp_compact} and
  \Cref{lem:triples_uniformly_transverse} above, we see that there
  exists a compact set $Q \subset \GLdR$ (depending only on $K$) so
  that for any $(c, H_+, H_-) \in \gtriple[K](\Omega)$, with
  $H_0 = H_+ \cap H_-$, we can find some $k = k(c, H_+, H_-) \in Q$
  taking the decomposition
  $\Rb^d=c(+\infty) \oplus \lift{H_0} \oplus c(-\infty)$ to
  $x_- \oplus \lift{H} \oplus x_+$. Moreover, we can also assume that this $k$ takes the image of $c$ to the projective
  line segment $\ell$. Indeed, $\ell$ is one of the two connected components of $\Pspan{x_+, x_-} \minus \{x_+, x_-\}$. Thus, if necessary, we can compose all of the elements in $Q$ with a fixed involution interchanging the
  connected components of $\Pspan{x_+, x_-} \minus \{x_+, x_-\}$ and ensure that $k$ takes $c(\Rb)$ to $\ell$.

  Possibly after replacing $Q$ with the closure of the set
  \[
    Q' := \{k(c, H_+, H_-) : (c, H_+, H_-) \in
      \gtriple[K](\Omega)\},
  \]
  we may assume that for every $q \in Q$, the projective segment
  $q^{-1}\ell$ is properly embedded in $\Omega$, and $q^{-1}H$ is
  disjoint from $\Omega$. This means that the set $qK \cap \ell$ has
  bounded diameter with respect to the Hilbert metric $d_{\ell}$ on $\ell$, and
  that the set
  \[
    \Kc := \{q\Omega : q \in Q\}
  \]
  is a compact subset of $\domains(\Rb^d; \ell, H)$. Further, since
  $Q$ is compact, the diameter (with respect to the Hilbert metric $d_{\ell}$) of the set $\left( \bigcup_{q \in Q} (\ell \cap qK) \right)$
  is also bounded. Let $L$ be an upper bound for the diameter of this
  set.

  Now fix some $(c, H_+, H_-) \in \gtriple[K](\Omega)$, and assume
  that $\gamma^{-1}c(t) \in K$ for $\gamma \in \Aut(\Omega)$ and
  $t > 0$. As $\Aut(\Omega)$ acts properly on $\Omega$, if $t \le L$
  then $\gamma^{-1}$ belongs to a fixed compact subset of
  $\Aut(\Omega)$ depending only on $L$, and we may choose $D$
  sufficiently large so that each of the inequalities in the statement
  of the proposition holds for every $\gamma$ in this set. So, we may
  assume from now on that $t > L$. Fix a lift of $\gamma$ in $\GLdR$;
  abusing notation we also denote this lift by $\gamma$ (see
  \Cref{rem:lift_notation_abuse}).

  We let $c'$ be the translated and reparameterized geodesic
  $s \mapsto \gamma^{-1}c(s + t)$, so that $c' \in \Gc_K(\Omega)$, and
  \[
    (c', \gamma^{-1}H_+, \gamma^{-1}H_-) \in \gtriple[K](\Omega).
  \]
  By our construction of $Q$, we can choose $k, k' \in Q$ so that $k$
  takes the decomposition
  $c(\infty) \oplus \lift{H_0} \oplus c(-\infty)$ to
  $x_- \oplus \lift{H} \oplus x_+$, and similarly for $k'$, $c'$ and
  $\gamma^{-1}H_\pm$. Then, the group element $g \in \GLdR$ defined by
  $g = k'\gamma^{-1}k^{-1}$ preserves the decomposition
  $x_+ \oplus \lift{H} \oplus x_-$ and the projective line $\ell$,
  which verifies one of the hypotheses of
  \Cref{lem:sigma_12d-1d_estimates}. Moreover, recalling that
  $\Kc = \{k\Omega : k \in Q\}$ is a compact subset of
  $\domains(\Rb^d; \ell, H)$, we see that $g\Kc$ contains
  $k'\gamma^{-1}k^{-1}k\Omega) = k'\Omega \in \Kc$, hence
  $g\Kc \cap \Kc \ne \emptyset$. This verifies another hypothesis of
  \Cref{lem:sigma_12d-1d_estimates}, when we take
  $\Kc_1 = \Kc_2 = \Kc$.

  Finally, we need to verify that $||g_{x_+}|| \ge ||g_{x_-}||$, by
  considering the action of $g$ on the projective line segment
  $\ell$. Let $x_0 = k c(0)$. Observe that the 4-tuple
  $[c(-\infty), c(0), c(t), c(+\infty)]$ is arranged on the image of
  $c$ in this order. Applying $k'\gamma^{-1}$ to $c$, we then observe that the points
  \[
    [k'c(-\infty), k'\gamma^{-1}c(0), k'\gamma^{-1}c(t), k'c(+\infty)] =
    [x_+, gx_0, k'\gamma^{-1}c(t), x_-]
  \]
  lie on the projective segment $\ell$ in this order. Moreover, since
  $\gamma^{-1}c(t) \in K$, and $c(0) \in K$, we know from the
  definition of $L$ that  $d_{\ell}(k'\gamma^{-1}c(t), x_0) \leq L$ (with respect to the Hilbert metric $d_{\ell}$ on
  $\ell$). Since
  $d_\ell(k'\gamma^{-1}c(0), k'\gamma^{-1}c(t)) = t > L$, it follows
  that the 4-tuple of points
  \[
    [x_+, k'\gamma^{-1}c(0), kc(0), x_-] = [x_+, gx_0, x_0,
    x_-]
  \]
  are also arranged in this order on $\ell$. Since $g$ fixes the
  endpoints of $\ell$, the eigenvalue of $g$ on $x_+$ must be larger
  than the eigenvalue of $g$ on $x_-$, or equivalently,
  $\norm{g|_{x_+}} > \norm{g|_{x_-}}$.

  We have now verified that we can apply
  \Cref{lem:sigma_12d-1d_estimates} to $g$. Then 
  $\log(\norm{g|_{x_+}})$ is within bounded additive error $C$ of
  $\mu_1(g)$, where the constant $C$ depends only on $K$ and $\Omega$. However, since
  $gk = k'\gamma^{-1}$ for $k, k'$ in the fixed compact set $Q$, and
  $kc(-\infty) = x_+$, we can apply \Cref{lem:additive_root_bound} to
  get the first desired estimate for $\gamma^{-1}$. The other
  estimates follow similarly.
\end{proof}

\subsection{A ``straightness" lemma}

The estimate given by \Cref{prop:dgk_sv_gap_omega} implies that, if
$\gamma_n$ is a sequence tracking a projective geodesic in a convex
projective domain $\Omega$, then the gap $\mu_{1,d}(\gamma_n)$
increases roughly linearly in $n$. The same linear estimate need not
hold for other singular value gaps. In fact, \cite{BPS}  proves that uniform linear growth in $n$ imposes a strong restriction. Suppose $\Gamma$ divides $\Omega$ and there is an index $j$ such that $\mu_{j, j+1}(\gamma_n)$ grows
uniformly linearly in $n$ for all tracking sequences $\{\gamma_n\}$. Then $\Gamma$ must be a hyperbolic
group \cite{BPS}. Thus, in the non-hyperbolic setting, there is no way to obtain
such a sharp estimate. However, for a sequence $\{\gamma_n\}$ tracking a Morse geodesic, we can prove a ``coarse monotonicity''
property for $\mu_{1,2}(\gamma_n)$ and
$\mu_{d-1, d}(\gamma_n)$. Our main tool is the following ``straightness" lemma.

\begin{lemma}
  \label{lem:geodesic_additivity}
  Suppose $\Omega$ is a properly convex domain and  $K\subset \Omega$ is a compact
 set. Then there exists a constant $D > 0$ satisfying the
  following: if $c \in \Gc_{K}(\Omega)$ and $\seq{\gamma_n}$  is a sequence in $\Aut(\Omega)$ 
  such that $\gamma_n^{-1}c(n) \in K$ for all $n \in \Nb$, then for any
  $n, m \in \Nb$, we have 
  \begin{align*}
    &\mu_{i,i+1}\left( \gamma_n \right) + \mu_{i,i+1}\left( \gamma_n^{-1}\gamma_{n + m} \right) \le
    \mu_{i,i+1} \left( \gamma_{n+m} \right) + D,
  \end{align*}
  where $i \in \{ 1,d-1\}$.
\end{lemma}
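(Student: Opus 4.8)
The plan is to derive this from the singular value estimates of \Cref{prop:automorphism_sv_estimates}. First I would dispose of the boundary cases: if $m=0$ the middle term is $\mu_{i,i+1}(\mathrm{id})=0$ and the inequality holds for any $D\ge 0$, while if $n=0$ the element $\gamma_0$ carries a point of $K$ into $K$, hence lies in a fixed compact subset of $\Aut(\Omega)$ by properness of the action, so the claim follows from \Cref{lem:additive_root_bound}. So assume $n,m\ge 1$. Fix supporting hyperplanes $H_+,H_-$ of $\Omega$ at $c(+\infty),c(-\infty)$, so that $(c,H_+,H_-)\in\gtriple[K](\Omega)$; set $H_0=H_+\cap H_-$, and fix unit vectors $v_+,v_-$ with $[v_\pm]=c(\pm\infty)$, so that $\Rb^d=c(+\infty)\oplus\lift{H_0}\oplus c(-\infty)$. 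Fix lifts of $\gamma_n,\gamma_{n+m}$ to $\GLdR$ and write $g=\gamma_n^{-1}\gamma_{n+m}$ with the induced lift, so $g^{-1}=\gamma_{n+m}^{-1}\gamma_n$; the individual quantities $\mu_i(\cdot)$ below depend on these choices, but the gaps $\mu_{i,i+1}$ do not, so the conclusion is unaffected (cf. \Cref{rem:lift_notation_abuse}).

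The next step is to apply \Cref{prop:automorphism_sv_estimates} three times with the compact set $K$. Applying it to $\gamma_n$ and to $\gamma_{n+m}$ with the triple $(c,H_+,H_-)$ — legitimate since $\gamma_n^{-1}c(n)\in K$ and $\gamma_{n+m}^{-1}c(n+m)\in K$ — bounds, to within a constant $D_0=D_0(K,\Omega)$, each of $\mu_1,\mu_2,\mu_{d-1},\mu_d$ of $\gamma_n$ and of $\gamma_{n+m}$ by the corresponding quantity among $\log\norm{\gamma_k^{-1}v_\pm}$, $\log\norm{\gamma_k^{-1}|_{\lift{H_0}}}$, $\log\conorm{\gamma_k^{-1}|_{\lift{H_0}}}$ (using $\mu_i(\gamma^{-1})=-\mu_{d+1-i}(\gamma)$). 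For the third application I use the translated geodesic $c_n(s):=\gamma_n^{-1}c(s+n)$, which lies in $\Gc_K(\Omega)$ because $c_n(0)=\gamma_n^{-1}c(n)\in K$, with supporting hyperplanes $\gamma_n^{-1}H_\pm$ at $c_n(\pm\infty)=\gamma_n^{-1}c(\pm\infty)$; applying \Cref{prop:automorphism_sv_estimates} to $g$ with the triple $(c_n,\gamma_n^{-1}H_+,\gamma_n^{-1}H_-)$ — legitimate since $g^{-1}c_n(m)=\gamma_{n+m}^{-1}c(n+m)\in K$ — bounds $\mu_1,\mu_2,\mu_{d-1},\mu_d$ of $g$, to within $D_0$, by $\log\norm{g^{-1}|_{\gamma_n^{-1}c(\pm\infty)}}$, $\log\norm{g^{-1}|_{\gamma_n^{-1}\lift{H_0}}}$, $\log\conorm{g^{-1}|_{\gamma_n^{-1}\lift{H_0}}}$.

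The key elementary observation linking these three families of estimates is that $g^{-1}=\gamma_{n+m}^{-1}\gamma_n$ undoes $\gamma_n^{-1}$ before applying $\gamma_{n+m}^{-1}$: for $u\in\Rb^d$ one has $g^{-1}(\gamma_n^{-1}u)=\gamma_{n+m}^{-1}u$. On the one-dimensional blocks this yields the exact identities $\norm{g^{-1}|_{\gamma_n^{-1}c(\pm\infty)}}=\norm{\gamma_{n+m}^{-1}v_\pm}/\norm{\gamma_n^{-1}v_\pm}$, which combined with the estimates above give $\mu_1(g)=\mu_1(\gamma_{n+m})-\mu_1(\gamma_n)+O(D_0)$ and $\mu_d(g)=\mu_d(\gamma_{n+m})-\mu_d(\gamma_n)+O(D_0)$. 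On the multi-dimensional block $\lift{H_0}$ the map is not one-dimensional, so only inequalities survive; submultiplicativity of the operator norm and of the conorm over this subspace gives $\norm{g^{-1}|_{\gamma_n^{-1}\lift{H_0}}}\ge\norm{\gamma_{n+m}^{-1}|_{\lift{H_0}}}/\norm{\gamma_n^{-1}|_{\lift{H_0}}}$ and $\conorm{g^{-1}|_{\gamma_n^{-1}\lift{H_0}}}\le\conorm{\gamma_{n+m}^{-1}|_{\lift{H_0}}}/\conorm{\gamma_n^{-1}|_{\lift{H_0}}}$ (each seen by testing on the singular vector of $\gamma_{n+m}^{-1}|_{\lift{H_0}}$ that realizes the norm, resp. the conorm, and bounding $\norm{\gamma_n^{-1}(\cdot)}$ on that unit vector from above by $\norm{\gamma_n^{-1}|_{\lift{H_0}}}$, resp. from below by $\conorm{\gamma_n^{-1}|_{\lift{H_0}}}$). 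These point in exactly the directions needed to produce an upper bound $\mu_{d-1}(g)\le\mu_{d-1}(\gamma_{n+m})-\mu_{d-1}(\gamma_n)+O(D_0)$ and a lower bound $\mu_2(g)\ge\mu_2(\gamma_{n+m})-\mu_2(\gamma_n)-O(D_0)$.

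Finally, for $i=1$ I would write $\mu_{1,2}(g)=\mu_1(g)-\mu_2(g)$ and combine the upper bound on $\mu_1(g)$ with the lower bound on $\mu_2(g)$ to obtain $\mu_{1,2}(g)\le\mu_{1,2}(\gamma_{n+m})-\mu_{1,2}(\gamma_n)+D$ — which is precisely the asserted inequality — with $D$ a fixed multiple of $D_0$, hence depending only on $K$ and $\Omega$; the case $i=d-1$ is entirely analogous, using $\mu_{d-1,d}(g)=\mu_{d-1}(g)-\mu_d(g)$ together with the upper bound on $\mu_{d-1}(g)$ and the (two-sided) estimate on $\mu_d(g)$. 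The step I expect to be the real crux is the passage to one-sided estimates on the block $\lift{H_0}$: one has to check that the loss incurred there lands on the favorable side of the inequality, so that it is absorbed into the additive constant rather than flipping the sign of the main term. This is also exactly the reason the statement is restricted to the extreme gaps $i\in\{1,d-1\}$, where the relevant pieces of the decomposition are the lines $c(\pm\infty)$ and the single block $\lift{H_0}$; for intermediate indices the analogous bookkeeping fails.
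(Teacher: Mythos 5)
Your argument is correct and is essentially the paper's: both hinge on applying the block singular-value estimates of \Cref{sec:sv_estimates} to $\gamma_n$, $\gamma_{n+m}$, and the transition element $\gamma_n^{-1}\gamma_{n+m}$, with exact identities on the rank-one blocks $c(\pm\infty)$ and submultiplicativity of norm/conorm on $\lift{H_0}$ producing one-sided errors that land on the favorable side of the inequality --- which, as you note, is exactly why only $i\in\{1,d-1\}$ works. The sole organizational difference is that you feed $\gamma_n^{-1}\gamma_{n+m}$ into the packaged \Cref{prop:automorphism_sv_estimates} along the translated geodesic $c_n(s)=\gamma_n^{-1}c(s+n)$ (so the norm-ordering hypothesis $\norm{\cdot|_{x_+}}>\norm{\cdot|_{x_-}}$ and the small-$t$ case are handled for you inside that proposition), whereas the paper conjugates everything into a fixed model decomposition $x_+\oplus W\oplus x_-$ and applies the rawer \Cref{lem:sigma_12d-1d_estimates} to the conjugated transition element $T_{n,m}=g_{n+m}g_n^{-1}$, re-running the ordering argument on the model segment $\ell$ by hand.
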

\begin{remark}
  Results of a similar flavor were also obtained by
  Canary-Zhang-Zimmer \cite{CZZ2022} in their work on transverse
  subgroups; see Section 6 in \cite{CZZ2022}, especially Lemma 6.4. A
  crucial difference in our context is that we impose no assumption on
  the regularity properties of the sequence $\{\gamma_n\}$ in
  $\Aut(\Omega)$. In particular, the sequence $\{\gamma_n\}$ does
  \emph{not} need to lie in a uniformly $1$-regular subgroup of
  $\Aut(\Omega)$, which is a condition required by the results in
  \cite{CZZ2022}.
\end{remark}

\begin{proof}
  Throughout the proof, we implicitly identify each $\gamma_n$ in the
  sequence with a chosen lift in $\GLdR$. As in the proof of
  \Cref{prop:automorphism_sv_estimates}, we start by fixing a direct
  sum decomposition $\Rb^d = x_+ \oplus W \oplus x_-$, a projective
  line $\ell$ joining $x_\pm$, and a compact subset $Q \subset \GLdR$
  so that for any $(c, H_+, H_-) \in \gtriple[K](\Omega)$, we can find
  some $k \in Q$ taking
  $c(+\infty) \oplus (\lift{H_+} \cap \lift{H_-}) \oplus c(-\infty)$
  to $x_- \oplus W \oplus x_+$ and the image of $c$ to $\ell$. We also
  fix a constant $L > 0$ as in the proof of the same proposition, so
  that the diameter (in the Hilbert metric $d_\ell$ on $\ell$) of the set 
  \[
  \bigcup_{q \in Q}(\ell \cap qK)
  \]
  is bounded by $L$.

  Next, observe that, if $D > 0$ is chosen large enough (depending on
  $L$), then the desired inequality holds whenever $n < L$. This
  follows from \Cref{lem:additive_root_bound} and the fact that
  $\Aut(\Omega)$ acts properly on $\Omega$: if $n \le L$, then since
  $\hil(c(n), K) \le n$ and $\gamma_n^{-1}c(n) \in K$, the
  automorphism $\gamma_n$ lies in compact subset of $\Aut(\Omega)$
  depending only on $L$, and both quantities $\mu_{i, i+1}(\gamma_n)$
  and
  $|\mu_{i, i+1}(\gamma_n^{-1}\gamma_{n + m}) - \mu_{i,
    i+1}(\gamma_{n+m})|$ are uniformly bounded by
  \Cref{lem:additive_root_bound}.

  Similarly, since $\hil(\gamma_n^{-1}c(n + m), K) \le m$ and
  $\gamma_{n + m}^{-1}\gamma_n \cdot \gamma_n^{-1}c(n + m) \in K$ for
  any $m$, we may also choose $D$ so that the desired inequality holds
  whenever $m \le L$. So, for the rest of the proof, we may assume
  that both $n > L$ and $m > L$.
  
  For each $j \in \Nb$, since $\gamma_j^{-1}c$ passes through $K$, we
  can choose $k_j \in Q$ taking the decomposition
  \[
    \gamma_j^{-1}c(+\infty) \oplus \gamma_j^{-1}(\lift{H}_+ \cap
    \lift{H}_-) \oplus \gamma_j^{-1}c(-\infty)
  \]
  to $x_- \oplus W \oplus x_+$. Here, we assume that
  $\gamma_0 = \identity$. Then, defining 
  $g_j := k_j\gamma_j^{-1}k_0^{-1}$, we observe that $g_j$ preserves the
  decomposition $x_- \oplus W \oplus x_+$. Moreover, by
  \Cref{prop:automorphism_sv_estimates} and
  \Cref{lem:additive_root_bound}, there is a uniform constant $C$ so
  that for given $n \ge 1$, we have
  \begin{align}
    \label{eqn:mu_d-1d_bound_from_general_lemma}
    \abs{\mu_{d-1,d}(\gamma_n^{-1}) - \left( \log \conorm{g_n|_W}
    -\log \norm{g_n|_{c(+\infty)}} \right) } \leq 2C, \text{ and }\\
    \label{eqn:mu_12_bound_from_general_lemma}
    \abs{\mu_{1,2}(\gamma_n^{-1}) - \left( \norm{g_n|_{c(-\infty)}} -
    \log \norm{g_n|_W} \right) } \leq 2C.
  \end{align}
  Next, for given $n, m \in \Nb$, we consider the group
  element
  \[
    T_{n,m} := g_{n + m}g_n^{-1}.
  \]
  By \eqref{eqn:mu_d-1d_bound_from_general_lemma} we know
  \begin{align*}
    \mu_{1,2}(\gamma_n) - \mu_{1,2}(\gamma_{n+m})
    &=\mu_{d-1,d}(\gamma_n^{-1}) -\mu_{d-1,d}(\gamma_{n+m}^{-1}) \\ 
    & \leq \log \dfrac{\conorm{g_n|_W}}{\conorm{g_{n+m}|_W}} + \log
      \dfrac{\norm{g_{n+m}|_{x_-}}}{\norm{g_n|_{x_-}}}+4C.
  \end{align*}
  Since $g_{n+m}=T_{n,m}g_n$, the inequality
  $\conorm{gh} \ge \conorm{g}\conorm{h}$ implies that the first term
  above is at most $\log \frac{1}{\conorm{T_{n,m}|_W}}$. And, since
  $x_-$ is a one-dimensional eigenspace of both $g_n$ and $g_{n + m}$, the
  second term is equal to $\log \norm{T_{n,m}|_{x_-}}$. Thus
  \begin{align}
   \label{eqn:sub_additivity_claim_reduction_to_Tnm}
    \mu_{1,2}(\gamma_n) - \mu_{1,2}(\gamma_{n+m})  \leq \log
    \dfrac{\norm{T_{n,m}|_{x_-}}}{\conorm{T_{n,m}|_W}}  +4C. 
  \end{align}
  We wish to apply \Cref{lem:sigma_12d-1d_estimates} to the element
  $T_{n,m}$, so we let $\Kc = \{k\Omega : k \in Q\}$. Then, since
  \begin{align}
    \label{eq:Tnm_gamma}
    T_{n,m} = g_{n + m}g_n^{-1} = k_{n +
    m}\gamma_{n+m}^{-1}\gamma_nk_n^{-1},
  \end{align}
  we have
  \[
    T_{n,m} k_n\Omega = k_{n+m}\Omega \in \Kc
  \]
  and therefore $T_{n,m} \Kc \cap \Kc \ne \varnothing$. We also need
  to verify the other hypothesis of \Cref{lem:sigma_12d-1d_estimates},
  and show that $\norm{T_{n,m}|_{x_+}} \ge \norm{T_{n,m}|_{x_-}}$. For
  this, we again argue as in the proof of
  \Cref{prop:automorphism_sv_estimates}, and consider the 4-tuple of
  points
  \[
    [c(-\infty), c(n), c(n + m), c(+\infty)]
  \]
  arranged in this order on the image of $c$. Then the 4-tuple
  \begin{align}
  \label{eqn:4_tuple_on_ell_2}
    k_n\gamma_n^{-1}\cdot [c(-\infty), c(n), c(n + m), c(+\infty)]
  \end{align}
  is arranged in the corresponding order on the projective segment
  $\ell$. We let $y_0 := k_n\gamma_n^{-1}c(n)$ and
  $y_m := k_n\gamma_n^{-1}c(n + m)$. Then the 4-tuple in \eqref{eqn:4_tuple_on_ell_2} is the same
  as
  \[
    [x_+, y_0, y_m, x_-].
  \]
  Since $T_{n,m}$ fixes the endpoints $x_\pm$ of $\ell$ and preserves
  $\ell$, the points
  \[
    [x_+, T_{n,m}y_0, T_{n,m}y_m, x_-]
  \]
  are also arranged in this order on $\ell$. Further, since
  $y_0 \in QK \cap \ell$, and
  $T_{n,m}y_m = k_{n + m}\gamma_{n+m}^{-1}c(n+m) \in QK \cap \ell$, we
  have $\mathrm{d}_\ell(T_{n,m}y_m, y_0) \le L$. But 
  $$\mathrm{d}_\ell(y_0, y_m) = d_{c}(c(n),c(n+m))=m > L.$$ So,
  $T_{n,m}y_m$ must lie in the open projective segment
  $(x_+,y_m) \subset \ell$. Thus it follows that the points
  \[
    [x_+, T_{n,m}y_m, y_m, x_-]
  \]
  are arranged on $\ell$ in that order which implies that 
  $\norm{T_{n,m}|_{x_+}} > \norm{T_{n,m}|_{x_-}}$.

  We may therefore apply estimate \eqref{item:mud_estimate} and estimate
  \eqref{item:mud1_estimate} from \Cref{lem:sigma_12d-1d_estimates} to
  $T_{n,m}$. This tells us that there is a uniform constant $C' > 0$ so
  that
  \[
    \log \dfrac{\norm{T_{n,m}|_{x_-}}}{\conorm{T_{n,m}|_W}} \leq
    \mu_d(T_{n,m})-\mu_{d-1}(T_{n,m}) + 2C' =
    -\mu_{1,2}(T_{n,m}^{-1})+2C'.
  \]
  Putting this together with
  \eqref{eqn:sub_additivity_claim_reduction_to_Tnm} and
  \eqref{eq:Tnm_gamma}, we obtain
  \[
    \mu_{1,2}(\gamma_n) - \mu_{1,2}(\gamma_{n+m}) \le
    -\mu_{1,2}(k_n\gamma_n^{-1}\gamma_{n+m}k_{n+m}^{-1}) + 4C + 2C'.
  \]
  Then an application of \Cref{lem:additive_root_bound} proves
  that the desired inequality holds when $i = 1$. 
  
  The case where
  $i = d-1$ is similar; we apply
  \eqref{eqn:mu_12_bound_from_general_lemma} in place of
  \eqref{eqn:mu_d-1d_bound_from_general_lemma} to see that
  \[
    \mu_{d-1,d}(\gamma_n)-\mu_{d-1,d}(\gamma_{n+m}) \leq \log
    \frac{\norm{T_{n,m}|_{x_+}}}{\norm{T_{n,m}|_W}} + 4C.
  \]
  Then we use the other estimates from
  \Cref{lem:sigma_12d-1d_estimates} to see that
  \[
    \log \dfrac{\norm{T_{n,m}|_{x_+}}}{\norm{T_{n,m}|_W}} \leq
    -\mu_{d-1,d}(T_{n,m}^{-1})+2C',
  \]
  and apply \Cref{lem:additive_root_bound} again to complete the
  proof.
\end{proof}

\section{Singular values of Morse geodesics in convex projective geometry}
\label{sec:sv_morse}

In this section, we combine results from
\Cref{sec:morse_contracting,sec:sv_estimates} to study the behavior of
singular value gaps of sequences that track Morse projective geodesic
rays. The main aim of the section is to prove
\Cref{thm:morse_implies_bdry_reg_and_unif_reg} and
\Cref{thm:1_morse_ggt_morse}.

\subsection{Morse geodesics are strongly uniformly regular}

We first address \Cref{thm:morse_implies_bdry_reg_and_unif_reg}. We
start with the following lemma, which we will strengthen later.
\begin{lemma}
  \label{lem:morse_geodesic_uniform_gap}
  Let $M$ be a Morse gauge, let $C> 0$, and let $x_0 \in
  \Omega$. There exists $k=k(M,C,x_0) > 0$ such that, for any
  $\gamma \in \Aut(\Omega)$, if $\hil(x_0, \gamma x_0) > k$ and
  $[x_0,\gamma x_0]$ is $M$-Morse, then $\mu_{1,2}(\gamma) > C$.
\end{lemma}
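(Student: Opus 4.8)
Proof strategy for Lemma 4.2 (\cref{lem:morse_geodesic_uniform_gap}).

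The plan is to argue by contradiction, using the Benzécri cocompactness theorem to pass to a limit and then invoke the characterization of Morse geodesics in terms of conically related points. Suppose the lemma fails: then there is a Morse gauge $M$ and a constant $C > 0$ such that for every $k$ there is some $\gamma_k \in \Aut(\Omega)$ with $\hil(x_0, \gamma_k x_0) > k$, with $[x_0, \gamma_k x_0]$ being $M$-Morse, yet $\mu_{1,2}(\gamma_k) \le C$. Since $\hil(x_0, \gamma_k x_0) \to \infty$, the element $\gamma_k x_0$ leaves every compact set in $\Omega$, so after passing to a subsequence we may assume $\gamma_k x_0 \to y \in \bdry$. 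By \cref{prop:dgk_sv_gap_omega} we have $\mu_{1,d}(\gamma_k) \to \infty$. Combined with the bound $\mu_{1,2}(\gamma_k) \le C$, this forces $\mu_{2,d}(\gamma_k) \to \infty$; in particular $\gamma_k$ is divergent in $\PGLdR$ and, after a further subsequence, $\gamma_k$ converges in $\Pb(\End(\Rb^d))$ to a rank-$(\ge 2)$ limiting projective endomorphism — equivalently, the accumulation set of the $\gamma_k$-image of a neighborhood of $x_0$ contains a nontrivial projective segment in $\bdry$ through $y$.

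The key point is to translate the singular value gap bound into the statement that $y$ is forward conically related to a point lying in the interior of a boundary segment. Concretely: set $x_k := \gamma_k^{-1} \gamma_k x_0 = x_0$ — wait, rather, consider the geodesic ray from $x_0$ through $\gamma_k x_0$; by passing to the limit of the pointed domains and the relevant boundary points one produces a domain $\Omega_\infty$ (here $\Omega_\infty = \Omega$ since $\Aut(\Omega)$ preserves $\Omega$, using the sequence $\{\gamma_k^{-1}\}$ to witness the conical relation) and a point $z \in \bdry$ conically related to $y$. The bound $\limsup \mu_{1,2}(\gamma_k) < \infty$, together with \cref{prop:face_sv_gap} and the fact that $\mu_{2,3}(\gamma_k)$ (or at least $\mu_{2,d}(\gamma_k)$) tends to infinity, shows that $z$ lies in a face of $\bdry$ of positive dimension, hence in the interior of a nontrivial segment. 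By \cref{lem:int_segments_triangles}(1) there is a properly embedded triangle $\Delta \subset \Omega_\infty$ with $z$ in the interior of an edge. Picking any $p \in \Delta$, \cref{lem:morse_conically_related} would imply $[p, z)$ is Morse (since it is conically related to the $M$-Morse ray through $y$), contradicting \cref{lem:morse_not_in_tri}. This contradiction proves the lemma. A technical point to handle carefully: the geodesics $[x_0, \gamma_k x_0]$ are segments, not rays, so one must phrase "Morse" appropriately and make sure the limiting procedure (choosing where along the segment to renormalize, to extract a ray converging to $y$) is valid — this is where the hypothesis $\hil(x_0, \gamma_k x_0) \to \infty$ is used.

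The main obstacle I expect is making the passage from "$\mu_{1,2}(\gamma_k)$ bounded" to "the conically related point $z$ lies in the interior of a segment" fully rigorous, rather than merely "in a face of positive dimension". One has to be slightly delicate: $\mu_{1,2}$ bounded guarantees the limiting rank is at least $2$, and $\mu_{1,d} \to \infty$ guarantees the rank is at most $d - 1$, but to conclude $z$ is in the \emph{interior} of a segment (as opposed to its endpoint) one needs the tracking hypothesis to feed \cref{prop:face_sv_gap}, ensuring the image balls $\gamma_k B(x_0, r)$ accumulate on a relatively open subset of the limiting face. Alternatively — and perhaps more cleanly — one could avoid \cref{prop:face_sv_gap} entirely and argue directly: if $z$ were an endpoint of a segment rather than an interior point, one would still reach a contradiction via \cref{cor:morse_not_in_segment} (which says Morse rays cannot even limit to the closure of a segment), combined with \cref{lem:morse_conically_related}. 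This sidesteps the interior-vs-boundary subtlety, since all we really need is that the bounded $\mu_{1,2}$ together with unbounded $\mu_{1,d}$ forces $z$ into the closure of a nontrivial boundary segment, and then \cref{cor:morse_not_in_segment} does the rest.
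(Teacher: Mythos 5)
Your skeleton is the paper's: argue by contradiction, extract a subsequence with $\gamma_n x_0 \to y \in \partial\Omega$, observe that the limit ray $[x_0,y)$ is $M$-Morse as a locally uniform limit of $M$-Morse segments, and then play the bounded gap $\mu_{1,2}(\gamma_n)\le C$ against the Morseness of that ray. The paper finishes in two lines from there: \cref{cor:morse_point_is_C1_extreme} says the endpoint of a Morse ray is a $C^1$ extreme point, so $\dim F_{\Omega}(y)=0$, and \cref{prop:face_dim_implies_some_sv_gp} (with $k=0$) then gives $\mu_{1,2}(\gamma_n)\to\infty$, a contradiction. Note that \cref{prop:face_dim_implies_some_sv_gp} requires no tracking hypothesis, so the segments-versus-rays issue you flag only enters in the standard limiting step for Morseness.

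Where your write-up has a genuine gap is the conical-relation detour. To assert that $y$ is forward conically related to some $(z,\Omega_2)$ by the sequence $\{\gamma_n^{-1}\}$, you must produce points $x_n$ on a fixed projective ray $[x,y)$ with $x_n\to y$ and $\gamma_n^{-1}(y,x_n)$ converging to a properly embedded segment; your text never constructs these (the "set $x_k:=\dots$ --- wait" passage), and the natural choice requires knowing that $\gamma_n x_0$ stays within uniformly bounded distance of the ray $[x_0,y)$, which is not automatic and which you do not establish. Fortunately the detour is unnecessary: as you half-observe in your fallback paragraph, boundedness of $\mu_{1,2}(\gamma_n)$ already forces the limit $T=\lim\gamma_n$ in $\Pb(\End(\Rb^d))$ to have rank at least $2$ with $\image(T)\subset\Span F_{\Omega}(y)$, so $y$ \emph{itself} lies in the interior of a nontrivial segment of $\partial\Omega$ --- no auxiliary domain, no triangle, no appeal to \cref{lem:morse_conically_related}. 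That contradicts \cref{cor:morse_point_is_C1_extreme} directly. So the mathematics you need is all present, but you should discard the conically-related machinery and instead cite \cref{prop:face_dim_implies_some_sv_gp} together with \cref{cor:morse_point_is_C1_extreme}, which is exactly what the paper does.
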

\begin{proof}
  Fix $M, C, x_0$. Suppose for a contradiction that there exists a
  sequence $\{\gamma_n\}$ in $\Aut(\Omega)$ such that
  $\hil(x_0,\gamma_n x_0) > n$ and $[x_0,\gamma_n x_0]$ $M$-Morse, but
  $\mu_{1,2}(\gamma_n) \leq C$. After passing to a subsequence, we can
  assume that $\gamma_n x_0$ converges to $x \in \bdry$. Then
  $[x_0,\gamma_n x_0] \to [x_0,x)$ uniformly on compact subsets of
  $\Omega$. As each $[x_0,\gamma_n x_0]$ is $M$-Morse, so is
  $[x_0,x)$.

  By \cref{cor:morse_point_is_C1_extreme}, $x$ is a $C^1$ extreme
  point in $\bdry$, so $\dim F_{\Omega}(x)=0$. Then
  \cref{prop:face_dim_implies_some_sv_gp} implies that
  $\mu_{1,2}(\gamma_n) \to \infty$. This contradicts the assumption
  that $\mu_{1,2}(\gamma_n) \leq C$.
\end{proof}

For the next result, we slightly refine the notion of tracking
sequences from \cref{defn:tracking}. If $c:[0,L] \to \Omega$ is a
projective geodesic segment of length $L>0$, then we will say that a
finite sequence $\{ \gamma_n\}$ in $\Aut(\Omega)$ \emph{$R$-tracks
  $c$} if $\hil(\gamma_n x_0,c(n))<R$ for all $n \in \Nb \cap [0,L]$.

\begin{remark}
  \label{rem:tracking_implies_mu_1d_gap}
  If $\{\gamma_n\}$ $R$-tracks $c$ (a geodesic ray or segment), then there exists a constant
  $D'$ depending on $x_0$ and $R$ such that
  \[
    \frac{k}{2}-D' \leq \mu_{1,d}(\gamma_i^{-1}\gamma_{i+k}) \leq \frac{k}{2} + D'.
  \]
  This follows from \cref{prop:dgk_sv_gap_omega} and the definition of a tracking sequence.
\end{remark}
\begin{proposition}
  \label{prop:morse_geodesic_gap_growth}
  Fix a Morse gauge $M$, a positive real number $R$, and
  $x_0 \in \Omega$. There exist constants $A,B > 0$ (depending on $M$, $x_0$, 
  and $R$), such that: if $\gamma \in \Aut(\Omega)$ for which
  $[x_0,\gamma x_0]$ is $M$-Morse, $\hil(x_0,\gamma x_0)>B$, and there
  is a finite sequence $\{\eta_n\}$ in $\Aut(\Omega)$ that $R$-tracks
  $[x_0,\gamma x_0]$, then
  \[
    \frac{\mu_{1,2}(\gamma)}{\mu_{1,d}(\gamma)} > A \quad\text{ and }
    \quad \frac{\mu_{d-1,d}(\gamma)}{\mu_{1,d}(\gamma)} > A.
  \]
\end{proposition}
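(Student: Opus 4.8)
The plan is to combine the "straightness" Lemma \ref{lem:geodesic_additivity} with the Morse-forces-growth Lemma \ref{lem:morse_geodesic_uniform_gap} via a telescoping argument. First, fix the Morse gauge $M$, the tracking constant $R$, and $x_0$. Apply Lemma \ref{lem:morse_geodesic_uniform_gap} with a constant $C$ to be determined, obtaining $k = k(M, C, x_0)$ with the property that any $M$-Morse segment $[x_0, g x_0]$ of Hilbert length exceeding $k$ has $\mu_{1,2}(g) > C$. The key point is that any sub-segment of an $M$-Morse geodesic is again $M$-Morse (with the same gauge), and it is close to a projective geodesic segment, so this lemma applies uniformly to all "sufficiently long" sub-segments of $[x_0, \gamma x_0]$.

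Next, set up the telescoping. Let $\ell = [x_0, \gamma x_0]$, of length $L = \hil(x_0, \gamma x_0)$, tracked by $\{\eta_n\}$. Choose a step size $S > k$ (depending only on $M$, $C$, $R$, $x_0$), and subdivide $\ell$ into roughly $N = \lfloor L/S \rfloor$ pieces by marking the automorphisms $\eta_0, \eta_S, \eta_{2S}, \ldots$. For each consecutive pair, the element $\eta_{jS}^{-1}\eta_{(j+1)S}$ coarsely tracks a sub-segment of $\ell$ of length $\approx S$, which is an $M$-Morse projective geodesic segment of length $> k$ (after accounting for the $R$-tracking error, which is why $S$ must be taken somewhat larger than $k + $ a controlled constant). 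Hence, by a basepoint-shifted version of Lemma \ref{lem:morse_geodesic_uniform_gap} — or rather by conjugating and applying Lemma \ref{lem:additive_root_bound} to absorb the bounded tracking error — we get $\mu_{1,2}(\eta_{jS}^{-1}\eta_{(j+1)S}) > C$ for each $j$. Now apply Lemma \ref{lem:geodesic_additivity} inductively: with $D$ the constant from that lemma, $\mu_{1,2}(\eta_{(j+1)S}) \geq \mu_{1,2}(\eta_{jS}) + \mu_{1,2}(\eta_{jS}^{-1}\eta_{(j+1)S}) - D \geq \mu_{1,2}(\eta_{jS}) + (C - D)$. Choosing $C > 2D$ (say), this telescopes to $\mu_{1,2}(\eta_{NS}) \geq N(C - D) \gtrsim L \cdot \frac{C-D}{S}$. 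Finally, relate $\eta_{NS}$ back to $\gamma$: since both $\eta_{NS}$ and $\gamma$ move $x_0$ to within bounded distance of $c(NS)$ and $c(L)$ respectively, and $|L - NS| < S$, the element $\eta_{NS}^{-1}\gamma$ moves $x_0$ a bounded distance, so it lies in a compact subset of $\Aut(\Omega)$; Lemma \ref{lem:additive_root_bound} then gives $\mu_{1,2}(\gamma) \geq \mu_{1,2}(\eta_{NS}) - (\text{const})$. On the other hand, $\mu_{1,d}(\gamma) \leq \tfrac{1}{2}L + \text{const}$ by Proposition \ref{prop:dgk_sv_gap_omega}. Dividing yields $\mu_{1,2}(\gamma)/\mu_{1,d}(\gamma) > A$ for an explicit $A > 0$ once $L > B$ for a sufficiently large threshold $B$ (so that the additive constants are negligible). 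The bound for $\mu_{d-1,d}$ follows by the symmetric argument: Lemma \ref{lem:geodesic_additivity} is stated for both $i = 1$ and $i = d-1$, and Lemma \ref{lem:morse_geodesic_uniform_gap} applied to the reversed geodesic (equivalently, to $\gamma^{-1}$, noting $\mu_{d-1,d}(\gamma) = \mu_{1,2}(\gamma^{-1})$ and that the reverse of an $M$-Morse geodesic is $M$-Morse) gives the needed lower bound on the increments $\mu_{d-1,d}(\eta_{jS}^{-1}\eta_{(j+1)S})$.

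The main obstacle I anticipate is the bookkeeping around the tracking error in invoking Lemma \ref{lem:morse_geodesic_uniform_gap} for the pieces: that lemma is stated for segments of the form $[x_0, g x_0]$ with $g$ an automorphism, but the natural object here is $\eta_{jS}^{-1}\eta_{(j+1)S}$, which moves $x_0$ to a point only \emph{within $O(R)$} of a point on the Morse geodesic $\ell$, not exactly onto it. One resolves this by observing that $[\eta_{jS}^{-1}\eta_{jS}x_0, \eta_{jS}^{-1}\eta_{(j+1)S}x_0] = \eta_{jS}^{-1}[\eta_{jS}x_0, \eta_{(j+1)S}x_0]$ is a projective segment with both endpoints within $O(R)$ of $\ell$; by \Cref{lem:hil_haus_dist_between_geods} (or a direct Hausdorff-distance estimate) this segment stays uniformly close to a sub-segment of $\ell$, hence is $M'$-Morse for a gauge $M'$ depending only on $M$ and $R$, and one applies Lemma \ref{lem:morse_geodesic_uniform_gap} with $M'$ in place of $M$. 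Everything else is a routine chase through the additive constants, with the only real choices being $C$ (large relative to $D$ from Lemma \ref{lem:geodesic_additivity}) and $S$ (large relative to $k(M', C, x_0)$ and $R$), after which $A$ and $B$ are determined.
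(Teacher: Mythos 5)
Your proposal is correct and follows essentially the same route as the paper's proof: the same two ingredients (Lemma \ref{lem:morse_geodesic_uniform_gap} to force $\mu_{1,2}$-growth over blocks of length $k$, and the straightness Lemma \ref{lem:geodesic_additivity} to telescope), the same observation that the sub-segments $[\eta_n x_0,\eta_m x_0]$ are $M'$-Morse for $M'$ depending only on $M$ and $R$, the same linear upper bound on $\mu_{1,d}$ from \Cref{prop:dgk_sv_gap_omega}, and the same symmetry $\mu_{d-1,d}(\gamma)=\mu_{1,2}(\gamma^{-1})$ for the second inequality. The only differences are cosmetic choices of constants (the paper takes $C=3D$ where you take $C>2D$) and that you spell out the final comparison between $\eta_{NS}$ and $\gamma$ slightly more explicitly than the paper does.
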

\begin{proof}
  It suffices to only prove the first inequality. The second inequality follows from the first after replacing $\gamma$ with $\gamma^{-1}$, since
  $\mu_{1,2}(\gamma) = \mu_{d-1,d}(\gamma^{-1})$ and $[x_0,\gamma x_0]$ is $M$-Morse if and only if $[x_0, \gamma^{-1}x_0]$ is $M$-Morse.

  Fix $\gamma \in \Aut(\Omega)$ such that $[x_0, \gamma x_0]$ is
  $M$-Morse.  Let $c:[0,\hil(x_0,\gamma x_0)] \to \Omega$ be the unit-speed projective geodesic joining $x_0$ and $\gamma x_0$. Set $L := \lfloor \hil(x_0, \gamma x_0) \rfloor$ and let $\{\eta_n\}_{n=1}^L$ be a sequence that
  $R$-tracks $c|_{[0,L]}$.

   We will first establish a lower bound on $\frac{\mu_{1,2}}{\mu_{1,d}}(\eta_n)$ for $n$ sufficiently large; see \eqref{eqn:bound_for_eta_n} below.
  To do this, observe that for
  any $n, m$, the geodesic segment $[\eta_nx_0, \eta_mx_0]$ is
  $M'$-Morse for a Morse gauge $M'$ depending only on $M$ and $R$. Now we apply \Cref{lem:geodesic_additivity}, taking the compact set
  $K$ in the lemma to be $\overline{B_R(x_0)}$. Let $D$ be the
  constant in \Cref{lem:geodesic_additivity}. Then for all
  $n,n+m \in \{1,\dots, L\}$,
 \begin{align}
 \label{eq:appl_of_mu_12_additive_ineq}
 \mu_{1,2}(\eta_n)+\mu_{1,2}(\eta_n^{-1}\eta_{n+m}) \leq \mu_{1,2}(\eta_{n+m})+D.
 \end{align}
  By \cref{lem:morse_geodesic_uniform_gap}, there exists a constant $k > 0$ so that for every $n=1,\dots,L$, 
  \[
    \mu_{1,2}(\eta_n^{-1}\eta_{n + k}) > 3D.
  \]
   Since our result applies to $\gamma$ for which $\hil(x_0,\gamma x_0)$ is sufficiently large, we can assume that $L \geq k$. Fix any $n \in \{k,k+1,\dots,L\}$. Let $j \in \{1,\dots,   \lfloor L/k \rfloor\}$ be such that $kj \leq n <kj+k$. Then,
  \begin{align*}
  \mu_{1,2}(\eta_n) \geq \mu_{1,2}(\eta_{kj}^{-1} \eta_n) + \mu_{1,2}(\eta_{kj})-D \geq \mu_{1,2}(\eta_{kj})-D. 
  \end{align*}
  But the inequality \eqref{eq:appl_of_mu_12_additive_ineq} further implies that 
  \[ \mu_{1,2}(\eta_{kj}) \ge -D + \mu_{1,2}(\eta_{kj-k}) +
    \mu_{1,2}(\eta_{kj-k}^{-1}\eta_{kj}).
  \]
  We can then conclude (by inducting on $j$, and assuming
  $\eta_0 = \mathrm{id}$) that for all $j$, we have
  \[
    \mu_{1,2}(\eta_{kj}) \ge -jD + \sum_{i=0}^{j-1}
    \mu_{1,2}(\eta_{ki}^{-1}\eta_{ki+k}).
  \]
  By our choice of $k$ this implies
  \[
     \mu_{1,2}(\eta_{kj}) \ge -jD + j(3D) = 2Dj.
  \]
  Thus
  \[
  \mu_{1,2}(\eta_n) \geq \mu_{1,2}(\eta_{kj})-D > 2jD-D \geq jD. 
  \]
  
  On the other hand, \Cref{rem:tracking_implies_mu_1d_gap} implies that
  $\mu_{1,d}(\eta_n) \leq D' + \frac{n}{2}$.

  Set $A:=\frac{D}{4k(1+2D')}$.  Then, for any $n \in \{k,\dots,L\}$, 
  \begin{align}
  \label{eqn:bound_for_eta_n}
  \frac{\mu_{1,2}}{\mu_{1,d}}(\eta_n) \geq \frac{jD}{n+2D'} \geq \frac{D}{k(1+2D')}\frac{kj}{n} \geq 4A \cdot \frac{kj}{kj+k} =4A \cdot \frac{j}{j+1}.
  \end{align}

  We will now upgrade this to the claimed lower bound on $\frac{\mu_{1,2}}{\mu_{1,d}}(\gamma).$ Indeed, by proper discontinuity of the $\Gamma$-action, $\gamma$ differs from $\eta_L$ by a matrix of bounded norm. Hence the claimed lower bound on  $\frac{\mu_{1,2}}{\mu_{1,d}}(\gamma)$ is a consequence of \eqref{eqn:bound_for_eta_n} and \cref{lem:additive_root_bound}. We provide the details below.

  Note that $\hil(\gamma x_0,\eta_L x_0) \leq R+1.$ As $\Aut(\Omega)$ acts properly on $\Omega$,
  there is a compact set $Q=Q(\Omega,\Gamma, x_0,R)$ in $\PGL_d(\Rb)$ such that $\gamma=q \cdot \eta_L$ for some $q \in Q$. Then, by \cref{lem:additive_root_bound}, 
  $$\mu_{1,2}(\gamma) \geq \mu_{1,2}(\eta_L)-C_Q \text{ and } \mu_{1,d}(\gamma) \leq \mu_{1,d}(\eta_L)+C_Q,$$
  for some constant $C_Q$ that depends only on $Q$. Combining this with \eqref{eqn:bound_for_eta_n}, 
  \begin{align*}
      \frac{\mu_{1,2}}{\mu_{1,d}}(\gamma) \geq \frac{\mu_{1,2}(\eta_L)-C_Q}{\mu_{1,d}(\eta_L)+C_Q} \geq \frac{A \cdot \frac{4j}{j+1}-\frac{C_Q}{\mu_{1,d}(\eta_L)}}{1+\frac{C_Q}{\mu_{1,d}(\eta_L)}}.
  \end{align*}
  Since \cref{rem:tracking_implies_mu_1d_gap} implies that $\mu_{1,d}(\eta_L) \geq \frac{L}{2}-D'$, we have that $\frac{C_Q}{\mu_{1,d}(\eta_L)}<\frac{1}{3}\min\{1,A\}$ when $L$ is sufficiently large (say $L \geq L_0$). Let $B>0$ be such that $\hil(x,\gamma x_0) >B$ implies that $L=\lfloor \hil(x_0,\gamma x_0)\rfloor \geq L_0$. Then, $\hil(x_0,\gamma x_0)>B$ implies that
  \begin{equation*}
      \frac{\mu_{1,2}}{\mu_{1,d}}(\gamma) \geq \frac{A \cdot \frac{4j}{j+1}-A \cdot \frac{1}{3}}{1+\frac{1}{3}}\geq \frac{1+\frac{2}{3}}{1+\frac{1}{3}}A>A. \qedhere\end{equation*} 
      \end{proof}

Now we can prove the proposition below, which is a restatement of
\Cref{thm:morse_implies_bdry_reg_and_unif_reg} from the introduction.
\begin{proposition}
  Let $c$ be a projective geodesic in a properly convex domain
  $\Omega$, and let $\{\gamma_n\}$ be a sequence which $R$-tracks $c$
  with respect to a basepoint $x_0 \in \Omega$. If $c$ is $M$-Morse,
  then there are constants $C, N > 0$ (depending only on $M, x_0, R$)
  so that, for all $n \geq 1$ and $m > N$, we have
  \[
    \frac{\mu_{1,2}(\gamma_n^{-1}\gamma_{n +
        m})}{\mu_{1,d}(\gamma_n^{-1}\gamma_{n+m})} > C \text{ and }
    \frac{\mu_{d-1,d}(\gamma_n^{-1}\gamma_{n +
        m})}{\mu_{1,d}(\gamma_n^{-1}\gamma_{n+m})} > C.
  \]
\end{proposition}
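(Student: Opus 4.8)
The plan is to re-run the telescoping argument behind \Cref{prop:morse_geodesic_gap_growth} on a suitably shifted tracking sequence, so that the whole statement reduces to \Cref{lem:geodesic_additivity}, \Cref{lem:morse_geodesic_uniform_gap}, and \Cref{rem:tracking_implies_mu_1d_gap}. Fix $n \ge 1$; for $j \ge 0$ put $\eta_j := \gamma_n^{-1}\gamma_{n+j}$, so $\eta_0 = \identity$, $\eta_m = \gamma_n^{-1}\gamma_{n+m}$, and $\eta_j^{-1}\eta_{j+l} = \gamma_{n+j}^{-1}\gamma_{n+j+l}$ for all $j, l$ (this last element being independent of $n$). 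Extend $\gamma_n^{-1}\circ c$ backward to a bi-infinite projective geodesic and reparameterize it to a $c' \in \Gc(\Omega)$ with $c'(j) = \gamma_n^{-1}c(n+j)$ for $j \ge 0$. Since $\{\gamma_k\}$ $R$-tracks $c$ and each $\gamma_k$ is an isometry of $\hil$, one has $\hil(x_0, c'(0)) = \hil(x_0, \gamma_n^{-1}c(n)) < R$ and $\hil(x_0, \eta_j^{-1}c'(j)) = \hil(x_0, \gamma_{n+j}^{-1}c(n+j)) < R$ for all $j$; hence, taking $K := \overline{B_R(x_0)}$, we have $c' \in \Gc_K(\Omega)$ and $\eta_j^{-1}c'(j) \in K$ for all $j$.

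First I would apply \Cref{lem:geodesic_additivity} to $c'$, $K$, and $\{\eta_j\}$, obtaining a constant $D$ depending only on $x_0$, $R$, and $\Omega$ (in particular independent of $n$ and $m$) with
\[
  \mu_{i,i+1}(\eta_j) + \mu_{i,i+1}(\eta_j^{-1}\eta_{j+l}) \le \mu_{i,i+1}(\eta_{j+l}) + D, \qquad i \in \{1, d-1\},
\]
for all $j, l \in \Nb$. Next I would record two facts about the elements $\eta_j^{-1}\eta_{j+l} = \gamma_{n+j}^{-1}\gamma_{n+j+l}$: (i) $[x_0, \gamma_{n+j}^{-1}\gamma_{n+j+l}x_0]$ is $M'$-Morse for a Morse gauge $M'$ depending only on $M$ and $R$, since the subsegment $c|_{[n+j,\,n+j+l]}$ is $M$-Morse and its isometric image $\gamma_{n+j}^{-1}(c|_{[n+j,\,n+j+l]})$ is an $M$-Morse projective segment whose endpoints lie within $\hil$-distance $R$ of $x_0$ and of $\gamma_{n+j}^{-1}\gamma_{n+j+l}x_0$ (this is exactly the Morse-stability-under-bounded-endpoint-perturbation step used inside the proof of \Cref{prop:morse_geodesic_gap_growth}); and (ii) $\hil(x_0, \gamma_{n+j}^{-1}\gamma_{n+j+l}x_0) = \hil(\gamma_{n+j}x_0, \gamma_{n+j+l}x_0) \ge l - 2R$. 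Feeding $M'$, the constant $3D$, and the basepoint $x_0$ into \Cref{lem:morse_geodesic_uniform_gap}, and applying its conclusion also to inverses so that it simultaneously controls $\mu_{1,2}$ and $\mu_{d-1,d}$, yields a threshold $k_0$; set $k_1 := k_0 + 2R + 1$. Then $\mu_{i,i+1}(\eta_j^{-1}\eta_{j+k_1}) > 3D$ for every $j$ and $i \in \{1, d-1\}$.

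The remainder is the telescoping bookkeeping. Combining the two displayed inequalities, an induction on $r$ (base case $r=1$ being immediate from $\mu_{i,i+1}(\eta_{k_1}) = \mu_{i,i+1}(\eta_0^{-1}\eta_{k_1}) > 3D$) gives $\mu_{i,i+1}(\eta_{k_1 r}) > 2Dr$ for all $r \ge 1$; then for $m \ge k_1$, writing $m = k_1 r + t$ with $0 \le t < k_1$ (so $r \ge 1$) and using the additivity estimate once more with $j = k_1 r$, we get $\mu_{i,i+1}(\eta_m) \ge \mu_{i,i+1}(\eta_{k_1 r}) - D > 2Dr - D > (2D/k_1)m - 3D$. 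On the other side, \Cref{rem:tracking_implies_mu_1d_gap} gives $\mu_{1,d}(\eta_m) = \mu_{1,d}(\gamma_n^{-1}\gamma_{n+m}) \le m/2 + 2R + D'$ with $D'$ depending only on $x_0$. Therefore
\[
  \frac{\mu_{i,i+1}(\gamma_n^{-1}\gamma_{n+m})}{\mu_{1,d}(\gamma_n^{-1}\gamma_{n+m})} \;>\; \frac{(2D/k_1)m - 3D}{m/2 + 2R + D'},
\]
whose right-hand side is increasing in $m$ and tends to $4D/k_1$; choosing $N \ge k_1$ so that it exceeds, say, $D/k_1$ for all $m > N$, and setting $C := D/k_1$, gives the claim for both $i = 1$ and $i = d - 1$.

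The point requiring care is the uniformity of every constant in $n$, and this is what dictates the choices above: $K = \overline{B_R(x_0)}$ and hence $D$ are fixed independently of $n$; $M'$ depends only on $M$ and $R$ (this uses the stability of Morse gauges under bounded perturbation of endpoints, the only external input not developed from scratch in the excerpt); and $k_0, k_1, D'$ depend only on $M', 3D, x_0$ and on $x_0$. I expect this uniformity, together with correctly identifying $\eta_j^{-1}\eta_{j+l}$ with $\gamma_{n+j}^{-1}\gamma_{n+j+l}$ and verifying the hypotheses of \Cref{lem:geodesic_additivity} for the extended geodesic $c'$, to be the only genuine subtleties; the telescoping itself is routine. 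An alternative would be to invoke \Cref{prop:morse_geodesic_gap_growth} verbatim for $\gamma = \gamma_n^{-1}\gamma_{n+m}$ after producing an $R'$-tracking sequence for $[x_0, \gamma_n^{-1}\gamma_{n+m}x_0]$, but sorting out the arc-length reparameterization needed there makes the self-contained telescoping argument above cleaner to write.
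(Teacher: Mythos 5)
Your argument is correct and is essentially the paper's proof: the paper simply observes that each $[x_0,\gamma_n^{-1}\gamma_{n+m}x_0]$ is $M'$-Morse for $M'=M'(M,R)$ and then cites \Cref{prop:morse_geodesic_gap_growth}, whose proof is exactly the telescoping of \Cref{lem:geodesic_additivity} against \Cref{lem:morse_geodesic_uniform_gap} and \Cref{rem:tracking_implies_mu_1d_gap} that you carry out by hand on the shifted sequence $\eta_j=\gamma_n^{-1}\gamma_{n+j}$. Your inlined version has the minor virtue of making explicit the tracking data for the shifted ray (which the paper's citation leaves implicit), but the mechanism and all uniformity considerations are the same.
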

\begin{proof}
  Fix an $M$-Morse geodesic $c$ and a tracking sequence $\{\gamma_n\}$
  as in the statement. Since $\{\gamma_n\}$ $R$-tracks $c$, there is
  some Morse gauge $M'$ (depending only on $M$ and $R$) so that for
  any $n, m$, the projective geodesic segment
  $[\gamma_nx_0, \gamma_{n+m}x_0]$ is $M'$-Morse, hence so is the
  projective geodesic $[x_0, \gamma_n^{-1}\gamma_{n+m}x_0]$. So then
  \Cref{prop:morse_geodesic_gap_growth} implies that there are
  positive constants $C, N$ depending only on $M'$ so that if $m > N$,
  then
  $\mu_{1,2}(\gamma_n^{-1}\gamma_{n+m})/\mu_{1,d}(\gamma_n^{-1}\gamma_{n+m})
  > C$, as required.
\end{proof}

\subsection{The partial converse}

The examples below show that the full converse to
\Cref{thm:morse_implies_bdry_reg_and_unif_reg} does not always hold.

\begin{example}
  \label{ex:nondivisible_nonmorse}
  Identify the hyperbolic plane $\Hb^2$ with its projective model in
  $\Pb(\Rb^3)$, so that $\PO(2,1) \subset \PSL(3, \Rb)$ acts by
  isometries. Let $\ell$ be a geodesic in $\Hb^2$. The two tangent
  lines to $\Hb^2$ at the endpoints of $\ell$ meet in unique dual
  point $\ell^*$ to $\ell$; this point is the orthogonal complement to
  $\ell$, with respect to the Minkowski bilinear form defining this
  model of $\Hb^2$.

  Let $\Omega$ be the convex hull of $\Hb^2$ and $\ell^*$, let
  $x_0 \in \ell$, and let $h$ be a loxodromic element in $\PO(2,1)$
  preserving $\ell$, with translation length 1. Then the sequence
  $\{h^nx_0\}$ lies along $\ell$, i.e. $\{h^n\}$ tracks a projective
  geodesic sub-ray of $\ell$. As a subset of $\Hb^2$, the projective
  geodesic $\ell$ is Morse, since $\Hb^2$ is hyperbolic; in particular
  by \Cref{thm:morse_implies_bdry_reg_and_unif_reg} this means that
  the sequence $\{h^n\}$ is strongly uniformly $k$-regular for
  $k = 1, 2$. However, while $\ell$ is still a geodesic in the larger
  domain $\Omega$, it cannot be a Morse geodesic in this domain, as
  both of its endpoints lie in the closure of nontrivial segments in
  $\dee \Omega$ (see \Cref{cor:morse_not_in_segment}).
\end{example}

There are two important points to observe in the previous example:
first, $\Omega$ does not have exposed boundary, and second,
$\{h^n: n \in \Zb\}$ does not divide $\Omega$. In the next example, we
observe that problems can still occur even if we assume that the
domain $\Omega$ is divisible.

\begin{example}
  \label{ex:triangle_nonmorse}
  Consider the projective 2-simplex $\Delta:=\{[x:y:z]~|~x,y,z>0\}$ in $\Pb(\Rb^3)$ and fix
  $x_0:= [1:1:1]$. Let $\Gamma \subset \PSL(3, \Rb)$ be the
  projectivization of the group of diagonal matrices whose entries are
  integer powers of $2$.
  Then $\Gamma$ is an abelian subgroup dividing
  $\Delta$. So if $h \in \Gamma$ is the diagonal matrix $h = {\rm diag}(2,1,1/2)$, then 
  the mapping $n \mapsto h^nx_0$ is a quasi-isometric embedding. The
  sequence $\{h^n\}$ is also strongly uniformly $k$-regular for
  $k = 1, 2$. However, the set of points $\{h^nx_0\}$ cannot
  be in a uniform neighborhood of a Morse geodesic, since $\Delta$ is
  quasi-isometric to the 2-dimensional Euclidean space, which contains no
  Morse geodesics.

  Note that, although the example above fails to be irreducible, one
  can find irreducible divisible domains (indeed, irreducible rank-one
  domains) which contain an embedded copy of this example; we work
  closely with such an example in \cref{sec:morse_counterexample} of
  this paper. So the precise converse to
  \Cref{thm:morse_implies_bdry_reg_and_unif_reg} can still fail even
  in the case where the ambient domain $\Omega$ is divisible and rank
  one.
\end{example}

Despite the existence of the examples above, it is still possible to
prove \cref{thm:1_morse_ggt_morse} -- a partial converse to
\Cref{thm:morse_implies_bdry_reg_and_unif_reg}. We recall the
statement of this partial converse.

\KLPMorseGGTMorse*

\begin{remark}\
  \begin{enumerate}
  \item The sequence given in \Cref{ex:nondivisible_nonmorse} tracks a
    projective geodesic, but the domain $\Omega$ in this example both
    fails to have exposed boundary and also fails to be divisible. We
    do not know if the ``exposed boundary'' assumption is necessary in
    \Cref{thm:1_morse_ggt_morse}; there are no known examples of
    divisible domains without exposed boundary.
  \item \Cref{thm:1_morse_ggt_morse} tells us that the quasi-geodesic
    considered in \Cref{ex:triangle_nonmorse} cannot track any
    projective geodesic, which can also be verified directly.
  \end{enumerate}
\end{remark}

The main idea in the proof of
\Cref{thm:morse_implies_bdry_reg_and_unif_reg} is to use the
characterization of Morse geodesics in divisible domains with exposed
boundary given at the end of \Cref{sec:morse_contracting}. This allows
us to prove a weaker version of the theorem, which does not provide
uniform control over the Morse gauge; then we use a compactness
argument to prove the full (uniform) result.

The non-uniform version of \Cref{thm:1_morse_ggt_morse} is given by
the proposition below.
\begin{proposition}
  \label{prop:1_morse_ggt_morse_nonuniform}
  Let $\Omega$ be a convex divisible domain with exposed boundary, let
  $c$ be a projective geodesic ray in $\Omega$, and let $\{\gamma_n\}$
  be a sequence which tracks $c$. If $\{\gamma_n\}$ is both strongly
  uniformly $1$-regular and strongly uniformly $(d-1)$-regular, then
  $c$ is $M$-Morse.
\end{proposition}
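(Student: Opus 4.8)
The plan is to deduce Morseness of $c$ from the characterization in \Cref{cor:divisible_c1_extreme_morse}. Write $y := c(\infty) \in \bdry$. Since $\Omega$ is divisible, its $\PGL(\Rb^d)$-orbit in $\domains(\Rb^d)$ is closed, so every domain conically related to $(y,\Omega)$ is projectively equivalent to $\Omega$; combining this with \Cref{prop:conical_relation_projective_equiv} and the projective invariance of ``$C^1$ extreme point'', it suffices to prove that \emph{every $z \in \bdry$ to which $y$ is forward conically related is a $C^1$ extreme point of $\Omega$}. By \Cref{cor:divisible_c1_extreme_morse} this forces $c$ to be $M$-Morse for some gauge $M$, as desired.

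The key step is to identify such a $z$ with a subsequential limit of the reversed orbit. Suppose $y$ is forward conically related to $z \in \bdry$, realized by a divergent sequence $\{g_k\}$ in $\PGL(\Rb^d)$ with $g_k \Omega \to \Omega$. Choosing a point $w_k = c(t_k)$ on the geodesic ray, tending to $y$, whose images $g_k w_k$ converge into $\Omega$, and setting $n_k := \lfloor t_k \rfloor$, I would apply the properness half of \Cref{thm:benzecri} to $g_k \gamma_{n_k}$ (which carries a convergent sequence in $\pdomains(\Rb^d)$ to a convergent one, since $\gamma_{n_k}^{-1}c(n_k) \in K$ for the fixed compact tracking set $K$) to write $g_k = q_k \gamma_{n_k}^{-1}$, after a subsequence, with $\{q_k\}$ precompact and $n_k \to \infty$. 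Then $z = q_\infty \zeta$ where $\zeta := \lim_k \gamma_{n_k}^{-1} y$, and, after a further subsequence, the reparametrized geodesics $s \mapsto \gamma_{n_k}^{-1} c(n_k + s)$ converge uniformly on compacts to a bi-infinite projective geodesic $c_\infty$ with $c_\infty(0) = p_\infty \in K$ and $c_\infty(\infty) = \zeta$. Since $C^1$-ness and extremeness are projective invariants, it remains only to show $\zeta$ is a $C^1$ extreme point. A diagonalization then yields a tracking sequence $\eta_m := \gamma_{n(m)}^{-1} \gamma_{n(m)+m}$ (with $n(m) \to \infty$, chosen so that $\eta_m x_0$ is within distance $1$ of $c_\infty(m)$) for the forward ray of $c_\infty$; as each $\eta_m$ has the form $\gamma_n^{-1}\gamma_{n+m}$, strong uniform $1$- and $(d-1)$-regularity of $\{\gamma_n\}$ give $\mu_{1,2}(\eta_m), \mu_{d-1,d}(\eta_m) > C\, \mu_{1,d}(\eta_m)$ for all large $m$, and since $\{\eta_m\}$ tracks a geodesic ray, \Cref{prop:dgk_sv_gap_omega} forces $\mu_{1,d}(\eta_m) \to \infty$, hence $\mu_{1,2}(\eta_m), \mu_{d-1,d}(\eta_m) \to \infty$.

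Extremeness of $\zeta$ is now immediate from \Cref{prop:face_sv_gap} applied to $c_\infty$ and $\{\eta_m\}$: if $\zeta$ lay in a positive-dimensional face, $\mu_{1,2}(\eta_m)$ would be bounded. For the $C^1$ property I would argue by contradiction: assume $H_1 \neq H_2$ are supporting hyperplanes of $\Omega$ at $\zeta$, put $\zeta^- := c_\infty(-\infty)$, fix a supporting hyperplane $P_-$ of $\Omega$ at $\zeta^-$, and set $K' := K \cup \overline{B_1(x_0)}$, so that $(c_\infty, H_i, P_-) \in \gtriple[K'](\Omega)$ and $\eta_m^{-1} c_\infty(m) \in K'$. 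The third estimate of \Cref{prop:automorphism_sv_estimates}, applied to $\eta_m$ with these triples, gives a uniform $D$ with $\norm{\eta_m^{-1}|_{\lift{H_i \cap P_-}}} \le e^{D}\sigma_2(\eta_m^{-1})$ for $i = 1,2$. Since $\sigma_1(\eta_m^{-1})/\sigma_2(\eta_m^{-1}) = e^{\mu_{d-1,d}(\eta_m)} \to \infty$, the elementary bound $\norm{g|_W} \ge \sigma_1(g)\cos\angle(V_1(g), W)$ (with $V_1(g)$ the top right singular direction) forces $\lift{H_i \cap P_-}$ to become asymptotically orthogonal to $V_1(\eta_m^{-1})$; the first estimate of \Cref{prop:automorphism_sv_estimates} likewise shows $V_1(\eta_m^{-1})$ stays bounded away from $\lift{P_-}^{\perp}$, so $V_1(\eta_m^{-1})^{\perp} \cap \lift{P_-}$ has dimension $d-2$. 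As $\lift{H_i \cap P_-} \subseteq \lift{P_-}$ also has dimension $d-2$, along any convergent subsequence $\lift{H_i \cap P_-} = \lim_m \bigl(V_1(\eta_m^{-1})^{\perp} \cap \lift{P_-}\bigr)$, independently of $i$; hence $\lift{H_1 \cap P_-} = \lift{H_2 \cap P_-}$. Since $\zeta \in H_i \setminus P_-$ we have $\lift{H_i} = \lift{\zeta} \oplus \lift{H_i \cap P_-}$, whence $\lift{H_1} = \lift{H_2}$, a contradiction. So $\zeta$ is a $C^1$ extreme point, completing the proof.

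I expect the main obstacle to lie in the second paragraph: extracting, from an arbitrary forward conical relation, a subsequence along which the approximating projective transformations agree, up to a precompact factor, with a tail of the tracking sequence, while simultaneously controlling the limiting bi-infinite geodesic $c_\infty$ and producing a tracking sequence for it of the special form $\gamma_n^{-1}\gamma_{n+m}$ needed to invoke strong uniform regularity. Once that bookkeeping is in place, the remaining steps are fairly direct applications of \Cref{prop:face_sv_gap} and \Cref{prop:automorphism_sv_estimates}.
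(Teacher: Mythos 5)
Your proof is correct, and while it shares the paper's outer scaffolding, the technical core runs in the opposite logical direction and uses different tools. Both arguments reduce to the boundary criterion of \Cref{cor:divisible_c1_extreme_morse}, both use the properness half of \Cref{thm:benzecri} to replace an arbitrary sequence realizing a forward conical relation from $c(\infty)$ by a tail $\gamma_{n_k}^{-1}$ of the tracking sequence, and both extract by diagonalization elements $\eta_m=\gamma_{n(m)}^{-1}\gamma_{n(m)+m}$ tracking the limiting bi-infinite geodesic $c_\infty$ --- which is exactly where the \emph{strong} form of uniform regularity is needed, since $n(m)$ varies with $m$. The paper then argues the contrapositive: starting from a segment or corner at $c_\infty(+\infty)$, it builds explicit rescalings $h_m$ on a projective $2$-plane through the defect, extends them to $g_m$ with $g_m\Omega$ precompact, and uses the two-dimensional extreme eigenspace of $h_m$ together with the minimax formula \eqref{eq:sv_minimax} to show that $\mu_{d-1,d}(\eta_m)$ (resp.\ $\mu_{1,2}(\eta_m)$) stays bounded. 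You instead run the implication forwards: strong uniform regularity plus \Cref{prop:dgk_sv_gap_omega} give $\mu_{1,2}(\eta_m),\mu_{d-1,d}(\eta_m)\to\infty$; extremeness of $\zeta=c_\infty(+\infty)$ is then immediate from \Cref{prop:face_sv_gap}; and $C^1$-ness follows from your singular-direction argument with \Cref{prop:automorphism_sv_estimates}. That last step is the only place needing real care, and it does close: estimate \eqref{item:first_automorphism_sv_estimate} bounds $\cos\angle(V_1(\eta_m^{-1}),\lift{\zeta^-})$ from below once $\sigma_2(\eta_m^{-1})/\sigma_1(\eta_m^{-1})\to 0$, hence the orthogonal projection of a unit vector $v_{1,m}$ spanning $V_1(\eta_m^{-1})$ to $\lift{P_-}$ has norm bounded below, and then $\sin\angle\bigl(u,V_1(\eta_m^{-1})^\perp\cap\lift{P_-}\bigr)=|\langle u,v_{1,m}\rangle|/\norm{\pi_{\lift{P_-}}(v_{1,m})}\to 0$ for every unit $u\in\lift{H_i\cap P_-}$, which with the dimension count forces $\lift{H_1\cap P_-}=\lift{H_2\cap P_-}$ and hence $H_1=H_2$. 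Your route buys a clean reuse of \Cref{prop:face_sv_gap} for the extremeness half and a limit-map-style transversality argument for the $C^1$ half; the paper's route treats the two failure modes symmetrically through $2$-dimensional sectors and avoids any discussion of singular directions.
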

\begin{proof}
  We will prove the contrapositive. We let $c:[0, \infty) \to \Omega$
  be a projective geodesic which is \emph{not} Morse, and let
  $\{\gamma_n\}$ be a sequence tracking $c$. Extend $c$
  (uniquely) to a bi-infinite projective geodesic
  $c:(-\infty, \infty) \to \Omega$ and let $y = c(-\infty)$. 
  
  By \Cref{cor:divisible_c1_extreme_morse}, we know that
  $z = c(+\infty)$ is either forward conically related to a
  non-extreme point in $\dee \Omega$, or else $c(+\infty)$ is forward
  conically related to a non-$C^1$ point in $\dee \Omega$. Since
  $\gamma_n$ tracks $c$, the properness part of the B\'enzecri
  cocompactness theorem tells us that we can use $\gamma_n$ to realize
  the conical relation: there is a subsequence of $\gamma_n$ so that
  $\gamma_n^{-1}(z, y)$ converges to a properly embedded projective
  segment $(z_\infty, y_\infty) \subset \Omega$, so that $z_\infty$ is
  either in the interior of a segment or a non-$C^1$ point. In this
  proof, we will consider the case where $z_\infty$ lies in the
  interior of a nontrivial segment; the case where $z_\infty$ is a
  non-$C^1$ point is nearly identical.

  Now we begin the proof. Let $L$ be a projective line spanned by a
  nontrivial segment in $\bdry$ containing $z_\infty$, and let $P$ be
  the projective 2-plane spanned by $(y_\infty, z_\infty)$ and
  $L$. Fix a basis $\{v_1, v_2, v_3\}$ for $\lift{P}$, so that
  $\Pspan{v_1, v_2} = L$ and $[v_3] = y_\infty$. Then, for each
  $m \in \Nb$, let $\lift{h}_m$ be linear map on $\lift{P}$ defined
  (with respect to the chosen basis) by
  \[
    h_m := \begin{pmatrix}
      e^{-2m}\\
      & e^{-2m}\\
      & & 1
    \end{pmatrix},
  \]
  and let $h_m$ be the corresponding projective transformation on $P$.
  \begin{claim}
    For infinitely many $m \in \Nb$, there exists $g_m \in \PGLdR$ and
    $n = n(m) \in \Nb$ so that each pair
    $(g_m\Omega, g_m\gamma_{n(m)}^{-1}c(n(m) + m))$ lies in a fixed compact subset of the space of pointed domains and
    $g_m|_{P} = h_m$.
  \end{claim}
  \begin{proof}[Proof of Claim]
    Observe that as $m \to \infty$, the sequence of domains
    $h_m(\Omega \cap P)$ converges (after extraction) to some fixed
    properly convex domain in $P$. So, by \cite[Lemma 2.8]{B2003}, we
    may extend each $h_m$ to a linear map $g_m \in \GL(d, \Rb)$
    agreeing with $h_m$ on $\lift{P}$, so that, as $m \to \infty$, a
    subsequence of $g_m\Omega$ converges to a properly convex domain
    $\Omega_\infty$ in $\Pb(\Rb^d)$, containing
    $\lim_{m \to \infty} h_m(\Omega \cap P)$ as a 2-section (see
    \Cref{defn:k_section}).
    
    Now, as $n \to \infty$, we know that (after extracting a
    subsequence) the sequence $\gamma_n^{-1}c(n)$ converges to some
    point in the geodesic $(y_\infty, z_\infty)$. We may fix a
    unit-speed parameterization
    $c_\infty:(-\infty, \infty) \to \Omega$ of this geodesic so that
    $c_\infty(\infty) = z_\infty$ and
    $\gamma_n^{-1}c(n) \to c_\infty(0)$. Then, for any fixed $m$, 
    $\gamma_n^{-1}c(n + m) \to c_\infty(m)$ as $n \to \infty$.

    Fix an auxiliary metric $\dproj$ on $\Pb(\Rb^d)$. Since $\GLdR$
    acts by homeomorphisms on $\Pb(\Rb^d)$, for each fixed $m$ we may
    choose some $\delta$ so that if $\dproj(u, v) < \delta$, then
    $\dproj(g_mu, g_mv) < 1/m$. In particular, for each $m$ we can
    find $n(m)$ so that
    \begin{align}
    \label{eqn:defn_nm_seqn}
      \dproj(g_m\gamma_{n(m)}^{-1}c(n(m) + m), g_mc_\infty(m)) < 1/m.
    \end{align}
    However, by construction, we know that
    $g_m c_\infty(m) = h_m c_\infty(m) = c_\infty(0)$, as $h_m$ acts
    by a translation of Hilbert distance $m$ along
    $(y_\infty,z_\infty)$ in the direction of $y_\infty$. Moreover,
    $c_\infty(0)$ lies in the limit of the 2-sections
    $h_m(P \cap \Omega)$. Thus $c_\infty(0)$ lies in the limiting
    domain $\Omega_\infty$. Then \eqref{eqn:defn_nm_seqn} implies that
    for $m$ large enough, $g_m\gamma_{n(m)}^{-1}c(n(m) + m)$ lies in a fixed compact subset of the domain
    $\Omega_\infty=\lim_{m\to\infty}g_m \Omega$.
  \end{proof}
  
  The last part of the previous claim tells us that the projective
  transformations $g_m$ ``approximate'' the automorphisms 
  $\gamma_{n(m)}^{-1}\gamma_{n(m) + m}$. To be precise, we have:
  \begin{claim}
    There is a fixed compact subset $Q$ of $\PGLdR$ so that, if $g_m$
    and $n(m)$ are as in the previous claim, then
    $g_m\gamma_{n(m)}^{-1}\gamma_{n(m) + m} \in Q$.
  \end{claim}
  \begin{proof}[Proof of Claim]
    Since $\{\gamma_n\}$ tracks $c$,
    $x_m:=\gamma_{n(m) + m}^{-1}c(n(m) + m)$ lies in a fixed compact
    subset of $\Omega$ for all $m$. By the previous claim,
    $g_m \gamma_{n(m)}^{-1}\gamma_{n(m) + m} (\Omega,x_m)=g_m(\Omega,
    \gamma_{n(m)}^{-1}c(n(m)+m))$ lies in a compact subset of the
    space of pointed domains. The claim is then immediate from the
    properness part of the Benz\'ecri compactness \Cref{thm:benzecri}.
  \end{proof}

  Finally, we can show:
  \begin{claim}
    The sequence $\{\gamma_n\}$ is not strongly uniformly
    $(d-1)$-regular.
  \end{claim}
  \begin{proof}[Proof of Claim]
    Since $\{\gamma_n\}$ tracks $c$, \cref{prop:dgk_sv_gap_omega}
    implies that the quantity
    $\mu_{1,d}(\gamma_{n(m)}^{-1}\gamma_{n(m) + m})$ tends to infinity
    as $m \to \infty$. We will show that
    $\mu_{d-1, d}(\gamma_{n(m)}^{-1}\gamma_{n(m) + m})$ is bounded,
    independent of $m$. Owing to the previous claim and
    \Cref{lem:additive_root_bound}, it suffices to show that
    $\mu_{d-1, d}(g_m)$ is bounded.

    To prove this, fix supporting hyperplanes $H_+, H_-$ of $\Omega$
    at $c(\pm \infty)$, and let $H_0 = H_+ \cap H_-$. Using
    \Cref{lem:triples_uniformly_transverse} and
    \Cref{lem:transverse_decomp_compact} (as in the proof of
    \Cref{prop:automorphism_sv_estimates}), we can find a fixed
    compact set $Q \subset \PGL(d, \Rb)$ and elements
    $q_n, q_n' \in Q$ so that any lift of $q_ng_mq_n'$ preserves the
    decomposition
    \[
      c(+\infty) \oplus \lift{H_0} \oplus c(-\infty).
    \]
    Let $\lift{g}_m$ be a lift of $g_n$ agreeing with $\lift{h}_m$ on
    $\lift{P}$, and let $\lift{q}_n$, $\lift{q}_n'$ be lifts of
    $q_n, q_n'$ lying in a fixed compact subset of $\GLdR$. Then,
    \Cref{lem:sigma_12d-1d_estimates} and
    \Cref{lem:additive_root_bound} imply that $\mu_d(\lift{g}_m)$ is
    within uniformly bounded additive error of $-2m$.  In addition,
    since the $e^{-2m}$-eigenspace of $\lift{g}_m$ is at least
    2-dimensional, it follows from the ``minimax'' formula
    \eqref{eq:sv_minimax} for singular values that
    $\sigma_{d-1}(\lift{g}_m) \le e^{-2m}$ and therefore
    $\mu_{d-1,d}(\lift{g}_m) = \mu_{d-1,d}(g_m)$ is uniformly bounded.
  \end{proof}

  This finishes the proof of \Cref{prop:1_morse_ggt_morse_nonuniform}
  in the first case, where $z_\infty$ is not an extreme point. In the
  other case (where $z_\infty$ is not a $C^1$ point) we argue
  similarly, but we instead pick our projective 2-plane $P$ so that
  $z_\infty$ is not a $C^1$ point in $\Omega \cap P$. Then we pick a
  basis $\{v_1, v_2, v_3\}$ so that $v_1$ spans $z_\infty$, and take
  $\lift{h}_m$ to be the sequence of matrices $\lift{h}_m = {\rm diag} (1, e^{2m}, e^{2m}).$ Arguing as in the other case, we see that for a
  sequence of indices $n(m)$, the gap
  $\mu_{1,2}(\gamma_{n(m)}^{-1}\gamma_{n(m) + m})$ is uniformly
  bounded, which implies that $\{\gamma_n\}$ is not strongly uniformly
  $1$-regular.
\end{proof}

\subsubsection{Proof of \Cref{thm:1_morse_ggt_morse}}
  We proceed by contradiction and suppose that there is a sequence of
  projective geodesics $\{c_m\}$ and tracking sequences
  $\{\gamma_{n,m}\}_{n \in \Nb}$, so that
  \[
    \hil(\gamma_{n,m}x_0, c_m(n)) \le R,
  \]
  and each $\{\gamma_{n,m}\}_{n \in \Nb}$ is both strongly uniformly
  $1$-regular and strongly uniformly $(d-1)$-regular (with uniform
  constants), but $c_m$ eventually fails to be $M$-Morse for any given
  Morse gauge $M$. Applying \Cref{prop:contracting_implies_morse} and
  \Cref{prop:slim_contracting}, it then follows that $c_m$ eventually
  fails to be projectively $\delta$-slim, for any given $\delta >
  0$. After extracting a subsequence, we can then assume that each
  $c_m$ fails to be projectively $m$-slim.

  We now argue as in the proof of
  \Cref{lem:halftri_projective_delta_slim}: for each $m$, let
  $x_m, y_m, z_m \in \Omega$ be points such that $x_m, y_m$ lie on the
  image of $c_m$, but $[x_m, y_m]$ is not contained in the
  $m$-neighborhood $N_m([x_m, z_m] \cup [y_m, z_m])$. Then let $w_m$
  be a point in $[x_m, y_m]$ such that
  $\hil(w_m,[x_m, z_m] \cup [y_m, z_m])\geq m$. Choose some $n_m$ so
  that $\gamma_{n_m, m}$ satisfies
  $\hil(\gamma_{n_m,m}^{-1}w_m, x_0) \le R$. After extracting a
  further subsequence, the geodesic rays $\gamma_{n_m, m}^{-1}c_m$
  converge to a bi-infinite projective geodesic $c_\infty$ whose
  endpoints lie in the boundary of a half-triangle. Then by
  \Cref{lem:morse_not_in_halftri}, no sub-ray of $c_\infty$ is Morse.

  For each $m \in \Nb$, define the geodesic sub-ray
  $c_m':[0,\infty) \to \Omega$ of $c_m$ by
  $c'_m(t):=\gamma_{n_m,m}^{-1}c_m(n_m+t)$. Note that the $n_m$-tail
  of the sequence $\{\gamma_{n_m,m}^{-1}\gamma_{n,m}\}_{n \in \Nb}$
  $R$-tracks $c'_m$ with respect to $x_0$. Moreover, as $m\to\infty$,
  $c'_m$ converges to $c_\infty$ uniformly on compact subsets of
  $\Omega$. Then, we can run a diagonalization argument along the
  sequences $\{\gamma_{n_m,m}^{-1}\gamma_{n,m}\}_{n \in \Nb}$ to
  produce a sequence $\{f_n\}$ in $\Gamma$ that tracks
  $c_\infty$. Moreover, $\{f_n\}$ is also strongly uniformly 1-regular
  since the sequences $\{\gamma_{n,m}\}_{n \in \Nb}$ are all strongly
  uniformly 1-regular with uniform regularity constants. Thus, by
  \Cref{prop:1_morse_ggt_morse_nonuniform}, the corresponding sub-ray
  of $c_\infty$ is Morse -- a contradiction. \qed

\section{Regularity at boundary points and singular value gaps}
\label{sec:boundary_regularity_sv}

Our goal in this section is to prove
\Cref{thm:regularity_equiv_regularity}, which connects the linear
algebraic behavior of a tracking sequence in a properly convex domain
$\Omega$ with the regularity of the endpoint of this geodesic in
$\dee \Omega$.

\subsection{Pointwise regularity in convex hypersurfaces}

As we have alluded to previously, the boundary of a properly convex
domain is often nowhere $C^1$, but differentiable in a dense set. We
therefore wish to have a notion of ``$C^\alpha$-regularity'' which
makes sense at a single point in a convex hypersurface. Morally, $x$ is a $C^\alpha$ point if the convex hypersurface $\bdry$ is majorized by the graph of $y \mapsto \norm{y}^\alpha$ near $x$. 

\begin{definition}
  \label{defn:c_alpha_point}
  Let $\Omega$ be a properly convex domain, $x \in \dee \Omega$, and $\alpha > 1$. Fix an Euclidean distance $d$ on an affine chart that contains $\overline{\Omega}$. 
   We say that $x$ is a \emph{$C^\alpha$ point} if there is a neighborhood $U$ of $x$ and a
  constant $C > 0$ so that: for any supporting hyperplane $H$ of
  $\Omega$ at $x$ and any $y \in U \cap \dee \Omega$,
  \begin{equation}
    \label{eq:c_alpha_inequality}
    d(y, H) \le C d(y, x)^\alpha.
  \end{equation}
\end{definition}
\begin{remark} This notion of a $C^\alpha$ point is independent of the choice of the distance
  $d$. Indeed, changing the affine chart or the distance is a bi-Lipschitz map
  in a neighborhood of $x$ and does not impact the definition. We observe further that if the inequality
  \eqref{eq:c_alpha_inequality} holds for some $\alpha > 1$, $\dee \Omega$ has a 
  \emph{unique} supporting hyperplane at $x$, i.e. $x$ is a $C^1$ point.
\end{remark}

One can alternatively define $C^\alpha$ points in $\dee \Omega$ in 
the following equivalent way. Suppose that in some affine chart, the hypersurface $\dee \Omega$ is the graph of a
convex function $f:\Rb^{\dim(\dee \Omega)} \to \Rb$ such that
$x = (0, f(0))$ and there exists a linear map $D_f(0):\Rb^{\dim(\bdry)} \to \Rb$ such that $\ker D_f(0)$ is a supporting hyperplane at $x$. 
We say that $x$ is a $C^\alpha$ point if and only if 
the following limit exists:
\[
  \lim_{y \to 0}\frac{f(y) - f(0) - D_f(0)(y)}{||y||^\alpha}.
\]

Dual to the notion of a $C^\alpha$ point is a \emph{$\beta$-convex
  point}. Just as the $C^\alpha$ property strengthens the condition
that there is a unique supporting hyperplane of $\Omega$ at $x$,
$\beta$-convexity strengthens the condition that $x \in \dee \Omega$
is an extreme point of $\overline{\Omega}$. Morally, $x$ is a $\beta$-convex point if the convex hypersurface $\bdry$ majorizes the graph of $y \mapsto \norm{y}^\beta$ near $x$. 

\begin{definition}
  Let $\Omega$ be a properly convex domain, let $x \in \dee \Omega$,
  and let $\beta < \infty$. We say that $x$ is a \emph{$\beta$-convex
    point} if there is a neighborhood $U$ of $x$ and a constant
  $C > 0$ so that for any $y \in U \cap \dee \Omega$, we have
  \[
    d(y, H) \ge C d(y, x)^\beta.
  \]
\end{definition}

As for $C^\alpha$ regularity, we have an alternative characterization
of $\beta$-convex points. If $U$ is a neighborhood of $0$ $\Rb^n$, and
$f:U \to \Rb$ is a convex function, we say that $f$ is
\emph{$\beta$-convex at} $0 \in U$ if there is a linear map
$A_f:\Rb^n \to \Rb$ so that $f(y) - f(0) > A_f(y)$ for all
$y \in U$, and the limit
\[
  \lim_{y \to 0}\frac{||y||^\beta}{f(y) - f(0) - A_f(y)}
\]
exists. Then a point $x$ in the boundary of a properly convex domain
$\Omega$ is $\beta$-convex if, in coordinates on some (any) affine
chart containing $x$, $\dee \Omega$ is locally the graph of a function
$f:\Rb^{\dim(\dee \Omega)} \to \Rb$ such that $x = (0, f(0))$ and $f$
is $\beta$-convex at $0$.

Note that the linear map $A_f$ defining $\beta$-convexity of the
function $f$ may not be uniquely determined---so in particular a
non-$C^1$ point in $\dee \Omega$ can be a $\beta$-convex
point. However, a $\beta$-convex point in $\dee \Omega$ is always an
extreme point in $\overline{\Omega}$.
\begin{example}
  Consider the graph of the function $f:\Rb \to \Rb$ such that
  $f(x)=x^2$ for $x \geq 0$ and $f(x)=-x$ otherwise. Set $A_f$ to be
  the constant function $0$. Then $f(x)$ is $\beta$-convex at $0$ with
  $\beta=2+\varepsilon$ for any $\varepsilon>0$.

  Now consider a properly convex domain $\Omega \subset \Pb(\Rb^3)$,
  whose boundary in a neighborhood of a point $x \in \bdry$ is
  projectively equivalent to the graph of $f$. Then $x$ is a
  $\beta$-convex point of $\Omega$ that is not $C^1$.
\end{example}

We recall \cref{defn:alpha_beta_point} from the introduction.
\defnbdryregularity*

\subsection{Boundary regularity and uniform regularity}

We will devote the rest of this section to the proof of
\cref{thm:regularity_equiv_regularity} whose statement we recall below. 

\boundaryRegularity*

 The proof is largely an
application of the estimates we proved in \Cref{sec:sv_estimates},
together with a computation in appropriate coordinates (\cref{lem:regularity_inequalities}).

\subsubsection{Choosing coordinates}

For the rest of this section, we will fix the following general
setup. Let $\Omega$ be a properly convex domain, let
$c:[0,\infty) \to \Omega$ be a projective geodesic ray, and let
$\{\gamma_n\}$ be a sequence in $\Aut(\Omega)$ tracking $c$. We will
also denote by $c:(-\infty,\infty) \to \Omega$ the unique bi-infinite
projective geodesic that extends the geodesic ray $c([0,\infty))$. Fix
supporting hyperplanes $H_\pm$ of $\Omega$ at $c(\pm \infty)$ and set
$H_0:=H_+ \cap H_-$.

We fix a coordinate system on the $d$-dimensional affine chart
$A:=\Pb(\Rb^d) \setminus H_-$, chosen so that $c(\infty)$ is the
origin, $H_+$ is the codimension one ``horizontal'' coordinate plane,
and $(c(\infty),c(-\infty))$ is the ``vertical'' ray based at the
origin. 

More formally, let $W_0,W_+,W_- \subset \Rb^d$ be the linear subspaces
such that $\Pb(W_*)=H_*$ for $*\in \{\pm,0\}$. Fix representatives
$v_{\pm} \in \Rb^d$ for $c(\pm \infty)$ in $\Pb(\Rb^d)$, chosen so
that the image of $c$ is the projectivization of
$\{tv_+ + sv_- : s, t > 0\}$. Consider the identification
$\Psi: W_- \to A$ defined by
\[
  \Psi(v)=[v + v_+].
\]
Note that $\Psi$ is a diffeomorphism such that $\Psi(0)=c(\infty)$,
$\Psi(\Rb_{>0} v_-)=c(\Rb)$, $\Psi(W_0)=H_+ \cap A$. So the
decomposition of $W_-=W_0 \oplus [v_-]$ into ``horizontal'' $W_0$ and
``vertical'' $[v_-]$ corresponds to making $A \cap H_+$ ``horizontal''
and $A \cap \Pspan{c(\infty),c(-\infty)}$ ``vertical''. Note that the
map $\Psi^{-1}$ identifies open neighborhoods $U$ of $c(+\infty)$ in
$H_+$ with open subsets of $W_0$ containing the origin.

The set $\Psi^{-1}(\bdry \cap A)$ is a convex hypersurface in $W_-$
passing through the origin in $W_-$, with tangent hyperplane
$W_0$. So, we can make the following definition.
\begin{definition}
  Let $f:W_0 \to \Rb$ be the function such that the image of the
  mapping $x \mapsto \Psi(x,f(x))$ is $\bdry \cap A$.
\end{definition}

\begin{remark}
  \label{rem:exposed_unique_minimum}
  As $\partial \Omega \cap A$ is a convex hypersurface, $f$ is a
  convex function.  The assumption that $c(\infty)$ is a $C^1$ point
  ensures that $f$ is differentiable at $0$. The
  assumption that $c(\infty)$ is an exposed extreme point ensures that $f$ is uniquely minimized at $0$.
\end{remark}

\begin{figure}[h]
  \centering
\begingroup%
  \makeatletter%
  \providecommand\color[2][]{%
    \errmessage{(Inkscape) Color is used for the text in Inkscape, but the package 'color.sty' is not loaded}%
    \renewcommand\color[2][]{}%
  }%
  \providecommand\transparent[1]{%
    \errmessage{(Inkscape) Transparency is used (non-zero) for the text in Inkscape, but the package 'transparent.sty' is not loaded}%
    \renewcommand\transparent[1]{}%
  }%
  \providecommand\rotatebox[2]{#2}%
  \newcommand*\fsize{\dimexpr\f@size pt\relax}%
  \newcommand*\lineheight[1]{\fontsize{\fsize}{#1\fsize}\selectfont}%
  \ifx\svgwidth\undefined%
    \setlength{\unitlength}{224.03468937bp}%
    \ifx\svgscale\undefined%
      \relax%
    \else%
      \setlength{\unitlength}{\unitlength * \real{\svgscale}}%
    \fi%
  \else%
    \setlength{\unitlength}{\svgwidth}%
  \fi%
  \global\let\svgwidth\undefined%
  \global\let\svgscale\undefined%
  \makeatother%
  \begin{picture}(1,0.7865761)%
    \lineheight{1}%
    \setlength\tabcolsep{0pt}%
    \put(0,0){\includegraphics[width=\unitlength,page=1]{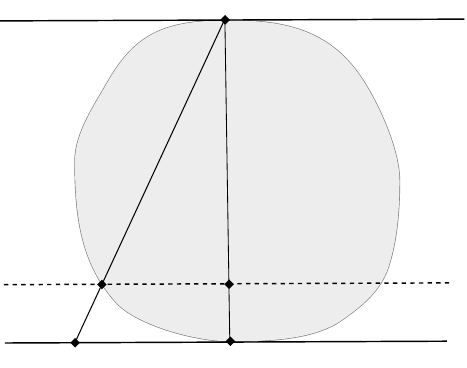}}%
    \put(0.43537885,0.01046206){\color[rgb]{0,0,0}\makebox(0,0)[lt]{\lineheight{1.25}\smash{\begin{tabular}[t]{l}$c(+\infty)$\end{tabular}}}}%
    \put(0.19498829,0.22769457){\makebox(0,0)[lt]{\lineheight{1.25}\smash{\begin{tabular}[t]{l}$y_z$\end{tabular}}}}%
    \put(0.50680093,0.20143695){\makebox(0,0)[lt]{\lineheight{1.25}\smash{\begin{tabular}[t]{l}$c(t_z)$\end{tabular}}}}%
    \put(0.44320754,0.7607684){\color[rgb]{0,0,0}\makebox(0,0)[lt]{\lineheight{1.25}\smash{\begin{tabular}[t]{l}$c(-\infty)$\end{tabular}}}}%
    \put(0.9102346,0.2067452){\makebox(0,0)[lt]{\lineheight{1.25}\smash{\begin{tabular}[t]{l}$H_{y_z}$\end{tabular}}}}%
    \put(0.89438726,0.08476611){\makebox(0,0)[lt]{\lineheight{1.25}\smash{\begin{tabular}[t]{l}$H_+$\end{tabular}}}}%
    \put(0.90064873,0.69870582){\makebox(0,0)[lt]{\lineheight{1.25}\smash{\begin{tabular}[t]{l}$H_-$\end{tabular}}}}%
    \put(0.13920061,0.00620131){\color[rgb]{0,0,0}\makebox(0,0)[lt]{\lineheight{1.25}\smash{\begin{tabular}[t]{l}$z$\end{tabular}}}}%
  \end{picture}%
\endgroup%

  \caption{Illustration of the function $h(x) = t_{\Psi(x)}$ in
    \Cref{defn:height_function}. In this affine chart, the
    intersection $H_0 = H_+ \cap H_-$ is the point at infinity
    corresponding to the ``horizontal direction.''}
    \label{fig:defn_h}
\end{figure}

Next, we define a function $h$ whose level sets determine annular neighborhoods of $c(+\infty)$ in the hyperplane $H_+$. 

\begin{definition}[see \cref{fig:defn_h}]
  \label{defn:height_function}
  For each point $z \in H_+-\{c(\infty)\}$ which is sufficiently close
  to $c(\infty)$, let $y_z$ be the unique point in $\bdry$ such that
  \[
    \Pspan{y_z, c(-\infty)} = \Pspan{z, c(-\infty)}.
  \]
  Let $H_{y_z}$ be the projective hyperplane spanned by $y_z$ and
  $H_0=H_+ \cap H_-$. Then $H_{y_z} \cap c(\Rb)$ is a singleton set
  $\{c(t_z)\}$ for some $t_z \in \Rb$.
  
  Let $U$ be a neighborhood of the origin in $W_0$. We define a
  function $h:U \minus \{0\} \to \Rb$ as follows: for any
  $x \in U \minus \{0\}$, define $h(x) = t_{\Psi(x)}$.
\end{definition}

\begin{remark}\
\begin{enumerate}
\item The intersection $H_{y_z} \cap c(\Rb)$ is always a singleton set
  for $z \in H_+-\{c(\infty)\}$. Indeed, since
  $y_z \in \bdry -\{c(-\infty)\}$, this can only possibly fail if
  $H_{y_z}$ is a supporting hyperplane of $\Omega$ at $y_x$. But if
  this is the case, then the projective segment $[y_z, c(+\infty)]$
  lies in $\bdry$. Since $c(+\infty)$ is extreme and exposed, this
  implies that $y_z=c(+\infty)=z$.
\item We always have $h(U-\{0\})=(a,\infty)$ for some $a \in \Rb$. So,
  by reparameterizing $c$, we can assume that the image of $h$ is
  $(0, \infty)$. Further, as $x \in U-\{0\}$ tends towards $0$, the
  function $h$ tends to $\infty$.
\item The definition of the function $h$ does not require
  $c(\infty)$ to be a $C^1$ point -- it makes sense whenever
  $c(\infty)$ is an extreme and exposed point in $\bdry$. 
\end{enumerate}
\end{remark}

Now we define annular neighborhoods of $c(\infty)$ using $h$.
\begin{definition}
  \label{defn:S_n_family}Suppose $U$ is a sufficiently small
  neighborhood of $0$ in $W_0$ and $h$ is as in
  \cref{defn:height_function} above. We define a family
  $\{S_n\}_{n\in \Nb}$ of subsets of $U$ by $S_n:=h^{-1}([n-1,
  n])$. Note that $\cup_{n\in \Nb}{S_n}=U-\{0\}$.
\end{definition}

\subsubsection{The key lemma} The lemma below gives the key estimates we need for the proof of
\Cref{thm:regularity_equiv_regularity}.
\begin{lemma}
  \label{lem:regularity_inequalities} Suppose $U$ is a sufficiently small neighborhood of $0$. Then, there is a constant $B > 0$ satisfying the following: for any
  $n \in \Nb$ and any $x \in S_n$, we have
  \begin{align}
    \label{eq:vertical_ineq}
    -\mu_{1,d}(\gamma_n) - B \le \log f(x) \le -\mu_{1,d}(\gamma_n) + B,\\
    \label{eq:horizontal_ineq}
    -\mu_{1,d-1}(\gamma_n) - B \le \log||x|| \le -\mu_{1,2}(\gamma_n) + B.
  \end{align}
  In addition, for any $n \in \Nb$, there are points 
  $x_2(n), x_{d-1}(n)$ in $S_n$ satisfying
  \begin{align}
    \label{eq:horizontal_ineq_2}
    \log||x_{d-1}(n)|| \le -\mu_{1,d-1}(\gamma_n) + B,\\
    \label{eq:horizontal_ineq_3}
    \log||x_2(n)|| \ge -\mu_{1,2}(\gamma_n) - B.
  \end{align}
\end{lemma}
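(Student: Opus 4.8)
The plan is to apply \Cref{prop:automorphism_sv_estimates} to each $\gamma_n$ and reduce all four inequalities to estimates on the norms $\norm{\gamma_n^{-1}v_+}$, $\norm{\gamma_n^{-1}v_-}$, $\norm{\gamma_n^{-1}|_{W_0}}$ and $\conorm{\gamma_n^{-1}|_{W_0}}$, where $v_\pm$ are the lifts of $c(\pm\infty)$ fixed above and $W_0 = \lift{H_0}$. With the coordinate system of this section, one has the clean identities $c(t_z) = [v_+ + f(x)v_-]$, $\Psi(x) = [v_+ + x]$ and $y_{\Psi(x)} = [v_+ + x + f(x)v_-]$ for $x \in W_0$. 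Fix $K := \overline{B_R(x_0)}$, enlarged if necessary so that $c(0)\in K$, hence $(c,H_+,H_-)\in\gtriple[K](\Omega)$; set $K' := \overline{B_{R+1}(x_0)}$; and let $\eps_0 > 0$ be the uniform transversality constant furnished by \Cref{lem:triples_uniformly_transverse} for the compact set $\gtriple[K'](\Omega)$. For $n$ below a fixed bound the automorphism $\gamma_n$ lies in a compact subset of $\Aut(\Omega)$, so all four inequalities hold after enlarging $B$; thus I assume $n$ large throughout. Since $\{\gamma_n\}$ $R$-tracks $c$, applying \Cref{prop:automorphism_sv_estimates} to $\gamma_n$ with $t=n$ gives, up to an additive error uniform in $n$,
\begin{align*}
  \log\norm{\gamma_n^{-1}v_+} &\approx \mu_d(\gamma_n^{-1}) = -\mu_1(\gamma_n), & \log\norm{\gamma_n^{-1}v_-} &\approx \mu_1(\gamma_n^{-1}) = -\mu_d(\gamma_n),\\
  \log\norm{\gamma_n^{-1}|_{W_0}} &\approx \mu_2(\gamma_n^{-1}) = -\mu_{d-1}(\gamma_n), & \log\conorm{\gamma_n^{-1}|_{W_0}} &\approx \mu_{d-1}(\gamma_n^{-1}) = -\mu_2(\gamma_n).
\end{align*}
Consequently $-\mu_{1,d}(\gamma_n)\approx\log\tfrac{\norm{\gamma_n^{-1}v_+}}{\norm{\gamma_n^{-1}v_-}}$, $-\mu_{1,d-1}(\gamma_n)\approx\log\tfrac{\norm{\gamma_n^{-1}v_+}}{\norm{\gamma_n^{-1}|_{W_0}}}$ and $-\mu_{1,2}(\gamma_n)\approx\log\tfrac{\norm{\gamma_n^{-1}v_+}}{\conorm{\gamma_n^{-1}|_{W_0}}}$, so it suffices to match $f(x)$ and $\norm{x}$ to these ratios of norms.

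For the vertical estimate \eqref{eq:vertical_ineq}: if $x\in S_n$ then $t_z := h(x)\in[n-1,n]$, so $c(t_z)$ is within Hilbert distance $1$ of $c(n)$ and hence $\gamma_n^{-1}c(t_z)\in K'$. Writing $w_n := \gamma_n^{-1}v_+ + f(x)\gamma_n^{-1}v_-$ (a lift of $\gamma_n^{-1}c(t_z)$), the point $[w_n]$ lies in the compact set $K'\subset\Omega$ and the endpoints $\gamma_n^{-1}c(\pm\infty)$ of the geodesic through it carry uniformly transverse supporting hyperplanes (Lemma \ref{lem:triples_uniformly_transverse} on $\gtriple[K'](\Omega)$); a standard cross-ratio / transversality argument then forces the two summands to be comparable, $\norm{\gamma_n^{-1}v_+}\asymp f(x)\norm{\gamma_n^{-1}v_-}$, with constants uniform in $n$. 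Combining this with the norm estimates above yields $\log f(x) = -\mu_{1,d}(\gamma_n) + O(1)$. In particular $\norm{w_n}\asymp\norm{\gamma_n^{-1}v_+}$, which I use below.

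For the horizontal lower bound in \eqref{eq:horizontal_ineq}: the point $\gamma_n^{-1}y_{\Psi(x)} = [w_n + \gamma_n^{-1}x]$ lies on $\bdry$, while $[w_n]\in K'$ is at uniform $\dproj$-distance from $\bdry$; since $\gamma_n^{-1}x\in\gamma_n^{-1}W_0$ has a transverse component of size $\gtrsim\norm{\gamma_n^{-1}x}$ (uniform transversality of $\gamma_n^{-1}v_+\oplus\gamma_n^{-1}W_0\oplus\gamma_n^{-1}v_-$), comparing $\dproj(\gamma_n^{-1}y_{\Psi(x)},[w_n])$ with $\norm{\gamma_n^{-1}x}/\norm{w_n}$ gives $\norm{\gamma_n^{-1}x}\gtrsim\norm{w_n}\asymp\norm{\gamma_n^{-1}v_+}$. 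Since $\norm{\gamma_n^{-1}x}\le\norm{\gamma_n^{-1}|_{W_0}}\norm{x}$, this yields $\norm{x}\gtrsim\norm{\gamma_n^{-1}v_+}/\norm{\gamma_n^{-1}|_{W_0}}\asymp e^{-\mu_{1,d-1}(\gamma_n)}$, i.e. the lower bound. The symmetric claim I need for the upper bound is $\norm{\gamma_n^{-1}x}\lesssim\norm{w_n}$ for all $x\in S_n$; granting it, $\norm{x}\le\norm{\gamma_n^{-1}x}/\conorm{\gamma_n^{-1}|_{W_0}}\lesssim\norm{\gamma_n^{-1}v_+}/\conorm{\gamma_n^{-1}|_{W_0}}\asymp e^{-\mu_{1,2}(\gamma_n)}$, giving the upper bound. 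Finally, the points $x_{d-1}(n)$ and $x_2(n)$ come for free: along the ray through a unit vector $e\in W_0$ the function $h$ is continuous and strictly monotone and tends to $+\infty$ as one approaches the origin, so for $n$ large $S_n$ meets this ray in a single point $r_n e$; taking $e$ with $\norm{\gamma_n^{-1}e}=\norm{\gamma_n^{-1}|_{W_0}}$ and using $\norm{\gamma_n^{-1}x}\lesssim\norm{w_n}$ gives $\norm{x_{d-1}(n)}=r_n\lesssim e^{-\mu_{1,d-1}(\gamma_n)}$, while taking $e$ with $\norm{\gamma_n^{-1}e}=\conorm{\gamma_n^{-1}|_{W_0}}$ and using $\norm{\gamma_n^{-1}x}\gtrsim\norm{w_n}$ gives $\norm{x_2(n)}=r_n\gtrsim e^{-\mu_{1,2}(\gamma_n)}$.

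The main obstacle is the claim $\norm{\gamma_n^{-1}x}\lesssim\norm{w_n}$ for $x\in S_n$, and this is where the hypothesis that $c(\infty)$ is an \emph{exposed} $C^1$ extreme point is essential (note it passes to every $\gamma_n^{-1}c(\infty)$, since exposedness is $\Aut(\Omega)$-invariant). The idea is that $\gamma_n^{-1}\Psi(x) = [\gamma_n^{-1}v_+ + \gamma_n^{-1}x]$ lies on the supporting hyperplane $\gamma_n^{-1}H_+$, which, by exposedness, meets $\overline{\Omega}$ only at $\gamma_n^{-1}c(\infty)$; if $\norm{\gamma_n^{-1}x}\gg\norm{w_n}$ then $\dproj(\gamma_n^{-1}\Psi(x),\gamma_n^{-1}y_{\Psi(x)})\lesssim\norm{w_n}/\norm{\gamma_n^{-1}x}\to 0$, so $\gamma_n^{-1}\Psi(x)$ would be arbitrarily close to the point $\gamma_n^{-1}y_{\Psi(x)}\in\bdry$, while by uniform transversality it stays at angular distance $\gtrsim\eps_0$ from $\gamma_n^{-1}c(\infty)$. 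Passing to the limit along a subsequence (using that the recentred geodesics $\gamma_n^{-1}c$ with their hyperplanes lie in the compact set $\gtriple[K'](\Omega)$, by Benz\'ecri compactness, \Cref{thm:benzecri}) produces a point of $\overline{\Omega}$ on a supporting hyperplane $H^\infty_+$ distinct from its point of contact, hence a nontrivial segment in $\dee\Omega$ through the limit of $\gamma_n^{-1}c(\infty)$; ruling this out (again via the exposed/extreme structure of the boundary points along the orbit, together with the vertical estimate) gives the contradiction. I expect the careful treatment of uniformity in this last step — i.e. keeping the implied constants independent of $n$ — to be the most delicate part of the argument.
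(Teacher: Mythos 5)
Your setup and roughly three-quarters of the argument track the paper's proof closely. Reducing everything to \Cref{prop:automorphism_sv_estimates} applied to $\gamma_n$, proving \eqref{eq:vertical_ineq} by observing that $\gamma_n^{-1}c(h(x))=[\gamma_n^{-1}v_+ + f(x)\gamma_n^{-1}v_-]$ lies in a fixed compact subset of $\Omega$ while the directions $\gamma_n^{-1}v_\pm$ stay uniformly transverse, deducing the left inequality of \eqref{eq:horizontal_ineq} from $\norm{\gamma_n^{-1}x}\le\norm{\gamma_n^{-1}|_{W_0}}\,\norm{x}$, and extracting $x_2(n),x_{d-1}(n)$ from the fact that each level set of $f$ surrounds the origin — all of this is correct and is essentially the paper's argument written with lifted vectors in place of the normalized affine maps $\phi(g_n)=g_n/\lambda_+(g_n)$ that the paper uses.

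The gap is exactly where you flag it, and it is fatal as proposed: the upper bound $\norm{\gamma_n^{-1}x}\lesssim\norm{\gamma_n^{-1}v_+}$ cannot be obtained by the contradiction you sketch. The configuration you derive — a point of $\overline{\Omega}$ on the limiting supporting hyperplane $H_+^\infty=\lim\gamma_n^{-1}H_+$ distinct from its contact point, hence a nontrivial segment of $\dee\Omega$ through $z^\infty=\lim\gamma_n^{-1}c(\infty)$ — is not contradictory. By \Cref{prop:morse_hilbert_equivalences} (conditions ($SC$)/($HT$)) and \Cref{cor:morse_not_in_segment}, this configuration occurs \emph{precisely} when $c$ fails to be Morse, and \Cref{thm:regularity_equiv_regularity} (hence this lemma) must apply to non-Morse rays: that is the entire content of \Cref{thm:not_strongly_uniform} and Section 7, where the theorem is invoked for a ray whose forward conical limits do land in segments. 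The hypothesis that $c(\infty)$ is an exposed $C^1$ extreme point is a condition at $c(\infty)$ in $\Omega$; it is invariant under each individual $\gamma_n^{-1}$, but it does \emph{not} pass to the Hausdorff limit of the recentred configuration, so "the exposed/extreme structure of the boundary points along the orbit" gives you nothing about $z^\infty$. If your argument worked it would prove that every ray satisfying the lemma's hypotheses is Morse, which is false. The paper's route to the upper bound is different in kind: it works with the domains $g_n\Omega=k_n\Omega$ (which range in a fixed compact family of domains, all supported by $H_\pm$ at $c(\pm\infty)$) and the convex functions $f_n$ whose graphs are $\dee(g_n\Omega)\cap A$; the already-established \emph{two-sided} bound $f_n(x)\in[1/C,C]$ from \eqref{eq:vertical_ineq} then pins the rescaled points $\phi(g_n)x$ inside a sublevel set of the limiting convex function $f_\infty$, which is where the boundedness comes from. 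You would need to supply an argument of this type (or otherwise exploit the two-sided vertical bound together with convexity of the rescaled boundaries) for the right-hand inequality of \eqref{eq:horizontal_ineq} and for \eqref{eq:horizontal_ineq_2}, both of which currently rest on the unproven claim.
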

\begin{proof}
  Note that there are two disjoint properly convex cones in $\Rb^d$
  that project to $\Omega \subset \Pb(\Rb^d)$, each of which is the
  negative of the other. We fix one of them, denoted by $\wt{\Omega}$,
  and call it the cone above $\Omega$.  For each
  $\gamma_n \in \Aut(\Omega)$, we fix a lift $\wt{\gamma_n}$ in
  $\GLdR$ that preserves $\wt{\Omega}$. By definition
  $\mu_{i,j}(\gamma_n)=\mu_{i,j}(\wt{\gamma_n})$, so our estimates
  will not be affected by switching between $\gamma_n$ and its
  lifts. So, by slight abuse of notation, we will henceforth denote
  the lifts by $\gamma_n$.

  We can use \Cref{lem:triples_uniformly_transverse} and
  \Cref{lem:transverse_decomp_compact} to find a fixed compact subset
  $Q \subset \GLdR$ and a sequence $\{k_n\}$ in $Q$ so that for every
  $n$, the group element $g_n := k_n\gamma_n^{-1}$ preserves the
  decomposition $c(\infty) \oplus \wt{H_0} \oplus
  c(-\infty)$. Then, we can apply
  \Cref{prop:automorphism_sv_estimates} and
  \Cref{lem:additive_root_bound} to see that there is a positive real
  number $D > 0$ so that for every $n$, we have
  \begin{align}
  \label{eqn:singular_value_comparison_application_1}
    |\log(||k_n\gamma_n^{-1}|_{c(\infty)}||) - \mu_d(\gamma_n^{-1})| <
    D,\\
    \label{eqn:singular_value_comparison_application_2}
    |\log(||k_n\gamma_n^{-1}|_{c(-\infty)}||) - \mu_1(\gamma_n^{-1})|
    < D,\\
    \label{eqn:singular_value_comparison_application_3}
    |\log(||k_n\gamma_n^{-1}|_{\wt{H_0}}||) - \mu_2(\gamma_n^{-1})| <
    D,\\
    \label{eqn:singular_value_comparison_application_4}
    |\log(\conorm{k_n\gamma_n^{-1}|_{\wt{H_0}}}) - \mu_{d-1}(\gamma_n^{-1})| <
    D.
  \end{align}
  Let $\lambda_\pm(g_n)$ be the eigenvalues of $g_n$ on
  $c(\pm \infty)$, i.e. $\lambda_{\pm}(g_n)=||k_n\gamma_n^{-1}|_{c(\pm\infty)}||$. Since each group element $g_n$ preserves $H_-$,
  $g_n$ acts by an affine map in our chosen affine chart
  $A = \Pb(\Rb^d) - H_-$. Via the identification $\Psi: W_- \to A$,
  the action of $g_n$ on $A$ (i.e. the map
  $\Psi^{-1}\circ g_n \circ \Psi$) is identified with the linear map
  $\phi(g_n):W_- \to W_-$ given by
  \begin{equation}
    \label{eq:gn_linear_formula}
    \phi(g_n)v = \frac{g_nv}{\lambda_+(g_n)}.
  \end{equation}
  
  Now we analyze the linear map $\phi(g_n)$. With respect
  to the decomposition $W_-=W_0 \oplus [v_-]$, we can write
  $\phi(g_n)$ as
  \begin{equation}
    \label{eq:gn_coordinate_formula}
    \phi(g_n)(x, y) = \left(\frac{g_nx}{\lambda_+(g_n)},
      \frac{\lambda_-(g_n)}{\lambda_+(g_n)}y\right)
  \end{equation}
  where $x \in W_0$ and $y \in [v_-]$. 
  
  Now, for each $n$, consider the
  intersection $g_n\Omega \cap A = k_n\Omega \cap A$. In coordinates given by $\Psi$, $\partial \Omega \cap A$ is the graph of the
  function $f:W_0 \to \Rb$.  Then, in the $\Psi$-coordinates,
  $g_n \partial \Omega \cap A$ is the graph of the convex function
  $f_n:W_0 \to \Rb$ given by
  \[
    f_n(v) =
    \frac{\lambda_-(g_n)}{\lambda_+(g_n)}
    f\left(\frac{g_n^{-1}v}{\lambda_+(g_n^{-1})}\right).
  \]
  This holds because
  $\phi(g_n)(x,f(x))=\left(\frac{g_nx}{\lambda_+(g_n)},\frac{\lambda_-(g_n)}{\lambda_+(g_n)}f(x)\right)$
  and $\lambda_+(g_n)=\frac{1}{\lambda_+(g_n^{-1})}$.  Further, as the
  action of $g_n$ preserves $H_+$, the graph of $f_n$ is a convex
  hypersurface through the origin with a supporting hyperplane
  $\{(w, 0) : w \in W_0\}$ at the origin.

  \begin{claim}
    \label{claim:vertically_bounded}
    There exists a constant $0< C < \infty$ such that: for any
    $n \in \Nb$ and any $x \in S_n$,
    \begin{align*}
      1/C \le \frac{\lambda_-(g_n)}{\lambda_+(g_n)}f(x) < C.
    \end{align*}
  \end{claim}
  \begin{proof}[Proof of Claim] To prove this claim, fix $x \in S_n$ for some $n \in \Nb$. Letting
  $h$ be the function from \Cref{defn:height_function}, the point
  $c(h(x))$ has coordinates $(0,f(x))$ in the coordinates given by
  $\Psi$. Thus, applying the coordinate formula
  \eqref{eq:gn_coordinate_formula} for $\phi(g_n)$, we see that
  $g_nc(h(x))$ has coordinates
  \[
    \left(0, \frac{\lambda_-(g_n)}{\lambda_+(g_n)}f(x)\right).
  \]

  Since $\{\gamma_n\}$ tracks $c$, there is compact set
  $K \subset \Omega$ such that $\gamma_n^{-1}c(n) \in K$ for any
  $n \in \Nb$.  Letting $K' \subset \Omega$ be the closed
  1-neighborhood of $K$ in the Hilbert metric $\hil$, we see that
  $\gamma_n^{-1}c(h(x)) \in K'$ since $h(x) \in [n-1,n]$.

  Then, as $g_n = k_n\gamma_n^{-1}$ for $k_n$ in a compact subset
  $Q \subset \GLdR$, we must have
  \[
    g_nc(h(x)) \in \bigcup_{q \in Q} qK'.
  \]
  
  Each $k_n$ takes $\Omega$ to some domain in a compact family of
  domains that are all supported by the hyperplanes $H_-, H_+$. So, we
  may assume that $Q$ is chosen such that the union
  $\bigcup_{q \in Q} qK'$ lies in the set
  $\Pb(\Rb^d) \minus (H_- \cup H_-)$. This means that $g_nc(h(x))$
  lies in a fixed compact subset of
  $A \minus H_+ = \Pb(\Rb^d) \minus (H_+ \cup H_-)$ which does not
  depend on $n$. As $H_+$ is identified with the horizontal coordinate
  plane in our chosen coordinates on the affine chart $A$, this means
  that the vertical coordinate of $g_nc(h(x))$ is bounded above and
  below, establishing the claim.
    \end{proof}

  Now we explain how the above \cref{claim:vertically_bounded} immediately implies the inequality
  \eqref{eq:vertical_ineq}.  Note that
  $-\mu_{1,d}(\gamma_n)=\mu_{1,d}(\gamma_n^{-1})$. Then, applying
  inequalities \eqref{eqn:singular_value_comparison_application_1} and
  \eqref{eqn:singular_value_comparison_application_2} above, we see
  that there is a constant $B$ (independent of $n$) so that
  \[
    -\mu_{1,d}(\gamma_n) - B \le \log f(x) \le -\mu_{1,d}(\gamma_n) + B,
  \]
  which is the inequality \eqref{eq:vertical_ineq} we wanted to show.
  
  The argument for the proof of the second inequality
  \eqref{eq:horizontal_ineq} is similar. We first claim the following:
  \begin{claim}
    There exists a constant $0<C'<\infty$ such that: for any
    $n \in \Nb$ and any $x \in S_n$,
    \begin{equation}
      \label{eq:horizontal_linear_bound}
      1/C' < ||\phi(g_n)x|| < C'.
    \end{equation} 
  \end{claim}
  \begin{proof}[Proof of Claim] Since each $S_n$ is a compact subset of $W_0$ not containing the
  origin, we can prove this claim by showing that, for any sequence
  $x_n \in S_n$, no subsequence of $\phi(g_n)x_n$ tends towards zero
  or infinity.

  Consider any such sequence $x_n \in S_n$. Let $\Omega_n:=g_n\Omega$. We know that the point with
  coordinates $(x_n, f(x_n))$ lies on the hypersurface $\bdry \cap A$,
  so the points
  \[
    (\phi(g_n)x_n, f_n(x_n))=\phi(g_n)(x_n,f(x_n))
  \]
  lies on the convex hypersurface $\partial \Omega_n \cap A$. Here, we are using the notation of \eqref{eq:gn_coordinate_formula} so that $\phi(g_n)x_n=\frac{g_nx_n}{\lambda_+(g_n)}$ (as $x_n \in W_0$).

  As each domain $\Omega_n$ lies in a fixed compact subset of the
  space of properly convex domains, we may extract a subsequence so
  that the domains $\Omega_n$ converge to a properly convex domain
  $\Omega_\infty$, which is supported by the hyperplanes $H_{\pm}$ at
  $c(\pm \infty)$. Thus, the hypersurfaces $\partial \Omega_n \cap A$
  converge to the convex hypersurface $\partial \Omega_\infty \cap A$.
  The convex functions $f_n:W_0 \to \Rb$ then converge pointwise to a
  convex function $f_\infty:W_0 \to \Rb$, whose graph (in
  $\Psi$-coordinates on $A$) is the hypersurface
  $\partial \Omega_\infty \cap A$.

  After extracting a further subsequence, we can assume that the
  points $$(\phi(g_n)x_n, f_n(x_n))$$ converge to a point in
  $\partial \Omega_\infty$ (a priori, this limit may not lie in the
  affine chart $A$). However, the previous claim gives us
  $\frac{1}{C} \leq f_n(x_n) \leq C$, i.e. there exist uniform upper
  and lower bounds on the vertical coordinates $f_n(x_n)$ of these
  points. In particular, this implies that the limit of the sequence
  $\{(\phi(g_n)x_n,f_n(x_n))\}$ lies on the graph of $f_\infty$, and
  that the limit of the sequence $\{\phi(g_n)x_n\}$ lies in the subset
  $f_\infty^{-1}([1/C, C])$. Since $f_\infty$ is convex,
  $\{\phi(g_n)x_n\}$ lies in a uniformly bounded subset of
  $W_0 \setminus \{0\}$. This proves the claim.
  \end{proof}

  We can now use the claim to show inequality
  \eqref{eq:horizontal_ineq}. For any $x \in W_0$ and any
  $g \in \GL(W_0)$, by definition we have $||x|| \cdot \conorm{g} \le ||g x|| \le ||x|| \cdot ||g||.$ Since $\phi(g_n)$ is a linear map preserving $W_0$, 
  this yields
  \begin{equation}
  \label{eq:W0_norm_conorm}
    ||x|| \cdot \conorm{\phi(g_n)|_{W_0}} \le ||\phi(g_n) x|| \le ||x||
    \cdot ||\phi(g_n)|_{W_0}||, 
  \end{equation}
  for every $n$. Then by applying our formula \eqref{eq:gn_linear_formula} for
  $\phi(g_n)$ we obtain
  \begin{equation}
    \label{eq:norm_bounds}
    ||x||\frac{\conorm{g_n|_{W_0}}}{\lambda_+(g_n)} \le ||\phi(g_n)x||
    \le ||x|| \frac{||g_n|_{W_0}||}{\lambda_+(g_n)}.
  \end{equation}
  Now, the inequalities
  \eqref{eqn:singular_value_comparison_application_3} and
  \eqref{eqn:singular_value_comparison_application_4} tell us that
  $\log||g_n|_{W_0}||$ and $\log\conorm{g_n|_{W_0}}$ are within
  uniformly bounded additive error of $-\mu_{d-1}(\gamma_n)$ and
  $-\mu_2(\gamma_n)$ respectively, and
  \eqref{eqn:singular_value_comparison_application_1} tells us that
  $\log \lambda_+(g_n)$ is within uniformly bounded additive error of
  $-\mu_1(\gamma_n)$. Putting this together with
  \eqref{eq:horizontal_linear_bound} and \eqref{eq:norm_bounds}, we
  see that there is another uniform constant $B > 0$ so that
  \[
    -\mu_{1,d-1}(\gamma_n) - B \le \log||x|| \le -\mu_{1,2}(\gamma_n)
    + B.
  \]
  This establishes inequality \eqref{eq:horizontal_ineq}.

  Now we prove the last two inequalities of \cref{lem:regularity_inequalities}. Observe that each $S_n$ contains
  $h^{-1}(n)$, which is a level set of the convex function $f$. Since
  $f$ is uniquely minimized at the origin (see
  \Cref{rem:exposed_unique_minimum}), this means that each $S_n$
  contains the boundary of a convex open ball in $W_0$, containing the
  origin. Then restrict the continuous function $\phi(g_n)$ to each $S_n$, consider \eqref{eq:W0_norm_conorm}, and recall the definition of $\conorm{\cdot}$ and $\norm{\cdot}$. It is clear that for each $n$,  we can find
  a pair of points $x_2=x_2(n)$ and $x_{d-1}=x_{d-1}(n)$ in $S_n$ so that when $x = x_2$ (resp. 
  $x=x_{d-1}$), the left-hand (resp. right-hand) inequality in \eqref{eq:W0_norm_conorm} is actually an equality. In particular,
  this implies that the corresponding
  inequalities in \eqref{eq:norm_bounds} are equalities when $x=x_2$ or $x_{d-1}$. Then, we
  again use the fact that $\log||g_n|_{W_0}||$ and
  $\log\conorm{g_n|_{W_0}}$ are within bounded error of
  $-\mu_{d-1}(\gamma_n)$ and $-\mu_2(\gamma_n)$ to establish
  \eqref{eq:horizontal_ineq_2} and \eqref{eq:horizontal_ineq_3}.
\end{proof}
We will now use \cref{lem:regularity_inequalities} to finish the proof of \cref{thm:regularity_equiv_regularity}.

\subsection{Proof of \Cref{thm:regularity_equiv_regularity}}

Let $\beta=\beta_0$ and $\alpha=\alpha_0$ where $\alpha_0,\beta_0$ are
as in the statement of \Cref{thm:regularity_equiv_regularity}. We will first prove that 
$\beta<\infty \implies \beta(x, \Omega) \le \beta$ and then show that $\beta(x, \Omega) <\infty \implies \beta \le \beta(x, \Omega)$. This
proves that $\beta = \beta(x, \Omega)$ when either side is finite or
infinite.

Assume first that $\beta < \infty$. For each $u \in U$, choose some
$n$ so that $u \in S_n$. We let
$\beta_n := \frac{\mu_{1,d}(\gamma_n)}{\mu_{1,2}(\gamma_n)}$. We apply
\Cref{lem:regularity_inequalities}: putting the left-hand side of
\eqref{eq:vertical_ineq} together with the right-hand side of
\eqref{eq:horizontal_ineq}, we have
\begin{align*}
  \log f(u) &\ge -\mu_{1,d}(\gamma_n) - B = -\mu_{1,2}(\gamma_n)\beta_n - B \ge \beta_n(\log||u|| - B) - B.
\end{align*}
Hence there is a uniform constant $D > 0$ such that
$f(u) \ge D^{-\beta_n}||u||^{\beta_n}$. Now, fix some 
$\beta<\beta' <\infty$. Since
$\limsup_{n \to \infty} \beta_n = \beta < \beta'$, we have
$||u||^{\beta_n} \ge ||u||^{\beta'}$ for $u$ sufficiently close to
zero. Thus for some $C > 0$ we have $f(u) \ge C||u||^{\beta'}$ in a
small neighborhood of the origin. Hence $\dee \Omega$ is
$\beta'$-convex at $x$. Since $\beta' > \beta$ was arbitrary,
$\beta(x, \Omega) \le \beta$ by definition of $\beta(x,\Omega)$.

Conversely, suppose that $\beta(x, \Omega) < \infty$, and fix
$\beta' > \beta(x, \Omega)$. Now, for each $n \in \Nb$, choose
$u_n \in S_n$ so that the inequality \eqref{eq:horizontal_ineq_3}
holds. We know that $\dee \Omega$ is $\beta'$-convex at $x$, so there
is some $C > 0$ so that $f(u_n) \ge C||u_n||^{\beta'}$. Then we
combine the right-hand inequality in \eqref{eq:vertical_ineq} with
\eqref{eq:horizontal_ineq_3} to obtain
\[
  -\mu_{1,d}(\gamma_n) + B \ge -\beta'(\mu_{1,2}(\gamma_n) + B) +
  \log C.
\]
Replacing $\mu_{1,d}(\gamma_n)$ by $\beta_n \cdot \mu_{1,2}(\gamma_n)$ and
rearranging, we obtain
\[
  \beta' \ge
  \beta_n\left(\frac{\mu_{1,2}(\gamma_n)}{\mu_{1,2}(\gamma_n) +
      B}\right) + \frac{B + \log C}{\mu_{1,2}(\gamma_n) + B}.
\]
Since $x$ is an extreme point in $\Omega$, \Cref{prop:face_sv_gap}
implies that $\mu_{1,2}(\gamma_n)\to\infty$. So the above
implies that $\beta' \ge \limsup_{n \to \infty} \beta_n =
\beta$. Since $\beta' > \beta(x, \Omega)$ was arbitrary, the definition of $\beta(x,\Omega)$ implies that 
$\beta(x, \Omega) \ge \beta$. This concludes the proof that
$\beta = \beta(x, \Omega)$.

The proof of $\alpha = \alpha(x, \Omega)$ is completely symmetric,
using the opposite inequalities in \eqref{eq:vertical_ineq},
\eqref{eq:horizontal_ineq}, and \eqref{eq:horizontal_ineq_2}, and
applying the fact that $x$ is a $C^1$ point to see that
$\mu_{d-1, d}(\gamma_n)$ tends to infinity.
\qed

\section{Boundary regularity does not imply Morse}
\label{sec:morse_counterexample}

In this section, we construct a specific example realizing
\Cref{thm:not_strongly_uniform}: we will find a projective geodesic
ray $c$ in a divisible domain $\Omega$ so that $c$ is tracked
by a sequence $\{\gamma_n\}$ that is uniformly regular, but not \emph{strongly}
uniformly regular. Thus $c$ is not Morse in either the group-theoretic sense or the sense of Kapovich-Leeb-Porti.  But, by
\Cref{thm:regularity_equiv_regularity}, its endpoint $c(\infty)$ in
$\bdry$ is still $C^\alpha$-regular and $\beta$-convex.

\subsection{Convex divisible domains in dimension 3}
\label{sec:description_of_benoist_domains}

The starting point for our construction is a convex divisible domain
$\Omega$ in $\Pb(\Rb^4)$ which is irreducible (meaning it is not
projectively equivalent to the cone over a 2-dimensional domain in
$\Pb(\Rb^3)$), but not strictly convex. Domains of this type were
studied and classified by Benoist \cite{B2006}. Benoist proved that
when $\Gamma$ is a torsion-free discrete group dividing such a domain,
the quotient manifold $M = \Omega / \Gamma$ can be cut along a
nonempty collection of incompressible tori so that each connected
component is homeomorphic to a (non-compact) finite-volume hyperbolic
3-manifold. This means that $\Gamma \simeq \pi_1M$ is a relatively
hyperbolic group, relative to the collection $\Pc$ of fundamental
groups of cutting tori. Moreover, it turns out that the cutting tori
in $\Omega/\Gamma$ lift to properly embedded 2-simplices in $\Omega$
whose stabilizers act by a group of simultaneously diagonalizable
matrices in $\PGL(4, \Rb)$. Thus, each connected component of the
geometric decomposition of $\Omega/\Gamma$ has the structure of a
convex projective manifold.

Benoist also provided explicit constructions for examples of these
domains, using the theory of projective actions of Coxeter groups.
Additional examples were later constructed by Ballas-Danciger-Lee
\cite{BDL2018} and Blayac-Viaggi \cite{BV2023}.

\subsection{Construction}

For the rest of the section, we let $\Omega$ be one of the convex
divisible domains in $\Pb(\Rb^4)$ as above, and let
$\Gamma \subseteq \Aut(\Omega)$ divide $\Omega$. Note that $\Omega$ is
a \emph{rank one} domain \cite{Islam2019}, so the dividing group
$\Gamma$ contains infinitely many \emph{rank one automorphisms} (see
\Cref{sec:rank_one}); these are precisely the automorphisms which do
not preserve any projective geodesic lying in a properly embedded
triangle in $\Omega$. Fix such a rank one automorphism
$\gamma \in \Gamma$, and let $\alpha$ be the closed projective
geodesic in $\Omega / \Gamma$ representing $\gamma$. In addition, fix
a cutting torus $T$ in the geometric decomposition of $M$.

Let us first give an informal sketch of the idea behind
\cref{thm:not_strongly_uniform}. The cyclic subgroup
$\langle \gamma \rangle \subset \Gamma$ gives a Morse geodesic in the
group $\Gamma$ tracking a lift of $\alpha$, along which both
$\mu_{1,2}$ and $\mu_{1,4}$ tend (uniformly) to infinity. On the other
hand, we can find a geodesic $\beta$ in $T$ (not necessarily closed),
corresponding to a sequence of group elements
$a_n \in \pi_1T \subset \Gamma$ along which $\mu_{1,2}$ stays bounded
but $\mu_{1,4}$ goes to infinity. We will produce a projective
geodesic ray $c$ in $M$ that successively follows $\alpha$ and $\beta$
for increasingly longer times. As $c$ spends arbitrarily long times
close to the torus $T$, it fails to be Morse. However, $c$ picks up
enough singular value gaps by looping around $\alpha$ to ensure that
the ratio $\mu_{1,2}/\mu_{1,4}$ stays bounded away from zero in the
limit.

We now turn to the details. First, we note the following.
\begin{lemma}
  \label{lem:singular_values_of_gamma}
  The group element $\gamma \in \Gamma$ representing the geodesic
  $\alpha$ satisfies the following (equivalent) conditions:
  \begin{enumerate}[label=(\roman*)]
  \item\label{item:gamma_morse} The mapping $\Zb \to \Gamma$ given by
    $j \mapsto \gamma^j$ is a Morse quasi-geodesic.
  \item\label{item:gamma_biproximal} $\gamma$ is biproximal, i.e. it
    has unique eigenvalues with maximum and minimum modulus.
  \item\label{item:gamma_gap} There is a positive constant $B_0$ such
    that $\mu_{1,2}(\gamma^j) \ge B_0 \cdot |j|$ and
    $\mu_{3,4}(\gamma^j) \ge B_0 \cdot |j|$ for any $j \in \Zb$.
  \end{enumerate}
\end{lemma}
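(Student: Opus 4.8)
The plan is to recognize all three conditions as avatars of a single fact: the axis $\ell_\gamma$ of $\gamma$ is Morse. Conditions (i) and (ii) are immediate reformulations of this, and (iii) is its linear-algebraic shadow, so essentially no new ideas are needed beyond what is already in the paper.

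First I would establish (i)$\iff$(ii). Since $\Gamma$ divides $\Omega$, the orbit map $\Gamma\to(\Omega,\hil)$, $g\mapsto gx_0$, is a $\Gamma$-equivariant quasi-isometry by the Milnor--\v{S}varc lemma, and the cyclic subgroup $\langle\gamma\rangle$ is undistorted because $\gamma$ acts by a translation along $\ell_\gamma$; hence $j\mapsto\gamma^j$ corresponds under this quasi-isometry to a quasi-geodesic at bounded Hausdorff distance from $\ell_\gamma$. Morseness is a quasi-isometry invariant and is shared by any two quasi-geodesics at bounded Hausdorff distance, so ``$j\mapsto\gamma^j$ is a Morse quasi-geodesic'' is equivalent to ``$\ell_\gamma$ is Morse.'' By \Cref{prop:rank_one_equiv_morse}, the latter is equivalent to $\gamma$ being a rank one automorphism, and --- since $\gamma\in\Gamma$ and $\Gamma$ divides $\Omega$ --- to $\gamma$ being biproximal. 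This gives (i)$\iff$(ii).

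For (ii)$\iff$(iii) I would use the spectral radius asymptotics for the Cartan projection: for any $g\in\GLdR$ one has $\frac{1}{n}\mu_i(g^n)\to\log|\lambda_i(g)|$, where $|\lambda_1(g)|\ge\dots\ge|\lambda_d(g)|$ are the moduli of the eigenvalues (this follows from $\|\Lambda^i g^n\|^{1/n}\to\rho(\Lambda^i g)=|\lambda_1\cdots\lambda_i|$ together with $\sigma_1(g^n)\cdots\sigma_i(g^n)=\|\Lambda^i g^n\|$). In particular $\frac{1}{n}\mu_{1,2}(\gamma^n)\to\log\big(|\lambda_1|/|\lambda_2|\big)$, and, applying this to $\gamma^{-1}$ and using $\mu_{3,4}(\gamma^n)=\mu_{1,2}(\gamma^{-n})$ (valid because $d=4$), $\frac{1}{n}\mu_{3,4}(\gamma^n)\to\log\big(|\lambda_3|/|\lambda_4|\big)$. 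If (iii) holds these limits are $\ge B_0>0$, which forces unique eigenvalues of maximal and of minimal modulus, i.e. (ii). Conversely, if $\gamma$ is biproximal both limits are strictly positive, so $\mu_{1,2}(\gamma^j)\ge B_0|j|$ and $\mu_{3,4}(\gamma^j)\ge B_0|j|$ hold for all sufficiently large $|j|$; the finitely many remaining nonzero powers are then absorbed by shrinking $B_0$, once one knows $\mu_{1,2}(\gamma^j)>0$ and $\mu_{3,4}(\gamma^j)>0$ for every $j\neq 0$. A slightly cleaner route to the linear lower bound is to apply the ``straightness'' lemma \Cref{lem:geodesic_additivity} to the tracking sequence $\{\gamma^n\}$ of $\ell_\gamma$, obtaining the coarse super-additivity $\mu_{1,2}(\gamma^n)+\mu_{1,2}(\gamma^m)\le\mu_{1,2}(\gamma^{n+m})+D$; together with $\mu_{1,2}(\gamma^n)\to\infty$ (which follows from \Cref{prop:face_dim_implies_some_sv_gp}, since \Cref{cor:morse_point_is_C1_extreme} makes the attracting endpoint of $\ell_\gamma$ an extreme point) a Fekete/telescoping argument promotes this to $\mu_{1,2}(\gamma^n)\ge B_0 n-C$, and symmetrically at the other end.

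I expect the main obstacle to be the positivity $\mu_{1,2}(\gamma^j)>0$ --- equivalently, a singular-value gap of $\gamma^j$ at index $1$ --- for every nonzero $j$ (and symmetrically at index $d-1$). This does \emph{not} follow from biproximality of an abstract matrix, so it must use that $\gamma^j\in\Aut(\Omega)$: here $\gamma^j$ preserves the properly convex cone over $\Omega$ and acts as a loxodromic isometry of $(\Omega,\hil)$ whose attracting and repelling fixed points in $\bdry$ are $C^1$ extreme points (\Cref{cor:morse_point_is_C1_extreme}), and one extracts the index-$1$ and index-$(d-1)$ gaps from this --- e.g. via \Cref{prop:face_sv_gap} applied along the axis, together with the uniform estimates of \Cref{lem:sigma_12d-1d_estimates} for the fixed ($j$-independent) decomposition $c(+\infty)\oplus(\lift{H_+}\cap\lift{H_-})\oplus c(-\infty)$. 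Once this positivity is in place, the rest is routine bookkeeping with \Cref{lem:additive_root_bound} and \Cref{prop:dgk_sv_gap_omega}.
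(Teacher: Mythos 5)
Your proposal follows essentially the same route as the paper's proof, which consists of two citations: (i)$\iff$(ii) is exactly \Cref{prop:rank_one_equiv_morse} (the Milnor--\v{S}varc / quasi-isometry-invariance step you spell out is left implicit there), and (ii)$\iff$(iii) is the same Jordan--Cartan limit formula $\tfrac{1}{n}\mu_i(g^n) \to \log|\lambda_i(g)|$ that the paper quotes from \cite{ggkw2017anosov}.

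The one place you go beyond the paper is the positivity of $\mu_{1,2}(\gamma^j)$ for every individual nonzero $j$. Flagging this is a fair criticism of the statement as written --- the limit formula only yields $\mu_{1,2}(\gamma^j) \ge B_0|j|$ for $|j|$ large, and the paper silently passes over the finitely many small powers --- but your proposed repair does not actually close it. \Cref{prop:face_sv_gap} gives only the asymptotic statement $\mu_{1,2}(\gamma^n) \to \infty$, and \Cref{lem:sigma_12d-1d_estimates} controls $\mu_1$ and $\mu_2$ only up to a uniform additive constant; neither can certify the exact inequality $\sigma_1(\gamma^{j_0}) \ne \sigma_2(\gamma^{j_0})$ for one fixed small $j_0$. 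Singular values, unlike eigenvalues, are not conjugation invariants, so biproximality together with preservation of $\Omega$ does not obviously exclude a coincidence $\sigma_1(\gamma^{j_0}) = \sigma_2(\gamma^{j_0})$ for some particular power; this is a codimension-one condition that a conjugate of the pair $(\Omega,\Gamma)$ can in principle realize. Your Fekete/straightness alternative has the same limitation: coarse superadditivity with error $D$ produces a linear lower bound only after one reaches a power with $\mu_{1,2}(\gamma^{n_0}) > 2D$, i.e.\ again only for large $|j|$. The honest options are either to weaken (iii) to $\mu_{1,2}(\gamma^j) \ge B_0|j| - C$ (or to restrict to $|j|$ large), which is all that is used downstream in \Cref{lem:wk_singular_value_estimates}, where only the quadratic growth of $\sum_{j\le m}\mu_{1,2}(\gamma^j)$ matters, or to note that the small powers can be handled exactly when (and only when) each $\mu_{1,2}(\gamma^j)$ happens to be nonzero. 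With that caveat recorded, the rest of your argument is correct and matches the paper's.
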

\begin{proof}
  \cref{prop:rank_one_equiv_morse} implies that both
  \ref{item:gamma_morse} and \ref{item:gamma_biproximal} are
  equivalent to the fact that $\gamma$ is a rank one automorphism. The
  equivalence of \ref{item:gamma_biproximal} and \ref{item:gamma_gap}
  follows from the relationship between the Jordan projection
  $\ell:\GL(d, \Rb) \to \Rb_{\ge 0}^d$ and Cartan projection
  $\mu:\GL(d, \Rb) \to \Rb_{\ge 0}^d$: if
  $\ell_1(g) \ge \ell_2(g) \ge \ldots \ge \ell_d(g)$ are the
  logarithms of the moduli of the eigenvalues of $g \in \GL(d, \Rb)$,
  then $\ell_i(g) = \lim_{n \to \infty} \mu_i(g^n)/n$ (see e.g.
  \cite[Section 2.4]{ggkw2017anosov}).
\end{proof}

Next, let $A_0 \simeq \Zb^2$ be the subgroup of $\Gamma$ identified
with $\pi_1T \subset \pi_1M \simeq \Gamma$. 
\begin{lemma}
  There is a finite-index subgroup $A \subseteq A_0$ so that the
  subgroup $\Gamma' \subset \Gamma$ generated by $\{\gamma\} \cup A$
  is naturally isomorphic to the abstract free product
  $\langle \gamma \rangle * A$.

  Moreover, this subgroup is \emph{strongly quasi-convex} in the sense
  of \cite{Tran}: there exists a function
  $M:\Rb_{\ge 0}^2 \to \Rb_{\ge 0}$ so that any
  $(K_1, K_2)$-quasi-geodesic in $\Gamma$ with endpoints in $\Gamma'$
  lies in the $M(K_1, K_2)$-neighborhood of $\Gamma'$.
\end{lemma}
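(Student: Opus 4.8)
The plan is to build $\Gamma'$ by a ping-pong argument on the Bowditch boundary $\partial\Gamma$ of the relatively hyperbolic pair $(\Gamma,\Pc)$, and then to identify its quasiconvexity type using combination theorems for relatively hyperbolic groups. The necessary dynamical inputs are the following. Since $\gamma$ is a rank one automorphism it preserves no properly embedded triangle in $\Omega$, whereas every subgroup in $\Pc$ and each of its conjugates stabilizes a properly embedded $2$-simplex; hence $\gamma$ is not conjugate into any peripheral subgroup, i.e.\ $\gamma$ is loxodromic in the relatively hyperbolic sense. Consequently $\gamma$ acts on $\partial\Gamma$ with North--South dynamics, fixing two conical limit points $\gamma^{\pm}$. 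The subgroup $A_0=\pi_1T$ is peripheral, so it fixes a bounded parabolic point $p\in\partial\Gamma$ and acts properly discontinuously and cocompactly on $\partial\Gamma\setminus\{p\}$; since no point of $\partial\Gamma$ is simultaneously conical and parabolic, $\gamma^{\pm}\neq p$.

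For the free product decomposition I would run the ping-pong as follows. Fix small pairwise disjoint neighborhoods $U_{\pm}$ of $\gamma^{\pm}$, each disjoint from $p$. North--South dynamics yields $n_0$ with $\gamma^{n}(\partial\Gamma\setminus U_{-})\subseteq U_{+}$ for $n\geq n_0$ and $\gamma^{n}(\partial\Gamma\setminus U_{+})\subseteq U_{-}$ for $n\leq -n_0$, and since $\mathrm{Fix}(\gamma^n)=\{\gamma^{+},\gamma^{-}\}$ we have $\gamma^n(p)\neq p$ for all $n\neq 0$. Hence one may choose a neighborhood $V$ of $p$, disjoint from $U_{+}\cup U_{-}$ and small enough that $\gamma^n(V)\cap V=\emptyset$ for every $n\neq 0$. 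Proper discontinuity of the $A_0$-action on $\partial\Gamma\setminus\{p\}$ makes $F:=\{a\in A_0: a(\partial\Gamma\setminus V)\cap(\partial\Gamma\setminus V)\neq\emptyset\}$ finite, so, using $A_0\cong\Zb^2$, there is a finite-index subgroup $A\leq A_0$ with $A\cap F=\{1\}$, and then $a(\partial\Gamma\setminus V)\subseteq V$ for all $a\in A\setminus\{1\}$. The ping-pong lemma for free products, applied with $X_1=\partial\Gamma\setminus V$ for $\langle\gamma\rangle$ and $X_2=V$ for $A$ (and using $|\langle\gamma\rangle|=\infty$), shows the natural homomorphism $\langle\gamma\rangle*A\to\Gamma$ is injective with image $\Gamma'$; this proves $\Gamma'\cong\langle\gamma\rangle*A$.

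For strong quasiconvexity I would first show $\Gamma'$ is relatively quasiconvex in $\Gamma$ and then upgrade using Tran's characterization \cite{Tran}. Both $\langle\gamma\rangle$ (infinite cyclic loxodromic) and $A$ (finite index in a peripheral) are relatively quasiconvex; their intersection is trivial, and the maximal virtually cyclic subgroup $E(\gamma)\supseteq\langle\gamma\rangle$ meets every conjugate of every peripheral trivially because $\gamma$ is loxodromic. Thus a standard combination theorem for relatively quasiconvex subgroups applies and shows $\Gamma'=\langle\gamma\rangle*A$ is relatively quasiconvex, with peripheral structure given by the $\Gamma'$-conjugates of $A$. Using the convergence dynamics furnished by the ping-pong one sees $\Stab_{\Gamma'}(p)=A$, so $\Gamma'\cap A_0=A$ is finite index in $A_0$; and by almost-malnormality of $\Pc$, together with the description of the parabolic subgroups of $\Gamma'$ as the conjugates of $A$, every infinite intersection $\Gamma'\cap gA_0g^{-1}$ with $g\in\Gamma$ is conjugate in $\Gamma$ to a finite-index subgroup of a peripheral. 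Since such subgroups are themselves strongly quasiconvex in $\Gamma$, Tran's criterion (relative quasiconvexity plus strong quasiconvexity of every infinite peripheral intersection) gives that $\Gamma'$ is strongly quasiconvex, with a Morse gauge depending only on the constants involved.

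The step I expect to be the main obstacle is the bookkeeping in the last paragraph: verifying the precise hypotheses of the combination theorem and of Tran's characterization, and in particular pinning down $\Gamma'\cap A_0=A$ and controlling the intersections $\Gamma'\cap gA_0g^{-1}$, where the convergence dynamics of the ping-pong and the almost-malnormality of $\Pc$ must be used carefully. The ping-pong itself is routine once $\gamma^{\pm}\neq p$ and the proper discontinuity of the $A_0$-action on $\partial\Gamma\setminus\{p\}$ are in hand.
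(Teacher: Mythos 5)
Your argument is correct in substance and, for the second half, follows the same route as the paper: the paper simply applies the Mart\'inez-Pedroza combination theorem \cite[Theorem 1.1]{MartinezPedroza} once, with $Q = \langle\gamma\rangle$ (relatively quasi-convex because $j \mapsto \gamma^j$ is Morse, \cref{lem:singular_values_of_gamma}) and $H = A$ a sufficiently deep finite-index subgroup of $A_0$; that single application yields the free product decomposition, the relative quasi-convexity of $\Gamma'$, \emph{and} the classification of the parabolic subgroups of $\Gamma'$ as finite-index subgroups of conjugates of $A_0$, after which \cite[Theorem 1.9]{Tran} gives strong quasi-convexity exactly as you say (the key point being that finite-index subgroups of the $\Zb^2$ peripherals are themselves strongly quasi-convex in $\Gamma$). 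Where you differ is in deriving the free product structure by an explicit ping-pong on the Bowditch boundary. That argument is fine — your verification that $\gamma$ is loxodromic would be cleanest via Morseness of $\langle\gamma\rangle$ versus the undistorted flat $A_0$, rather than via invariant triangles, but the conclusion is right — however it is logically redundant: the combination theorem you invoke for relative quasi-convexity already produces the isomorphism $\langle Q, H\rangle \cong Q * H$ and the parabolic bookkeeping you flag as the main obstacle. Moreover, if you keep the ping-pong, note that the finite-index subgroup $A$ it produces need not satisfy the quantitative ``deepness'' hypothesis of the combination theorem ($\min\{|h|_S : h \in H\setminus\{1\}\}$ large), so you would need to pass to a further finite-index subgroup anyway — at which point the ping-pong has bought you nothing. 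So: correct, but the cleaner organization is the paper's, letting the combination theorem do all of the first paragraph's work at once.
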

\begin{proof}
  The first part of the lemma follows from a combination theorem for
  relatively quasi-convex subgroups of relatively hyperbolic groups
  (\cite[Theorem 1.1]{MartinezPedroza}). To apply the combination
  theorem, we need to check that the group $\langle \gamma \rangle$ is
  relatively quasi-convex in $\Gamma$, which follows from
  \Cref{lem:singular_values_of_gamma} \ref{item:gamma_morse}. This
  combination theorem also implies that every parabolic subgroup in
  $\Gamma'$ is a finite-index subgroup of some conjugate of
  $A_0$. Consequently, the second part of the lemma follows from the
  characterization of strongly quasi-convex subgroups in relatively
  hyperbolic groups given by \cite[Theorem 1.9]{Tran}.
\end{proof}

The next step is to construct the geodesic $\beta$ in the torus $T$ we
alluded to previously. We know that the finite-index subgroup
$A \subseteq \pi_1T$ is generated by a pair of commuting
diagonalizable matrices.  So, we can choose a basis for $\Rb^4$ and
find linearly independent vectors $(x_i) \in \Rb^4$, $(y_i) \in \Rb^4$
so that (with respect to this basis) $A$ can be written as the group
\begin{equation}
  \label{eq:a_definition}
  \left\{ {\rm diag}(e^{ux_1 + vy_1}, e^{ux_2 + vy_2}, e^{ux_3
      + vy_3}, e^{ux_4 + vy_4}) \in {\rm PGL}(4,\Rb): u, v \in \Zb\right\}.
\end{equation}

Fix a finite generating set $S_A$ for $A$, and let $|\cdot|_{S_A}$
denote the corresponding word metric on $A$.

\begin{lemma}
  \label{lem:abelian_mu12_bound}
  There exists a constant $C > 0$ and a sequence $\{a_n\}$ in $A$ so
  that $|a_n|_{S_A} = n$ but $\mu_{1,2}(a_n) < C$.
\end{lemma}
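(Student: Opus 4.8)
The plan is to work directly with the explicit description \eqref{eq:a_definition} of $A$. Write $A \cong \Zb^2$ with coordinates $(u,v)$, and let $\psi_i(u,v) := u x_i + v y_i$ for $i = 1,\dots,4$ be the linear functionals appearing in \eqref{eq:a_definition}. Then for $a = (u,v) \in A$, the singular values of $a$ are $e^{\psi_i(a)}$, so $\mu_{1,2}(a)$ is precisely the gap between the largest and the second-largest of the numbers $\psi_1(a), \dots, \psi_4(a)$. Thus the lemma reduces to finding, for each $n$, a lattice point of word-length exactly $n$ along which the two largest of the $\psi_i$ stay within a bounded distance of each other.

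The key point is that the convex piecewise-linear function $g := \max(\psi_1,\psi_2,\psi_3,\psi_4) : \Rb^2 \to \Rb$ has a non-differentiable locus that contains a ray $R = \Rb_{\ge 0}\theta_0$ through the origin. Indeed, a maximum of linear functions on $\Rb^2$ is smooth everywhere only if one of them dominates all the others pointwise; since $\psi_i - \psi_j$ is linear and cannot be strictly positive on all of $\Rb^2 \setminus \{0\}$ unless it is identically zero, smoothness would force $\psi_1 = \psi_2 = \psi_3 = \psi_4$, contradicting the fact that $A \cong \Zb^2$ acts faithfully on $\Pb(\Rb^4)$ via the diagonal matrices \eqref{eq:a_definition}. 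By homogeneity the ridge is a cone, so it contains a whole ray $R$. (Alternatively, one can argue geometrically: $A$ stabilizes the properly embedded $2$-simplex $\Delta$ lifting the cutting torus, and a projective geodesic ray in $\Delta$ toward a point in the relative interior of an edge has endpoint in a face of $\bdry$ of dimension $\ge 1$; tracking such a ray by a sequence in $A$ and invoking \Cref{prop:face_sv_gap} produces a sequence along which $\mu_{1,2}$ is bounded.)

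Next I would use $R$ to bound $\mu_{1,2}$ on a Euclidean neighborhood of $R$. Let $I \subseteq \{1,\dots,4\}$ be the (homogeneity-invariant) set of indices with $\psi_i \equiv g$ on $R$, so $|I| \ge 2$. For $i,j \in I$ the functional $\psi_i - \psi_j$ vanishes on the line $\Rb\theta_0$, while for $i \in I$, $k \notin I$ the functional $\psi_i - \psi_k$ is strictly positive and linearly growing along $R$. Hence there are constants $D_1, C_1, N_0 > 0$ such that any $w \in A$ with $d_{\mathrm{Euc}}(w, R) \le D_1$ and $\|w\| \ge N_0$ has its two largest coordinates $\psi_{(1)}(w), \psi_{(2)}(w)$ both lying in $\{\psi_i(w) : i \in I\}$, so that $\mu_{1,2}(w) \le C_1$. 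To realize such $w$ with word-length exactly $n$, I would use that, since $A \cong \Zb^2$, the word metric $|\cdot|_{S_A}$ differs from the stable norm $\|\cdot\|_s$ (a genuine polyhedral norm on $\Rb^2$) by a uniformly bounded additive constant, and that the ray $R$ is a $\|\cdot\|_s$-geodesic ray; choosing $p_n \in R$ with $\|p_n\|_s$ slightly larger than $n$, a nearest lattice point $\hat p_n$ has $|\hat p_n|_{S_A} > n$ with $|\hat p_n|_{S_A} - n$ bounded, and taking $a_n$ to be the vertex at distance exactly $n$ from the identity on a word-geodesic from $0$ to $\hat p_n$ yields $|a_n|_{S_A} = n$ and $d_{\mathrm{Euc}}(a_n, R) \le D_1$. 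Enlarging $C_1$ to absorb the finitely many $n < N_0$ gives the desired constant $C$.

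I expect the main obstacle to lie in the third step: one must be careful that the fourth functional $\psi_4$ is accounted for correctly, so that it is the gap $\mu_{1,2}$ itself — and not merely the gap between the top two of $\psi_1,\psi_2,\psi_3$ — that stays bounded; this is exactly what the choice of the index set $I$ along the ridge ray $R$ is designed to control. By contrast, the existence of the ridge ray and the word-length bookkeeping in $\Zb^2$ are routine.
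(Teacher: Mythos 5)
Your proof is correct and is essentially the paper's argument in primal rather than dual language: the ray $R$ in the non-differentiability locus of $\max_i\psi_i$, together with its active index set $I$, is exactly the direction $v\in\hat A$ that the paper obtains by maximizing over an edge of the polygon spanned by the four weight functionals, and both arguments finish by approximating that ray with lattice points of word length exactly $n$. The only points to tidy are that the singular values equal $e^{\psi_i(a)}$ only after choosing an inner product making the common eigenbasis orthogonal (or up to the bounded error of \Cref{lem:additive_root_bound}), and that the vertex at distance $n$ on a word-geodesic to $\hat p_n$ stays near $R$ because it is within bounded word-distance of $\hat p_n$, not because arbitrary word-geodesics in $\Zb^2$ track straight lines.
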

\begin{proof}
  The basic idea of the proof is that the group $A$ spans a
  two-dimensional subgroup in the group of positive diagonal matrices
  inside $\PGL(4, \Rb)$, which is isomorphic to $\Rb^3$; since the
  condition $\mu_{1,2}(g) = 0$ is roughly a codimension-1 linear
  constraint on an $\Rb^2$ subgroup containing $A$, there must be
  a one-dimensional ``direction'' in $A$ where $\mu_{1,2}$ coarsely
  vanishes. To make this more precise, we view $A$ as a lattice in a
  subgroup $\hat{A} \subset \PGL(4, \Rb)$ isomorphic to $\Rb^2$; the
  group $\hat{A}$ is defined exactly as in \eqref{eq:a_definition},
  except that the parameters $u, v$ are allowed to vary in $\Rb$
  instead of $\Zb$. We can additionally lift $\hat{A}$ to an
  (isomorphic) subgroup of $\SL(4, \Rb)$, so that every element of
  $\hat{A}$ has positive eigenvalues. After choosing an appropriate
  inner product on $\Rb^4$, we may assume that the eigenspaces of $A$
  (hence of $\hat{A}$) are mutually orthogonal. Then, since the
  eigenvalues of any $a \in \hat{A}$ are positive, the eigenvalues of
  $a$ are precisely the singular values of $a$.

  Let $e_1, \dots, e_4$ be the eigenvectors of $\hat{A}$, and let
  $\lambda_i(a)$ denote the eigenvalue of $a$ on $e_i$ for
  $i = 1,2,3,4$. For each $i$, the mapping $w_i$ given by
  \[
    w_i(a) = \log\lambda_i(a)
  \]
  is an element of the dual $\hat{A}^* \simeq (\Rb^2)^*$. Since $A$ is
  discrete with rank 2, these four dual vectors must span $\hat{A}^*$,
  so their convex hull is a polygon $P$ whose interior contains the
  origin. Pick an edge of this polygon, with endpoints $w_i, w_j$. We
  can pick a vector $v \in \hat{A}^{**} = \hat{A}$ which is positive
  on the chosen edge, but vanishes on the line through the origin in
  $\hat{A}^*$ parallel to the edge. It follows that $v$ achieves its
  maximum on $P$ on both $w_i$ and $w_j$, hence
  $\mu_1(v) = \mu_2(v) = w_i(v) = w_j(v)$ and thus $\mu_{1,2}(v) =
  0$. The same is true for any positive real multiple of $v$. Then,
  since $A$ is a lattice in $\hat{A}$, we can find length-$n$ points
  $a_n \in A$ which are uniformly close to the line
  $\{rv : r \in \Rb_{>0}\}$, giving us the desired sequence.
\end{proof}

The sequence $\{a_n\}$ corresponds to our geodesic $\beta$ in the
torus $T$. Next, we define a sequence in $\Gamma$ that we will use to determine the projective geodesic in
\Cref{thm:not_strongly_uniform}:
\begin{definition}
  \label{defn:wk}
  Define a sequence of words $\{w_k\}_{k \in \Nb}$ as follows:
  \begin{align}
    w_k:=\begin{cases}
     \identity, &\text{ if } k=0\\
      a_1 \gamma \dots a_m \gamma^m, &\text{ if } k=2m \\
      a_1 \gamma \dots a_m \gamma^m a_{m+1}, &\text{ if } k=2m+1
    \end{cases}
  \end{align}
\end{definition}

\subsection{Proof of \cref{thm:not_strongly_uniform}}
Fix a finite generating set $S_A$ for $A$, and extend $S_A \cup \{\gamma\}$ to a finite generating set
$S$ for $\Gamma$. Fix a basepoint $x_0 \in \Omega$ and let
$F: \Gamma \to \Omega$ be the orbit map defined by $F(g)=g x_0$, so
that $F$ is a quasi-isometry with respect to the word metric $d_S$ on
$\Gamma$ induced by $S$. We first prove that if $\{w_k\}$ is the
sequence in \Cref{defn:wk}, then we can extend $\{w_k\}$ to a sequence
that tracks a projective geodesic ray; this ray will be the ray
appearing in \Cref{thm:not_strongly_uniform}.
\begin{lemma}
  \label{lem:wk_along_ray}
  There exists $R>0$ and a projective geodesic ray $[x_0,\xi)$ such
  that for any $k \ge 0$, $\hil(w_k x_0,[x_0,\xi))\leq R$.
\end{lemma}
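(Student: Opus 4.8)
The plan is to build the geodesic ray $[x_0,\xi)$ as a limit of the broken geodesics through the orbit points $w_k x_0$, and use strong quasi-convexity of $\Gamma' = \langle\gamma\rangle * A$ together with the fact that the word metric path $\identity, w_1, w_2, \dots$ is an (unparametrized) quasi-geodesic in $\Gamma'$ (hence in $\Gamma$) to control the geometry. Concretely, the concatenated word
$$
\cdots a_1 \gamma a_2 \gamma^2 a_3 \gamma^3 \cdots
$$
is in normal form for the free product $\langle\gamma\rangle * A$ (alternating nontrivial syllables), so its prefixes $w_k$ lie along a geodesic ray in the Cayley graph of $\Gamma'$ with respect to a suitable generating set; passing to $\Gamma$ and using the strong quasi-convexity function $M$ from the lemma on $\Gamma'$, the points $\{w_k x_0\}$ in $\Omega$ track (up to bounded error) a genuine quasi-geodesic ray in $(\Omega, \hil)$.

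**First I would** make precise the claim that the sequence $\{w_k\}$ lies along a quasi-geodesic ray in $\Gamma$. The syllable lengths $|a_i|_{S_A} = i$ and $|\gamma^i|_S = i$ are finite, so the prefixes $w_k$ are a sequence of points along a normal-form ray in $\langle\gamma\rangle * A$; in a free product, such a ray is a quasi-geodesic with constants depending only on the (finitely many) generators, and since $\Gamma'$ embeds quasi-isometrically in $\Gamma$ (being strongly quasi-convex), the images $\{w_k\}$ form a sequence along a quasi-geodesic ray $q: [0,\infty)\to\Gamma$. Then I would transport this to $\Omega$ via the orbit quasi-isometry $F$, getting a quasi-geodesic ray $F\circ q$ in $(\Omega, \hil)$ with the property that each $w_k x_0$ lies within a bounded distance $R_0$ of its image.

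**Next I would** upgrade this quasi-geodesic ray to an actual \emph{projective} geodesic ray within bounded Hausdorff distance. This is where I would invoke the structure already developed in the paper: by the remark in the introduction (``it is straightforward to check that any $M$-Morse geodesic ray in a convex projective domain $\Omega$ is uniformly close to a projective geodesic ray'') — but \emph{caution}: our ray is precisely \emph{not} Morse, so I cannot use that. Instead, the correct route is more elementary: a quasi-geodesic ray in $(\Omega,\hil)$ with one endpoint fixed has a well-defined ideal endpoint $\xi \in \overline{\Omega}$ obtained as an accumulation point of the sequence $w_k x_0$ (after passing to a subsequence; one must check the whole sequence converges, which follows because consecutive $w_k x_0$ are at bounded distance along the quasi-geodesic and the quasi-geodesic escapes every compact set). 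Then $\xi \in \bdry$ since the sequence is divergent, and I set $[x_0,\xi)$ to be the projective geodesic ray to $\xi$. The content of the lemma is that $\hil(w_k x_0, [x_0,\xi)) \le R$ uniformly; I would prove this by showing the quasi-geodesic ray $F\circ q$ stays within bounded distance of $[x_0,\xi)$. For a \emph{general} (non-Morse) quasi-geodesic this fails in a general metric space, so the key input must be convex-projective: I would use \Cref{lem:hil_haus_dist_between_geods} and \Cref{fact:nonunique_geodesics} to compare the broken projective geodesic through the points $\{w_k x_0\}$ (which is an honest geodesic whenever the ``turns'' are aligned in boundary segments — but they need not be) with $[x_0,\xi)$, or more robustly, exploit that each $w_k x_0$ is within bounded distance of the \emph{bi-infinite axis} of a conjugate of $\gamma$ or a flat $gA$, and chain the nearest-point projections.

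**The hard part will be** precisely this last step — producing the \emph{projective} geodesic ray and bounding $\hil(w_k x_0, [x_0,\xi))$ uniformly — because the ray is deliberately non-Morse, so none of the Morse-stability machinery applies, and in a Hilbert geometry that is not Gromov-hyperbolic, quasi-geodesics need not fellow-travel geodesics. I expect the resolution to use the specific ``staircase'' structure: the ray alternately runs along a translate of the axis $\ell_\gamma$ (a rank-one, hence Morse, geodesic) for finite times and along a translate of the flat in the torus $T$; along the flat portions the nearest-point projection to $[x_0,\xi)$ does not move much because the flat's boundary segment is ``seen head-on'' from $\xi$ (using that the endpoints of $\ell_\gamma$ and the flat vertices are arranged suitably in $\bdry$), while along the axis portions one uses rank-one contraction locally. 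Making ``locally rank one'' precise likely requires \Cref{cor:morse_contracting_uniform} applied to the divisible domain $\Omega$ and the Morse gauge of $\ell_\gamma$, combined with \Cref{lem:hil_haus_dist_between_geods} to glue. I would organize the argument as an induction on $k$, maintaining the invariant that the portion of $[x_0,\xi)$ ``shadowed'' by $w_0, \dots, w_k$ stays within $R$ of the corresponding broken path, with $R$ absorbing the contraction constants, the quasi-geodesic constants, and $\diam(\Omega/\Gamma)$.
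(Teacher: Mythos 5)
Your proposal correctly reduces the problem to showing that each $w_k x_0$ lies at uniformly bounded distance from a single projective geodesic ray, and you correctly diagnose the obstruction: the ray is deliberately non-Morse, so no fellow-traveling machinery applies. But you do not resolve this obstruction. The ``staircase'' argument you sketch in the last paragraph --- chaining local rank-one contraction along the $\gamma^m$-portions and controlling the projection along the flat portions, with an induction absorbing all constants --- is not carried out, and it is doubtful it can be made to work as stated: if nearest-point projections to $[x_0,\xi)$ could be uniformly controlled along both the axis portions and the arbitrarily long flat excursions $a_{m+1}$, one would be dangerously close to proving the ray is contracting, which is exactly what the construction is designed to refute. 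So the proposal has a genuine gap at precisely the step you flag as the hard part.

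The paper's proof avoids this entirely by reversing the direction of comparison. Instead of starting from a quasi-geodesic in the group and trying to push it onto a projective geodesic, it starts from the honest projective geodesic segments $[x_0, w_l x_0]$ and shows that each $w_k$ (for $k<l$) is uniformly close to them; the ray $[x_0,\xi)$ is then obtained as a limit of these segments, and the bound passes to the limit. The key point is group-theoretic, not metric: pull $[x_0, w_l x_0]$ back through a quasi-inverse of the orbit map to get a quasi-geodesic in $\Gamma$ from $\identity$ to $w_l$; by strong quasi-convexity of $\Gamma'=\langle\gamma\rangle * A$ this quasi-geodesic lies at bounded Hausdorff distance from a continuous quasi-geodesic $q$ in $\mathrm{Cay}(\Gamma', S_A\cup\{\gamma\})$; and in that Cayley graph the free-product normal form makes $w_k$ a cut point separating $\identity$ from $w_l$, so $q$ must pass through $w_k$. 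This separation argument is the missing idea: it gives the one-sided closeness $\hil(w_kx_0,[x_0,w_lx_0])<R$ with no appeal to Morseness, contraction, or fellow-traveling whatsoever. If you want to repair your write-up, replace the entire last paragraph with this cut-point argument applied to the pullbacks of the segments $[x_0,w_lx_0]$, and then take a convergent subsequence of $w_lx_0$ to define $\xi$.
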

\begin{proof}
  Fix any $1 \leq k < l$. We first claim that there exists $R>0$,
  independent of $k,l$, such that $\hil(w_k x_0,[x_0,w_l
  x_0])<R$. Before proving this claim, let us explain how this claim
  immediately implies the lemma. Choose a subsequence of $\{w_l x_0\}$
  such that it converges to a point $\xi \in \bdry$. As
  $[x_0,w_lx_0] \to [x_0,\xi)$ uniformly on compact subsets of
  $\Omega$,
  $\hil(w_kx_0,[x_0,\xi)) \leq \limsup_{l \to \infty}
  \hil(w_kx_0,[x_0,w_l x_0])$. Supposing that the claim holds, it is
  immediate that $\hil(w_k x_0,[x_0,\xi)) \leq R$ for all $k$.

  Now we prove the claim. Fix a quasi-inverse $F^{-1}$ for the
  quasi-isometry $F$, and consider the quasi-geodesic
  $F^{-1}([x_0, w_l]) \subset \Gamma$; we will show that for some
  uniform $R$ we have
  \[
    d_S(w_k, F^{-1}([x_0, w_lx_0])) < R.
  \]
  We may assume that the quasi-geodesic $F^{-1}([x_0, w_lx_0])$ joins
  $\identity$ to $w_l$. So, by strong quasi-convexity of $\Gamma'$,
  this quasi-geodesic is within uniformly bounded Hausdorff distance
  of some quasi-geodesic $q$ in the Cayley graph
  $\mathrm{Cay}(\Gamma', S_A \cup \{\gamma\})$. We may assume that $q$
  is continuous (see e.g. \cite[Lemma III.H.1.11]{BH2013}). However,
  observe that if $k < l$ then $w_k$ separates
  $\mathrm{Cay}(\Gamma', S_A \cup \{\gamma\})$ into two components,
  one containing $\identity$ and the other containing $w_l$. In
  particular, $q$ passes through $w_k$, which completes the proof of
  the claim.
\end{proof}

Fix a sequence $\{\gamma_n\}$ tracking the ray $[x_0, \xi)$ from the
previous lemma; we may assume that the sequence $\{w_k\}$ is a
subsequence of $\{\gamma_n\}$. From now on, we make this assumption about $\{\gamma_n\}$. We will show that $\{\gamma_n\}$ is
both uniformly $1$-regular and uniformly $3$-regular. The first step
is the following:
\begin{lemma}
  \label{lem:wk_singular_value_estimates}
  There exist constants $\hat{C},\hat{D}>0$ such that, for
  $i \in \{1, 3\}$, we have
  \[
    \liminf_{n \to \infty}\frac{\mu_{i,i+1}(w_k)}{k^2} > \hat{C}
    ~\text{ and }~ \limsup_{k\to \infty}\frac{\mu_{1,4}(w_k)}{k^2}<
    \hat{D}.
  \]
\end{lemma}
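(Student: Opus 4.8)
The plan is to split the estimate into an easy subadditive upper bound for $\mu_{1,4}(w_k)$ and a lower bound for $\mu_{i,i+1}(w_k)$ ($i \in \{1,3\}$) that combines the ``straightness'' \Cref{lem:geodesic_additivity} with the eigenvalue gap for the rank one element $\gamma$ recorded in \Cref{lem:singular_values_of_gamma}. Since $\Omega \subset \Pb(\Rb^4)$ we have $d = 4$ and $d-1 = 3$, so the two cases $i=1$, $i=3$ are precisely the indices covered by \Cref{lem:geodesic_additivity}. Throughout I would take $c = [x_0,\xi)$, extended (along its projective line) to a bi-infinite projective geodesic with $c(0) = x_0$, fix the tracking sequence $\{\gamma_n\}$ from the paragraph preceding the lemma (so $\gamma_n^{-1}c(n)$ lies in the fixed compact ball $K := \overline{B_R(x_0)}$ for all $n$), and record that $\{w_k\}$ occurs as a subsequence $w_k = \gamma_{n_k}$ with $n_0 < n_1 < \cdots$.

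For the upper bound I would use that $\mu_{1,4}(g) = \log\|g\| + \log\|g^{-1}\|$ is subadditive and invariant under inversion. From the factorization $w_{2m} = a_1\gamma a_2\gamma^2\cdots a_m\gamma^m$ this gives $\mu_{1,4}(w_{2m}) \le \sum_{r=1}^m\bigl(\mu_{1,4}(a_r) + \mu_{1,4}(\gamma^r)\bigr)$. Bounding $\mu_{1,4}(\gamma^r)\le r\,\mu_{1,4}(\gamma)$ and $\mu_{1,4}(a_r)\le r\max_{s\in S_A}\mu_{1,4}(s)$ (again by subadditivity, using $|a_r|_{S_A}=r$), each summand is $O(r)$, so $\mu_{1,4}(w_{2m}) = O(m^2)$; the odd case $w_{2m+1} = w_{2m}a_{m+1}$ only adds an $O(m)$ term. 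As $k \asymp m$, this yields $\limsup_{k\to\infty}\mu_{1,4}(w_k)/k^2 < \hat{D}$ for a suitable constant $\hat D$.

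For the lower bound I would invoke \Cref{lem:geodesic_additivity} with this $K$, taking $n = n_k$ and $m = n_{k+1}-n_k$, to get, for $i \in \{1,3\}$,
\[
  \mu_{i,i+1}(w_{k+1}) \ge \mu_{i,i+1}(w_k) + \mu_{i,i+1}(w_k^{-1}w_{k+1}) - D,
\]
and then telescope (with $w_0 = \identity$) to obtain $\mu_{i,i+1}(w_{2m}) \ge \sum_{j=0}^{2m-1}\mu_{i,i+1}(w_j^{-1}w_{j+1}) - 2mD$. By construction the consecutive increments alternate: $w_{2r}^{-1}w_{2r+1} = a_{r+1}$, whose gap is only $\ge 0$, while $w_{2r-1}^{-1}w_{2r} = \gamma^{r}$, whose gap is $\ge B_0 r$ by \Cref{lem:singular_values_of_gamma}\ref{item:gamma_gap} (valid for both $i=1$ and $i=3$). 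Keeping only the $\gamma$-increments gives $\sum_{r=1}^m B_0 r = B_0 m(m+1)/2$, hence $\mu_{i,i+1}(w_{2m}) \ge B_0 m^2/2 - 2mD$, and with $k = 2m$ this is $\ge B_0 k^2/8 - kD$; the odd indices are handled identically up to a harmless shift. Therefore $\liminf_{k\to\infty}\mu_{i,i+1}(w_k)/k^2 \ge B_0/8 > 0$, and any $\hat C < B_0/8$ works.

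I do not expect a genuine obstacle: the real content is already packaged in \Cref{lem:geodesic_additivity}, which supplies coarse super-additivity of $\mu_{i,i+1}$ along the tracking sequence with \emph{no} regularity hypothesis on $\{\gamma_n\}$, and in the eigenvalue gap of the rank one element $\gamma$. The only delicate point is bookkeeping --- making sure the straightness inequality is applied with the correct indices $n_k$ along the subsequence, and checking that the ``bad'' increments $a_{r+1}$ enter the telescoped sum with a harmless sign (one only needs $\mu_{i,i+1}\ge 0$, which is automatic). It is exactly this asymmetry --- the $\gamma$-steps inflate every gap while the $a$-steps inflate $\mu_{1,4}$ but not $\mu_{1,2}$ or $\mu_{3,4}$ --- that will subsequently make $\{\gamma_n\}$ uniformly but not strongly uniformly regular.
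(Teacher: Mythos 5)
Your proposal is correct and follows essentially the same route as the paper: the lower bound is obtained by telescoping the straightness inequality of \cref{lem:geodesic_additivity} along the subsequence $\{w_k\}$ of the tracking sequence, discarding the nonnegative $a$-increments and keeping the linearly growing gaps $\mu_{i,i+1}(\gamma^r)\ge B_0 r$ from \cref{lem:singular_values_of_gamma}, exactly as in the paper's two-step telescoping claim. The only (harmless) deviation is the upper bound on $\mu_{1,4}(w_k)$, which you get directly from submultiplicativity of $\|\cdot\|$ and $\|\cdot^{-1}\|$, whereas the paper bounds the Hilbert displacement $\hil(x_0,w_kx_0)$ by the triangle inequality and then invokes \cref{prop:dgk_sv_gap_omega}; both give the same $O(k^2)$ estimate.
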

\begin{proof}
  For concreteness, take $i = 1$; the proof when $i = 3$ is
  essentially the same. We first claim that:
  \begin{claim}There is a positive constant $C_0$ such that: for any
    $k
    \in\{2m,2m+1\}$, \begin{align} \label{eqn:telescope_mu_12}\mu_{1,2}(w_k)-\mu_{1,2}(w_{k-2})
      \geq \mu_{1,2}(\gamma^m)-2C_0.\end{align}
\end{claim}
\begin{proof}[Proof of Claim] 
  Since $\{w_k\}$ is a subsequence of a tracking sequence, we may
  prove the claim by applying \cref{lem:geodesic_additivity}. First,
  suppose that $k=2m$. Then, by \cref{lem:geodesic_additivity}, there
  exists a constant $C_0$---independent of $k$---such that:
 \begin{align*}
   \mu_{1,2}(w_k) &\geq \mu_{1,2}(w_{k-1}) + \mu_{1,2}(\gamma^m) - C_0 \\
                  &\geq \mu_{1,2}(w_{k-2})+\mu_{1,2}(a_m)+\mu_{1,2}(\gamma^m)-2C_0.
 \end{align*}
 Since $\mu_{1,2}(a_m) \geq 0$,
 $\mu_{1,2}(w_k)-\mu_{1,2}(w_{k-2}) \geq \mu_{1,2}(\gamma^m)-2C_0$.
 This proves the claim for $k=2m$. The case $k=2m+1$ is similar.
\end{proof}

Using \eqref{eqn:telescope_mu_12} above, we have
$$\mu_{1,2}(w_k) \geq \sum_{j=1}^m (\mu_{1,2}(\gamma^j)-2C_0),$$ for
any $k\in\{2m,2m+1\}$. By \cref{lem:singular_values_of_gamma}, there
is a positive constant $B_0$ such that
$\mu_{1,2}(\gamma^j) \geq B_0 \cdot j$ for any $j \geq 1$. Then for
any $k \in \{2m,2m+1\}$,
$$\mu_{1,2}(w_k) \geq \sum_{j=1}^m (B_0 \cdot j-2C_0)=\frac{B_0}{2}
m(m+1)-2C_0m.$$ Since $2m \leq k \leq 2m+1$,
$\frac{m^2}{k^2} \to \frac{1}{4}$ while $\frac{m}{k^2} \to 0$ as
$k \to \infty$. Thus, there exists a constant $\hat{C}>0$ such
that $$\liminf_{k \to \infty}\frac{\mu_{1,2}(w_k)}{k^2} > \hat{C}.$$
This finishes the proof of the first part.

To prove the estimate for $\mu_{1,4}$, observe that the triangle
inequality implies that, if $k = 2m$, then
\[
  \hil(x_0, w_kx_0) \le \sum_{j=1}^m(\hil(x_0, a_jx_0) + \hil(x_0,
  \gamma^jx_0)).
\]
Since both groups $\langle \gamma \rangle$ and $A$ are
quasi-isometrically embedded in $\Gamma$, and the orbit map for
$\Gamma$ is a quasi-isometry, both terms appearing in the sum above
are uniformly linear in $j$. So there is a uniform constant $D > 0$ so
that the sum is at most $Dk^2$. Then the desired bound follows from
\Cref{prop:dgk_sv_gap_omega}.
\end{proof}

Using the above lemma, we can show:
\begin{lemma}
  Any sequence $\{\gamma_n\}$ which tracks the geodesic ray $[x_0, \xi)$
  is both uniformly $1$-regular and uniformly $3$-regular.
\end{lemma}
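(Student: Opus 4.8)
The plan is to deduce uniform $1$-regularity and uniform $3$-regularity from \Cref{lem:wk_singular_value_estimates} together with the ``straightness'' Lemma \ref{lem:geodesic_additivity}. Recall that we have arranged $\{w_k\}$ to be a subsequence of $\{\gamma_n\}$, and both sequences track the projective geodesic ray $[x_0,\xi)$. First I would observe that, by \Cref{rem:tracking_implies_mu_1d_gap}, there is a constant $D'$ (depending on $x_0$) so that $\mu_{1,4}(\gamma_n) \le 2R + D' + n/2$ for all $n$; in particular $\mu_{1,4}(\gamma_n)$ grows at most linearly in $n$. On the other hand, along the subsequence $\{w_k\}$, \Cref{lem:wk_singular_value_estimates} gives $\mu_{1,4}(w_k) \le \hat D k^2$ eventually, and $\mu_{1,2}(w_k) \ge \hat C k^2$ (resp. $\mu_{3,4}(w_k) \ge \hat C k^2$) eventually. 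Hence along the subsequence $\{w_k\}$, the ratios $\mu_{1,2}(w_k)/\mu_{1,4}(w_k)$ and $\mu_{3,4}(w_k)/\mu_{1,4}(w_k)$ are bounded below by $\hat C/\hat D > 0$ for $k$ large.

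The remaining work is to promote this liminf statement from the subsequence $\{w_k\}$ to the full sequence $\{\gamma_n\}$. The key point is \emph{coarse monotonicity} of $\mu_{1,2}$ and $\mu_{3,4}$ along a tracking sequence, which is exactly the content of \Cref{lem:geodesic_additivity}. Taking the compact set $K = \overline{B_R(x_0)}$ in that lemma, there is a constant $D_0 > 0$ so that for all $n, m \in \Nb$ and $i \in \{1, 3\}$,
\[
  \mu_{i,i+1}(\gamma_n) + \mu_{i,i+1}(\gamma_n^{-1}\gamma_{n+m}) \le \mu_{i,i+1}(\gamma_{n+m}) + D_0,
\]
and since $\mu_{i,i+1}(\gamma_n^{-1}\gamma_{n+m}) \ge 0$, this says $\mu_{i,i+1}(\gamma_n) \le \mu_{i,i+1}(\gamma_{n+m}) + D_0$, i.e. $\mu_{i,i+1}(\gamma_n)$ is nondecreasing in $n$ up to the additive error $D_0$. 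Now given any index $n$, pick $k = k(n)$ minimal with $w_k$ appearing at or after position $n$ in the tracking sequence; then $\mu_{i,i+1}(\gamma_n) \ge \mu_{i,i+1}(w_{k}) - D_0$ by coarse monotonicity. Also, because consecutive elements $w_{k-1}, w_k$ of the subsequence are boundedly far apart in the word metric relative to the position $n$ — more precisely, the positions in $\{\gamma_n\}$ at which $w_{k-1}$ and $w_k$ occur differ by a controlled amount, comparable to $\hil(w_{k-1}x_0, w_k x_0) = \hil(x_0, a_k\gamma^k x_0) = O(k)$ — one gets a lower bound $k(n) \gtrsim \sqrt{n}$ and an upper bound of the same order, so $k(n)^2$ and $n$ are comparable up to uniform constants. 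Combining: $\mu_{i,i+1}(\gamma_n) \ge \hat C k(n)^2 - D_0 \gtrsim n$, while $\mu_{1,4}(\gamma_n) \le n/2 + (2R + D')$, so $\liminf_n \mu_{i,i+1}(\gamma_n)/\mu_{1,4}(\gamma_n) > 0$ for $i \in \{1,3\}$. This is precisely uniform $1$-regularity and uniform $3$-regularity (divergence of $\{\gamma_n\}$ is clear since $\hil(x_0, \gamma_n x_0) \to \infty$).

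The main obstacle I anticipate is bookkeeping the comparison between the position index $n$ in $\{\gamma_n\}$ and the subsequence index $k(n)$ of $\{w_k\}$. This requires knowing that the Hilbert distance traversed between $w_{k-1}x_0$ and $w_k x_0$ grows linearly in $k$ (which follows since $\hil(x_0, a_k x_0)$ and $\hil(x_0, \gamma^k x_0)$ are each uniformly linear in $k$, by quasi-isometric embeddedness of $A$ and $\langle\gamma\rangle$ and the fact that the orbit map is a quasi-isometry), and hence that $w_k$ sits near position $\sum_{j\le k} O(j) = O(k^2)$ along the ray. Once this quadratic-vs-index correspondence is pinned down, the inequalities $\mu_{i,i+1}(\gamma_n) \gtrsim k(n)^2 \asymp n$ and $\mu_{1,4}(\gamma_n) = O(n)$ close the argument. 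A minor subtlety is that coarse monotonicity only bounds $\mu_{i,i+1}(\gamma_n)$ from below by $\mu_{i,i+1}(w_{k(n)}) - D_0$ using $w_{k(n)}$ at a \emph{later} position; one should double-check the roles of $n$ and $n+m$ in \Cref{lem:geodesic_additivity} are oriented correctly (they are: the lemma bounds the earlier gap plus the increment by the later gap), but with either orientation the boundedness of $\mu_{1,4}$ forces the ratio bound, so there is no real risk here.
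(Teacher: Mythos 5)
Your argument is correct in substance but takes a genuinely different route from the paper. The paper transfers the quadratic estimates of \Cref{lem:wk_singular_value_estimates} from $\{w_k\}$ to $\{\gamma_n\}$ by locating, for each $n$, an index $k(n)$ with $\hil(\gamma_n x_0, w_{k(n)}x_0)=O(k(n))$ and then invoking \Cref{prop:dgk_sv_gap_omega} together with the additivity of the Cartan projection (\Cref{lem:additive_root_bound}) to conclude $|\mu_{1,2}(\gamma_n)-\mu_{1,2}(w_{k(n)})|=O(k(n))$ and $|\mu_{1,4}(\gamma_n)-\mu_{1,4}(w_{k(n)})|=O(k(n))$; the $O(k)$ errors are negligible against $k^2$, and one even recovers the explicit liminf $\hat C/\hat D$. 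You instead use only a one-sided comparison: the coarse monotonicity of $\mu_{i,i+1}$ coming from the straightness \Cref{lem:geodesic_additivity} to bound $\mu_{i,i+1}(\gamma_n)$ from below by the value at a nearby element of $\{w_k\}$, combined with the linear-in-$n$ upper bound on $\mu_{1,4}(\gamma_n)$ from \Cref{rem:tracking_implies_mu_1d_gap}. This is a perfectly valid alternative; it requires you to pin down the correspondence $n_k\asymp k^2$ between subsequence index and position along the ray (which the paper's route needs only implicitly, and which does hold: the upper bound is the triangle inequality and the lower bound follows from $\mu_{1,2}(w_k)\gtrsim k^2$ together with \Cref{prop:dgk_sv_gap_omega}), but it avoids the two-sided Cartan comparison. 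The trade-off is that you only get \emph{some} positive lower bound on the liminf rather than $\hat C/\hat D$, which is all the lemma claims anyway.

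One point to fix: the orientation of the interpolation step is stated backwards. \Cref{lem:geodesic_additivity} gives $\mu_{i,i+1}(\gamma_n)\le\mu_{i,i+1}(\gamma_{n+m})+D_0$, so if $w_k$ occurs \emph{at or after} position $n$ you obtain an \emph{upper} bound on $\mu_{i,i+1}(\gamma_n)$, not the lower bound you write. To get $\mu_{i,i+1}(\gamma_n)\ge\mu_{i,i+1}(w_k)-D_0$ you must take $k(n)$ \emph{maximal} with $w_{k(n)}$ occurring at or before position $n$; your closing remark flags this subtlety but describes it with the roles of ``earlier'' and ``later'' reversed. The repair is immediate and does not affect the quantitative conclusion, since $n\le n_{k(n)+1}=n_{k(n)}+O(k(n))=O(k(n)^2)$ still gives $\mu_{i,i+1}(\gamma_n)\gtrsim k(n)^2\gtrsim n$ against $\mu_{1,4}(\gamma_n)=O(n)$.
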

\begin{proof}
  We know that each point $\{w_kx_0\}$ lies within uniformly bounded
  distance of $[x_0, \xi)$, and that
  $\hil(w_kx_0, w_{k+1}x_0) = O(k)$. So, as $\{w_k\}$ is an unbounded subsequence of $\{\gamma_n\}$,
  it follows that for each $n$ there is some $k = k(n) \in \Nb$ so
  that
  \[
    \hil(\gamma_nx_0, w_kx_0) = O(k).
  \]
  Then by \Cref{prop:dgk_sv_gap_omega} we also have
  $\mu_{1,4}(\gamma_n^{-1}w_k) = O(k)$. So, by 
  \Cref{lem:additive_root_bound}, 
  \begin{align*}
    |\mu_{1,2}(\gamma_n) - \mu_{1,2}(w_k)| = O(k), ~~\text{ and }~~ |\mu_{1,4}(\gamma_n) - \mu_{1,4}(w_k)| = O(k).
  \end{align*}
  So, it follows from the previous lemma that, $\liminf_{n \to \infty}
    \frac{\mu_{1,2}(\gamma_n)}{\mu_{1,4}(\gamma_n)} \ge
    \frac{\hat{C}}{\hat{D}} > 0,$
  i.e. $\{\gamma_n\}$ is uniformly $1$-regular. The proof for
  $3$-regularity is similar.
\end{proof}

\begin{corollary}
    Let $[x_0,\xi)$ be the geodesic ray in \cref{lem:wk_along_ray} that $w_k x_0$ embeds along. Then $\xi$ is $C^\alpha$-regular and $\beta$-convex for some $\alpha>1$ and $\beta<\infty$. 
\end{corollary}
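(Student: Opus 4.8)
The plan is to deduce the corollary directly from \Cref{thm:regularity_equiv_regularity}, once we have checked that its hypotheses hold for $\xi$ and for a sequence $\{\gamma_n\}$ tracking $[x_0,\xi)$. By \Cref{lem:wk_along_ray} and the lemma following it, such a tracking sequence exists (and may be chosen to contain $\{w_k\}$ as a subsequence), and it is both uniformly $1$-regular and uniformly $3$-regular. Since $\Omega \subset \Pb(\Rb^4)$, here $d = 4$, so uniform $3$-regularity is exactly uniform $(d-1)$-regularity. Unwinding \Cref{defn:strong_unif_reg}, these two conditions say
\[
  \liminf_{n\to\infty}\frac{\mu_{1,2}(\gamma_n)}{\mu_{1,d}(\gamma_n)} > 0 \quad\text{and}\quad \liminf_{n\to\infty}\frac{\mu_{d-1,d}(\gamma_n)}{\mu_{1,d}(\gamma_n)} > 0;
\]
in particular, since $\{\gamma_n\}$ tracks a projective geodesic ray, $\mu_{1,d}(\gamma_n)\to\infty$ by \Cref{prop:dgk_sv_gap_omega}, and hence both $\mu_{1,2}(\gamma_n)\to\infty$ and $\mu_{d-1,d}(\gamma_n)\to\infty$.

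Next I would verify that $\xi = c(\infty)$ is an exposed $C^1$ extreme point of $\partial \Omega$. Exposedness is immediate: $\Omega$ is one of the Benoist domains fixed in \Cref{sec:description_of_benoist_domains}, and those have exposed boundary. For extremeness, $\mu_{1,2}(\gamma_n)\to\infty$ together with \Cref{prop:face_sv_gap} forces $F_\Omega(\xi)$ to be $0$-dimensional --- were its dimension positive, \Cref{prop:face_sv_gap} would give a uniform bound on $\mu_{1,2}(\gamma_n)$, a contradiction --- so $\xi$ is an extreme point. For $C^1$-ness, I would pass to the dual domain $\Omega^* \subset \Pb((\Rb^d)^*)$: acting there, $\gamma_n$ has $\mu_{1,2}(\gamma_n^*) = \mu_{d-1,d}(\gamma_n)\to\infty$, and $\{\gamma_n\}$ tracks a dual geodesic whose forward endpoint is a supporting hyperplane of $\Omega$ at $\xi$; applying the dual of \Cref{prop:face_sv_gap} in $\Omega^*$ then shows this hyperplane is the unique supporting hyperplane of $\Omega$ at $\xi$, i.e. $\xi$ is a $C^1$ point. (Equivalently, one argues directly from the Cartan decomposition that $\mu_{d-1,d}(\gamma_n)\to\infty$ forces the supporting hyperplanes of $\Omega$ at $\gamma_n x_0$ to converge to a single hyperplane at $\xi$.)

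With the hypotheses of \Cref{thm:regularity_equiv_regularity} in hand (taking $x=\xi$ and the tracking sequence $\{\gamma_n\}$), the conclusion is routine. Uniform $1$-regularity gives $\beta_0 := \limsup_n \mu_{1,d}(\gamma_n)/\mu_{1,2}(\gamma_n) < \infty$, so by the theorem $\beta(\xi,\Omega) = \beta_0 < \infty$ and $\xi$ is $\beta$-convex for $\beta = \beta_0$. Writing $\mu_{1,d} = \mu_{1,d-1} + \mu_{d-1,d}$, uniform $(d-1)$-regularity gives, for some $\varepsilon>0$ and all large $n$, the bound $\mu_{1,d}(\gamma_n)/\mu_{1,d-1}(\gamma_n) = 1 + \mu_{d-1,d}(\gamma_n)/\mu_{1,d-1}(\gamma_n) \ge 1/(1-\varepsilon)$, so $\alpha_0 := \liminf_n \mu_{1,d}(\gamma_n)/\mu_{1,d-1}(\gamma_n) > 1$; then the theorem yields $\alpha(\xi,\Omega) = \alpha_0 > 1$, so $\xi$ is a $C^\alpha$ point for some $\alpha > 1$. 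This proves the corollary.

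The only step that is not purely formal is the $C^1$ verification: it requires setting up the projective duality $\Omega \leftrightarrow \Omega^*$ and the corresponding dual of \Cref{prop:face_sv_gap}, so that $\mu_{d-1,d}(\gamma_n)\to\infty$ can be read off as convergence of supporting hyperplanes. This is standard bookkeeping, but it is where the real content of the argument beyond \Cref{thm:regularity_equiv_regularity} sits; everything else is a direct application of results already established in the paper.
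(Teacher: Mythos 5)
Your proof is correct and takes the same route as the paper, which simply invokes \cref{thm:regularity_equiv_regularity} together with the uniform $1$- and $3$-regularity established in the preceding lemma. The only difference is that you explicitly verify the hypothesis that $\xi$ is an exposed $C^1$ extreme point (via \cref{prop:face_sv_gap} and its dual), a check the paper's one-line proof leaves implicit; your outline of that verification is sound.
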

\begin{proof}
  Since $\Omega$ has exposed boundary (see \cite{B2006}), \cref{thm:regularity_equiv_regularity} applies and the previous
  lemma implies the result.
\end{proof}

\begin{lemma}
  The sequence $\{\gamma_n\}$ is not strongly uniformly $1$-regular.
\end{lemma}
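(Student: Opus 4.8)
The plan is to exploit the ``$a_{m+1}$ pieces'' of the words $w_k$ from \Cref{defn:wk}. These furnish increments $\gamma_n^{-1}\gamma_{n+p}$ with $p$ arbitrarily large along which $\mu_{1,2}$ stays bounded while $\mu_{1,4} = \mu_{1,d}$ diverges, which is precisely the failure of strong uniform $1$-regularity. (The sequence $\{\gamma_n\}$ is divergent since it tracks a geodesic ray converging to $\bdry$ and $\Aut(\Omega)$ acts properly on $\Omega$, so the only thing to disprove is the singular-value ratio bound.) Concretely, recall that we have arranged $\{w_k\}$ to be a subsequence of $\{\gamma_n\}$, so there is a strictly increasing index sequence $\{n_k\}$ with $\gamma_{n_k} = w_k$. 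Since $w_{2m+1} = w_{2m}\,a_{m+1}$ by \Cref{defn:wk}, the increment from $\gamma_{n_{2m}}$ to $\gamma_{n_{2m+1}}$ is exactly
\[
  \gamma_{n_{2m}}^{-1}\gamma_{n_{2m+1}} = w_{2m}^{-1}w_{2m+1} = a_{m+1}.
\]

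First I would record the two singular-value facts about $a_{m+1}$. By \Cref{lem:abelian_mu12_bound} there is a constant $C$, independent of $m$, with $\mu_{1,2}(a_{m+1}) < C$. On the other hand the $a_{m+1}$ are pairwise distinct (as $|a_{m+1}|_{S_A} = m+1$), so $\{a_{m+1}\}$ is divergent in $\Aut(\Omega)$; properness of the $\Aut(\Omega)$-action then gives $\hil(x_0, a_{m+1}x_0) \to \infty$, and \Cref{prop:dgk_sv_gap_omega} yields $\mu_{1,4}(a_{m+1}) \to \infty$. Hence $\mu_{1,2}(a_{m+1})/\mu_{1,4}(a_{m+1}) \to 0$ as $m \to \infty$.

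Next I would check that the index gaps $p_m := n_{2m+1} - n_{2m}$ tend to infinity, so that these increments are genuinely ``long.'' Since $w_{2m}$ acts as a Hilbert isometry, $\hil(w_{2m}x_0, w_{2m+1}x_0) = \hil(x_0, a_{m+1}x_0) \to \infty$; and since $\{\gamma_n\}$ $R$-tracks the unit-speed ray $[x_0,\xi)$, the quantity $\hil(w_{2m}x_0, w_{2m+1}x_0)$ differs from $n_{2m+1} - n_{2m}$ by at most $2R$, so $p_m \to \infty$. To finish, suppose for contradiction that $\{\gamma_n\}$ were strongly uniformly $1$-regular with constants $C_0, N$; pick $m$ large enough that $p_m > N$ and $\mu_{1,2}(a_{m+1})/\mu_{1,4}(a_{m+1}) \le C_0$, and set $n = n_{2m}$. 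Then the increment $\gamma_n^{-1}\gamma_{n+p_m}$, with $p_m > N$, violates the required lower bound, a contradiction.

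The argument is short and essentially bookkeeping; the only step needing any care is matching the abstract cancellation $w_{2m}^{-1}w_{2m+1} = a_{m+1}$ with a genuine increment $\gamma_n^{-1}\gamma_{n+p}$ of the tracking sequence and verifying that the corresponding spacing $p$ is unbounded, which the isometry-plus-tracking estimate above supplies. I do not anticipate a real obstacle, since all the needed inputs (\Cref{lem:abelian_mu12_bound}, \Cref{prop:dgk_sv_gap_omega}, properness of the action, and the tracking hypothesis) are already established.
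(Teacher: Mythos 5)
Your proof is correct and rests on the same core idea as the paper's: the increments $w_{2m}^{-1}w_{2m+1}=a_{m+1}$ have $\mu_{1,2}$ bounded (by \Cref{lem:abelian_mu12_bound}) while $\mu_{1,4}\to\infty$, so the relevant ratio tends to $0$. Where you differ is in a point of bookkeeping that is actually needed: \Cref{defn:strong_unif_reg} only constrains increments $g_n^{-1}g_{n+m}$ with gap $m>N$, so exhibiting bad increments is not enough unless one also checks that they occur at arbitrarily large gaps. The paper passes to the subsequence $\{w_k\}$ and uses its gap-one increments, which read literally against the definition leaves exactly this step implicit; you instead stay in the original indexing and verify, via the isometry identity $\hil(w_{2m}x_0,w_{2m+1}x_0)=\hil(x_0,a_{m+1}x_0)\to\infty$ together with the $R$-tracking estimate, that the index gaps $n_{2m+1}-n_{2m}$ are unbounded. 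That extra verification is sound and makes the contradiction with the definition airtight, so your version is, if anything, the more complete of the two.
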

\begin{proof}
  Observe that $\{w_k\}$ is a subsequence of
  $\{\gamma_n\}$ and since strong uniform regularity passes to
  subsequences, it suffices to prove the claim for $\{w_k\}$. Suppose $k=2m+1$. Then $w_{k-1}^{-1}w_k=a_{m+1}$. Recall that
  $\mu_{1,2}(a_{m+1})$ is uniformly bounded while
  $\mu_{1,4}(a_{m+1}) \to \infty$ linearly in $m$. Then
  $\frac{\mu_{1,2}(w_{k-1}^{-1}w_k)}{\mu_{1,d}(w_{k-1}^{-1}w_k)} \to
  0$. So $\{w_k\}$ is not strongly uniformly 1-regular.
\end{proof}

\bibliographystyle{alpha}
\bibliography{geom}

\end{document}